\numberwithin{equation}{section}
\newtheorem{theorem}{Theorem}[section]
\newtheorem{lemma}[theorem]{Lemma}
\newtheorem{prop}[theorem]{Proposition}
\newtheorem{corollary}[theorem]{Corollary}
\theoremstyle{definition}
\newtheorem{definition}[theorem]{Definition}
\theoremstyle{remark}
\newtheorem{example}[theorem]{Example}
\newtheorem{remark}[theorem]{Remark}
\DeclareMathOperator{\Hom}{Hom}
\DeclareMathOperator{\Ker}{Ker}
\renewcommand*{\Im}{\mathop{\mathrm{Im}}}
\DeclareMathOperator{\Ob}{Ob}
\DeclareMathOperator{\inv}{inv}
\DeclareMathOperator{\HCC}{HCC}
\DeclareMathOperator{\supp}{supp}
\DeclareMathOperator{\card}{card}
\newcommand*{\lmod}{\mbox{-}\!\mathop{\mathsf{mod}}}
\newcommand*{\lalg}{\mbox{-}\!\mathop{\mathsf{alg}}}
\newcommand*{\cHol}{\mathop{\mathit{Hol}}}
\newcommand*{\Ptens}{\mathop{\widehat\otimes}}
\newcommand*{\Tens}{\mathop{\otimes}}
\newcommand*{\Htens}{\mathop{\dot\otimes}}
\newcommand*{\ptens}[1]{\mathop{\widehat\otimes}_{#1}}
\newcommand*{\Pfree}{\mathop{\widehat *}}
\newcommand*{\id}{1}
\newcommand*{\reg}{\mathrm{reg}}
\newcommand*{\defo}{\mathrm{def}}
\newcommand*{\fdef}{\mathrm{fdef}}
\newcommand*{\wh}{\widehat}
\newcommand*{\la}{\langle}
\newcommand*{\ra}{\rangle}
\renewcommand*{\Re}{\mathop{\mathrm{Re}}}
\newcommand*{\CC}{\mathbb C}
\newcommand*{\N}{\mathbb N}
\newcommand*{\Z}{\mathbb Z}
\newcommand*{\R}{\mathbb R}
\newcommand*{\DD}{\mathbb D}
\newcommand*{\BB}{\mathbb B}
\newcommand*{\cO}{\mathscr O}
\newcommand*{\cA}{\mathscr A}
\newcommand*{\cC}{\mathscr C}
\newcommand*{\cF}{\mathscr F}
\newcommand*{\fF}{\mathfrak F}
\newcommand*{\fm}{\mathfrak m}
\newcommand*{\cN}{\mathscr N}
\newcommand*{\cB}{\mathscr B}
\newcommand*{\cL}{\mathscr L}
\newcommand*{\cU}{\mathscr U}
\newcommand*{\cT}{\mathscr T}
\newcommand*{\fA}{\mathfrak A}
\newcommand*{\ccH}{\mathcal H}
\newcommand*{\sT}{\mathsf T}
\newcommand*{\sU}{\mathsf U}
\newcommand*{\sE}{\mathsf E}
\newcommand*{\hI}{\tilde I}
\newcommand*{\hcO}{\tilde\cO}
\newcommand*{\hsE}{\tilde\sE}
\newcommand*{\rp}{\mathrm p}
\newcommand*{\rt}{\mathrm t}
\newcommand*{\rT}{\mathrm T}
\newcommand*{\rs}{\mathrm s}
\newcommand*{\rrm}{\mathrm m}
\newcommand*{\spn}{\mathrm{span}}
\newcommand*{\add}{\mathrm{add}}
\newcommand*{\mult}{\mathrm{mult}}
\newcommand*{\quot}{\mathrm{quot}}
\newcommand*{\AM}{\mathsf{AM}}
\newcommand*{\Bnd}{\mathsf{Bnd}}
\newcommand*{\AlgBnd}{\mathsf{AlgBnd}}
\newcommand*{\eps}{\varepsilon}
\newcommand*{\ol}{\overline}
\newcommand*{\dd}{\partial}
\newenvironment{mycompactenum}{\pltopsep=5pt\begin{compactenum}[\upshape (i)]}%
{\end{compactenum}}
\newcommand*{\xra}{\xrightarrow}
\begin{document}
\title[Nonformal deformations of the polydisk and of the ball]{Nonformal deformations\\[2pt]
of the algebras of holomorphic functions\\[2pt]
on the polydisk and on the ball in $\CC^n$}
\subjclass[2010]{46H99, 16S80, 16S38, 53D55, 46L65, 32A38, 46M10}
\author{Alexei Yu. Pirkovskii}
\address{Faculty of Mathematics\\
HSE University\\
6 Usacheva, 119048 Moscow, Russia}
\email{aupirkovskii@hse.ru}
\date{}

\begin{abstract}
We construct Fr\'echet $\cO(\CC^\times)$-algebras
$\cO_\defo(\DD^n)$ and $\cO_\defo(\BB^n)$ which may be interpreted
as nonformal (or, more exactly, holomorphic) deformations of the algebras
$\cO(\DD^n)$ and $\cO(\BB^n)$
of holomorphic functions on the polydisk
$\DD^n\subset\CC^n$ and on the ball $\BB^n\subset\CC^n$, respectively.
The fibers of our algebras over $q\in\CC^\times$ are isomorphic to the
previously introduced ``quantum polydisk'' and ``quantum ball'' algebras,
$\cO_q(\DD^n)$ and $\cO_q(\BB^n)$. We show that
the algebras $\cO_\defo(\DD^n)$ and $\cO_\defo(\BB^n)$ yield continuous
Fr\'echet algebra bundles over $\CC^\times$ which are strict deformation quantizations
(in Rieffel's sense) of $\DD^n$ and $\BB^n$.
We also give a noncommutative power series interpretation of $\cO_\defo(\DD^n)$ and apply it to
showing that $\cO_\defo(\DD^n)$ is not topologically projective (and a fortiori
is not topologically free) over $\cO(\CC^\times)$.
Finally, we consider respective formal deformations of $\cO(\DD^n)$ and $\cO(\BB^n)$,
and we show that they can be obtained from the holomorphic deformations by
extension of scalars.
\end{abstract}

\maketitle

\section{Introduction}
\label{sect:intro}

Formal deformations of associative algebras are by now classical and
relatively well-studied objects. They were introduced by
Gerstenhaber \cite{Gerst_def_1}, who also obtained first deep results
on constructing formal deformations, obstructions to their existence,
their classification, and their relations
to Hochschild cohomology \cite{Gerst_def_2,Gerst_def_3,Gerst_def_4,Gerst_Schack_survey}.
An important milestone in the formal deformation theory was the
foundational paper \cite{BFFLS}, in which formal deformations
were put into the framework of deformation quantization.
This has resulted in a considerable interest in formal deformations.
The absolute highlight
of this theory is Kontsevich's celebrated theorem \cite{Konts},
which states that every Poisson manifold admits a deformation quantization.
We refer to \cite{Gerst_Schack_survey,Markl} for a thorough treatment
of formal deformation theory, and to \cite{Wald_book,Esposito} for
an introduction to deformation quantization.

By contrast, the theory of nonformal deformations is now at a much earlier
stage of development. In fact, there seems to be no unified approach to
nonformal deformations so far, and their theory still lacks general existence and
classification results. Roughly, a general feature of all existing approaches to nonformal
deformations, which distinguishes them from formal deformations, is that the role
of the ``base'' ring is now played by a certain algebra of functions
(continuous, or smooth, or holomorphic\dots) rather than by the algebra of formal power series.
This opens the possibility to ``specialize'' nonformal deformations of an algebra $A$
at nonzero values of the deformation parameter and thus to obtain actual multiplications on the
underlying vector space of $A$ (or, more exactly, on the underlying
vector space of a certain dense subalgebra of $A$, assuming that $A$ is equipped with
a suitable topology). Because of this, nonformal deformations are probably even
more natural than formal deformations, at least from the physical point of view.
Indeed, the deformation parameter is usually interpreted as Planck's constant,
which is of course a concrete nonzero physical quantity rather than a formal variable.
For a more detailed discussion of physical aspects of nonformal deformations,
and for further references, see
\cite{BRWald_Wick,BWald_disk,Wald_EMS,KRSWald_disk,Landsman_clq,Landsman_qth,Rf_qst}.

The first successful attempt to introduce and study nonformal deformations
is due to Rieffel \cite{Rf_Heis} (see also \cite{Rf_dq_oa,Rf_Lie,Rf_mem,Rf_qst,Rf_quest}),
who used a $C^*$-algebra approach to this notion. Rieffel was mostly interested
in the quantization aspect of deformations, so he coined the name ``strict deformation
quantization'' for his theory. On the technical side, Rieffel's deformations
are continuous fields
(or continuous bundles) of $C^*$-algebras whose fibers are completions of the
underlying space of a certain Poisson subalgebra $A\subset C^\infty(M)$
(where $M$ is a Poisson manifold) equipped with a family of ``deformed''
multiplications. In concrete examples, the deformed product of functions in $A$ is often given by an
oscillatory integral formula (see \cite{Rf_mem} for details).
Rieffel's approach was further developed by Landsman (see \cite{Landsman_clq} and
references therein) and applied to some problems of quantum physics.
Numerous generalizations of Rieffel's original constructions were obtained by
Bieliavsky et al. \cite{Biel_sympl,Biel_axb,Biel_solv,Biel_dq_hyper,Biel_dq_Heis,
Biel_dq_Kah,Biel_starexp_hpbd,BGNT_Kah,BGNT_aff},
by Kasprzak \cite{Kaspr_crs,Kaspr_coact,Kaspr_hmg,Kaspr_tens},
and by Neshveyev and Tuset \cite{NT_def,BGNT_Kah,BGNT_aff},
both in the $C^*$-algebraic and in the Fr\'echet algebraic settings.
See also a paper \cite{Lech_Wald_sdq} by Lechner and Waldmann, where some of Rieffel's
results are extended to nonmetrizable locally convex algebras and modules.

Another approach to nonformal deformations is based on a convergence analysis for
power series appearing in the formal deformation theory.
Perhaps the most natural choice here would be to take
the definition of a formal deformation and to replace everywhere the algebra of formal power series
by the algebra of holomorphic functions on a subset of the complex plane.
The resulting objects were introduced and studied by
Pflaum and Schottenloher \cite{Pfl_Schott} under the name of
``topologically free holomorphic deformations''.
Although this approach is quite natural and attractive, it turns out to be too restrictive
for our purposes.

The delicate point in the power series approach to nonformal deformations is that,
for many concrete examples in deformation quantization,
the formal power series defining the star product of two functions on
a Poisson manifold $M$ normally
diverges everywhere except $0$ (see \cite{Wald_EMS} for more on this).
Thus one is forced to work with some proper subalgebras of $C^\infty(M)$
(or of $\cO(M)$ in the complex analytic setting)
for which the series converges.
The first concrete example of this situation was investigated by
Omori, Maeda, Miyazaki, and Yoshioka \cite{OMMY_converg}, who proved
the convergence of the Moyal-Weyl star product on the algebra of those holomorphic
functions on $\CC^2$ which have exponential order $\le 2$ and minimal type.
See also \cite{OMMY_order} for further results in this direction.
A systematic study of convergent star products was undertaken
by Waldmann et al. \cite{BRWald_Wick,BWald_disk,ESWald_Gutt,Wald_Weyl,%
SWald_lim,KRSWald_disk,HRWald_Lie}; see \cite{Wald_EMS} for a recent survey.

Our approach to nonformal deformations is somewhat different. In some sense,
it is situated somewhere in between Rieffel's continuous deformations \cite{Rf_Heis}
and Pflaum-Schottenloher's free holomorphic deformations \cite{Pfl_Schott}.
Broadly speaking, the objects we would like to deform are
algebras of holomorphic functions on complex Stein manifolds.
Like in \cite{Pfl_Schott}, our ``base'' ring is the algebra $\cO(\Omega)$
of holomorphic functions on an open set $\Omega\subset\CC$, and our deformations
are Fr\'echet $\cO(\Omega)$-algebras.
However, in contrast to \cite{Pfl_Schott}
and to the examples considered by Waldmann's group (see the preceding paragraph),
we do not require that $R$ be topologically free over the base algebra.
The reason for that will be explained below. At the same time, we would like our
deformations to yield continuous bundles (in the sense of \cite{Gierz}) of Fr\'echet
algebras over $\Omega$ with fibers isomorphic to $R/\fm_z R$ ($z\in\Omega$),
where $\fm_z$ is the maximal ideal of $\cO(\Omega)$ corresponding to the evaluation at $z$.

In the present paper, we do not attempt to give a general definition of a holomorphic
deformation (although we have a candidate definition in mind). Instead, we
concentrate on two concrete examples, namely on the deformations of
the algebras of holomorphic functions on the open polydisk and on the open ball
in $\CC^n$. We nevertheless hope that our techniques can be applied
in a more general setting. Also, the constructions developed here provide
a more natural and conceptual approach to the algebras of holomorphic functions
on the quantum polydisk and on the quantum ball, which were
introduced and studied in our earlier papers
\cite{Pir_ncStein,Pir_ISQS21,Pir_HFG,Pir_qball_JNCG}.

To motivate our constructions, let us look at the following example,
which serves as an algebraic prototype of the deformations we are going to introduce.
Given $q\in\CC^\times$ (where $\CC^\times=\CC\setminus\{ 0\}$), consider the
algebra $\cO_q^\reg(\CC^n)$ of ``regular functions on the quantum affine space'' generated by
$n$ elements $x_1,\ldots ,x_n$ subject to the relations $x_i x_j=qx_j x_i$ for all $i<j$
(see, e.g., \cite{Br_Good}).
If $q=1$, then $\cO_q^\reg(\CC^n)$ is nothing but the
polynomial algebra $\CC[x_1,\ldots ,x_n]=\cO^\reg(\CC^n)$. Because of this,
it is common to interpret $\cO_q^\reg(\CC^n)$ as a ``deformation'' of $\cO^\reg(\CC^n)$
and to say that $\cO^\reg(\CC^n)$ is the ``limit'' of $\cO_q^\reg(\CC^n)$ as $q\to 1$.
Here the words ``deformation'' and ``limit'' are often understood heuristically.
Nevertheless, it is rather easy to give them a precise meaning.
Indeed, it is a standard fact that the monomials $x_1^{k_1}\cdots x_n^{k_n}$
$(k_1,\ldots ,k_n\ge 0)$ form a basis of $\cO_q^\reg(\CC^n)$,
so the underlying vector space of $\cO_q^\reg(\CC^n)$ can be identified with that
of $\cO^\reg(\CC^n)$. As a result, we have a family of associative
multiplications on the same underlying vector space. This situation can be axiomatized
as follows.
By a {\em Laurent deformation} of a $\CC$-algebra $A$
we mean a family $\{\star_q : q\in\CC^\times\}$
of associative multiplications on $A$ such that $\star_1$ is the initial
multiplication on $A$ and such that for every $a,b\in A$ the function
$q\in\CC^\times\mapsto a\star_q b\in A$ is an $A$-valued Laurent polynomial.
If we identify the underlying vector spaces of $\cO_q^\reg(\CC^n)$ and $\cO^\reg(\CC^n)$
as explained above, then
the resulting family of multiplications on $\cO^\reg(\CC^n)$ clearly becomes
a Laurent deformation of $\cO^\reg(\CC^n)$.

Equivalently, a Laurent deformation of $A$ can be defined as a
$\CC[t^{\pm 1}]$-algebra $R$ together with an algebra isomorphism
$R/(t-1)R\cong A$ such that $R$ is a free $\CC[t^{\pm 1}]$-module.
To see that the above definitions are indeed equivalent, observe that, if
$\{\star_q : q\in\CC^\times\}$ is a Laurent deformation of $A$ in the sense of the first definition,
then the map $A\times A\to A[t^{\pm 1}]$ taking $(a,b)$ to the Laurent polynomial
$(a\star b)(t)=a\star_t b$ uniquely extends to a $\CC[t^{\pm 1}]$-bilinear
multiplication $\star$ on $A[t^{\pm1}]$, so $R=(A[t^{\pm 1}],\star)$ is a Laurent
deformation of $A$ in the sense of the second definition. Conversely,
if $R$ is as in the second definition, then for each $q\in\CC^\times$
we have a vector space isomorphism $R/(t-q)R\cong A$, so we can let
$(A,\star_q)=R/(t-q)R$.

Our second definition of a Laurent deformation has a natural ``holomorphic'' analog in the case where
$A$ is a complete locally convex topological algebra.
Specifically, we can replace $\CC[t^{\pm 1}]$ by the algebra $\cO(\CC^\times)$ of holomorphic
functions on $\CC^\times$, and consider ``topologically free'' modules (in the sense of \cite{X1})
instead of free modules. This leads to the above-mentioned free holomorphic deformations
in the sense of \cite{Pfl_Schott}.

Let us now come back to our concrete example, i.e., to the quantum affine space.
Since the algebras $\cO_q^\reg(\CC^n)$ form a Laurent deformation of $\cO^\reg(\CC^n)$,
a natural question is then whether we can ``analytify'' this example by constructing
a free holomorphic deformation of the algebra $\cO(D)$ of holomorphic functions on
an open set $D\subset\CC^n$.
Surprisingly, it turns out that this is not possible in general.
Indeed, if such a deformation of $\cO(D)$ existed, then evaluating at every point $q\in\CC^\times$
would yield a family $\{\star_q : q\in\CC^\times\}$ of continuous associative multiplications
on $\cO(D)$ such that $x_i x_j = x_i\star_q x_j=qx_j\star_q x_i$ for all $i<j$.
However, as was observed in \cite[Remark 3.12]{Pir_qball_JNCG}, if
$D$ is either the open polydisk or the open ball in $\CC^n$, then
there is no such multiplication on $\cO(D)$ provided that $n\ge 2$ and $|q|<1$.
As a result, there is no chance
to construct a free holomorphic deformation of $\cO(D)$ in the sense of \cite{Pfl_Schott}.

Thus we see that a more flexible approach to holomorphic deformations is needed.
What we suggest here is a construction based on the following observation.
Let $R$ denote the Laurent deformation
of $\cO^\reg(\CC^n)$ introduced above. Thus $R$ is a $\CC[t^{\pm 1}]$-algebra
such that for each $q\in\CC^\times$ we have $R/(t-q)R\cong\cO_q^\reg(\CC^n)$ and such that
$R$ is free over $\CC[t^{\pm 1}]$. It is easy to see that
we have a $\CC[t^{\pm 1}]$-algebra isomorphism
\begin{equation}
\label{algdef_free_quot}
R\cong (\CC[t^{\pm 1}]\Tens F_n)/I,
\end{equation}
where $F_n=\CC\la\zeta_1,\ldots ,\zeta_n\ra$ is the free algebra and
$I$ is the two-sided ideal of $\CC[t^{\pm 1}]\Tens F_n$ generated by
$\zeta_i\zeta_j-t\zeta_j\zeta_i\; (i<j)$; cf. \cite{Good_semiclass}.
To construct an (inevitably nonfree) holomorphic deformation of $\cO(D)$, we suggest
to replace $\CC[t^{\pm 1}]$ by $\cO(\CC^\times)$
in \eqref{algdef_free_quot}, to replace $\Tens$ by $\Ptens$
(where $\Ptens$ is the completed projective tensor product),
and finally, to replace $F_n$ by a suitable ``algebra of holomorphic functions of several
free variables'' (whatever that means).

The nontrivial part of the above program is to find a suitable holomorphic
analog of $F_n$ (or, to put in another way, a suitable free analog of $\cO(D)$).
There are several approaches to the theory of holomorphic functions
of several free variables, they are closely related to each other, but are not formally equivalent.
Perhaps the best developed approach, which goes back to J.~L.~Taylor \cite{T2,T3},
deals with matrix-valued functions on $n$-tuples of square matrices of all sizes.
See \cite{KVV_book} and the second part of \cite{AMCCY} for a detailed account
of this theory and for further references. Another approach,
mostly due to Popescu (see \cite{Pop_VN,Pop_disk,%
Pop_holball,Pop_var,Pop_var_2,Pop_interpol,%
Pop_holball2,Pop_aut,Pop_ncdom,Pop_class,Pop_freebihol1,Pop_freebihol2,%
Pop_curv,Pop_Euler,Pop_ncpolydom_I,Pop_ncpolydom_II,Pop_ncvar_23}, to cite a few),
is motivated by the multivariable dilation theory and is based on
free versions of the disk algebra $A(\bar\DD)$ and of the Hardy algebra $H^\infty(\DD)$.
For a related work, see \cite{Dav_FSG,Dav_Topl,Dav_Pop,Dav_bihol,Dav_isoprobl,%
Ar_Lat_iso,Ar_Lat_class,Ar_Lat_iso2,SSS_18,SSS_20,MS_Hardy,MS_ncfun,MS_tensfunc,MS_fun_tens}.

Of course, the problem of defining a free analog of the algebra $\cO(D)$
for a general open set $D\subset\CC^n$ is already quite challenging and offers a variety of possibilities.
We concentrate here on two classical domains $D$, namely on the open polydisk $\DD^n_r\subset\CC^n$
and on the open ball $\BB^n_r\subset\CC^n$ of radius $r$.
In our view, these cases are already quite interesting and nontrivial.
What makes them especially convenient for our purposes is that reasonable free
analogs of the algebras $\cO(\DD^n_r)$ and $\cO(\BB^n_r)$ can be defined entirely in terms
of free power series. In fact, we have at least two natural candidates for a free power series
analog of $\cO(\DD^n_r)$, namely Taylor's free polydisk algebra $S_n(r)$ \cite{T2}
(denoted by $\cF^\rT(\DD^n_r)$ in this paper) and the ``universal'' free polydisk
algebra $\cF(\DD^n_r)$ introduced in \cite{Pir_HFG}. Although these algebras are not
isomorphic, we will see (see Theorem~\ref{thm:D_DT_iso}) that they lead to the same holomorphic
deformation of $\cO(\DD^n_r)$. To construct a holomorphic deformation
of $\cO(\BB^n_r)$, we use Popescu's
algebra $\cHol(\cB(\ccH)_r^n)$ of ``free holomorphic functions on the open
operatorial ball'' \cite{Pop_holball} for the same purpose.
For notational consistency, we denote Popescu's algebra by $\cF(\BB^n_r)$
in this paper.

The structure of the paper is as follows.
Section~\ref{sect:prelim} contains some preliminaries concerning the quantized algebras
$\cO_q(\DD^n_r)$ and $\cO_q(\BB^n_r)$ introduced in our earlier
papers \cite{Pir_ncStein,Pir_ISQS21,Pir_HFG,Pir_qball_JNCG}.
In Section~\ref{sect:free_poly} we interpret $\cO_q(\DD^n_r)$ as the quotient
of Taylor's free polydisk algebra $\cF^\rT(\DD^n_r)$ modulo the closed two-sided ideal
generated by $\zeta_i\zeta_j-q\zeta_j\zeta_i\; (i<j)$.
To this end, we prove that $\cF^\rT(\DD^n_r)$ has a remarkable
universal property formulated in terms of the joint spectral radius.
A similar interpretation of $\cO_q(\DD^n_r)$ as a quotient of $\cF(\DD^n_r)$
was obtained in \cite{Pir_HFG}, and Theorem~\ref{thm:q_poly_quot_free_poly} (iv) gives
a ``metric'' extension of this result.
To perform a similar construction for $\BB^n_r$, we discuss in Section~\ref{sect:free_ball}
the ``free ball'' algebra $\cF(\BB^n_r)$ introduced by G.~Popescu in \cite{Pop_holball}.
We give an alternative definition of $\cF(\BB^n_r)$, which seems to be more appropriate for our
purposes, show that Popescu's definition is equivalent to ours, and prove that
the quotient of $\cF(\BB^n_r)$ modulo the closed two-sided ideal
generated by $\zeta_i\zeta_j-q\zeta_j\zeta_i\; (i<j)$ is topologically isomorphic to $\cO_q(\BB^n_r)$.

The results of Sections~\ref{sect:free_poly} and~\ref{sect:free_ball} are then applied
in Section~\ref{sect:deforms} to constructing Fr\'echet $\cO(\CC^\times)$-algebras
which can be viewed as ``holomorphic deformations'' of $\cO(\DD^n_r)$ and $\cO(\BB^n_r)$.
Specifically, using \eqref{algdef_free_quot} as a motivation, for every
$F\in\{\cF(\DD^n_r),\cF^\rT(\DD^n_r),\cF(\BB^n_r)\}$ we consider the quotient
$(\cO(\CC^\times)\Ptens F)/I$, where $I$ is the closed two-sided ideal of
$\cO(\CC^\times)\Ptens F$ generated by $\zeta_i\zeta_j-t\zeta_j\zeta_i\; (i<j)$.
This yields three Fr\'echet $\cO(\CC^\times)$-algebras denoted by
$\cO_\defo(\DD^n_r)$, $\cO_\defo^\rT(\DD^n_r)$, and $\cO_\defo(\BB^n_r)$,
respectively. If we let $R=\cO_\defo(\DD^n_r)$ or $R=\cO_\defo^\rT(\DD^n_r)$, then
for every $q\in\CC^\times$ the fiber $R_q=R/\ol{(t-q)R}$ of $R$ over $q$
is isomorphic to $\cO_q(\DD^n_r)$.
Similarly, if we let $R=\cO_\defo(\BB^n_r)$, then
for every $q\in\CC^\times$ the fiber $R_q$ is isomorphic to $\cO_q(\BB^n_r)$.
Moreover, we show that
the $\cO(\CC^\times)$-algebras $\cO_\defo(\DD^n_r)$ and $\cO_\defo^\rT(\DD^n_r)$
not only have isomorphic fibers, but are in fact isomorphic,
in spite of the fact that $\cF(\DD^n_r)$ and $\cF^\rT(\DD^n_r)$
are not isomorphic (unless $n=1$ or $r=\infty$).

In Section~\ref{sect:nonproj} we prove that
$\cO_\defo(\DD^n_r)$ is not topologically projective (and hence is not topologically free)
over $\cO(\CC^\times)$. This fact illustrates an essential (and a bit surprising) difference
between $\cO_\defo(\DD^n_r)$ and its algebraic prototype \eqref{algdef_free_quot},
which is free over $\CC[t^{\pm 1}]$.
In Sections~\ref{sect:cont_bnd}
and~\ref{sect:Rief_quant} we show that
the Fr\'echet algebra bundles $\sE(\DD^n_r)$ and $\sE(\BB^n_r)$ associated to
$\cO_\defo(\DD^n_r)$ and $\cO_\defo(\BB^n_r)$ are continuous, and that they
form strict Fr\'echet deformation quantizations of $\cO(\DD^n_r)$ and $\cO(\BB^n_r)$
in the sense of Rieffel.

Section~\ref{sect:form_def} is devoted to finding a relationship between our
holomorphic deformations, i.e., the algebras $\cO_\defo(\DD^n_r)$ and $\cO_\defo(\BB^n_r)$,
and formal deformations. For every open set $U\subset\CC^n$, we construct a deformed
multiplication (star product) $\star$ on the topologically free $\CC[[h]]$-module
$\CC[[h]]\Ptens\cO(U)$ satisfying the relations of the quantum affine space
(see above), i.e., $x_j\star x_k=q x_k\star x_j$ for all $j<k$, where $q=e^{ih}$
and $x_1,\ldots ,x_n$ are the coordinate functions. The resulting algebra
$\cO_\fdef(U)=(\CC[[h]]\Ptens\cO(U),\star)$ is a formal deformation of $\cO(U)$
in the sense of \cite{BFGP}. The main result of Section~\ref{sect:form_def} states
that, if $U=\DD^n_r$ or $U=\BB^n_r$, then $\cO_\fdef(U)$ can be obtained from
$\cO_\defo(U)$ via extension of scalars, i.e., there is an isomorphism
$\cO_\fdef(U) \cong \CC[[h]]\ptens{\cO(\CC^\times)}\cO_\defo(U)$.
Finally, Appendix A contains some auxiliary facts on bundles of locally convex spaces and algebras.

\section{Preliminaries and notation}
\label{sect:prelim}

We shall work over the field $\CC$ of complex numbers. All algebras are assumed to
be associative and unital, and all algebra homomorphisms are assumed to be unital
(i.e., to preserve identity elements).
By a {\em Fr\'echet algebra} we mean a complete metrizable locally convex
algebra (i.e., a topological algebra whose underlying space
is a Fr\'echet space). A {\em locally $m$-convex algebra} \cite{Michael} is a topological
algebra $A$ whose topology can be defined by a family of submultiplicative
seminorms (i.e., seminorms $\|\cdot\|$ satisfying $\| ab\|\le \| a\| \| b\|$
for all $a,b\in A$). A complete locally $m$-convex algebra is called
an {\em Arens-Michael algebra} \cite{X2}.

Given a complex manifold $X$, we denote by $\cO(X)$ the algebra of holomorphic functions
on $X$. This is a Fr\'echet-Arens-Michael algebra for the topology of uniform convergence
on compact sets. The same is true in the more general setting where $X$ is a reduced
complex space \cite[\S V.6]{GR_Stein}.

Throughout we will use the following multi-index notation. Let $\Z_+=\N\cup\{ 0\}$ denote the set
of all nonnegative integers. For each $n\in\N$ and each $d\in \Z_+$, let $W_{n,d}=\{ 1,\ldots ,n\}^d$,
and let $W_n=\bigsqcup_{d\in\Z_+} W_{n,d}$. Thus a typical element of $W_n$
is a $d$-tuple $\alpha=(\alpha_1,\ldots ,\alpha_d)$ of arbitrary length
$d\in\Z_+$, where $\alpha_j\in\{ 1,\ldots ,n\}$ for all $j$. The only element
of $W_{n,0}$ will be denoted by $*$.
For each $\alpha\in W_{n,d}\subset W_n$, let $|\alpha|=d$.
Given an algebra $A$, an $n$-tuple $a=(a_1,\ldots ,a_n)\in A^n$, and
$\alpha=(\alpha_1,\ldots ,\alpha_d)\in W_n$, we let
$a_\alpha=a_{\alpha_1}\cdots a_{\alpha_d}\in A$ if $d>0$;
it is also convenient to set $a_*=1\in A$.

Let $F_n=\CC\la\zeta_1,\ldots ,\zeta_n\ra$ denote the free algebra on $n$ generators.
Recall that the set $\{ \zeta_\alpha : \alpha\in W_n\}$ is a vector space basis of $F_n$.
We let $\fF_n$ denote the algebra of all formal series $f=\sum_{\alpha\in W_n} c_\alpha\zeta_\alpha$
(where $c_\alpha\in\CC$) with the obvious multiplication extended from $F_n$.
In other words, $\fF_n=\varprojlim_{d\ge 0} F_n/I^d$, where $I$ is the ideal of
$F_n$ generated by $\zeta_1,\ldots ,\zeta_n$.

Given $k=(k_1,\ldots ,k_n)\in\Z_+^n$, we let $a^k=a_1^{k_1}\cdots a_n^{k_n}$.
If the $a_i$'s are invertible, then $a^k$ makes sense for all $k\in\Z^n$.
As usual, for each $k=(k_1,\ldots ,k_n)\in\Z^n$, we let $|k|=|k_1|+\cdots +|k_n|$.
Given $d\in\Z_+$, let
\[
(\Z_+^n)_d=\{ k\in \Z_+^n : |k|=d\}.
\]
For each $\alpha\in W_n$ and each $i\in\{1,\ldots ,n\}$, define
\[
\rp_i(\alpha)=\card\alpha^{-1}(i).
\]
Thus we have a ``projection''
\begin{equation}
\label{proj}
\rp\colon W_n\to\Z_+^n,\quad \rp(\alpha)=(\rp_1(\alpha),\ldots ,\rp_n(\alpha)).
\end{equation}
Observe that, for each $\alpha\in W_n$, we have $|\rp(\alpha)|=|\alpha|$.
It is also easy to see that
\begin{equation}
\label{card_p^{-1}}
\card\rp^{-1}(k)=\frac{|k|!}{k!} \qquad (k\in\Z_+^n).
\end{equation}

Let $q\in\CC^\times=\CC\setminus\{ 0\}$.
Recall from Section~\ref{sect:intro} that the algebra $\cO_q^\reg(\CC^n)$
{\em of regular functions on the quantum affine $n$-space} is generated by
$n$ elements $x_1,\ldots ,x_n$ subject to the relations $x_i x_j=qx_j x_i$ for all $i<j$
(see, e.g., \cite{Br_Good}). Observe that for each $\alpha\in W_n$ there exists
a unique $\rt(\alpha)\in\CC^\times$ such that
\begin{equation}
\label{t_alpha}
x_\alpha=\rt(\alpha)x^{\rp(\alpha)}.
\end{equation}
An explicit formula for $\rt(\alpha)$ will be given in Lemma~\ref{lemma:t(alpha)};
at the moment, let us only observe that $\rt(\alpha)$ is an integer power of $q$.

We will also use the standard notation related to $q$-numbers
(see, e.g., \cite{Kac_Ch,Kassel,Gasp_Rahm}).
Given $q\in\CC^\times$ and $k\in\N$, let
\[
[k]_q=1+q+\cdots +q^{k-1};\quad [k]_q!=[1]_q [2]_q \cdots [k]_q.
\]
It is also convenient to let $[0]_q!=1$. If $k=(k_1,\ldots ,k_n)\in\Z_+^n$, then
we let $[k]_q!=[k_1]_q!\cdots [k_n]_q!$.

Given $r\in (0,+\infty]$, we let $\DD^n_r$ (respectively, $\BB^n_r$) denote the open polydisk
(respectively, the open ball) of radius $r$ in $\CC^n$. Thus we have
\begin{align*}
\DD^n_r&=\Bigl\{ z=(z_1,\ldots ,z_n)\in\CC^n : \max_{1\le i\le n} |z_i|<r\Bigr\},\\
\BB^n_r&=\Bigl\{ z=(z_1,\ldots ,z_n)\in\CC^n : \sum_{i=1}^n |z_i|^2<r^2\Bigr\}.
\end{align*}
To motivate further definitions,
let us recall useful power series characterizations of the algebras
of holomorphic functions on $\DD^n_r$ and on $\BB^n_r$.
We have topological algebra isomorphisms
\begin{align}
\label{poly_power_rep}
\cO(\DD^n_r)&\cong
\biggl\{
f=\sum_{k\in\Z_+^n} c_k z^k :
\| f\|_{\DD,\rho}=\sum_{k\in\Z_+^n} |c_k| \rho^{|k|}<\infty
\;\forall \rho\in (0,r) \biggr\},\\
\label{ball_pow_rep}
\cO(\BB_r^n)&\cong
\biggl\{ f=\sum_{k\in\Z_+^n} c_k z^k :
\| f\|_{\BB,\rho}=\sum_{k\in\Z_+^n}
|c_k| \left(\frac{k!}{|k|!}\right)^{\frac{1}{2}} \rho^{|k|}<\infty
\; \forall \rho\in (0,r)\biggr\}.
\end{align}
The spaces on the right-hand side of \eqref{poly_power_rep} and \eqref{ball_pow_rep}
are subalgebras of the algebra of formal power series, $\CC[[z_1,\ldots ,z_n]]$,
and are Fr\'echet-Arens-Michael algebras under the families
$\{ \|\cdot\|_{\DD,\rho} : \rho\in (0,r)\}$
(respectively, $\{ \|\cdot\|_{\BB,\rho} : \rho\in (0,r)\}$)
of submultiplicative norms.
The isomorphisms~\eqref{poly_power_rep} and \eqref{ball_pow_rep}
take each holomorphic function on $\DD^n_r$
(respectively, on $\BB^n_r$) to its Taylor expansion at $0$.

Both isomorphisms \eqref{poly_power_rep} and \eqref{ball_pow_rep} follow from
a general result due to Aizenberg and Mityagin \cite[Theorem 4]{Aiz_Mit}.
Formally this result applies in the case where $r<\infty$ only, but a standard inverse
limit argument (cf. \cite[proof of Corollary 3.3]{Pir_qball_JNCG}) shows that the
isomorphisms hold for $r=\infty$ as well. Note also that \eqref{poly_power_rep}
follows from Aizenberg-Mityagin's theorem immediately, while deducing
\eqref{ball_pow_rep} from this theorem requires some extra work
\cite[Proposition 3.5]{Pir_qball_JNCG}. For \eqref{poly_power_rep}, see
also \cite[Theorem 3.1]{Rolewicz} and \cite[Example 27.27]{MV}.

Quantized analogs of the algebras $\cO(\DD^n_r)$ and $\cO(\BB^n_r)$
were introduced in \cite{Pir_HFG,Pir_qball_JNCG}
(see also short surveys \cite{Pir_ncStein,Pir_ISQS21}).
Given $q\in\CC^\times$, we define functions $u_q,w_q\colon\Z_+^n\to [0,+\infty)$ by
\begin{equation}
\label{w_q}
u_q(k)=|q|^{\sum_{i<j}k_i k_j},\qquad w_q(k)=\min\{ u_q(k),1\}.
\end{equation}
The algebra of {\em holomorphic functions on the quantum $n$-polydisk
of radius $r\in (0,+\infty]$} is defined by
\begin{equation*}
\cO_q(\DD^n_r)=
\biggl\{
f=\sum_{k\in\Z_+^n} c_k x^k :
\| f\|_{\DD,\rho}=\sum_{k\in\Z_+^n} |c_k| w_q(k) \rho^{|k|}<\infty
\;\forall \rho\in (0,r) \biggr\}.
\end{equation*}
As was observed in \cite{Pir_HFG}, $\cO_q(\DD^n_r)$ is a Fr\'echet algebra for
the topology generated by the norms $\|\cdot\|_{\DD,\rho}$ ($0<\rho<r$) and for
the multiplication uniquely determined by $x_i x_j=qx_j x_i$ ($i<j$).
Moreover, each norm $\|\cdot\|_{\DD,\rho}$ is submultiplicative
(cf. also \cite[Lemma 5.10]{Pir_qfree}), so that $\cO_q(\DD^n_r)$ is an Arens-Michael algebra.

\begin{remark}
\label{rem:OqCn}
If $r=\infty$, then the algebra $\cO_q(\DD^n_\infty)=\cO_q(\CC^n)$ is the
{\em Arens-Michael envelope} of $\cO_q^\reg(\CC^n)$, i.e., the completion of
$\cO_q^\reg(\CC^n)$ for the topology generated by all submultiplicative seminorms
on $\cO_q^\reg(\CC^n)$ \cite[Theorem 5.11 and Proposition 5.12]{Pir_qfree}.
Together with the above isomorphism
$\cO_1(\DD^n_r)\cong\cO(\DD^n_r)$ (see \eqref{poly_power_rep}),
this is the main motivation for our definition of $\cO_q(\DD^n_r)$.
\end{remark}

\begin{remark}
The algebra $\cO_q(\DD^n_r)$ can also be defined in a more general multiparameter setting,
see \cite[Subsection 7.4]{Pir_HFG}.
\end{remark}

The algebra of {\em holomorphic functions on the quantum $n$-ball
of radius $r\in (0,+\infty]$} is defined by
\begin{equation*}
\cO_q(\BB_r^n)=
\biggl\{ f=\sum_{k\in\Z_+^n} c_k x^k :
\| f\|_{\BB,\rho}=\sum_{k\in\Z_+^n}
|c_k| \left(\frac{[k]_{|q|^2}!}{\bigl[ |k|\bigr]_{|q|^2}!}\right)^{1/2}\!\!\!\!
u_q(k) \rho^{|k|}<\infty\; \forall \rho\in (0,r)\biggr\}
\end{equation*}
As was observed in \cite[Theorem 3.9]{Pir_qball_JNCG},
$\cO_q(\BB^n_r)$ is a Fr\'echet algebra for
the topology generated by the norms $\|\cdot\|_{\BB,\rho}$ ($0<\rho<r$) and for
the multiplication uniquely determined by $x_i x_j=qx_j x_i$ ($i<j$).
Moreover, each norm $\|\cdot\|_{\BB,\rho}$ is submultiplicative,
so that $\cO_q(\BB^n_r)$ is an Arens-Michael algebra.
Note that $\cO_1(\BB^n_r)\cong\cO(\BB^n_r)$ by \eqref{ball_pow_rep}.
Since the monomials
$x^k$ ($k\in\Z_+^n$) form a vector space basis of $\cO_q^\reg(\CC^n)$, we see that
$\cO_q^\reg(\CC^n)$ is a dense subalgebra of $\cO_q(\DD^n_r)$
and of $\cO_q(\BB^n_r)$.

A slightly shorter expression for the norms $\|\cdot\|_{\BB,\rho}$ is given by
\begin{equation}
\label{qballnorms_2}
\| f\|_{\BB,\rho}=\sum_{k\in\Z_+^n}
|c_k| \left(\frac{[k]_{|q|^{-2}}!}{\bigl[ |k|\bigr]_{|q|^{-2}}!}\right)^{1/2}\!\!\!\!
\rho^{|k|}
\end{equation}
(see \cite[Corollary 3.14]{Pir_qball_JNCG}).

For $r=\infty$, we have $\cO_q(\DD^n_\infty)=\cO_q(\BB^n_\infty)$,
both algebraically and topologically \cite[Corollary 5.3]{Pir_qball_JNCG}.
Thus our notation is consistent with the equalities $\DD^n_\infty=\BB^n_\infty=\CC^n$
(see also Remark~\ref{rem:OqCn}).
If $r<\infty$ and $|q|\ne 1$, then we still have
$\cO_q(\DD^n_r)=\cO_q(\BB^n_r)$ \cite[Theorem 5.2]{Pir_qball_JNCG}.
However, if $r<\infty$, $|q|=1$ and $n\ge 2$, then the algebras
$\cO_q(\DD^n_r)$ and $\cO_q(\BB^n_r)$ are not topologically isomorphic
\cite[Theorem 5.14]{Pir_qball_JNCG}.

\section{Quantum polydisk as a quotient of the free polydisk}
\label{sect:free_poly}

We begin this section by recalling some results from~\cite{Pir_HFG}.
Let $(A_i)_{i\in I}$ be a family of Arens-Michael algebras.
The {\em Arens-Michael free product} \cite{Cuntz_doc,Pir_HFG} of $(A_i)_{i\in I}$
is the coproduct of $(A_i)_{i\in I}$ in the category $\AM$ of Arens-Michael algebras, i.e.,
an Arens-Michael algebra $\Pfree_{i\in I} A_i$ together with a natural isomorphism
\begin{equation*}
\Hom_{\AM}(\Pfree_{i\in I} A_i,B)\cong\prod_{i\in I}\Hom_{\AM}(A_i,B)\qquad (B\in\Ob(\AM)).
\end{equation*}
The Arens-Michael free product always exists and can be constructed explicitly~\cite{Cuntz_doc,Pir_HFG}.
Clearly, the Arens-Michael free product is unique up to a unique
topological algebra isomorphism over the $A_i$'s.

Let $r>0$, and let $\DD_r=\DD^1_r$ denote the open disk of radius $r$.

\begin{definition}[\cite{Pir_HFG}]
\label{def:F_poly}
The {\em algebra of holomorphic functions on the free $n$-polydisk of
radius $r$} is
\begin{equation}
\label{F_poly}
\cF(\DD^n_r)=\cO(\DD_r)\Pfree\cdots\Pfree\cO(\DD_r).
\end{equation}
\end{definition}

To explain why $\cF(\DD^n_r)$ is indeed a natural free analog of $\cO(\DD^n_r)$,
observe that the coproduct in the category of {\em commutative} Arens-Michael algebras
is the completed projective tensor product, $\Ptens$. Recall also that for each pair
$X,Y$ of complex manifolds we have
a topological isomorphism $\cO(X)\Ptens\cO(Y)\cong\cO(X\times Y)$ \cite[II.3.3]{Groth}.
Hence replacing in \eqref{F_poly} the ``noncommutative coproduct''
by the ``commutative coproduct'' yields the algebra of holomorphic functions
on $\DD_r^n$.

\begin{remark}
\label{rem:FCn}
If $r=\infty$, then the algebra $\cF(\DD_\infty^n)=\cF(\CC^n)$ is the
Arens-Michael envelope (cf. Remark~\ref{rem:OqCn})
of the free algebra $F_n=\CC\la\zeta_1,\ldots ,\zeta_n\ra$
\cite[Proposition 4.5]{Pir_HFG}.
\end{remark}

\begin{remark}
The algebra $\cF(\DD^n_r)$ can easily be defined in the more general setting where
$r=(r_1,\ldots ,r_n)\in (0,+\infty]^n$ and $\DD^n_r$ is the polydisk of polyradius $r$
\cite[Subsection 7.2]{Pir_HFG}.
\end{remark}

The algebra $\cF(\DD_r^n)$ can also be described more explicitly as a certain subalgebra
of the algebra $\fF_n$ of free power series.
For each $i=1,\ldots ,n$, let $\zeta_i$ denote the canonical image of the
complex coordinate $z\in\cO(\DD_r)$
(i.e., of the inclusion mapping $z\colon\DD_r\hookrightarrow\CC$)
under the embedding of the $i$th factor
$\cO(\DD_r)$ into $\cF(\DD_r^n)$.
Given $d\ge 2$ and $\alpha=(\alpha_1,\ldots ,\alpha_d)\in W_n$, let
\[
\rs(\alpha)=\card\bigl\{ i \in \{ 1,\ldots ,d-1\} : \alpha_i\ne\alpha_{i+1} \bigr\}.
\]
If $|\alpha|\in\{ 0,1\}$, we let $\rs(\alpha)=|\alpha|-1$.
The next result is a special case of \cite[Proposition 7.8]{Pir_HFG}.

\begin{prop}
We have
\begin{equation}
\label{F_poly_expl}
\cF(\DD_r^n)=\Bigl\{ a=\sum_{\alpha\in W_n} c_\alpha\zeta_\alpha :
\| a\|_{\rho,\tau}=\sum_{\alpha\in W_n} |c_\alpha|\rho^{|\alpha|} \tau^{\rs(\alpha)+1}<\infty
\;\forall \rho\in (0,r),\; \forall \tau\ge 1\Bigr\}.
\end{equation}
The topology on $\cF(\DD_r^n)$ is given by the norms
$\|\cdot\|_{\rho,\tau}\; (0<\rho<r,\; \tau\ge 1)$,
and the multiplication is induced from $\fF_n$.
\end{prop}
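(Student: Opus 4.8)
The plan is to realize $\cF(\DD_r^n)$ as the completion of the \emph{algebraic} free product $A_1*\cdots*A_n$ of $n$ copies of $A:=\cO(\DD_r)$ with respect to an explicit fundamental system of submultiplicative seminorms, and then to compute those seminorms in coordinates. Write $\ol A$ for the span of the non-constant monomials $z,z^2,\dots$, on which $\|\cdot\|_\rho$ is obtained by restricting the factor norm $\|\sum_{m\ge0}c_m z^m\|_\rho=\sum_{m\ge0}|c_m|\rho^m$ from \eqref{poly_power_rep} (the case $n=1$), which is submultiplicative as recorded there. As a vector space the algebraic free product decomposes as $\CC 1\oplus\bigoplus_{k\ge1}\bigoplus\ol A_{i_1}\Tens\cdots\Tens\ol A_{i_k}$, the inner sum running over \emph{reduced words}, i.e. tuples $(i_1,\dots,i_k)$ with $i_j\ne i_{j+1}$. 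Under $\zeta_i\leftrightarrow z\in A_i$, grouping a word $\alpha\in W_n$ into its maximal blocks of equal letters carries $\zeta_\alpha$ to a reduced word with exactly $\rs(\alpha)+1$ tensor factors; this is the combinatorial identity that will ultimately produce the exponent $\rs(\alpha)+1$.

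For the seminorms I would fix $\rho<r$ and a parameter $\tau\ge1$, and on a reduced word set
\[
\|a_{i_1}\cdots a_{i_k}\|_{\rho,\tau}=\tau^k\,\|a_{i_1}\|_\rho\cdots\|a_{i_k}\|_\rho ,
\]
extending to the whole algebraic free product by summing, over the reduced-word decomposition, the projective tensor seminorm of each homogeneous component. The crucial point is submultiplicativity. When two reduced words are multiplied, adjacent factors from the same copy of $A$ merge; each merge replaces $\|a\|_\rho\|b\|_\rho$ by $\|ab\|_\rho\le\|a\|_\rho\|b\|_\rho$ and lowers the block count by one, so the lost power of $\tau$ is harmless \emph{precisely because} $\tau\ge1$. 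If a merged element acquires a scalar component, that scalar cascades into a further merge with its neighbour, and the same two inequalities keep the estimate pointing the right way. Tracking this cascade is the one genuinely fiddly computation.

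To make the seminorms explicit, I would use that the completed projective tensor product of weighted $\ell^1$-spaces is again a weighted $\ell^1$-space with the product weight, so $\ol A_{i_1}\Ptens\cdots\Ptens\ol A_{i_k}$ has the monomials $\zeta_{i_1}^{m_1}\cdots\zeta_{i_k}^{m_k}$ ($m_j\ge1$) as an $\ell^1$-basis with weight $\rho^{m_1+\cdots+m_k}$. Combining this with the block count $\rs(\alpha)+1$ from the first paragraph, the seminorm of $a=\sum_\alpha c_\alpha\zeta_\alpha$ becomes exactly $\sum_\alpha|c_\alpha|\rho^{|\alpha|}\tau^{\rs(\alpha)+1}$, i.e. the norm $\|\cdot\|_{\rho,\tau}$ of the statement. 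Hence the right-hand side of \eqref{F_poly_expl} is the completion of $A_1*\cdots*A_n$ in these seminorms, with multiplication inherited from $\fF_n$.

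It remains to verify that $\{\|\cdot\|_{\rho,\tau}:\rho<r,\ \tau\ge1\}$ is \emph{fundamental} for the Arens--Michael free-product topology. One direction is the universal property: a continuous homomorphism $\varphi$ from the free product into a Banach algebra satisfies $\|\varphi(a)\|\le C_i\|a\|_\rho$ on each factor $A_i$ for suitable $\rho<r$ and $C_i>0$, whence on a reduced word $\|\varphi(a_{i_1}\cdots a_{i_k})\|\le\prod_j C_{i_j}\|a_{i_j}\|_\rho\le\|a_{i_1}\cdots a_{i_k}\|_{\rho,\tau}$ with $\tau=\max(1,C_1,\dots,C_n)$, so $\varphi$ is $\|\cdot\|_{\rho,\tau}$-continuous; conversely each $\|\cdot\|_{\rho,\tau}$ is itself one of the submultiplicative seminorms defining the free product. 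The main obstacle I anticipate is exactly this cofinality/density bookkeeping together with the cascading-merge estimate of the second paragraph; the remaining identifications of weighted $\ell^1$-completions are routine. (Alternatively, the statement is the equal-factors case $A_i=\cO(\DD_r)$ of \cite[Proposition 7.8]{Pir_HFG}.)
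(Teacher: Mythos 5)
Your proposal is correct, and it essentially reconstructs the proof of the result the paper itself invokes without proof: the text derives the proposition as a special case of \cite[Proposition 7.8]{Pir_HFG}, where the Arens--Michael free product is realized in exactly this way, as the completion of the algebraic free product in the seminorms $\tau^k\|a_{i_1}\|_\rho\cdots\|a_{i_k}\|_\rho$ on reduced words, with cofinality checked against homomorphisms into Banach algebras. The one step you flag as ``genuinely fiddly''---the cascading merge when a product acquires a scalar component---is in fact vacuous here: with the complement $\ol A=\{f\in\cO(\DD_r): f(0)=0\}$ the space $\ol A$ is a subalgebra (products of functions vanishing at $0$ vanish at $0$), so a merge of adjacent factors from the same copy stays in $\ol A$, no scalar term ever appears, and submultiplicativity follows from the single estimate $\tau^{k+l-1}\|ab\|_\rho\le\tau^{k+l}\|a\|_\rho\|b\|_\rho$ for $\tau\ge 1$.
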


The following universal property of $\cF(\DD^n_r)$ was proved in \cite[Proposition 7.7]{Pir_HFG}.
Given an algebra $A$ and an element $a\in A$, the spectrum of $a$ in $A$
will be denoted by $\sigma_A(a)$.

\begin{prop}
\label{prop:univ_F_poly}
Let $A$ be an Arens-Michael algebra, and let $a=(a_1,\ldots ,a_n)$ be an $n$-tuple in $A^n$
such that $\sigma_A(a_i)\subset\DD_r$ for all $i=1,\ldots ,n$. Then there exists a unique
continuous homomorphism $\gamma_a\colon\cF(\DD^n_r)\to A$
such that $\gamma_a(\zeta_i)=a_i$ for all $i=1,\ldots ,n$. Moreover,
the assignment $a\mapsto\gamma_a$ determines a natural isomorphism
\[
\Hom_\AM(\cF(\DD_r^n),A)\cong\{ a\in A^n : \sigma_A(a_i)\subset\DD_r\;\forall i=1,\ldots ,n\}
\qquad (A\in\Ob(\AM)).
\]
\end{prop}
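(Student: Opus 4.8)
The plan is to reduce the $n$-variable statement to a one-variable holomorphic functional calculus by exploiting the fact that $\cF(\DD^n_r)$ is, by Definition~\ref{def:F_poly}, an Arens-Michael free product. Indeed, the defining universal property of $\Pfree$ gives, for every Arens-Michael algebra $A$, a natural bijection
\[
\Hom_\AM(\cF(\DD^n_r),A)\cong\prod_{i=1}^n\Hom_\AM(\cO(\DD_r),A),
\]
under which a homomorphism $\gamma$ corresponds to the tuple of its restrictions to the $n$ factors, i.e., to $(\gamma(\zeta_1),\ldots,\gamma(\zeta_n))$, since the $i$th canonical embedding sends the coordinate $z$ to $\zeta_i$. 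Everything therefore comes down to the one-variable assertion that $\Hom_\AM(\cO(\DD_r),A)\cong\{ a\in A:\sigma_A(a)\subset\DD_r\}$ naturally in $A$.

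For the one-variable case I would define two mutually inverse maps directly. In one direction, send a continuous homomorphism $\phi\colon\cO(\DD_r)\to A$ to $a=\phi(z)$, where $z\in\cO(\DD_r)$ is the coordinate function. Since a unital homomorphism cannot enlarge spectra, $\sigma_A(a)\subseteq\sigma_{\cO(\DD_r)}(z)$, and the latter equals $\DD_r$ because $z-\lambda$ is invertible in $\cO(\DD_r)$ exactly when $\lambda\notin\DD_r$; hence $a$ lies in the target set. Uniqueness of $\phi$ for given $a$ is automatic, as the polynomials $\CC[z]$ are dense in $\cO(\DD_r)$ by Taylor expansion, so a continuous homomorphism is determined by $\phi(z)$.

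In the other direction, given $a$ with $\sigma_A(a)\subset\DD_r$, I would produce $\gamma_a\colon f\mapsto f(a)$ as a holomorphic functional calculus assembled from the Banach level. Writing $A\cong\varprojlim_i A_i$ as an inverse limit of Banach algebras with canonical maps $a\mapsto a_i$, the same spectrum-shrinking property gives $\sigma_{A_i}(a_i)\subseteq\sigma_A(a)\subset\DD_r$; since $\sigma_{A_i}(a_i)$ is compact, its spectral radius is strictly less than $r$, and the classical Banach-algebra holomorphic functional calculus furnishes continuous unital homomorphisms $\gamma_{a_i}\colon\cO(\DD_r)\to A_i$ with $\gamma_{a_i}(z)=a_i$. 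Naturality of that calculus makes the $\gamma_{a_i}$ compatible with the connecting maps of the system, so they glue to a continuous homomorphism $\gamma_a\colon\cO(\DD_r)\to A$ with $\gamma_a(z)=a$. That $\phi\mapsto\phi(z)$ and $a\mapsto\gamma_a$ are inverse to each other, and that the bijection is natural in $A$ (for $\psi\colon A\to B$ one checks $\sigma_B(\psi(a))\subseteq\sigma_A(a)$, so the spectral constraint is preserved), are then routine.

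The main obstacle is precisely the construction of the one-variable functional calculus in the Arens-Michael setting, and in particular the verification that the Banach-algebra calculi $\gamma_{a_i}$ are genuinely compatible with the connecting homomorphisms $A_i\to A_j$, so that they descend to a single map into $\varprojlim_i A_i$. This is where naturality of the Cauchy-integral functional calculus must be invoked with care; once that compatibility is secured, the reduction via the free product and the accompanying spectral bookkeeping are formal.
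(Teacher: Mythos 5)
Your proof is correct and follows essentially the same route as the paper, which does not reprove the proposition but quotes it from \cite[Proposition 7.7]{Pir_HFG}: there, too, the coproduct property of $\Pfree$ reduces everything to the one-variable representability $\Hom_\AM(\cO(\DD_r),A)\cong\{a\in A:\sigma_A(a)\subset\DD_r\}$, which is the holomorphic functional calculus for Arens--Michael algebras obtained by gluing Banach-algebra Riesz--Dunford calculi along the decomposition $A\cong\varprojlim_\lambda A_\lambda$. One remark: the compatibility you single out as the main obstacle is immediate, since $\pi_{\lambda\mu}\circ\gamma_{a_\mu}$ and $\gamma_{a_\lambda}$ are continuous unital homomorphisms on $\cO(\DD_r)$ agreeing on the dense subalgebra $\CC[z]$, so no delicate naturality argument for the Cauchy integral is required.
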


Another algebra closely related to $\cF(\DD_n^r)$ was introduced by J.~L.~Taylor \cite{T2,T3}.
We will define it in a slightly more general context.
For a Banach space $E$, the {\em analytic tensor algebra} $\wh{T}(E)$
(\cite{Cuntz_doc}; cf. also \cite{Vogt1,Vogt2,Pir_qfree}) is given by
\[
\wh{T}(E)=\Bigl\{ a=\sum_{d=0}^\infty a_d : a_d\in E^{\wh{\otimes}d},\;
\| a\|_{\rho}=\sum_d \| a_d\| \rho^d<\infty\;\forall\rho>0\Bigr\},
\]
where $E^{\wh{\otimes}d}=E\Ptens\cdots \Ptens E$ is the $d$th completed
projective tensor power of $E$.
The topology on $\wh{T}(E)$ is given by the norms
$\|\cdot\|_{\rho}\; (\rho>0)$, and the multiplication
on $\wh{T}(E)$ is given by concatenation, like on the usual tensor algebra $T(E)$.
Each norm $\|\cdot\|_{\rho}$ is easily seen to be
submultiplicative, and so $\wh{T}(E)$ is an Arens-Michael algebra containing
$T(E)$ as a dense subalgebra. As was observed by J.~Cuntz \cite{Cuntz_doc},
$\wh{T}(E)$ has the universal property that, for every Arens-Michael algebra $A$,
each continuous linear map $E\to A$ uniquely extends to a continuous
homomorphism $\wh{T}(E)\to A$. In other words, there is a natural isomorphism
\begin{equation*}
\Hom_\AM(\wh{T}(E),A)\cong\cL(E,A)\qquad (A\in\Ob(\AM)),
\end{equation*}
where $\cL(E,A)$ is the space of all continuous linear maps from $E$ to $A$.
Note that $\wh{T}(E)$ was originally defined in the more general setting where $E$
is a complete locally convex space \cite{Cuntz_doc}, but this generality
is not needed here.

Fix now $r>0$, and let
\[
\wh{T}_r(E)=\Bigl\{ a=\sum_{d=0}^\infty a_d : a_d\in E^{\wh{\otimes}d},\;
\| a\|_{\rho}=\sum_d \| a_d\| \rho^d<\infty\;\forall\rho\in (0,r)\Bigr\}.
\]
It follows from the above discussion that $\wh{T}_r(E)$ is an Arens-Michael algebra containing
$T(E)$ as a dense subalgebra. Note that $\wh{T}_r(E)$ essentially depends on the fixed
norm on $E$ (in contrast to $\wh{T}(E)$, which depends only on the topology of $E$).

\begin{definition}
Let $\CC^n_1$ be the vector space $\CC^n$ endowed with the $\ell^1$-norm
$\| x\|=\sum_{i=1}^n |x_i|$  (where $x=(x_1,\ldots ,x_n)\in\CC^n$).
The algebra $\wh{T}_r(\CC^n_1)$ will be denoted by $\cF^\rT(\DD_r^n)$ and will be called
{\em Taylor's algebra of holomorphic functions on the free $n$-polydisk of
radius $r$}.
\end{definition}

Using the canonical isometric isomorphisms $\CC^m_1\Ptens\CC^n_1\cong\CC^{mn}_1$,
we see that
\begin{equation}
\label{F_poly_T}
\cF^\rT(\DD_r^n)=\Bigl\{ a=\sum_{\alpha\in W_n} c_\alpha\zeta_\alpha\in\fF_n :
\| a\|_\rho=\sum_{\alpha\in W_n} |c_\alpha|\rho^{|\alpha|}<\infty
\;\forall \rho\in (0,r)\Bigr\}.
\end{equation}
The algebra $\cF^\rT(\DD_r^n)$ was introduced by J.~L.~Taylor \cite{T2,T3}
and was denoted by $S(r)$ in \cite{T2} and by $\cF_n(r)$ in \cite{T3}.
Our notation hints that both $\cF(\DD_r^n)$ and $\cF^\rT(\DD_r^n)$ are natural
candidates for the algebra of holomorphic functions on the free polydisk;
the superscript ``$\rT$'' is for ``Taylor''.

\begin{prop}
\label{prop:FD_vs_FTD}
We have $\cF(\DD_r^n)\subset\cF^\rT(\DD_r^n)$, and the embedding
$\cF(\DD_r^n)\to\cF^\rT(\DD_r^n)$ is continuous.
For $r=\infty$, this embedding is a topological algebra isomorphism
(thus we have $\cF(\CC^n)=\cF^\rT(\CC^n)$, both algebraically
and topologically).
\end{prop}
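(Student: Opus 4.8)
The plan is to compare the two explicit families of norms from \eqref{F_poly_expl} and \eqref{F_poly_T} directly. Two elementary facts do all the work: the trivial identity $1^{\rs(\alpha)+1}=1$ (for the forward inclusion) and the combinatorial bound $\rs(\alpha)+1\le |\alpha|$ for all $\alpha\in W_n$ (for the reverse inclusion in the case $r=\infty$). The latter is immediate: for a word of length $|\alpha|=d\ge 2$ there are only $d-1$ consecutive positions where a ``switch'' can occur, so $\rs(\alpha)\le d-1$, while the degenerate cases $|\alpha|\in\{0,1\}$ are covered by the convention $\rs(\alpha)=|\alpha|-1$.

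First I would settle the inclusion and the continuity of the embedding. Setting $\tau=1$ in \eqref{F_poly_expl} and using $1^{\rs(\alpha)+1}=1$ yields the exact identity $\|a\|_{\rho,1}=\sum_{\alpha}|c_\alpha|\rho^{|\alpha|}=\|a\|_\rho$. Thus each Taylor seminorm $\|\cdot\|_\rho$ is literally a member of the defining family $\{\|\cdot\|_{\rho,\tau}\}$ of $\cF(\DD_r^n)$ (namely the one with $\tau=1$). Consequently every $a\in\cF(\DD_r^n)$ satisfies $\|a\|_\rho<\infty$ for all $\rho\in(0,r)$, so $a\in\cF^\rT(\DD_r^n)$; and since the seminorms of $\cF^\rT(\DD_r^n)$ pull back to a subfamily of the seminorms of $\cF(\DD_r^n)$, the inclusion map is continuous. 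No separate verification of multiplicativity is required, since both algebras carry the multiplication induced from $\fF_n$.

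For $r=\infty$ I would obtain the reverse inclusion and the continuity of the inverse map simultaneously. From $\rs(\alpha)+1\le|\alpha|$ and $\tau\ge 1$ we get $\tau^{\rs(\alpha)+1}\le\tau^{|\alpha|}$, and hence
\[
\|a\|_{\rho,\tau}=\sum_{\alpha}|c_\alpha|\rho^{|\alpha|}\tau^{\rs(\alpha)+1}
\le\sum_{\alpha}|c_\alpha|(\rho\tau)^{|\alpha|}=\|a\|_{\rho\tau}.
\]
When $r=\infty$ the product $\rho\tau$ ranges over all of $(0,\infty)$ as $\rho$ and $\tau$ vary, so for $a\in\cF^\rT(\CC^n)$ the right-hand side is finite for every choice of $\rho,\tau$. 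This shows $\cF^\rT(\CC^n)\subset\cF(\CC^n)$ and, at the same time, that each seminorm $\|\cdot\|_{\rho,\tau}$ of $\cF(\CC^n)$ is dominated by the seminorm $\|\cdot\|_{\rho\tau}$ of $\cF^\rT(\CC^n)$, i.e., the inverse of the embedding is continuous. Together with the previous paragraph this gives the claimed topological algebra isomorphism.

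Since the whole argument reduces to a two-sided estimate, there is no real obstacle; the only delicate point is the combinatorial bound and, in particular, checking that the convention for $\rs(\alpha)$ at $|\alpha|\in\{0,1\}$ does not violate it. It is worth noting that the reverse estimate genuinely breaks down for finite $r$: there the product $\rho\tau$ can exceed $r$, so $\|a\|_{\rho\tau}$ need not be finite, which is consistent with the fact that $\cF(\DD_r^n)$ and $\cF^\rT(\DD_r^n)$ differ when $r<\infty$. This is precisely why the hypothesis $r=\infty$ enters only in the last step.
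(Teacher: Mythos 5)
Your proof is correct and takes essentially the same route as the paper's: both arguments reduce to the two-sided estimate $\|\cdot\|_\rho\le\|\cdot\|_{\rho,\tau}\le\|\cdot\|_{\rho\tau}$ on $\fF_n$ (for $\rho>0$, $\tau\ge 1$), obtained from the bound $\rs(\alpha)+1\le|\alpha|$, with the first inequality yielding the continuous inclusion and the second yielding the topological isomorphism when $r=\infty$ since $\rho\tau$ then exhausts $(0,\infty)$. Your exact identity $\|a\|_{\rho,1}=\|a\|_\rho$ is just the $\tau=1$ instance of the paper's first inequality, so the only difference is that you spell out the verifications (including the degenerate cases $|\alpha|\in\{0,1\}$) in more detail.
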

\begin{proof}
By looking at \eqref{F_poly_expl} and~\eqref{F_poly_T}, and by using the obvious inequality
$\rs(\alpha)+1\le |\alpha|$, we see that
\begin{equation}
\label{2norms_Fn}
\|\cdot\|_\rho\le\|\cdot\|_{\rho,\tau}\le\|\cdot\|_{\rho\tau} \quad\text{on $\fF_n$}
\qquad (\rho>0,\;\tau\ge 1).
\end{equation}
The first inequality in \eqref{2norms_Fn} yields a continuous inclusion
of $\cF(\DD^n_r)$ into $\cF^\rT(\DD^n_r)$. On the other hand,
the second inequality in \eqref{2norms_Fn} implies that
the inclusion map is a topological isomorphism for $r=\infty$.
\end{proof}

\begin{remark}
It is easy to observe that $\cF(\DD_r^n)\ne\cF^\rT(\DD_r^n)$
unless $r=\infty$ or $n=1$; for instance,
the element $\sum_k r^{-2k} (\zeta_1\zeta_2)^k$
belongs to $\cF^\rT(\DD_r^n)$, but does not belong to $\cF(\DD_r^n)$.
Moreover, $\cF(\DD^n_r)$ is nuclear as a locally convex space \cite{Pir_HFG},
while $\cF^\rT(\DD^n_r)$ is not \cite{T2,Lum_PI}, so they are not isomorphic even as locally
convex spaces.
\end{remark}

Our next goal is to show that $\cF^\rT(\DD_r^n)$ has a remarkable
universal property similar in spirit to Proposition~\ref{prop:univ_F_poly}.
Before introducing this property, let us recall the following definition.
Let $A$ be an Arens-Michael algebra, and let $\{ \|\cdot\|_\lambda : \lambda\in\Lambda\}$
be a directed defining family of submultiplicative seminorms on $A$.
For an $n$-tuple $a=(a_1,\ldots ,a_n)\in A^n$, the {\em joint spectral radius} of
$a$ is defined by
\begin{equation}
\label{j-sprad}
r_\infty(a)=\sup_{\lambda\in\Lambda}%
\lim_{d\to\infty}\Bigl(\sup_{\alpha\in W_{n,d}} \| a_\alpha\|_\lambda\Bigr)^{1/d}.
\end{equation}

\begin{remark}
The joint spectral radius was introduced in \cite{RS} in the case where $A$ is a Banach algebra.
An extension to locally convex algebras (not necessarily locally $m$-convex) was discussed
in \cite{Solt2} under the assumption that the $a_i$'s pairwise commute.
More generally, one can consider the joint $\ell^p$-spectral radius, $r_p(a)$,
for every $p\in [1,+\infty]$
(see \cite[V.35 and Comments to Chapter V]{Muller_book} for the Banach algebra case,
and \cite[Section~5]{Pir_qball_JNCG} for the case of Arens-Michael algebras). Definition~\eqref{j-sprad}
corresponds to the case where $p=\infty$.
\end{remark}

By \cite[Proposition 5.7]{Pir_qball_JNCG}, the definition of $r_\infty(a)$ does not depend on
the choice of a directed defining family of submultiplicative seminorms on $A$.
If $\varphi\colon A\to B$ is a continuous homomorphism of Arens-Michael algebras,
then for every $a=(a_1,\ldots ,a_n)\in A^n$ we have $r_\infty(\varphi(a))\le r_\infty(a)$, where
$\varphi(a)=(\varphi(a_1),\ldots ,\varphi(a_n))$
\cite[Proposition 5.8]{Pir_qball_JNCG}.

\begin{definition}
Let $A$ be an Arens-Michael algebra, and let $r>0$.
We say that an $n$-tuple $a\in A^n$ is
{\em strictly spectrally $r$-contractive} if, for each Banach algebra $B$ and each continuous
homomorphism $\varphi\colon A\to B$, we have $r_\infty(\varphi(a))<r$.
\end{definition}

\begin{remark}
\label{rem:contr_image}
Observe that, if $A$ and $B$ are Arens-Michael algebras, $\psi\colon A\to B$ is a continuous
homomorphism, and $a\in A^n$ is strictly spectrally $r$-contractive, then so is $\psi(a)\in B^n$.
\end{remark}

An equivalent but more convenient definition is as follows.

\begin{prop}
\label{prop:spec_contr}
Let $A$ be an Arens-Michael algebra, and let $\{\|\cdot\|_\lambda : \lambda\in\Lambda\}$
be a directed defining family of submultiplicative seminorms on $A$. For each $\lambda\in\Lambda$,
let $A_\lambda$ denote the completion of $A$ with respect to $\|\cdot\|_\lambda$.
Given $a\in A^n$, let $a_\lambda$ denote the canonical image of $a$ in $A_\lambda^n$.
Then the following conditions are equivalent:
\begin{mycompactenum}
\item $a$ is strictly spectrally $r$-contractive;
\item $r_\infty(a_\lambda)<r$ for all $\lambda\in\Lambda$;
\item $\lim_{d\to\infty}\bigl(\sup_{\alpha\in W_{n,d}} \| a_\alpha\|_\lambda\bigr)^{1/d}<r$
for all $\lambda\in\Lambda$.
\end{mycompactenum}
\end{prop}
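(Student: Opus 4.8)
The plan is to prove the equivalences $(i)\Leftrightarrow(ii)\Leftrightarrow(iii)$ by exploiting the fact that each seminorm $\|\cdot\|_\lambda$ factors through the Banach algebra $A_\lambda$, so that testing against all Banach algebra quotients reduces to testing against the canonical family $\{A_\lambda\}$. The easiest link is $(ii)\Leftrightarrow(iii)$: by the very definition \eqref{j-sprad} of the joint spectral radius, applied to the Banach algebra $A_\lambda$ (where a single submultiplicative norm defines the topology), the supremum over $\lambda$ collapses and $r_\infty(a_\lambda)$ is exactly the limit in $(iii)$. First I would make this identification explicit, noting that the quantity $\lim_{d\to\infty}\bigl(\sup_{\alpha\in W_{n,d}}\|a_\alpha\|_\lambda\bigr)^{1/d}$ appearing in $(iii)$ is precisely $r_\infty$ of the image tuple $a_\lambda$ in the Banach algebra $A_\lambda$, using that the seminorm on $A$ and the norm on its completion $A_\lambda$ agree on the image of $A$.

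For the implication $(i)\Rightarrow(ii)$, I would simply take $B=A_\lambda$ and $\varphi$ the canonical map $A\to A_\lambda$, which is a continuous homomorphism into a Banach algebra; strict spectral $r$-contractivity then gives $r_\infty(a_\lambda)=r_\infty(\varphi(a))<r$ directly. The substantive direction is $(ii)\Rightarrow(i)$, where one must control an \emph{arbitrary} continuous homomorphism $\varphi\colon A\to B$ into \emph{any} Banach algebra $B$, not merely the canonical quotients $A_\lambda$. The key tool here is the monotonicity of the joint spectral radius under continuous homomorphisms, namely $r_\infty(\varphi(a))\le r_\infty(a)$, already recorded in the excerpt from \cite[Proposition 5.8]{Pir_qball_JNCG}. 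Given such a $\varphi$, its continuity means there exist $\lambda\in\Lambda$ and $C>0$ with $\|\varphi(x)\|_B\le C\|x\|_\lambda$ for all $x\in A$; consequently $\varphi$ factors through $A_\lambda$ as a continuous homomorphism $\tilde\varphi\colon A_\lambda\to B$ with $\tilde\varphi(a_\lambda)=\varphi(a)$. Applying the monotonicity bound to $\tilde\varphi$ yields $r_\infty(\varphi(a))=r_\infty(\tilde\varphi(a_\lambda))\le r_\infty(a_\lambda)<r$, which is exactly $(i)$.

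\textbf{The main obstacle} I anticipate is the factorization step in $(ii)\Rightarrow(i)$: justifying that a continuous homomorphism $\varphi\colon A\to B$ genuinely descends to a continuous homomorphism on the Banach completion $A_\lambda$. This requires that the estimate $\|\varphi(x)\|_B\le C\|x\|_\lambda$ follows from continuity of $\varphi$ together with the defining family being directed and submultiplicative—so that a single seminorm $\|\cdot\|_\lambda$ (rather than a finite max, which the directedness lets us absorb into one index) dominates $\|\varphi(\cdot)\|_B$. Once this bound is in hand, $\varphi$ is uniformly continuous for $\|\cdot\|_\lambda$ and hence extends uniquely to $A_\lambda$ by density of (the image of) $A$, and the extension is automatically multiplicative and unital by continuity. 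After dispatching this point, the whole argument is a short cycle $(i)\Rightarrow(ii)\Rightarrow(i)$ together with the definitional identity $(ii)\Leftrightarrow(iii)$, all resting on the two previously cited properties of $r_\infty$: its independence from the choice of defining seminorms and its monotonicity under homomorphisms.
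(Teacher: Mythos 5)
Your proposal is correct and follows essentially the same route as the paper's proof: the implications $(i)\Rightarrow(ii)\iff(iii)$ are immediate from the definitions (taking $\varphi$ to be the canonical map $A\to A_\lambda$), and for $(ii)\Rightarrow(i)$ you factor an arbitrary continuous homomorphism $\varphi\colon A\to B$ through some $A_\lambda$ via the continuity estimate $\|\varphi(x)\|\le C\|x\|_\lambda$ and then invoke the monotonicity $r_\infty(\psi(a_\lambda))\le r_\infty(a_\lambda)$, exactly as in the paper. The factorization step you flag as the main obstacle is handled in the paper in precisely the way you describe, so there is no gap.
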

\begin{proof}
$\mathrm{(i)}\Longrightarrow\mathrm{(ii)}\iff\mathrm{(iii)}$. This is clear.

$\mathrm{(ii)}\Longrightarrow\mathrm{(i)}$.
Let $B$ be a Banach algebra, and let $\varphi\colon A\to B$ be a continuous homomorphism.
There exist $\lambda\in\Lambda$ and $C>0$ such that for all $a\in A$ we have
$\| \varphi(a)\| \le C \| a\|_\lambda$. Hence there exists a unique continuous homomorphism
$\psi\colon A_\lambda\to B$ such that $\varphi=\psi\tau_\lambda$, where
$\tau_\lambda\colon A\to A_\lambda$ is the canonical map. We have
\[
r_\infty(\varphi(a))=r_\infty(\psi(a_\lambda)) \le r_\infty(a_\lambda)<r,
\]
and so $a$ is strictly spectrally $r$-contractive.
\end{proof}

\begin{corollary}
\label{cor:Ban_spec_contr}
If $A$ is a Banach algebra, then $a\in A^n$ is strictly spectrally $r$-contractive
if and only if $r_\infty(a)<r$.
\end{corollary}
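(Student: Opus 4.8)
The plan is to derive Corollary~\ref{cor:Ban_spec_contr} as an immediate consequence of Proposition~\ref{prop:spec_contr} by specializing to the case where $A$ itself is a Banach algebra. The key observation is that when $A$ is a Banach algebra, its topology is defined by a single submultiplicative norm, so we may take the directed defining family $\{\|\cdot\|_\lambda : \lambda\in\Lambda\}$ to be the one-element family consisting of the norm $\|\cdot\|$ of $A$.

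First I would note that with this choice of defining family, the index set $\Lambda$ is a singleton, and the completion $A_\lambda$ of $A$ with respect to $\|\cdot\|_\lambda=\|\cdot\|$ is just $A$ itself (since $A$ is already complete). Consequently the canonical image $a_\lambda$ of $a\in A^n$ in $A_\lambda^n$ coincides with $a$. Then condition (ii) of Proposition~\ref{prop:spec_contr}, namely that $r_\infty(a_\lambda)<r$ for all $\lambda\in\Lambda$, collapses to the single condition $r_\infty(a)<r$. Since Proposition~\ref{prop:spec_contr} asserts the equivalence of condition (i) --- that $a$ is strictly spectrally $r$-contractive --- with condition (ii), this immediately yields the desired equivalence: $a$ is strictly spectrally $r$-contractive if and only if $r_\infty(a)<r$.

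There is essentially no obstacle here; the corollary is a direct reading of the proposition in the degenerate case of a single seminorm. The only minor point worth verifying is that the joint spectral radius $r_\infty(a)$ computed via \eqref{j-sprad} with the one-element family agrees with the intended notion, but this is immediate from the definition, and moreover the remark preceding Proposition~\ref{prop:spec_contr} (citing \cite[Proposition 5.7]{Pir_qball_JNCG}) guarantees that $r_\infty(a)$ is independent of the choice of directed defining family, so no ambiguity arises. Thus the proof is a one-line specialization, and the substantive content has already been established in Proposition~\ref{prop:spec_contr}.
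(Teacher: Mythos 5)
Your proof is correct and matches the paper's (implicit) argument: the corollary is stated without proof precisely because it is the specialization of Proposition~\ref{prop:spec_contr} to the case where the directed defining family is the single submultiplicative norm of the Banach algebra, so that $A_\lambda=A$ and condition (ii) reads $r_\infty(a)<r$. Your additional remark on the independence of $r_\infty(a)$ from the choice of defining family is a valid (if not strictly necessary) sanity check.
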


\begin{example}
\label{ex:zeta_spec_contr}
Let $\zeta=(\zeta_1,\ldots ,\zeta_n)\in\cF^\rT(\DD_r^n)^n$.
For each $\rho\in (0,r)$, each $d\in\Z_+$,
and each $\alpha\in W_{n,d}$, we have $\| \zeta_\alpha\|_\rho^{1/d}=\rho$.
Hence $\zeta\in\cF^\rT(\DD_r^n)^n$ is strictly spectrally
$r$-contractive, but is not strictly spectrally $r'$-contractive whenever $r'<r$.
The same assertion holds for the $n$-tuple $z=(z_1,\ldots ,z_n)\in\cO(\DD_r^n)^n$
of coordinate functions on $\DD_r^n$.
Note that such a phenomenon can never happen in a Banach algebra
(see Corollary~\ref{cor:Ban_spec_contr}).
\end{example}

\begin{example}
Let $\zeta=(\zeta_1,\ldots ,\zeta_n)\in\cF(\DD_r^n)^n$, where $n\ge 2$.
For each $\rho\in (0,r)$, each $\tau\ge 1$, each $d\in\Z_+$,
and each $\alpha\in W_{n,d}$, we have
$\| \zeta_\alpha\|_{\rho,\tau}^{1/d}=\rho\tau^{(s(\alpha)+1)/d}$.
In particular, for $\alpha=(1,2,\ldots ,1,2)\in W_{n,2d}$
we have $\|\zeta_\alpha\|_{\rho,\tau}^{1/2d}=\rho\tau$.
Hence $\zeta\in\cF(\DD_r^n)^n$ is not strictly spectrally
$R$-contractive for any $R>0$.
\end{example}

\begin{prop}
\label{prop:univ_F_poly_T}
Let $A$ be an Arens-Michael algebra, and let $a=(a_1,\ldots ,a_n)$ be a strictly
spectrally $r$-contractive $n$-tuple in $A^n$. Then there exists a unique
continuous homomorphism $\gamma_a\colon\cF^\rT(\DD^n_r)\to A$
such that $\gamma_a(\zeta_i)=a_i$ for all $i=1,\ldots ,n$. Moreover,
the assignment $a\mapsto\gamma_a$ determines a natural isomorphism
\begin{equation}
\label{univ_F_poly_T}
\Hom_\AM(\cF^\rT(\DD_r^n),A)\cong\{ a\in A^n : a\; \text{\upshape is str. spec. $r$-contr.}\}
\qquad (A\in\Ob(\AM)).
\end{equation}
\end{prop}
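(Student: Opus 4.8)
The plan is to verify the universal property in the standard way: first get uniqueness from the density of the free algebra, then construct $\gamma_a$ by the only admissible formula and check it is a well-defined continuous homomorphism, and finally identify the image of $a\mapsto\gamma_a$ via Example~\ref{ex:zeta_spec_contr} and Remark~\ref{rem:contr_image}. Uniqueness is immediate: by \eqref{F_poly_T} the polynomials $\sum_\alpha c_\alpha\zeta_\alpha$ (finite sums) are dense in $\cF^\rT(\DD_r^n)$, and a continuous homomorphism must send $\zeta_\alpha\mapsto a_\alpha$, so it is forced to agree with the assignment $\sum_\alpha c_\alpha\zeta_\alpha\mapsto\sum_\alpha c_\alpha a_\alpha$ on a dense subalgebra.

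The crux is existence, and in particular converting strict spectral $r$-contractivity of $a$ into a bound by a single continuous seminorm of the domain. I would fix a directed defining family $\{\|\cdot\|_\lambda\}$ of submultiplicative seminorms on $A$. For each $\lambda$, condition (iii) of Proposition~\ref{prop:spec_contr} gives $\lim_{d\to\infty}(\sup_{\alpha\in W_{n,d}}\|a_\alpha\|_\lambda)^{1/d}<r$, so I would choose $\rho_\lambda\in(0,r)$ strictly above this limit; then $\sup_{\alpha\in W_{n,d}}\|a_\alpha\|_\lambda$ is eventually dominated by $\rho_\lambda^d$, and after absorbing the finitely many small values of $d$ into a constant one obtains $\sup_{\alpha\in W_{n,d}}\|a_\alpha\|_\lambda\le C_\lambda\rho_\lambda^d$ for all $d$. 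Summing over $\alpha$ yields, for every $a=\sum_\alpha c_\alpha\zeta_\alpha\in\cF^\rT(\DD_r^n)$,
\[
\Bigl\|\sum_\alpha c_\alpha a_\alpha\Bigr\|_\lambda\le\sum_\alpha|c_\alpha|\,\|a_\alpha\|_\lambda\le C_\lambda\sum_\alpha|c_\alpha|\rho_\lambda^{|\alpha|}=C_\lambda\|a\|_{\rho_\lambda}.
\]
Since $\rho_\lambda<r$, the seminorm $\|\cdot\|_{\rho_\lambda}$ is one of the defining seminorms of $\cF^\rT(\DD_r^n)$, so this estimate shows both that the series $\sum_\alpha c_\alpha a_\alpha$ converges absolutely in the complete algebra $A$ and that the resulting map $\gamma_a$ is continuous. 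Multiplicativity and unitality hold on polynomials by concatenation and the convention $a_*=1$, hence on all of $\cF^\rT(\DD_r^n)$ by density and continuity, so $\gamma_a\in\Hom_\AM(\cF^\rT(\DD_r^n),A)$.

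It then remains to see that $a\mapsto\gamma_a$ is a bijection onto the strictly spectrally $r$-contractive tuples and is natural. Injectivity is clear, since $\gamma_a(\zeta_i)=a_i$ recovers $a$. For surjectivity, given any $\varphi\in\Hom_\AM(\cF^\rT(\DD_r^n),A)$ I would set $a_i=\varphi(\zeta_i)$: by Example~\ref{ex:zeta_spec_contr} the tuple $\zeta=(\zeta_1,\ldots,\zeta_n)$ is strictly spectrally $r$-contractive in $\cF^\rT(\DD_r^n)$, hence by Remark~\ref{rem:contr_image} its image $a=\varphi(\zeta)$ is strictly spectrally $r$-contractive, and uniqueness forces $\gamma_a=\varphi$. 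Naturality in $A$ follows again from uniqueness: for a continuous homomorphism $\psi\colon A\to B$, both $\psi\circ\gamma_a$ and $\gamma_{\psi(a)}$ are continuous homomorphisms sending $\zeta_i\mapsto\psi(a_i)$ (and $\psi(a)$ is strictly spectrally $r$-contractive by Remark~\ref{rem:contr_image}), so they coincide.

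The one genuinely delicate point I expect is the passage from the joint-spectral-radius condition to the uniform estimate $\sup_{\alpha\in W_{n,d}}\|a_\alpha\|_\lambda\le C_\lambda\rho_\lambda^d$. Here it is essential that contractivity be tested seminorm-by-seminorm (condition (iii) rather than the definition through Banach quotients), that submultiplicativity of $\|\cdot\|_\lambda$ make $d\mapsto\sup_{\alpha\in W_{n,d}}\|a_\alpha\|_\lambda$ submultiplicative, so that the limit in (iii) genuinely exists (by Fekete's lemma) and controls all $d$, and that the chosen $\rho_\lambda$ stays strictly below $r$, so that $\|\cdot\|_{\rho_\lambda}$ is an admissible seminorm of $\cF^\rT(\DD_r^n)$; the strictness of the inequality $r_\infty<r$ is exactly what makes room for such a $\rho_\lambda$.
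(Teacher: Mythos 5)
Your proof is correct and follows essentially the same route as the paper: convert strict spectral $r$-contractivity, via condition (iii) of Proposition~\ref{prop:spec_contr}, into a uniform bound $\|a_\alpha\|\le C\rho^{|\alpha|}$ with $\rho<r$, deduce absolute convergence and continuity of $\gamma_a$ against the norm $\|\cdot\|_\rho$, get uniqueness from density of $F_n$, and obtain the converse direction from Example~\ref{ex:zeta_spec_contr} and Remark~\ref{rem:contr_image}. The only differences are presentational (you quantify over a defining family of seminorms at once and spell out Fekete's lemma and naturality, where the paper works with one continuous submultiplicative seminorm at a time), so there is nothing to correct.
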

\begin{proof}
Let $\|\cdot\|$ be a continuous submultiplicative seminorm on $A$.
Using Proposition~\ref{prop:spec_contr}, we can choose $\rho>0$ such that
\[
\lim_{d\to\infty}\Bigl(\sup_{\alpha\in W_{n,d}} \| a_\alpha\|\Bigr)^{1/d}<\rho<r.
\]
Hence there exists $d_0\in \Z_+$ such that
\begin{equation}
\label{free_calc_cont_1}
\sup_{\alpha\in W_{n,d}} \| a_\alpha\| < \rho^d \qquad (d\ge d_0).
\end{equation}
Choose now $C\ge 1$ such that
$\| a_\alpha\|\le C\rho^{|\alpha|}$ whenever $|\alpha|<d_0$.
Together with~\eqref{free_calc_cont_1}, this yields
$\| a_\alpha\|\le C\rho^{|\alpha|}$ for all $\alpha\in W_n$.
Hence for each $f=\sum_\alpha c_\alpha\zeta_\alpha\in\cF^\rT(\DD_r^n)$ we obtain
\begin{equation*}
\sum_{\alpha\in W_n} |c_\alpha| \| a_\alpha\|
\le C \sum_{\alpha\in W_n} |c_\alpha| \rho^{|\alpha|}
= C \| f\|_\rho.
\end{equation*}
Therefore the series $\sum_\alpha c_\alpha a_\alpha$ absolutely converges in $A$, and the
mapping
\[
\gamma_a\colon\cF^\rT(\DD_r^n)\to A,\quad
\sum_{\alpha\in W_n} c_\alpha \zeta_\alpha \mapsto \sum_{\alpha\in W_n} c_\alpha a_\alpha
\]
is the required continuous homomorphism. The uniqueness of $\gamma_a$ is immediate
from the density of the free algebra $F_n$ in $\cF^\rT(\DD_n^r)$.

Conversely, using Example~\ref{ex:zeta_spec_contr} and Remark~\ref{rem:contr_image}, we see that,
for each Arens-Michael algebra $A$ and each continuous homomorphism
$\varphi\colon \cF^\rT(\DD_r^n)\to A$, the $n$-tuple $\varphi(\zeta)\in A^n$ is strictly
spectrally $r$-contractive. Thus~\eqref{univ_F_poly_T} is indeed a bijection, as required.
\end{proof}

\begin{remark}
Proposition \ref{prop:univ_F_poly_T} can easily be extended to the algebra $\wh{T}_r(E)$
for any Banach space $E$. Now the set of strictly spectrally $r$-contractive $n$-tuples on
the right-hand side of~\eqref{univ_F_poly_T} should be replaced by
the set of all $\psi\in\cL(E,A)$ such that the image of the unit ball of $E$ under $\psi$
is a strictly spectrally $r$-contractive set in $A$.
Related results can be found in~\cite{Dos_Slodk}.
\end{remark}

\begin{theorem}
\label{thm:q_poly_quot_free_poly}
Let $q\in\CC^\times$, $n\in\N$, and $r\in (0,+\infty]$.
\begin{mycompactenum}
\item There exists a surjective continuous homomorphism
\begin{equation}
\label{pi}
\pi \colon \cF(\DD_r^n)\to\cO_q(\DD_r^n), \qquad
\zeta_i\mapsto x_i \quad (i=1,\ldots ,n).
\end{equation}
\item $\Ker\pi$ coincides with
the closed two-sided ideal of $\cF(\DD_r^n)$
generated by the elements
$\zeta_i\zeta_j-q\zeta_j\zeta_i\; (i,j=1,\ldots ,n,\; i<j)$.
\item $\Ker\pi$ is a complemented subspace of $\cF(\DD_r^n)$.
\item Under the identification $\cO_q(\DD_r^n)\cong\cF(\DD^n_r)/\Ker\pi$,
the norm $\|\cdot\|_{\DD,\rho}$ on $\cO_q(\DD_r^n)$ is equal to the quotient of
the norm $\|\cdot\|_{\rho,\tau}$ on $\cF(\DD^n_r)\; (\rho\in (0,r),\; \tau\ge 1)$.
\end{mycompactenum}
\end{theorem}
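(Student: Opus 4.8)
The plan is to define $\pi$ first on the dense free subalgebra $F_n\subset\cF(\DD_r^n)$ by $\zeta_\alpha\mapsto x_\alpha$ and then extend it by continuity. The combinatorial input is the value of $\rt(\alpha)$ from Lemma~\ref{lemma:t(alpha)}: it is a power of $q$ whose exponent is the number $\inv(\alpha)$ of inversions of $\alpha$, so that $|\rt(\alpha)|=|q|^{-\inv(\alpha)}$, and one has $0\le\inv(\alpha)\le\sum_{i<j}\rp_i(\alpha)\rp_j(\alpha)$. From this I would first extract the sharp inequality $|\rt(\alpha)|\,w_q(\rp(\alpha))\le 1$ for all $\alpha\in W_n$, with equality precisely when $\inv(\alpha)=0$ (the case $|q|\ge1$) or when $\inv(\alpha)=\sum_{i<j}\rp_i(\alpha)\rp_j(\alpha)$ (the case $|q|<1$). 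Writing $\pi\bigl(\sum_\alpha c_\alpha\zeta_\alpha\bigr)=\sum_k\bigl(\sum_{\rp(\alpha)=k}c_\alpha\rt(\alpha)\bigr)x^k$ and estimating termwise with this inequality gives $\|\pi(a)\|_{\DD,\rho}\le\|a\|_\rho\le\|a\|_{\rho,\tau}$; hence $\pi$ is continuous (indeed contractive), and multiplicativity, obvious on $F_n$, passes to the completion. This establishes everything in (i) except surjectivity.

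The technical core is an explicit continuous section. For each $k\in\Z_+^n$ let $\beta_k\in\rp^{-1}(k)$ be the block-ordered word that is increasing if $|q|\ge1$ and decreasing if $|q|<1$, and set $s(x^k)=\rt(\beta_k)^{-1}\zeta_{\beta_k}$, extended by linearity and continuity from the polynomials. Then $\pi s(x^k)=x^k$, so $\pi s=\id$ and $\pi$ is onto, completing (i). Since $\beta_k$ is extremal in the sense above, $|\rt(\beta_k)|^{-1}=w_q(k)$, while $\rs(\beta_k)+1=|\supp(k)|\le n$ because $\beta_k$ consists of $|\supp(k)|$ blocks; therefore $\|s(f)\|_{\rho,\tau}=\sum_k|c_k|\,w_q(k)\rho^{|k|}\tau^{|\supp(k)|}$, so $s$ is continuous (bounded by $\tau^n\|\cdot\|_{\DD,\rho}$) and is isometric for the seminorms with $\tau=1$. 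Part (iii) is then immediate: $e=s\pi$ is a continuous idempotent with $\Im e=\Im s$ and $\Ker e=\Ker\pi$, so $\Ker\pi=\Im(\id-e)$ is complemented.

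For (ii), let $J$ be the closed two-sided ideal generated by the $\zeta_i\zeta_j-q\zeta_j\zeta_i$ $(i<j)$. As $\pi$ annihilates these generators and $\Ker\pi$ is closed, $J\subseteq\Ker\pi$, and $\pi$ descends to $\bar\pi\colon\cF(\DD_r^n)/J\to\cO_q(\DD_r^n)$. I would prove $\bar\pi$ is a topological isomorphism by exhibiting the quotient $\bar s$ of $s$ as its inverse: $\bar\pi\bar s=\id$ follows from $\pi s=\id$, and for $\bar s\bar\pi=\id$ it suffices to check $s\pi(\zeta_\alpha)\equiv\zeta_\alpha\pmod J$ on the dense subalgebra $F_n$. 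This is a one-line computation, since modulo $J$ one has $\zeta_\alpha\equiv\rt(\alpha)\zeta^{\rp(\alpha)}$ and $\zeta_{\beta_k}\equiv\rt(\beta_k)\zeta^{k}$, whence $s\pi(\zeta_\alpha)=\rt(\alpha)\rt(\beta_{\rp(\alpha)})^{-1}\zeta_{\beta_{\rp(\alpha)}}\equiv\rt(\alpha)\zeta^{\rp(\alpha)}\equiv\zeta_\alpha$. Continuity and density then give $\bar s\bar\pi=\id$, so $\Ker\pi=J$.

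Finally (iv) falls out of the two norm estimates already assembled. Contractivity gives $\|f\|_{\DD,\rho}=\|\pi(a)\|_{\DD,\rho}\le\|a\|_{\rho,\tau}$ for every preimage $a$ of $f$, so the quotient seminorm dominates $\|\cdot\|_{\DD,\rho}$; the particular preimage $s(f)$ satisfies $\|s(f)\|_{\rho,1}=\|f\|_{\DD,\rho}$, giving the reverse inequality, so the quotient of $\|\cdot\|_{\rho,1}$ equals $\|\cdot\|_{\DD,\rho}$ exactly (and for general $\tau$ the quotient seminorm lies between $\|\cdot\|_{\DD,\rho}$ and $\tau^n\|\cdot\|_{\DD,\rho}$, which identifies the quotient topology with that of $\cO_q(\DD_r^n)$). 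I expect the genuine obstacle to be entirely in the first two paragraphs: fixing $\rt(\alpha)$ through Lemma~\ref{lemma:t(alpha)}, proving the sharp weight inequality together with its equality cases, and recognizing that the extremal words are exactly the block-ordered ones, so that $\rs(\beta_k)+1=|\supp(k)|$ governs the $\tau$-exponent. Once this bookkeeping is done, the functional-analytic steps — contractivity, the idempotent splitting, and the quotient-norm squeeze — are routine.
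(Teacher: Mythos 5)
Your construction is correct, and it takes a genuinely different, more self-contained route than the paper. The paper does not reprove (i)--(iii) at all: it cites \cite{Pir_HFG} (Theorem 7.13 there) for these parts, and in the combined proof with Theorem~\ref{thm:q_poly_quot_free^T_poly} it obtains $\pi$ as the composition of $\pi^\rT$ (produced by the universal property of Proposition~\ref{prop:univ_F_poly_T}) with the embedding $\nu$, inheriting surjectivity and the splitting from the cited result; for (iv) it proves $\|\pi(f)\|_{\DD,\rho}\le\|f\|_{\rho,\tau}$ exactly as you do, via \eqref{w_q_3}, and then gets the reverse inequality abstractly from the maximality property of $\|\cdot\|_{\DD,\rho}$ among submultiplicative norms with $\|x_i\|\le\rho$ (\cite{Pir_qfree}, Lemma 5.10). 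You instead build everything from an explicit section $s$ supported on extremal block words; this is precisely the strategy the paper itself uses for the \emph{ball} (Theorem~\ref{thm:q_ball_quot_free_ball}), where the analogous section $\varkappa$ requires a Lagrange-multiplier optimization, whereas for the polydisk a single extremal word per multi-index suffices, as you observe. Your route buys independence from \cite{Pir_HFG} and an exact evaluation of the quotient norm; the paper's route is shorter but leans on external results.

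The one point of divergence is (iv) for $\tau>1$, and here your caution is warranted: the claim as printed appears to fail for $\tau>1$, and your weaker conclusion is the correct one. Any preimage $a=\sum_\alpha c_\alpha\zeta_\alpha$ of $x^k$ must satisfy $\sum_{\alpha\in\rp^{-1}(k)}c_\alpha\rt(\alpha)=1$, whence
\[
\|x^k\|_{\rho,\tau}^\wedge
=\rho^{|k|}\min_{\alpha\in\rp^{-1}(k)}|\rt(\alpha)|^{-1}\tau^{\rs(\alpha)+1}
=w_q(k)\,\rho^{|k|}\,\tau^{\card\supp(k)}
\]
(where $\supp(k)=\{i : k_i\ne 0\}$), because your block word minimizes both factors simultaneously; in particular $\|x_1\|_{\rho,\tau}^\wedge=\rho\tau>\rho=\|x_1\|_{\DD,\rho}$ whenever $\tau>1$. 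Equivalently, your section $s$ is norm-optimal for \emph{every} $\tau$, not only $\tau=1$, and the quotient norm of $\|\cdot\|_{\rho,\tau}$ is $\sum_k|c_k|w_q(k)\rho^{|k|}\tau^{\card\supp(k)}$, not $\|\cdot\|_{\DD,\rho}$. The paper's argument for $\tau>1$ rests on the evaluation $\|\zeta_i\|_{\rho,\tau}=\rho$, but by \eqref{F_poly_expl} one has $\|\zeta_i\|_{\rho,\tau}=\rho\,\tau^{\rs((i))+1}=\rho\tau$, so the maximality lemma yields only $\|\cdot\|_{\rho,\tau}^\wedge\le\|\cdot\|_{\DD,\rho\tau}$, not $\le\|\cdot\|_{\DD,\rho}$. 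So you should state (iv) for $\tau=1$ (equivalently, for the quotient of the Taylor norm $\|\cdot\|_\rho$, which is Theorem~\ref{thm:q_poly_quot_free^T_poly}(iv)) together with the two-sided equivalence for general $\tau$; this is also all that the rest of the paper needs, since the bundle-continuity argument in Section~\ref{sect:cont_bnd} invokes only the Taylor-algebra version.
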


Parts (i)--(iii) of the above theorem were proved in~\cite[Theorem 7.13]{Pir_HFG}
in the more general multiparameter case. The proof of the ``quantitative''
part (iv) will be given below, together
with the proof of Theorem~\ref{thm:q_poly_quot_free^T_poly}.

\begin{theorem}
\label{thm:q_poly_quot_free^T_poly}
Let $q\in\CC^\times$, $n\in\N$, and $r\in (0,+\infty]$.
\begin{mycompactenum}
\item There exists a surjective continuous homomorphism
\begin{equation}
\label{pi^T}
\pi^\rT \colon \cF^\rT(\DD_r^n)\to\cO_q(\DD_r^n), \qquad
\zeta_i\mapsto x_i \quad (i=1,\ldots ,n).
\end{equation}
\item $\Ker\pi^\rT$ coincides with
the closed two-sided ideal of $\cF^\rT(\DD_r^n)$
generated by the elements
$\zeta_i\zeta_j-q\zeta_j\zeta_i\; (i,j=1,\ldots ,n,\; i<j)$.
\item $\Ker\pi^\rT$ is a complemented subspace of $\cF^\rT(\DD_r^n)$.
\item Under the identification $\cO_q(\DD_r^n)\cong\cF^\rT(\DD^n_r)/\Ker\pi^\rT$,
the norm $\|\cdot\|_{\DD,\rho}$ on $\cO_q(\DD_r^n)$ is equal to the quotient of
the norm $\|\cdot\|_\rho$ on $\cF^\rT(\DD^n_r)\; (\rho\in (0,r))$.
\end{mycompactenum}
\end{theorem}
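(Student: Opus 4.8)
The plan is to realize $\pi^\rT$ through the universal property of $\cF^\rT(\DD^n_r)$ (Proposition~\ref{prop:univ_F_poly_T}) and to control all four parts by a single sharp combinatorial estimate. The starting point is the identity $x_\alpha=\rt(\alpha)x^{\rp(\alpha)}$ from~\eqref{t_alpha}, together with the explicit formula of Lemma~\ref{lemma:t(alpha)}, which gives $|\rt(\alpha)|=|q|^{-\inv(\alpha)}$, where $\inv(\alpha)$ counts the pairs of positions of $\alpha$ carrying strictly decreasing letters. From the definition of $\cO_q(\DD^n_r)$ one then reads off $\|x_\alpha\|_{\DD,\rho}=|\rt(\alpha)|\,w_q(\rp(\alpha))\,\rho^{|\alpha|}$. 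The key step — and in my view the only genuinely substantial one — is the fibrewise estimate
\[
\max_{\alpha\in\rp^{-1}(k)}|\rt(\alpha)|=w_q(k)^{-1}\qquad(k\in\Z_+^n),
\]
equivalently $|\rt(\alpha)|\,w_q(\rp(\alpha))\le 1$ for every $\alpha$, with equality attained somewhere on each fibre of $\rp$. I would prove this by noting that $\inv(\alpha)$ ranges over the interval $[0,\sum_{i<j}k_ik_j]$ as $\alpha$ runs over $\rp^{-1}(k)$, the upper end being realised by the fully reversed word, and then splitting into the cases $|q|\le 1$ and $|q|>1$: for $|q|\le 1$ the maximum of $|q|^{-\inv(\alpha)}$ is $u_q(k)^{-1}=w_q(k)^{-1}$, while for $|q|>1$ it is $1=w_q(k)^{-1}$. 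This is exactly where the truncation $w_q=\min\{u_q,1\}$ of~\eqref{w_q} is forced upon us.

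Granting the estimate, part~(i) is immediate: for every $\rho\in(0,r)$ one has $\sup_{\alpha\in W_{n,d}}\|x_\alpha\|_{\DD,\rho}=\rho^d$, hence $\lim_d(\sup_\alpha\|x_\alpha\|_{\DD,\rho})^{1/d}=\rho<r$, and Proposition~\ref{prop:spec_contr} shows the tuple $x=(x_1,\dots,x_n)$ to be strictly spectrally $r$-contractive in $\cO_q(\DD^n_r)$; Proposition~\ref{prop:univ_F_poly_T} then supplies the continuous homomorphism $\pi^\rT$ with $\pi^\rT(\zeta_i)=x_i$. The same estimate yields both halves of part~(iv). On one side, $\|x_\alpha\|_{\DD,\rho}\le\rho^{|\alpha|}=\|\zeta_\alpha\|_\rho$ shows that $\pi^\rT$ is contractive from $\|\cdot\|_\rho$ to $\|\cdot\|_{\DD,\rho}$, so $\|f\|_{\DD,\rho}\le\|f\|_{\mathrm{quot},\rho}$, where $\|\cdot\|_{\mathrm{quot},\rho}$ is the quotient seminorm induced by $\|\cdot\|_\rho$. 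On the other side, for $f=\sum_k c_k x^k$ I would pick for each $k$ a word $\alpha(k)\in\rp^{-1}(k)$ attaining the maximum above and set $s(f)=\sum_k \rt(\alpha(k))^{-1}c_k\,\zeta_{\alpha(k)}$; then $\pi^\rT(s(f))=f$ and $\|s(f)\|_\rho=\sum_k|c_k|\,w_q(k)\rho^{|k|}=\|f\|_{\DD,\rho}$, giving $\|f\|_{\mathrm{quot},\rho}\le\|f\|_{\DD,\rho}$. This proves~(iv), exhibits $s$ as a linear section of $\pi^\rT$ that is $\rho$-isometric for every $\rho$ (hence continuous), and in particular reproves surjectivity in~(i).

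The section settles part~(iii) at once: the composite $P=s\circ\pi^\rT$ is a continuous idempotent on $\cF^\rT(\DD^n_r)$ with $\Ker P=\Ker\pi^\rT$ (using $\pi^\rT s=\mathrm{id}$ and injectivity of $s$), so $\Ker\pi^\rT$ is complemented, with complement $\Im s$. For part~(ii), the inclusion $J\subseteq\Ker\pi^\rT$ of the closed two-sided ideal $J$ generated by the elements $\zeta_i\zeta_j-q\zeta_j\zeta_i\ (i<j)$ is clear, since $\pi^\rT$ annihilates these generators. For the reverse inclusion I would show that $a\equiv Pa\pmod J$ for every $a$. It suffices to verify this on the dense subalgebra $F_n$, where it reduces to the monomial case: modulo $J$ one has $\zeta_\alpha\equiv\rt(\alpha)\zeta^{\rp(\alpha)}$ and $s(x^k)\equiv\zeta^k$, whence $P\zeta_\alpha=\rt(\alpha)\,s(x^{\rp(\alpha)})\equiv\rt(\alpha)\zeta^{\rp(\alpha)}\equiv\zeta_\alpha\pmod J$; since $J$ is closed and $a\mapsto a-Pa$ is continuous, the congruence extends to all of $\cF^\rT(\DD^n_r)$.

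If now $a\in\Ker\pi^\rT$, then $Pa=s\pi^\rT a=0$, so $a\equiv 0\pmod J$, giving $\Ker\pi^\rT\subseteq J$ and hence $\Ker\pi^\rT=J$. (Alternatively, surjectivity in~(i) also follows from Theorem~\ref{thm:q_poly_quot_free_poly}(i) via the continuous inclusion $\cF(\DD^n_r)\hookrightarrow\cF^\rT(\DD^n_r)$ of Proposition~\ref{prop:FD_vs_FTD}, under which $\pi^\rT$ restricts to $\pi$.) The only real work is the sharp fibrewise estimate on $|\rt(\alpha)|$; once it is in hand, parts~(i)--(iv) follow by soft functional-analytic arguments, and the same contractivity-plus-optimal-lift scheme — now against the norms $\|\cdot\|_{\rho,\tau}$ and using that the reversed (respectively sorted) word has boundedly many descents — handles the companion quantitative statement Theorem~\ref{thm:q_poly_quot_free_poly}(iv).
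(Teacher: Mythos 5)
Your proof is correct, and while your part (i) coincides with the paper's route, your treatment of parts (ii)--(iv) is genuinely different and more self-contained. Both you and the paper obtain $\pi^\rT$ from strict spectral $r$-contractivity of $(x_1,\ldots,x_n)$ together with Proposition~\ref{prop:univ_F_poly_T}, and both rest on the same sharp fact $\max_{\alpha\in\rp^{-1}(k)}|\rt(\alpha)|=w_q(k)^{-1}$: this is \eqref{w_q_3}, which the paper imports from \cite[Proposition~5.12]{Pir_qfree}, whereas your two-case derivation from Lemma~\ref{lemma:t(alpha)} (extremes of $\rrm$ on a fiber attained at the sorted and the reversed word) is a valid direct proof. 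After that the arguments diverge. The paper deduces surjectivity, (ii) and (iii) from Theorem~\ref{thm:q_poly_quot_free_poly}, i.e.\ from \cite[Theorem~7.13]{Pir_HFG}, by factoring $\pi=\pi^\rT\nu$ through the inclusion $\nu\colon\cF(\DD^n_r)\to\cF^\rT(\DD^n_r)$ and showing $\Ker\pi^\rT=\ol{\nu(\Ker\pi)}$, and it gets the inequality ``quotient norm $\le\|\cdot\|_{\DD,\rho}$'' in (iv) from the maximality property of $\|\cdot\|_{\DD,\rho}$ (\cite[Lemma~5.10]{Pir_qfree}). You instead build the explicit lift $s(f)=\sum_k\rt(\alpha(k))^{-1}c_k\zeta_{\alpha(k)}$, isometric from $\|\cdot\|_{\DD,\rho}$ to $\|\cdot\|_\rho$ for every $\rho$, and let it do all the work: surjectivity and (iv) at once, (iii) via the projection $s\pi^\rT$, and (ii) via the congruence $a\equiv s(\pi^\rT(a)) \pmod{J}$ verified on monomials and propagated by density and closedness of $J$. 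This makes the theorem independent of \cite{Pir_HFG} and of the maximality lemma; it is in fact structurally the same strategy the paper itself uses for the ball (Theorem~\ref{thm:q_ball_quot_free_ball}), where the optimal lift requires an $\ell^2$-minimization via Lagrange multipliers, while in your $\ell^1$ setting the optimum is a single word. What the paper's route buys is a simultaneous proof of parts (iv) of Theorems~\ref{thm:q_poly_quot_free_poly} and~\ref{thm:q_poly_quot_free^T_poly}.

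One caveat concerns your closing sentence: the same contractivity-plus-optimal-lift scheme does \emph{not} yield Theorem~\ref{thm:q_poly_quot_free_poly}(iv) as stated. Your optimal words have up to $n$ blocks, so the lift only gives $\| s(f)\|_{\rho,\tau}\le\tau^{n}\| f\|_{\DD,\rho}$, since $\|\zeta_{\alpha(k)}\|_{\rho,\tau}=\rho^{|k|}\tau^{\rs(\alpha(k))+1}$; this proves equivalence of the norm families, not equality. The obstruction is intrinsic: the algebraic ideal generated by the elements $\zeta_i\zeta_j-q\zeta_j\zeta_i$ has vanishing components in degrees $0$ and $1$, and the coefficient functionals are continuous, so every element of $\Ker\pi$ has vanishing degree-one part; hence every lift of $x_i$ has $\|\cdot\|_{\rho,\tau}$-norm at least $\|\zeta_i\|_{\rho,\tau}=\rho\tau$, and the quotient norm of $x_i$ is $\rho\tau>\rho=\| x_i\|_{\DD,\rho}$ whenever $\tau>1$. (The same factor of $\tau$ affects the paper's own proof of that part, which asserts $\|\zeta_i\|_{\rho,\tau}=\rho$, in conflict with the formula $\|\zeta_\alpha\|_{\rho,\tau}=\rho^{|\alpha|}\tau^{\rs(\alpha)+1}$ used elsewhere in the paper.) For the statement actually under review, which involves only the norms $\|\cdot\|_\rho$ on $\cF^\rT(\DD^n_r)$, no such issue arises, and your proof of all four parts is complete.
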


To prove Theorem~\ref{thm:q_poly_quot_free^T_poly}, we need some notation from \cite{Pir_qfree}.
For each $d\in\Z_+$, let the symmetric group $S_d$ act on $W_{n,d}$ via
$\sigma(\alpha)=\alpha\sigma^{-1}\; (\alpha\in W_{n,d},\; \sigma\in S_d)$.
Clearly, for each $\alpha\in W_{n,d}$ and $\sigma\in S_d$ there exists a unique
$\lambda(\sigma,\alpha)\in\CC^\times$ such that
\begin{equation*}
x_\alpha=\lambda(\sigma,\alpha)x_{\sigma(\alpha)}.
\end{equation*}
Given $k=(k_1,\ldots ,k_n)\in\Z_+^n$ with $|k|=d$, let
\[
\delta(k)=(\underbrace{1,\ldots ,1}_{k_1},\ldots , \underbrace{n,\ldots ,n}_{k_n})\in W_{n,d}.
\]
By \cite[Proposition 5.12]{Pir_qfree}, we have
\begin{equation}
\label{w_q_2}
w_q(k)=\min\{ |\lambda(\sigma,\delta(k))| : \sigma\in S_d\}.
\end{equation}
Clearly, for each $\alpha\in \rp^{-1}(k)$ (where $\rp$ is given by~\eqref{proj})
there exists $\sigma_\alpha\in S_d$ such that
$\alpha=\sigma_\alpha(\delta(k))$. Therefore $x^k=\lambda(\sigma_\alpha,\delta(k))x_\alpha$.
Comparing with~\eqref{t_alpha}, we see that
\begin{equation}
\label{t_alpha_2}
\lambda(\sigma_\alpha,\delta(k))=\rt(\alpha)^{-1}.
\end{equation}
Observe also that, if $\alpha\in W_{n,d}$ and $\sigma_1,\sigma_2\in S_d$ are such that
$\sigma_1(\alpha)=\sigma_2(\alpha)$, then $\lambda(\sigma_1,\alpha)=\lambda(\sigma_2,\alpha)$.
In other words, $\lambda(\sigma,\alpha)$ depends only on $\alpha$
and $\sigma(\alpha)$. Since the orbit of $\delta(k)$ under the action of $S_d$ is exactly $\rp^{-1}(k)$,
we conclude from~\eqref{w_q_2} and~\eqref{t_alpha_2} that
\begin{equation}
\label{w_q_3}
w_q(k)=\min\{ |\rt(\alpha)|^{-1} : \alpha\in \rp^{-1}(k)\}.
\end{equation}

\begin{proof}[Proof of Theorem~\upshape{\ref{thm:q_poly_quot_free^T_poly}}]
Let $x=(x_1,\ldots ,x_n)\in\cO_q(\DD^n_r)^n$. For each $\rho\in (0,r)$, each $d\in\Z_+$,
and each $\alpha\in W_{n,d}$, we clearly have $\| x_\alpha\|_{\DD,\rho}\le\rho^d$.
Hence $x$ is strictly spectrally $r$-contractive, and Proposition~\ref{prop:univ_F_poly_T}
yields the required homomorphism~\eqref{pi^T}.
The homomorphism $\pi$ given by~\eqref{pi} is then the composition
of $\pi^\rT$ with the canonical embedding
\[
\nu\colon\cF(\DD^n_r)\to\cF^\rT(\DD^n_r),\qquad \zeta_i\mapsto\zeta_i\quad (i=1,\ldots ,n).
\]
By Theorem~\ref{thm:q_poly_quot_free_poly}, $\pi$ is onto, and hence so is $\pi^\rT$.
This proves (i).

Since $\Ker\pi$ is a complemented subspace of $\cF(\DD^n_r)$,
there exists a continuous linear map $\varkappa\colon\cO_q(\DD^n_r)\to\cF(\DD^n_r)$
such that $\pi\varkappa=\id$. Letting
$\varkappa^\rT=\nu\varkappa\colon\cO_q(\DD^n_r)\to\cF^\rT(\DD^n_r)$,
we see that $\pi^\rT\varkappa^\rT=\id$, whence $\Ker\pi^\rT$ is a complemented subspace
of $\cF^\rT(\DD^n_r)$. This proves (iii).

Using the density of $\Im\nu$ in $\cF^\rT(\DD^n_r)$, we obtain
\[
\Ker\pi^\rT
=\Im(\id-\varkappa^\rT\pi^\rT)
=\ol{\Im\bigl((\id-\varkappa^\rT\pi^\rT)\nu\bigr)}
=\ol{\Im\bigl(\nu(\id-\varkappa\pi)\bigr)}
=\ol{\nu(\Ker\pi)}.
\]
Now (ii) follows from Theorem~\ref{thm:q_poly_quot_free_poly} (ii).

Since \eqref{pi} and \eqref{pi^T} are surjective, the Open Mapping Theorem yields topological
isomorphisms
\[
\cO_q(\DD_r^n)\cong\cF(\DD^n_r)/\Ker\pi\cong \cF^\rT(\DD^n_r)/\Ker\pi^\rT.
\]
To prove parts (iv) of Theorems~\ref{thm:q_poly_quot_free_poly}
and~\ref{thm:q_poly_quot_free^T_poly}, we have to show that
\begin{equation}
\label{three_norms}
\| \cdot\|_{\DD,\rho}=\|\cdot\|_\rho^\wedge=\| \cdot\|_{\rho,\tau}^\wedge,
\end{equation}
where $\|\cdot\|_\rho^\wedge$ and $\| \cdot\|_{\rho,\tau}^\wedge$ are the quotient
norms of $\|\cdot\|_\rho$ and $\| \cdot\|_{\rho,\tau}$, respectively.
Let $f=\sum_{\alpha\in W_n} c_\alpha \zeta_\alpha\in\cF^\rT(\DD^n_r)$.
We have
\[
\pi^\rT(f)=\sum_{\alpha\in W_n} c_\alpha x_\alpha
=\sum_{k\in\Z_+^n}\biggl(\sum_{\alpha\in \rp^{-1}(k)} c_\alpha \rt(\alpha)\biggr) x^k.
\]
Together with~\eqref{w_q_3}, this yields
\begin{equation}
\label{Drho<=rho}
\begin{split}
\| \pi^\rT(f)\|_{\DD,\rho}
&\le \sum_{k\in\Z_+^n}\biggl(\sum_{\alpha\in \rp^{-1}(k)} |c_\alpha \rt(\alpha)|\biggr)
\| x^k\|_{\DD,\rho}\\
&\le \sum_{k\in\Z_+^n} \biggl( \max_{\alpha\in \rp^{-1}(k)} |\rt(\alpha)|
\sum_{\alpha\in \rp^{-1}(k)} |c_\alpha| \biggr) w_q(k) \rho^{|k|}\\
&= \sum_{k\in\Z_+^n} \biggl(\sum_{\alpha\in \rp^{-1}(k)} |c_\alpha|\biggr) \rho^{|k|}
= \| f\|_\rho.
\end{split}
\end{equation}
If now $f\in\cF(\DD^n_r)$ and $\tau\ge 1$, then
\begin{equation}
\label{Drho<=rho_tau}
\| \pi(f)\|_{\DD,\rho}=\| \pi^\rT(\nu(f))\|_{\DD,\rho}\le \|\nu(f)\|_\rho=\| f\|_{\rho,1}\le\| f\|_{\rho,\tau}.
\end{equation}
From~\eqref{Drho<=rho} and \eqref{Drho<=rho_tau}, we conclude that
$\|\cdot\|_{\DD,\rho}\le\| \cdot\|_\rho^\wedge$ and
$\|\cdot\|_{\DD,\rho}\le\|\cdot\|_{\rho,\tau}^\wedge$.
On the other hand, both $\|\cdot\|_\rho^\wedge$ and $\| \cdot\|_{\rho,\tau}^\wedge$ are
submultiplicative norms on $\cO_q(\DD^n_r)$, and we have
$\| x_i\|_\rho^\wedge\le \|\zeta_i\|_\rho=\rho$ and
$\| x_i\|_{\rho,\tau}^\wedge\le \|\zeta_i\|_{\rho,\tau}=\rho$.
By the maximality property of $\|\cdot\|_{\DD,\rho}$
\cite[Lemma 5.10]{Pir_qfree}, it follows that
$\|\cdot\|_\rho^\wedge\le\|\cdot\|_{\DD,\rho}$ and
$\|\cdot\|_{\rho,\tau}^\wedge\le\|\cdot\|_{\DD,\rho}$.
Together with the above estimates, this gives~\eqref{three_norms}
and completes the proof
of (iv), both for Theorem~\ref{thm:q_poly_quot_free_poly}
and Theorem~\ref{thm:q_poly_quot_free^T_poly}.
\end{proof}

\section{Quantum ball as a quotient of the free ball}
\label{sect:free_ball}
The goal of this section is to prove a quantum ball analog of Theorems~\ref{thm:q_poly_quot_free_poly}
and \ref{thm:q_poly_quot_free^T_poly}.
Towards this goal, it will be convenient to introduce a
``hilbertian'' version of the algebra $\wh{T}_r(E)$. In what follows,
given Hilbert spaces $H_1$ and $H_2$, their Hilbert tensor product will
be denoted by $H_1\Htens H_2$. The Hilbert tensor product of $n$ copies
of a Hilbert space $H$ will be denoted by $H^{\dot\otimes n}$.

Given $r>0$ and a Hilbert space $H$, let
\[
\dot T_r(H)=\Bigl\{ a=\sum_{d=0}^\infty a_d : a_d\in H^{\dot\otimes d},\;
\| a\|_{\rho}^\bullet=\sum_d \| a_d\| \rho^d<\infty\;\forall\rho\in (0,r)\Bigr\}.
\]
Clearly, $\dot T_r(H)$ is a Fr\'echet space for the topology
determined by the norms $\|\cdot\|_\rho^\bullet\; (\rho>0)$.
Similarly to the case of $\wh{T}_r(E)$, it is easy to check that
each norm $\|\cdot\|_\rho^\bullet$ is submultiplicative on the
tensor algebra $T(H)\subset \dot T_r(H)$. Therefore there exists
a unique continuous multiplication on $\dot T_r(H)$ extending that of $T(H)$,
and $\dot T_r(H)$ becomes an Arens-Michael algebra containing
$T(H)$ as a dense subalgebra.

\begin{definition}
\label{def:F_ball}
Let $\CC^n_2$ be the vector space $\CC^n$ endowed with the inner product
$\la x,y\ra=\sum_{i=1}^n x_i\bar y_i$.
The algebra $\dot T_r(\CC^n_2)$ will be denoted by $\cF(\BB_r^n)$ and will be called
{\em the algebra of holomorphic functions on the free $n$-ball of
radius $r$}.
\end{definition}

Using the canonical isometric isomorphisms $\CC^m_2\Htens\CC^n_2\cong\CC^{mn}_2$,
we see that
\begin{equation}
\label{F_ball}
\cF(\BB_r^n)=\biggl\{ a=\sum_{\alpha\in W_n} c_\alpha\zeta_\alpha\in\fF_n :
\| a\|_\rho^\bullet=\sum_{d=0}^\infty\Bigl(\sum_{|\alpha|=d}%
|c_\alpha|^2\Bigr)^{1/2} \rho^d<\infty
\;\forall \rho\in (0,r)\biggr\}.
\end{equation}

Let us first compare $\cF(\BB^n_r)$ with $\cF^\rT(\DD^n_r)$ and observe,
in particular, that Definition~\ref{def:F_ball} is consistent with
our convention that $\BB_\infty^n=\DD^n_\infty=\CC^n$
(cf. Proposition~\ref{prop:FD_vs_FTD}).

\begin{prop}
\label{prop:FTD_vs_FB}
We have $\cF^\rT(\DD_r^n)\subset\cF(\BB_r^n)$, and the embedding
$\cF^\rT(\DD_r^n)\to\cF(\BB_r^n)$ is continuous.
For $r=\infty$, this embedding is a topological algebra isomorphism.
\end{prop}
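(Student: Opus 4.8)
The plan is to reduce everything to the elementary comparison between the $\ell^1$- and $\ell^2$-norms on each homogeneous component, exactly as the proof of Proposition~\ref{prop:FD_vs_FTD} reduced matters to the inequalities \eqref{2norms_Fn}. Comparing the defining norms \eqref{F_poly_T} and \eqref{F_ball}, I observe that for any $a=\sum_{\alpha\in W_n} c_\alpha\zeta_\alpha\in\fF_n$ both $\|a\|_\rho$ and $\|a\|_\rho^\bullet$ are sums over $d\in\Z_+$ of the degree-$d$ contribution $\rho^d$ times a norm of the finite vector $(c_\alpha)_{|\alpha|=d}$; the only difference is that $\cF^\rT(\DD_r^n)$ uses the $\ell^1$-norm $\sum_{|\alpha|=d}|c_\alpha|$ of this vector, while $\cF(\BB_r^n)$ uses its $\ell^2$-norm.

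First I would record the two norm inequalities on $\fF_n$ that this comparison yields. From $(\sum_{|\alpha|=d}|c_\alpha|^2)^{1/2}\le\sum_{|\alpha|=d}|c_\alpha|$, summing over $d$ with weights $\rho^d$, I get
\[
\|a\|_\rho^\bullet\le\|a\|_\rho\qquad(\rho>0).
\]
In the opposite direction, since $\card W_{n,d}=n^d$, the Cauchy--Schwarz inequality gives $\sum_{|\alpha|=d}|c_\alpha|\le n^{d/2}(\sum_{|\alpha|=d}|c_\alpha|^2)^{1/2}$, and absorbing the factor $n^{d/2}$ into the radius yields
\[
\|a\|_\rho\le\|a\|_{\sqrt n\,\rho}^\bullet\qquad(\rho>0).
\]

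The first inequality immediately shows $\cF^\rT(\DD_r^n)\subset\cF(\BB_r^n)$ with continuous inclusion, since the generating seminorms $\|\cdot\|_\rho^\bullet$ of the target are dominated by the generating seminorms $\|\cdot\|_\rho$ of the source; and because both algebras carry the multiplication induced from $\fF_n$, this inclusion is automatically an algebra homomorphism. For $r=\infty$ I would then use the second inequality: if $a\in\cF(\BB_\infty^n)$, so that $\|a\|_\sigma^\bullet<\infty$ for all $\sigma>0$, then taking $\sigma=\sqrt n\,\rho$ gives $\|a\|_\rho\le\|a\|_{\sqrt n\,\rho}^\bullet<\infty$ for every $\rho>0$, whence $a\in\cF^\rT(\DD_\infty^n)$ and the inclusion is onto. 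The same inequality shows that each seminorm $\|\cdot\|_\rho$ on the target is dominated by a generating seminorm on the source, so the inverse is continuous and the embedding becomes a topological algebra isomorphism.

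I do not expect a genuine obstacle: the whole argument is simply the $\ell^1$--$\ell^2$ analog of Proposition~\ref{prop:FD_vs_FTD}. The only point to keep track of is that the reverse inequality inflates the radius by the factor $\sqrt n$ (coming from the $n^d$-dimensional degree-$d$ slice), so that it identifies the two topologies only in the limit $r=\infty$, mirroring the role of the factor $\tau$ in \eqref{2norms_Fn}.
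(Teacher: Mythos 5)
Your proposal is correct and follows essentially the same route as the paper: the inequality $\|a\|_\rho^\bullet\le\|a\|_\rho$ (termwise $\ell^2\le\ell^1$ on each degree-$d$ slice) for the continuous inclusion, and the Cauchy--Bunyakowsky--Schwarz estimate $\|a\|_\rho\le\|a\|_{\rho\sqrt n}^\bullet$, with the radius inflated by $\sqrt n$, for the isomorphism when $r=\infty$. The extra details you supply (the inclusion being an algebra map, surjectivity for $r=\infty$) are fine and are left implicit in the paper's proof.
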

\begin{proof}
Given $a=\sum_\alpha c_\alpha \zeta_\alpha\in\fF_n$ and $\rho>0$, we have
\[
\| a\|_\rho^\bullet\le \sum_d \Bigl(\sum_{|\alpha|=d} |c_\alpha|\Bigr) \rho^d=\| a\|_\rho.
\]
This yields a continuous inclusion
of $\cF^\rT(\DD^n_r)$ into $\cF(\BB^n_r)$. On the other hand,
by using the Cauchy-Bunyakowsky-Schwarz inequality, we obtain
\[
\| a\|_\rho\le \sum_d\Bigl(\sum_{|\alpha|=d} |c_\alpha|^2\Bigr)^{1/2} n^{d/2} \rho^d
= \| a\|_{\rho\sqrt{n}}^\bullet.
\]
This shows that the inclusion map is a topological isomorphism for $r=\infty$.
\end{proof}

Our next goal is to show that $\cF(\BB^n_r)$ coincides with the
algebra $\cHol(\cB(\ccH)_r^n)$ of ``free holomorphic functions on the open
operatorial ball'' introduced by G.~Popescu \cite{Pop_holball}.
To this end, let us recall some results from \cite{Pop_holball}.

Let $H$ be a Hilbert space, let $\cB(H)$ denote the algebra of bounded linear operators on $H$,
and let $T=(T_1,\ldots ,T_n)$ be an $n$-tuple in $\cB(H)^n$.
Following \cite{Pop_holball}, we identify $T$ with the ``row'' operator
acting from the Hilbert direct sum $H^n=H\oplus\cdots\oplus H$ to $H$.
Thus we have $\| T\|=\|\sum_{i=1}^n T_i T_i^*\|^{1/2}$.

Given a free formal series
$f=\sum_{\alpha\in W_n} c_\alpha\zeta_\alpha\in\fF_n$,
the {\em radius of convergence} $R(f)\in [0,+\infty]$
is given by
\[
\frac{1}{R(f)}=\limsup_{d\to\infty}%
\Bigl(\sum_{|\alpha|=d} |c_\alpha|^2\Bigr)^{\frac{1}{2d}}.
\]
By \cite[Theorem 1.1]{Pop_holball}, for each $T\in\cB(H)^n$ such that
$\| T\|<R(f)$, the series
\begin{equation}
\label{Pop_series}
\sum_{d=0}^\infty \Bigl(\sum_{|\alpha|=d} c_\alpha T_\alpha \Bigr)
\end{equation}
converges in $\cB(H)$ and, moreover, $\sum_d \| \sum_{|\alpha|=d} c_\alpha T_\alpha\|<\infty$.
On the other hand, if $H$ is infinite-dimensional,
then for each $R'>R(f)$ there exists $T\in\cB(H)^n$ with $\| T\|=R'$
such that the series~\eqref{Pop_series} diverges.
The collection of all $f\in\fF_n$ such that $R(f)\ge r$ is denoted
by $\cHol(\cB(\ccH)_r^n)$. By \cite[Theorem 1.4]{Pop_holball}, $\cHol(\cB(\ccH)_r^n)$
is a subalgebra of $\fF_n$. For each $f\in \cHol(\cB(\ccH)_r^n)$, each Hilbert space $H$,
and each $T\in\cB(H)^n$ with $\| T\|<r$, the sum of the series~\eqref{Pop_series}
is denoted by $f(T)$. The map
\[
\gamma_T\colon \cHol(\cB(\ccH)_r^n) \to \cB(H),\qquad f\mapsto f(T),
\]
is an algebra homomorphism.

Fix an infinite-dimensional Hilbert space $\ccH$, and,
for each $\rho\in (0,r)$, define a seminorm $\|\cdot\|_\rho^P$ on $\cHol(\cB(\ccH)_r^n)$
by
\[
\| f\|_\rho^P=\sup\{ \| f(T)\| : T\in\cB(\ccH)^n,\; \| T\|\le\rho\}.
\]
By \cite[Theorem 5.6]{Pop_holball}, $\cHol(\cB(\ccH)_r^n)$ is a Fr\'echet space for
the topology determined by the family $\{ \|\cdot\|_\rho^P : \rho\in (0,r)\}$ of seminorms.

The following result is implicitly contained in \cite{Pop_holball}. For the reader's convenience,
we give a proof here.

\begin{prop}
\label{prop:Pop=my}
For each $r\in (0,+\infty]$, $\cHol(\cB(\ccH)_r^n)=\cF(\BB^n_r)$ as topological algebras.
\end{prop}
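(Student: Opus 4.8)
The plan is to prove equality of two subsets of $\fF_n$, each carrying a Fréchet topology, by showing that the underlying sets coincide and that the defining families of seminorms are equivalent. The key is to understand Popescu's seminorm $\|\cdot\|_\rho^P$ in terms of the coefficients $c_\alpha$ of $f=\sum_\alpha c_\alpha\zeta_\alpha$, and to compare it with the explicit norm $\|\cdot\|_\rho^\bullet$ from \eqref{F_ball}. First I would observe that the condition $R(f)\ge r$ defining $\cHol(\cB(\ccH)_r^n)$ says precisely that $\limsup_{d\to\infty}(\sum_{|\alpha|=d}|c_\alpha|^2)^{1/2d}\le 1/r$, which is exactly the condition that $\| a\|_\rho^\bullet<\infty$ for all $\rho\in(0,r)$ (by the root test applied to the series $\sum_d(\sum_{|\alpha|=d}|c_\alpha|^2)^{1/2}\rho^d$). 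Hence the two sets coincide; it remains to match the topologies.

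The heart of the proof is the identity (or two-sided estimate) relating $\|\cdot\|_\rho^P$ and $\|\cdot\|_\rho^\bullet$. For the upper bound $\| f\|_\rho^P\le C\| f\|_{\rho'}^\bullet$ I would use that, for any $T\in\cB(\ccH)^n$ with $\| T\|\le\rho$, each homogeneous block satisfies $\|\sum_{|\alpha|=d}c_\alpha T_\alpha\|\le (\sum_{|\alpha|=d}|c_\alpha|^2)^{1/2}\| T\|^d$; this is the standard noncommutative Cauchy--Schwarz bound used throughout Popescu's theory, reflecting that the row operator $T$ has norm controlling $T_\alpha$ via the isometric structure of $\CC^n_2$. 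Summing over $d$ gives $\| f\|_\rho^P\le\| f\|_\rho^\bullet$ directly, with no loss of constant. For the reverse inequality the plan is to evaluate $f$ on the $n$-tuple $S=(S_1,\ldots,S_n)$ of creation (left shift) operators on the full Fock space $\fF(\CC^n_2)=\bigoplus_d (\CC^n_2)^{\dot\otimes d}$, realized inside $\cB(\ccH)$ after fixing an isometric embedding. Since the $S_i$ have orthogonal ranges, the vectors $S_\alpha e_\emptyset=e_\alpha$ are orthonormal, so $\| f(S)e_\emptyset\|^2=\sum_{|\alpha|=d}|c_\alpha|^2$ on each homogeneous piece, and $\| S\|=1$. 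Scaling by $\rho$ (i.e. evaluating at $\rho S$) I would recover $\| f\|_\rho^P\ge(\sum_{|\alpha|=d}|c_\alpha|^2)^{1/2}\rho^d$ for each $d$, which after a standard Cauchy-estimate/comparison argument on the two families (passing from fixed $\rho$ to slightly larger $\rho'$) yields $\|\cdot\|_\rho^\bullet\le\|\cdot\|_{\rho'}^P$ for $\rho<\rho'<r$.

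Having the two-sided estimates $\|\cdot\|_\rho^P\le\|\cdot\|_\rho^\bullet$ and $\|\cdot\|_\rho^\bullet\le\|\cdot\|_{\rho'}^P$ on a cofinal set of radii, I would conclude that the families $\{\|\cdot\|_\rho^P\}$ and $\{\|\cdot\|_\rho^\bullet\}$ generate the same Fréchet topology, and that the multiplications agree since both are induced from $\fF_n$. This gives the desired topological algebra isomorphism, in fact the identity map on the common underlying set.

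I expect the main obstacle to be the lower bound, i.e. extracting the full $\ell^2$-norm of the homogeneous coefficients from the supremum over operators. The creation operators on Fock space are the natural test operators, but some care is needed: evaluating $f(\rho S)$ only on the vacuum vector $e_\emptyset$ recovers $\sum_d\|\text{block}_d\|$ as an $\ell^2$-over-$\alpha$ sum rather than the $\ell^1$-over-$d$ sum that defines $\|\cdot\|_\rho^\bullet$, so one must either pick off individual homogeneous components (using that distinct degrees land in orthogonal Fock subspaces, which lets one estimate each $(\sum_{|\alpha|=d}|c_\alpha|^2)^{1/2}\rho^d$ separately from $\| f(\rho S)e_\emptyset\|$) and then sum the resulting geometric-type series by slightly enlarging $\rho$, or invoke the relevant norm computation already recorded in \cite{Pop_holball}. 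The degree-separation trick combined with the elementary inequality $\sum_d a_d\rho^d\le C\sup_d(a_d\rho'^d)$ for $\rho<\rho'$ is what makes the two seminorm families equivalent despite the $\ell^1$-versus-$\ell^\infty$ discrepancy across degrees.
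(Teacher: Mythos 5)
Your proposal is correct and takes essentially the same route as the paper's proof: the upper bound $\| f\|_\rho^P\le\| f\|_\rho^\bullet$ via the row-operator Cauchy--Schwarz estimate on homogeneous blocks, the lower bound via the left creation operators $\rho S$ on the full Fock space applied to the vacuum vector, and the $\ell^1$-versus-$\ell^\infty$ discrepancy across degrees resolved by the geometric-series comparison at a slightly larger radius (the paper packages this step as an auxiliary family of sup-seminorms $\|\cdot\|_\rho^{(\infty)}$ satisfying $\|\cdot\|_\rho^{(\infty)}\le\|\cdot\|_\rho^P\le\|\cdot\|_\rho^\bullet$). Your root-test identification of the underlying sets likewise matches the paper's appeal to Popescu's formula for $R(f)$.
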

\begin{proof}
For each $f=\sum_\alpha c_\alpha\zeta_\alpha\in\fF_n$ and each $\rho>0$, let
\[
\| f\|_\rho^{(\infty)}=\sup_{d\in\Z_+}\Bigl(\sum_{|\alpha|=d}%
|c_\alpha|^2\Bigr)^{1/2} \rho^d \in [0,+\infty].
\]
We clearly have $\| f\|_\rho^{(\infty)} \le \| f\|_\rho^\bullet$. On the other hand, if $\tau>\rho$,
then
\[
\| f\|_\rho^\bullet
= \sum_{d=0}^\infty\Bigl(\sum_{|\alpha|=d} |c_\alpha|^2\Bigr)^{1/2} \rho^d
\le \sum_{d=0}^\infty \left(\frac{\rho}{\tau}\right)^d \| f\|_\tau^{(\infty)}
= \left(\frac{\tau}{\tau-\rho}\right) \| f\|_\tau^{(\infty)}.
\]
Hence
\[
\cF(\BB_r^n)=\biggl\{ f=\sum_{\alpha\in W_n} c_\alpha\zeta_\alpha :
\| f\|_\rho^{(\infty)}<\infty \;\forall \rho\in (0,r)\biggr\},
\]
and the families $\{ \|\cdot\|_\rho^\bullet : 0<\rho<r\}$ and
$\{ \|\cdot\|_\rho^{(\infty)} : 0<\rho<r\}$ of norms on $\cF(\BB_r^n)$ are equivalent.

By \cite[Corollary 1.2]{Pop_holball}, for each
$f=\sum_\alpha c_\alpha\zeta_\alpha\in\fF_n$ we have
\begin{equation*}
R(f)=\sup\biggl\{ \rho\ge 0 : \text{ the sequence }
\Bigl\{\Bigl(\sum_{|\alpha|=d} |c_\alpha|^2\Bigr)^{1/2}\rho^d\Bigr\}_{d\in\Z_+}
\text{ is bounded}\biggr\}.
\end{equation*}
Thus $\cHol(\cB(\ccH)_r^n)=\cF(\BB^n_r)$ as algebras. To complete the proof, it suffices to show
that for each $\rho\in (0,r)$ we have
\begin{equation}
\label{Pop_my_equiv}
\|\cdot\|_\rho^{(\infty)} \le \|\cdot\|_\rho^P \le \| \cdot\|_\rho^\bullet
\end{equation}
on $\cF(\BB_r^n)$.

Fix $\rho\in (0,r)$ and $T\in\cB(\ccH)^n$ such that $\| T\|\le\rho$.
For each $f=\sum_\alpha c_\alpha\zeta_\alpha\in\cF(\BB^n_r)$, we have
\[
\begin{split}
\| f(T)\|
&= \Bigl\| \sum_\alpha c_\alpha T_\alpha\Bigr\|
\le \sum_{d=0}^\infty \Bigl\| \sum_{|\alpha|=d} c_\alpha T_\alpha\Bigr\|
\le \sum_{d=0}^\infty\Bigl(\sum_{|\alpha|=d} |c_\alpha|^2\Bigr)^{1/2}
\Bigl\| \sum_{|\alpha|=d} T_\alpha T_\alpha^*\Bigr\|^{1/2}\\
&\le\sum_{d=0}^\infty \Bigl(\sum_{|\alpha|=d} |c_\alpha|^2\Bigr)^{1/2}
\Bigl\| \sum_{i=1}^n T_i T_i^*\Bigr\|^{d/2}
\le \sum_{d=0}^\infty \Bigl(\sum_{|\alpha|=d} |c_\alpha|^2\Bigr)^{1/2} \rho^d
=\| f\|_\rho^\bullet.
\end{split}
\]
Thus $\| f\|_\rho^P\le \| f\|_\rho^\bullet$. Let now $S_1,\ldots ,S_n$
be the left creation operators on the full Fock space
\begin{equation}
\label{Fock}
H=\mathop{\dot\bigoplus}\limits_{d\in\Z_+} (\CC^n_2)^{\dot\otimes d}
\end{equation}
(where $\dot\oplus$ stands for the Hilbert direct sum). Recall that $S_i x=e_i\otimes x$
for each $x\in (\CC^n_2)^{\dot\otimes d}$ and each $d\in\Z_+$, where $e_1,\ldots ,e_n$
is the standard basis of $\CC^n$. Let $e_0\in H$ be the ``vacuum vector'', i.e.,
any element of the $0$th direct summand $\CC$ in~\eqref{Fock} with $\| e_0\|=1$.
Then $\{ S_\alpha e_0 : \alpha\in W_n\}$ is an orthonormal basis of $H$.
Let now $T_i=\rho S_i\; (i=1,\ldots ,n)$, and let $T=(T_1,\ldots ,T_n)$.
We have $\| T\|=\rho\|\sum_{i=1}^n S_i S_i^*\|^{1/2}=\rho$, and
\[
\begin{split}
\| f(T) \|
&= \Bigl\| \sum_\alpha c_\alpha \rho^{|\alpha|} S_\alpha\Bigr\|
\ge  \Bigl\| \sum_\alpha c_\alpha \rho^{|\alpha|} S_\alpha e_0\Bigr\|
= \Bigl( \sum_\alpha |c_\alpha|^2 \rho^{2|\alpha|}\Bigr)^{1/2}\\
&=\Bigl(\sum_d\Bigl(\sum_{|\alpha|=d} |c_\alpha|^2\Bigr)\rho^{2d}\Bigr)^{1/2}
\ge\sup_d\Bigl(\sum_{|\alpha|=d} |c_\alpha|^2\Bigr)^{1/2}\rho^d
=\| f\|_\rho^{(\infty)}.
\end{split}
\]
This yields \eqref{Pop_my_equiv} and completes the proof.
\end{proof}

\begin{remark}
It is immediate from \eqref{Pop_my_equiv} that each seminorm
$\|\cdot\|_\rho^P$ on $\cHol(\cB(\ccH)_r^n)$ is actually a norm.
\end{remark}

\begin{remark}
Note that the proof of Proposition~\ref{prop:Pop=my} does not rely on the
completeness of $\cHol(\cB(\ccH)_r^n)$. Since $\cF(\BB^n_r)$ is obviously complete,
we see that Proposition~\ref{prop:Pop=my} readily implies \cite[Theorem 5.6]{Pop_holball}.
\end{remark}

For future use, it will be convenient to modify the norms $\|\cdot\|_\rho^\bullet$ on
$\cF(\BB^n_r)$ as follows.
Observe that we have a $\Z_+$-grading
$F_n=\bigoplus_{d\in\Z_+} (F_n)_d$, where
\[
(F_n)_d=\spn\{\zeta_\alpha : |\alpha|=d\}=(\CC^n_2)^{\dot\otimes d}.
\]
The norm $\|\cdot\|_\rho^\bullet$ on $F_n$ is then given by
$\| f\|_\rho^\bullet=\sum_{d\in\Z_+} \| f_d\| \rho^d$, where $f_d\in (F_n)_d$ is the $d$th
homogeneous component of $f$ and $\|\cdot\|$ is the hilbertian norm
on $(F_n)_d$.
Consider now a finer $\Z_+^n$-grading
$F_n=\bigoplus_{k\in\Z_+^n} (F_n)_k$, where
\[
(F_n)_k=\spn\{\zeta_\alpha : \alpha\in\rp^{-1}(k)\}.
\]
Observe that for each $d\in\Z_+$ we have $(F_n)_d=\bigoplus_{k\in (\Z_+^n)_d}(F_n)_k$.
For each $\rho>0$, define a new norm $\|\cdot\|_\rho^\circ$ on $F_n$ by
\[
\| f\|_\rho^\circ=\sum_{k\in\Z_+^n} \| f_k\| \rho^{|k|},
\]
where $f_k\in (F_n)_k$ is the $k$th
homogeneous component of $f$ and $\|\cdot\|$ is the hilbertian norm
on $(F_n)_k$ inherited from $(F_n)_{|k|}$. Explicitly, for $f=\sum_\alpha c_\alpha\zeta_\alpha\in F_n$
we have
\[
\| f\|_\rho^\circ=\sum_{k\in\Z_+^n}
\biggl(\sum_{\alpha\in \rp^{-1}(k)} |c_\alpha|^2\biggr)^{1/2} \rho^{|k|}.
\]

\begin{lemma}
\label{lemma:equiv_norms_free_ball}
For each $\rho>0$, the norm $\|\cdot\|_\rho^\circ$ is submultiplicative, and the families
\begin{equation}
\label{norms_free_ball}
\{ \|\cdot\|_\rho^\bullet : \rho\in (0,r)\}\quad\text{and}\quad\{ \|\cdot\|_\rho^\circ : \rho\in (0,r)\}
\end{equation}
of norms are equivalent on $F_n$.
\end{lemma}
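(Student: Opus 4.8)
The plan is to exploit the fact that both norms are assembled from gradings of $F_n$ that are respected by the concatenation product, and to compare them degree by degree.

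First I would record the orthogonality that underlies $\|\cdot\|_\rho^\circ$. Since $\{\zeta_\alpha : \alpha\in W_n\}$ is an orthonormal basis of $F_n$ for the hilbertian structure inherited from the powers $(\CC^n_2)^{\dot\otimes d}$, and since the sets $\rp^{-1}(k)$ are pairwise disjoint, the decomposition $(F_n)_d=\bigoplus_{k\in(\Z_+^n)_d}(F_n)_k$ is \emph{orthogonal}. Hence the $d$th homogeneous component $f_d$ of $f\in F_n$ satisfies $\|f_d\|=\bigl(\sum_{k\in(\Z_+^n)_d}\|f_k\|^2\bigr)^{1/2}$, where $f_k$ is the $k$th homogeneous component. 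This rewrites $\|f\|_\rho^\bullet$ as $\sum_d\bigl(\sum_{k\in(\Z_+^n)_d}\|f_k\|^2\bigr)^{1/2}\rho^d$ and $\|f\|_\rho^\circ$ as $\sum_d\bigl(\sum_{k\in(\Z_+^n)_d}\|f_k\|\bigr)\rho^d$, i.e.\ as sums over $d$ of the $\ell^2$- resp.\ $\ell^1$-norm of the finite vector $(\|f_k\|)_{k\in(\Z_+^n)_d}$.

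For the equivalence of the two families, one inclusion is immediate: since $\ell^2\le\ell^1$ on the finite index set $(\Z_+^n)_d$, the rewriting gives $\|f\|_\rho^\bullet\le\|f\|_\rho^\circ$ for every $\rho$. For the reverse estimate I would apply Cauchy--Schwarz in the opposite direction, $\sum_{k\in(\Z_+^n)_d}\|f_k\|\le(\card(\Z_+^n)_d)^{1/2}\|f_d\|$, and note that $\card(\Z_+^n)_d=\binom{d+n-1}{n-1}$ grows only polynomially in $d$. This factor cannot be absorbed at a fixed radius, but it is absorbed by an arbitrarily small enlargement of the radius: for any $\tau\in(\rho,r)$ the quantity $\binom{d+n-1}{n-1}^{1/2}(\rho/\tau)^d$ is bounded in $d$, whence $\|f\|_\rho^\circ\le C\,\|f\|_\tau^\bullet$ for a constant $C=C(\rho,\tau,n)$. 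Together with $\|f\|_\rho^\bullet\le\|f\|_\rho^\circ$ this yields the asserted equivalence of the families~\eqref{norms_free_ball}.

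For submultiplicativity of $\|\cdot\|_\rho^\circ$ the crucial point is that concatenation is compatible with the finer grading: since $\rp(\alpha\beta)=\rp(\alpha)+\rp(\beta)$ we have $(F_n)_k\cdot(F_n)_l\subseteq(F_n)_{k+l}$, and moreover the product of homogeneous elements is \emph{isometric}, $\|uv\|=\|u\|\,\|v\|$ for $u\in(F_n)_k$ and $v\in(F_n)_l$. This identity is exactly the statement that the concatenation $uv$ is the elementary Hilbert tensor $u\otimes v$ inside $(\CC^n_2)^{\dot\otimes(|k|+|l|)}$, for which $\|u\otimes v\|=\|u\|\,\|v\|$ holds for single vectors. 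Granting it, the standard Cauchy-product estimate finishes the proof: writing $(fg)_m=\sum_{k+l=m}f_kg_l$ and using the triangle inequality together with $\|f_kg_l\|=\|f_k\|\,\|g_l\|$, one gets
\[
\|fg\|_\rho^\circ=\sum_m\|(fg)_m\|\,\rho^{|m|}
\le\sum_{k,l}\|f_k\|\,\|g_l\|\,\rho^{|k|+|l|}=\|f\|_\rho^\circ\,\|g\|_\rho^\circ .
\]
I expect the isometry $\|uv\|=\|u\|\,\|v\|$ --- that is, recognizing the concatenation product as a genuine Hilbert tensor product rather than merely a contraction --- to be the conceptual crux, while the polynomial-growth absorption is the only genuinely quantitative point in the equivalence; since all sums on $F_n$ are finite, no convergence issues arise.
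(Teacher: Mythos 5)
Your proof is correct and follows essentially the same route as the paper's: the isometry $\|uv\|=\|u\otimes v\|=\|u\|\,\|v\|$ on $\Z_+^n$-homogeneous components together with the Cauchy-product estimate gives submultiplicativity, while the equivalence is obtained exactly as in the paper, via the triangle inequality (your $\ell^2\le\ell^1$ rephrasing) in one direction and Cauchy--Bunyakowsky--Schwarz with the polynomially growing factor $\card(\Z_+^n)_d=\binom{d+n-1}{n-1}$ absorbed by an enlargement of the radius in the other. The only cosmetic difference is that you record the orthogonality of the decomposition $(F_n)_d=\bigoplus_{k\in(\Z_+^n)_d}(F_n)_k$ up front to rewrite both norms, whereas the paper invokes orthogonality only in the reverse estimate; this changes nothing of substance.
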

\begin{proof}
Let $r,s\in\Z_+^n$, $a\in (F_n)_r\subset (\CC^n_2)^{\dot\otimes |r|}$,
and $b\in (F_n)_s\subset (\CC^n_2)^{\dot\otimes |s|}$. We clearly have
$\| ab\|=\| a\otimes b\|=\| a\| \| b\|$.
Hence for each $f,g\in F_n$ and each $\rho>0$ we obtain
\[
\begin{split}
\| fg\|_\rho^\circ
&=\sum_{k\in\Z_+^n} \| (fg)_k\|\rho^{|k|}
=\sum_{k\in\Z_+^n} \biggl\| \sum_{r+s=k} f_r g_s\biggr\| \rho^{|k|}\\
&\le \sum_{k\in\Z_+^n} \sum_{r+s=k} \| f_r\| \| g_s\| \rho^{|k|}
=\sum_{r,s\in\Z_+^n} \| f_r\| \| g_s\| \rho^{|r|} \rho^{|s|}
=\| f\|_\rho^\circ \| g\|_\rho^\circ.
\end{split}
\]
Thus $\|\cdot\|_\rho^\circ$ is submultiplicative.

Let now $f\in F_n$, and let $\rho\in (0,r)$.
By using the decomposition $(F_n)_d=\bigoplus_{k\in (\Z_+^n)_d} (F_n)_k$, we obtain
\begin{equation}
\label{bullet_prec_circ}
\begin{split}
\| f\|_\rho^\bullet
&=\sum_{d\in\Z_+} \| f_d\| \rho^d
=\sum_{d\in\Z_+} \biggl\| \sum_{k\in (\Z_+^n)_d} f_k\biggr\| \rho^d\\
&\le \sum_{d\in\Z_+} \sum_{k\in (\Z_+^n)_d} \| f_k\| \rho^d
=\sum_{k\in\Z_+^n} \| f_k\| \rho^{|k|}=\| f\|_\rho^\circ.
\end{split}
\end{equation}
Conversely, the orthogonality of the $f_k$'s and the Cauchy-Bunyakowsky-Schwarz inequality
yield the estimate
\begin{equation}
\label{circ_prec_bullet}
\| f\|_\rho^\circ
=\sum_{d\in\Z_+} \sum_{k\in (\Z_+^n)_d} \| f_k\| \rho^d
\le \sum_{d\in\Z_+} \biggl\| \sum_{k\in (\Z_+^n)_d} f_k\biggr\|
\left(\card (\Z_+^n)_d\right)^{1/2}\rho^d.
\end{equation}
We have
\begin{equation*}
\card (\Z_+^n)_d=\binom{d+n-1}{n-1}
\le (d+1)(d+2)\cdots (d+n-1)\le (d+n-1)^{n-1}.
\end{equation*}
Hence for each $\rho_1\in (\rho,r)$ we obtain
\[
C=\sup_{d\in\Z_+} \left(\card (\Z_+^n)_d\right)^{1/2}(\rho/\rho_1)^d <\infty.
\]
Together with \eqref{circ_prec_bullet}, this implies that
\begin{equation}
\label{circ_prec_bullet2}
\| f\|_\rho^\circ\le C \sum_{d\in\Z_+} \biggl\| \sum_{k\in (\Z_+^n)_d} f_k\biggr\| \rho_1^d
=C \sum_{d\in\Z_+} \| f_d\|\rho_1^d=C\| f\|_{\rho_1}^\bullet.
\end{equation}
Now \eqref{bullet_prec_circ} and \eqref{circ_prec_bullet2} imply that
the families \eqref{norms_free_ball} of norms are equivalent.
\end{proof}

\begin{prop}
We have
\begin{equation*}
\cF(\BB_r^n)=\biggl\{ a=\sum_{\alpha\in W_n} c_\alpha\zeta_\alpha :
\| a\|_\rho^\circ=\sum_{k\in\Z_+^n}\Bigl(\sum_{\alpha\in \rp^{-1}(k)}%
|c_\alpha|^2\Bigr)^{1/2} \rho^{|k|}<\infty
\;\forall \rho\in (0,r)\biggr\}.
\end{equation*}
Moreover, each norm $\|\cdot\|_\rho^\circ$ on $\cF(\BB^n_r)$ is submultiplicative.
\end{prop}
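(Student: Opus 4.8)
The plan is to derive this proposition directly from Lemma~\ref{lemma:equiv_norms_free_ball}, whose two key estimates already do all the substantive work; the only thing left to verify is that they pass from the dense subalgebra $F_n$ to the completion $\cF(\BB^n_r)$.

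First I would observe that the two inequalities established inside the proof of Lemma~\ref{lemma:equiv_norms_free_ball}, namely $\|a\|_\rho^\bullet\le\|a\|_\rho^\circ$ (from \eqref{bullet_prec_circ}) and $\|a\|_\rho^\circ\le C\|a\|_{\rho_1}^\bullet$ for any $\rho<\rho_1<r$ (from \eqref{circ_prec_bullet2}), are computed entirely term-by-term in the homogeneous components. Every step used there, namely the triangle inequality, the orthogonality of distinct graded pieces, and the Cauchy--Bunyakowsky--Schwarz inequality together with the cardinality bound on $(\Z_+^n)_d$, remains valid for an arbitrary formal series $a=\sum_\alpha c_\alpha\zeta_\alpha\in\fF_n$ as soon as both sides are allowed to take values in $[0,+\infty]$. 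Hence both inequalities hold verbatim for every $a\in\fF_n$, not merely for polynomials. From the first inequality, finiteness of $\|a\|_\rho^\circ$ for all $\rho\in(0,r)$ forces finiteness of $\|a\|_\rho^\bullet$ for all such $\rho$, so $a\in\cF(\BB^n_r)$ by \eqref{F_ball}. Conversely, if $a\in\cF(\BB^n_r)$ then $\|a\|_{\rho_1}^\bullet<\infty$ for every $\rho_1\in(0,r)$, and the second inequality (applied with any $\rho_1\in(\rho,r)$) gives $\|a\|_\rho^\circ<\infty$ for every $\rho\in(0,r)$. These two implications together yield the asserted description of $\cF(\BB^n_r)$ as a set and show simultaneously that the families $\{\|\cdot\|_\rho^\bullet\}$ and $\{\|\cdot\|_\rho^\circ\}$ remain equivalent on all of $\cF(\BB^n_r)$.

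It then remains to upgrade submultiplicativity of $\|\cdot\|_\rho^\circ$ from $F_n$ to $\cF(\BB^n_r)$. Here I would simply rerun the computation of Lemma~\ref{lemma:equiv_norms_free_ball}: for $f,g\in\cF(\BB^n_r)$ the identity $(fg)_k=\sum_{\mu+\nu=k} f_\mu g_\nu$ still holds, the summands satisfy $\|f_\mu g_\nu\|=\|f_\mu\|\,\|g_\nu\|$ on the compatible graded pieces, and the same triangle-inequality-plus-reindexing argument delivers $\|fg\|_\rho^\circ\le\|f\|_\rho^\circ\|g\|_\rho^\circ$. Alternatively, one can invoke density of $F_n$ in $\cF(\BB^n_r)$ together with joint continuity of the multiplication and continuity of $\|\cdot\|_\rho^\circ$ (which follows from the equivalence with the defining norm $\|\cdot\|_\rho^\bullet$ just proved), and pass to the limit in $\|f_j g_j\|_\rho^\circ\le\|f_j\|_\rho^\circ\|g_j\|_\rho^\circ$ along sequences $f_j,g_j\in F_n$ with $f_j\to f$, $g_j\to g$. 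There is no genuine obstacle in this statement; the entire content lies in the preceding lemma, and the one point deserving attention is this routine passage from the dense polynomial subalgebra to its completion, which either approach handles cleanly.
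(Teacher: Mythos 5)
Your proposal is correct and takes essentially the same route as the paper, which disposes of this proposition in one line (``Immediate from Lemma~\ref{lemma:equiv_norms_free_ball}''); your argument simply makes explicit the routine passage from $F_n$ to the completion that the paper leaves to the reader. Your key observation---that the estimates \eqref{bullet_prec_circ} and \eqref{circ_prec_bullet2}, as well as the submultiplicativity computation, are term-by-term in the graded components and hence remain valid for arbitrary elements of $\fF_n$ with both sides allowed to take values in $[0,+\infty]$---is exactly the right way to justify that ``immediate,'' since the completion is realized concretely inside $\fF_n$.
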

\begin{proof}
Immediate from Lemma~\ref{lemma:equiv_norms_free_ball}.
\end{proof}

Let us introduce some notation.
Given $\alpha=(\alpha_1,\ldots ,\alpha_d)\in W_n$, let
\[
\rrm(\alpha)=\bigl| \{ (i,j) : 1\le i<j\le d,\; \alpha_i>\alpha_j\}\bigr|.
\]

\begin{lemma}
\label{lemma:t(alpha)}
For each $\alpha\in W_n$, we have $\rt(\alpha)=q^{-\rrm(\alpha)}$.
In other words, $x_\alpha=q^{-\rrm(\alpha)} x^{\rp(\alpha)}$ in $\cO_q^\reg(\CC^n)$.
\end{lemma}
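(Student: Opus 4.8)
The plan is to argue by induction on the number of inversions $\rrm(\alpha)$. First I would record that $\rt(\alpha)$ is well defined: since the ordered monomials $x^k$ ($k\in\Z_+^n$) form a basis of $\cO_q^\reg(\CC^n)$, the monomial $x^{\rp(\alpha)}$ is nonzero, so the scalar relating $x_\alpha$ to $x^{\rp(\alpha)}$ is unique. The base case is $\rrm(\alpha)=0$, which occurs precisely when the word $\alpha=(\alpha_1,\ldots,\alpha_d)$ is non-decreasing; in that case $x_\alpha$ already equals the ordered monomial $x^{\rp(\alpha)}$, so $\rt(\alpha)=1=q^{-0}$. This covers in particular the cases $|\alpha|\in\{0,1\}$.

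For the inductive step I would invoke the standard fact that a word with $\rrm(\alpha)>0$ has at least one adjacent descent, i.e. a position $j$ with $\alpha_j>\alpha_{j+1}$ (otherwise the word would be non-decreasing, hence inversion-free). Let $\alpha'$ be obtained from $\alpha$ by transposing the entries in positions $j$ and $j+1$. The key combinatorial point is that this single swap removes exactly one inversion: the pair $(j,j+1)$ ceases to be an inversion, while for every other index the relative order with respect to the block $\{j,j+1\}$ is unchanged, since both of these positions remain on the same side of it. Consequently $\rp(\alpha')=\rp(\alpha)$ and $\rrm(\alpha')=\rrm(\alpha)-1$.

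On the algebraic side, the defining relation $x_ix_j=qx_jx_i$ for $i<j$ gives, for the descent $\alpha_j>\alpha_{j+1}$, the identity $x_{\alpha_j}x_{\alpha_{j+1}}=q^{-1}x_{\alpha_{j+1}}x_{\alpha_j}$; here the only point requiring care is the direction of the $q$-factor, namely that fixing one inversion contributes $q^{-1}$ rather than $q$. Multiplying on the left and right by the unchanged subwords yields $x_\alpha=q^{-1}x_{\alpha'}$. Combining this with the inductive hypothesis $x_{\alpha'}=q^{-\rrm(\alpha')}x^{\rp(\alpha')}=q^{-(\rrm(\alpha)-1)}x^{\rp(\alpha)}$ gives $x_\alpha=q^{-\rrm(\alpha)}x^{\rp(\alpha)}$, and by uniqueness $\rt(\alpha)=q^{-\rrm(\alpha)}$, completing the induction.

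Since there is no genuine analytic or algebraic difficulty, the main thing to watch is the bookkeeping: confirming that an adjacent transposition at a descent changes the inversion count by exactly $\pm1$, and matching this with a single application of the commutation relation. An essentially equivalent route would be to sort any word into ordered form by bubble sort and observe that the number of elementary swaps equals $\rrm(\alpha)$, each contributing a factor $q^{-1}$; I find the inductive formulation cleaner because it sidesteps having to argue that the total number of swaps is independent of the chosen sorting order.
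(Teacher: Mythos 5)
Your proof is correct and follows essentially the same route as the paper's: induction on $\rrm(\alpha)$, with the base case $\rrm(\alpha)=0$ giving the ordered word $\delta(\rp(\alpha))$, and the inductive step performed by an adjacent transposition at a descent, which removes exactly one inversion and contributes the factor $q^{-1}$ via the relation $x_i x_j = q x_j x_i$ for $i<j$. The only cosmetic differences are that the paper swaps at the \emph{leftmost} descent while you allow any descent, and that your explicit verifications of the well-definedness of $\rt(\alpha)$ and of the inversion-count bookkeeping fill in details the paper dismisses as elementary.
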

\begin{proof}
We use induction on $\rrm(\alpha)$. If $\rrm(\alpha)=0$, then $\alpha=\delta(\rp(\alpha))$,
whence $x_\alpha=x^{\rp(\alpha)}$ and $\rt(\alpha)=1$. Suppose now that $\rrm(\alpha)=r>0$,
and assume that $\rt(\beta)=q^{-\rrm(\beta)}$ for all $\beta\in W_n$ such that $\rrm(\beta)<r$.
Let $s=\min\{ i\ge 2: \alpha_i<\alpha_{i-1}\}$, and let $\beta=(s\, s-1)(\alpha)$.
It is elementary to check that $\rrm(\beta)=r-1$. By the induction hypothesis, we have
$x_\beta=q^{1-r}x^k$, where $k=\rp(\beta)=\rp(\alpha)$. Therefore
$x_\alpha=q^{-1}x_\beta=q^{-r}x^k$, i.e., $\rt(\alpha)=q^{-r}$, as required.
\end{proof}

\begin{lemma}
\label{lemma:x^k_norm}
For each $k\in\Z_+^n$, we have
\[
\| x^k\|_{\BB,1}=\biggl(\sum_{\alpha\in \rp^{-1}(k)} |q|^{-2\rrm(\alpha)}\biggr)^{-1/2}.
\]
\end{lemma}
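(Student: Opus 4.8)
The plan is to evaluate the norm on the single monomial $x^k$ and thereby reduce the statement to a classical $q$-multinomial identity. Since $x^k$ has only one nonzero coefficient, the shorter norm formula~\eqref{qballnorms_2} gives at once
\[
\| x^k\|_{\BB,1}=\left(\frac{[k]_{|q|^{-2}}!}{\bigl[|k|\bigr]_{|q|^{-2}}!}\right)^{1/2}.
\]
Writing $p=|q|^{-2}$ and comparing with the asserted formula, the lemma becomes equivalent to the identity
\[
\frac{\bigl[|k|\bigr]_p!}{[k]_p!}=\sum_{\alpha\in\rp^{-1}(k)} p^{\rrm(\alpha)},
\]
since $|q|^{-2\rrm(\alpha)}=p^{\rrm(\alpha)}$. (One could instead start from the defining formula for $\|\cdot\|_{\BB,\rho}$, using $u_q(k)=|q|^{\sum_{i<j}k_ik_j}$ and Lemma~\ref{lemma:t(alpha)} to track the exponent, but~\eqref{qballnorms_2} is cleaner.)

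Next I would read off the right-hand side combinatorially. By definition $\rrm(\alpha)$ is the number of inversions of the word $\alpha$, and $\rp^{-1}(k)$ is exactly the set of rearrangements of the multiset consisting of $k_i$ copies of the letter $i$ for each $i$. Thus $\sum_{\alpha\in\rp^{-1}(k)} p^{\rrm(\alpha)}$ is the inversion generating polynomial over multiset permutations, and the displayed identity is the standard fact that this polynomial equals the Gaussian multinomial coefficient $\binom{|k|}{k}_p=[|k|]_p!/[k]_p!$.

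To keep the argument self-contained I would prove this identity by induction on $|k|$. Set $f_k(p)=\sum_{\alpha\in\rp^{-1}(k)} p^{\rrm(\alpha)}$, with base case $f_0=1$, and let $e_i$ denote the $i$th standard basis vector of $\Z_+^n$. Conditioning on the first letter $\alpha_1=i$, the inversions of $\alpha$ split into those involving position $1$ --- of which there are exactly $k_1+\cdots+k_{i-1}$, one for each letter of the tail smaller than $i$, independently of how the tail is arranged --- and the inversions internal to the tail $(\alpha_2,\ldots,\alpha_{|k|})\in\rp^{-1}(k-e_i)$. This yields the recursion
\[
f_k(p)=\sum_{i:\,k_i>0} p^{\,k_1+\cdots+k_{i-1}}\, f_{k-e_i}(p).
\]
It then remains to check that $g_k(p)=[|k|]_p!/[k]_p!$ obeys the same recursion, which reduces (after dividing by $g_k$ and using $g_{k-e_i}/g_k=[k_i]_p/[|k|]_p$) to the telescoping identity $[|k|]_p=\sum_{i:\,k_i>0} p^{\,k_1+\cdots+k_{i-1}}[k_i]_p$. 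The main --- and essentially the only nontrivial --- point is this inversion recursion; once it is in place, the two recursions with matching base cases force $f_k=g_k$, and the rest is routine bookkeeping.
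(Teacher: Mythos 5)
Your proposal is correct and follows the same route as the paper: both evaluate $\|x^k\|_{\BB,1}$ directly from \eqref{qballnorms_2} and reduce the lemma to the identity
\[
\frac{\bigl[|k|\bigr]_p!}{[k]_p!}=\sum_{\alpha\in\rp^{-1}(k)} p^{\rrm(\alpha)},
\qquad p=|q|^{-2},
\]
i.e., to MacMahon's theorem that the inversion generating polynomial over multiset permutations equals the Gaussian multinomial coefficient. The only difference is that the paper cites this fact (Theorem 3.6 of Andrews \cite{Andrews}), whereas you prove it from scratch: your first-letter recursion $f_k(p)=\sum_{i:\,k_i>0} p^{\,k_1+\cdots+k_{i-1}} f_{k-e_i}(p)$ is valid (the inversions contributed by position $1$ are indeed exactly the $k_1+\cdots+k_{i-1}$ smaller letters in the tail, independently of its arrangement), and the verification that $[|k|]_p!/[k]_p!$ satisfies the same recursion correctly reduces to the telescoping identity $[|k|]_p=\sum_{i:\,k_i>0} p^{\,k_1+\cdots+k_{i-1}}[k_i]_p$. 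So your argument buys self-containedness at the cost of a routine induction, while the paper's is shorter by outsourcing the combinatorics to a standard reference; mathematically they rest on the identical key fact.
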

\begin{proof}
Given $s\in\Z_+$, let
\[
\inv(k,s)=\card\{ \alpha\in \rp^{-1}(k) : \rrm(\alpha)=s\}.
\]
By \cite[Theorem 3.6]{Andrews}, we have
\[
\frac{\bigl[ |k|\bigr]_q!}{[k]_q!}
=\sum_{s\ge 0} \inv(k,s)q^s
=\sum_{\alpha\in \rp^{-1}(k)} q^{\rrm(\alpha)}.
\]
Together with \eqref{qballnorms_2}, this implies that
\[
\| x^k\|_{\BB,1}
=\left(\frac{[k]_{|q|^{-2}}!}{\bigl[ |k|\bigr]_{|q|^{-2}}!}\right)^{1/2}
=\biggl(\sum_{\alpha\in \rp^{-1}(k)} |q|^{-2\rrm(\alpha)}\biggr)^{-1/2}. \qedhere
\]
\end{proof}

We are now ready to prove the main result of this section.

\begin{theorem}
\label{thm:q_ball_quot_free_ball}
Let $q\in\CC^\times$, $n\in\N$, and $r\in (0,+\infty]$.
\begin{mycompactenum}
\item There exists a surjective continuous homomorphism
\begin{equation*}
\pi \colon \cF(\BB_r^n)\to\cO_q(\BB_r^n), \qquad
\zeta_i\mapsto x_i \quad (i=1,\ldots ,n).
\end{equation*}
\item $\Ker\pi$ coincides with
the closed two-sided ideal of $\cF(\BB_r^n)$
generated by the elements
$\zeta_i\zeta_j-q\zeta_j\zeta_i\; (i,j=1,\ldots ,n,\; i<j)$.
\item $\Ker\pi$ is a complemented subspace of $\cF(\BB_r^n)$.
\item Under the identification $\cO_q(\BB_r^n)\cong\cF(\BB^n_r)/\Ker\pi$,
the norm $\|\cdot\|_{\BB,\rho}$ on $\cO_q(\BB_r^n)$ is equal to the quotient of
the norm $\|\cdot\|_\rho^\circ$ on $\cF(\BB^n_r)\; (\rho\in (0,r))$.
\end{mycompactenum}
\end{theorem}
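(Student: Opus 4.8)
The plan is to follow the pattern of the proofs of Theorems~\ref{thm:q_poly_quot_free_poly} and~\ref{thm:q_poly_quot_free^T_poly}, but with the $\circ$-norms of $\cF(\BB^n_r)$ in place of the polydisk norms. First I would build $\pi$ by a direct estimate. Writing $f=\sum_\alpha c_\alpha\zeta_\alpha$ and using Lemma~\ref{lemma:t(alpha)} to collect terms, the coefficient of $x^k$ in $\pi(f)=\sum_\alpha c_\alpha x_\alpha$ equals $\sum_{\alpha\in\rp^{-1}(k)} c_\alpha q^{-\rrm(\alpha)}$. The Cauchy--Bunyakowsky--Schwarz inequality bounds its modulus by $(\sum_{\alpha\in\rp^{-1}(k)}|c_\alpha|^2)^{1/2}(\sum_{\alpha\in\rp^{-1}(k)}|q|^{-2\rrm(\alpha)})^{1/2}$, and multiplying by $\|x^k\|_{\BB,\rho}$ and invoking Lemma~\ref{lemma:x^k_norm}---whose value is exactly the reciprocal of the second factor---makes the two $\rrm$-sums cancel. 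Summing over $k$ yields the sharp estimate $\|\pi(f)\|_{\BB,\rho}\le\|f\|_\rho^\circ$ for every $\rho\in(0,r)$. Since the $\|\cdot\|_\rho^\circ$ are submultiplicative and define the topology of $\cF(\BB^n_r)$, and $\zeta_\alpha\mapsto x_\alpha$ is multiplicative on the dense free algebra $F_n$, this extends to a continuous homomorphism $\pi$, settling continuity in (i).

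Next I would extract the equality case of Cauchy--Schwarz to build an isometric section, which does most of the remaining work at one stroke. For each $k\in\Z_+^n$ set $s_k=\sum_{\alpha\in\rp^{-1}(k)} b_\alpha\zeta_\alpha\in(F_n)_k$, where $b_\alpha=\bigl(\sum_{\gamma\in\rp^{-1}(k)}|q|^{-2\rrm(\gamma)}\bigr)^{-1}\overline{q^{-\rrm(\alpha)}}$. A short computation gives $\pi(s_k)=x^k$ and, by Lemma~\ref{lemma:x^k_norm}, $\|s_k\|_\rho^\circ=\|x^k\|_{\BB,\rho}$. Because the $\circ$-norm is additive across the $\Z_+^n$-grading, the linear map $\varkappa\colon\cO_q(\BB^n_r)\to\cF(\BB^n_r)$, $\sum_k d_k x^k\mapsto\sum_k d_k s_k$, is a well-defined isometry for each pair $(\|\cdot\|_{\BB,\rho},\|\cdot\|_\rho^\circ)$ with $\pi\varkappa=\id$. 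Surjectivity of $\pi$ in (i) is then immediate; $\varkappa\pi$ is a continuous idempotent with kernel $\Ker\pi$, giving the complementation in (iii); and the inequality $\|\pi(f)\|_{\BB,\rho}\le\|f\|_\rho^\circ$ together with $\|\varkappa(g)\|_\rho^\circ=\|g\|_{\BB,\rho}$ pins the quotient norm between the two values, yielding (iv).

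For (ii), one inclusion is clear: the closed ideal $J$ generated by $\zeta_i\zeta_j-q\zeta_j\zeta_i$ lies in $\Ker\pi$. For the reverse I would show $\id-\varkappa\pi$ maps $\cF(\BB^n_r)$ into $J$; since $\Ker\pi=\Im(\id-\varkappa\pi)$, this gives $\Ker\pi\subseteq J$. As $\id-\varkappa\pi$ is continuous and $J$ is closed, it suffices to check this on the basis elements $\zeta_\alpha$ of $F_n$. Working modulo $J$, the defining relations let one sort any word into ascending order, so $\zeta_\beta\equiv q^{-\rrm(\beta)}\zeta_{\delta(k)}\pmod J$ for $\beta\in\rp^{-1}(k)$; applying this to $s_k$ and using $\sum_\beta b_\beta q^{-\rrm(\beta)}=1$ (the very normalization forcing $\pi(s_k)=x^k$) gives $s_k\equiv\zeta_{\delta(k)}$, whence $\varkappa\pi(\zeta_\alpha)=q^{-\rrm(\alpha)}s_k\equiv q^{-\rrm(\alpha)}\zeta_{\delta(k)}\equiv\zeta_\alpha\pmod J$. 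I expect the main obstacle to be purely the quantitative bookkeeping of the first two paragraphs: arranging that Cauchy--Schwarz is sharp and that the minimal-norm lift $s_k$ is simultaneously $\circ$-isometric, which hinges on recognizing (via Lemma~\ref{lemma:x^k_norm}) that $\|x^k\|_{\BB,1}^{-2}=\sum_{\alpha\in\rp^{-1}(k)}|q|^{-2\rrm(\alpha)}$ is exactly the Cauchy--Schwarz weight. Once this identity is in hand, parts (i), (iii), (iv) and the reduction in (ii) are formal.
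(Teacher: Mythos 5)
Your proposal is correct and is essentially the paper's own proof. Your estimate $\|\pi(f)\|_{\BB,\rho}\le\|f\|_\rho^\circ$ is obtained exactly as in the paper (Lemma~\ref{lemma:t(alpha)}, Cauchy--Bunyakowsky--Schwarz, Lemma~\ref{lemma:x^k_norm}), and your lift $s_k$ is literally the paper's element $a_k$: since $|q|^{-2\rrm(\alpha)}q^{\rrm(\alpha)}=\overline{q^{-\rrm(\alpha)}}$, the coefficients produced by the paper's Lagrange-multiplier minimization of \eqref{minimize} under \eqref{sum=1} coincide with your equality-case-of-Cauchy--Schwarz coefficients $b_\alpha$, so parts (i), (iii), (iv) proceed identically. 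The only divergence is in the packaging of (ii): the paper passes to the quotient $\cF(\BB^n_r)/I$, checks that $\bar\varkappa$ is an algebra homomorphism with dense range satisfying $\bar\pi\bar\varkappa=\id$, and concludes that $\bar\pi$ and $\bar\varkappa$ are mutually inverse isomorphisms, whereas you verify on the basis words $\zeta_\alpha$ that the continuous projection $\id-\varkappa\pi$ maps into the closed ideal $J$ and then use $\Ker\pi=\Im(\id-\varkappa\pi)$; both arguments rest on the same sorting identity $s_k\equiv\zeta_{\delta(k)}\pmod{J}$ together with the normalization $\sum_\beta b_\beta q^{-\rrm(\beta)}=1$, so this is a repackaging of the same idea rather than a different route.
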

\begin{proof}
Let $f=\sum_{\alpha\in W_n} c_\alpha \zeta_\alpha\in\cF(\BB^n_r)$.
We claim that the family
\[
\biggl\{\sum_{\alpha\in \rp^{-1}(k)} c_\alpha x_\alpha : k\in\Z_+^n\biggr\}
\]
is absolutely summable in $\cO_q(\BB^n_r)$. Indeed, using Lemma~\ref{lemma:t(alpha)},
the Cauchy-Bunyakowsky-Schwarz inequality, and Lemma~\ref{lemma:x^k_norm},
for each $\rho\in (0,r)$ we obtain
\begin{align}
\sum_{k\in\Z_+^n}\biggl\| \sum_{\alpha\in \rp^{-1}(k)} c_\alpha x_\alpha\biggr\|_{\BB,\rho}
&=\sum_{k\in\Z_+^n} \biggl| \sum_{\alpha\in \rp^{-1}(k)} c_\alpha q^{-\rrm(\alpha)}\biggr|
\| x^k\|_{\BB,\rho}\notag\\
&\le \sum_{k\in\Z_+^n} \biggl(\sum_{\alpha\in \rp^{-1}(k)} |c_\alpha|^2\biggr)^{1/2}
\biggl(\sum_{\alpha\in \rp^{-1}(k)} |q^{-2\rrm(\alpha)}|\biggr)^{1/2}
\| x^k\|_{\BB,1}\rho^{|k|}\notag\\
\label{Brho<=rho}
&= \sum_{k\in\Z_+^n} \biggl(\sum_{\alpha\in \rp^{-1}(k)} |c_\alpha|^2\biggr)^{1/2} \rho^{|k|}
= \| f\|_\rho^\circ.
\end{align}
Hence there exists a continuous linear map
\[
\pi\colon\cF(\BB^n_r)\to\cO_q(\BB^n_r),\quad \sum_{\alpha\in W_n} c_\alpha\zeta_\alpha
\mapsto \sum_{k\in\Z_+^n} \biggl(\sum_{\alpha\in \rp^{-1}(k)} c_\alpha x_\alpha\biggr).
\]
Clearly, $\pi$ is an algebra homomorphism. Moreover, \eqref{Brho<=rho} implies that
\begin{equation}
\label{Brho<=rho2}
\| \pi(f)\|_{\BB,\rho} \le \| f\|_\rho^\circ \qquad (f\in\cF(\BB^n_r)).
\end{equation}

Let us now construct a continuous linear map
$\varkappa\colon\cO_q(\BB^n_r)\to\cF(\BB^n_r)$ such that $\pi\varkappa=\id$.
To this end, observe that for each $k\in\Z_+^n$ we have
\begin{equation}
\label{preimage_opt}
x^k=\pi\biggl(\sum_{\alpha\in \rp^{-1}(k)} c_\alpha q^{\rrm(\alpha)}\zeta_\alpha\biggr)
\end{equation}
as soon as
\begin{equation}
\label{sum=1}
\sum_{\alpha\in \rp^{-1}(k)} c_\alpha=1.
\end{equation}
We have
\begin{equation}
\label{minimize}
\biggl\| \sum_{\alpha\in \rp^{-1}(k)} c_\alpha q^{\rrm(\alpha)}\zeta_\alpha\biggr\|_\rho^\circ
=\biggl(\sum_{\alpha\in \rp^{-1}(k)} |c_\alpha|^2 |q|^{2\rrm(\alpha)}\biggr)^{1/2}\rho^{|k|}.
\end{equation}
Our strategy is to minimize \eqref{minimize} under the condition~\eqref{sum=1}.
First observe that replacing $c_\alpha$ by $\Re c_\alpha$ preserves~\eqref{sum=1}
and does not increase~\eqref{minimize}. Thus we may assume that $c_\alpha\in\R$.
An elementary computation involving Lagrange multipliers shows that the minimum
of~\eqref{minimize} under the condition~\eqref{sum=1} is attained at
\begin{equation*}
c_\alpha^0=|q|^{-2\rrm(\alpha)}\biggl(\sum_{\beta\in \rp^{-1}(k)} |q|^{-2\rrm(\beta)}\biggr)^{-1}
\end{equation*}
and is equal to
\begin{equation}
\label{preimage_opt_norm}
\biggl(\sum_{\beta\in \rp^{-1}(k)} |q|^{-2\rrm(\beta)}\biggr)^{-1/2} \rho^{|k|}=\| x^k\|_{\BB,\rho}
\end{equation}
(see Lemma~\ref{lemma:x^k_norm}). Let now
\[
a_k=\sum_{\alpha\in \rp^{-1}(k)} c^0_\alpha q^{\rrm(\alpha)} \zeta_\alpha.
\]
By \eqref{preimage_opt} and \eqref{preimage_opt_norm}, we have
\begin{equation}
\label{preimage_opt_2}
\pi(a_k)=x^k,\quad \| a_k\|_\rho^\circ=\| x^k\|_{\BB,\rho}.
\end{equation}
Let us now define
\[
\varkappa\colon\cO_q(\BB^n_r)\to\cF(\BB^n_r),\quad
\sum_{k\in\Z_+^n} c_k x^k \mapsto \sum_{k\in\Z_+^n} c_k a_k.
\]
By \eqref{preimage_opt_2}, we have
\[
\sum_{k\in\Z_+^n} |c_k| \| a_k\|_\rho^\circ
=\sum_{k\in\Z_+^n} |c_k| \| x^k\|_{\BB,\rho}
=\biggl\| \sum_{k\in\Z_+^n} c_k x^k \biggr\|_{\BB,\rho},
\]
whence $\varkappa$ is indeed a continuous linear map from $\cO_q(\BB^n_r)$ to
$\cF(\BB^n_r)$. Moreover,
\begin{equation}
\label{varkappa_ball_est}
\| \varkappa(f)\|_\rho^\circ \le \| f\|_{\BB,\rho} \qquad (f\in\cO_q(\BB^n_r)).
\end{equation}
By \eqref{preimage_opt_2}, we also have $\pi\varkappa=\id$. This proves (i) and (iii).

Let now $I\subset\cF(\BB^n_r)$ denote the closed two-sided ideal generated by
$\zeta_i\zeta_j-q\zeta_j\zeta_i\; (i<j)$. Clearly, $I\subset\Ker\pi$, and hence
$\pi$ induces a continuous homomorphism
\[
\bar\pi\colon\cF(\BB^n_r)/I\to\cO_q(\BB^n_r),\quad \bar\zeta_i\mapsto x_i \quad (i=1,\ldots ,n),
\]
where we let $\bar f=f+I\in\cF(\BB^n_r)/I$ for each $f\in\cF(\BB^n_r)$.
Let $\bar\varkappa\colon \cO_q(\BB^n_r)\to\cF(\BB^n_r)/I$ denote the composite of $\varkappa$
with the quotient map $\cF(\BB^n_r)\to\cF(\BB^n_r)/I$. It is immediate from $\pi\varkappa=\id$
that $\bar\pi\bar\varkappa=\id$. On the other hand, we obviously have
$\bar\varkappa(x_i)=\bar\zeta_i$ for each $i$, and it follows from the definition of $I$ that $\bar\varkappa$
is an algebra homomorphism. Since the elements $\bar\zeta_1,\ldots ,\bar\zeta_n$ generate a dense
subalgebra of $\cF(\BB^n_r)/I$, we conclude that
$\Im\bar\varkappa$ is dense in $\cF(\BB^n_r)/I$.
Together with $\bar\pi\bar\varkappa=\id$, this implies that $\bar\pi$ and $\bar\varkappa$
are topological isomorphisms. Therefore $I=\Ker\pi$, which proves (ii).

To prove (iv), let us identify $\cF(\BB^n_r)/I$ with $\cO_q(\BB^n_r)$ via $\bar\pi$,
and let $\|\cdot\|_\rho^\wedge$ denote the quotient norm of $\|\cdot\|_\rho^\circ\; (\rho\in (0,r))$.
By~\eqref{Brho<=rho2}, we have $\|\cdot\|_{\BB,\rho}\le\|\cdot\|_\rho^\wedge$,
while~\eqref{varkappa_ball_est} together with $\pi\varkappa=\id$ yields the opposite
estimate. This completes the proof.
\end{proof}

\section{Constructing the deformations}
\label{sect:deforms}

In this section we construct our main objects, i.e., Fr\'echet $\cO(\CC^\times)$-algebras
(together with the associated Fr\'echet algebra bundles over $\CC^\times$)
whose fibers over $q\in\CC^\times$ are isomorphic to
$\cO_q(\DD^n_r)$ and $\cO_q(\BB^n_r)$.
For basic facts on locally convex bundles and for related notation we refer to Appendix A.

If $K$ is a commutative Fr\'echet algebra, then by a {\em Fr\'echet $K$-algebra}
we mean a complete metrizable locally convex $K$-algebra $A$ such that
the action $K\times A\to A,\; (k,a)\mapsto ka$, is continuous.
Given a reduced Stein space $X$ and a Fr\'echet algebra $A$, we let $\cO(X,A)$
denote the Fr\'echet algebra of all holomorphic $A$-valued functions on $X$.
Clearly, $\cO(X,A)$ is a Fr\'echet $\cO(X)$-algebra in a canonical way.
By \cite[II.3.3]{Groth} (see also \cite[II.4.14]{X1}),
we have a topological isomorphism
\begin{equation}
\label{vec_hol}
\cO(X)\Ptens A\to\cO(X,A), \qquad f\otimes a\mapsto (x\mapsto f(x)a).
\end{equation}
From now on, we identify $\cO(X)$ and $A$ with subalgebras of $\cO(X,A)$
via the embeddings $f\mapsto f\otimes 1$ and $a\mapsto 1\otimes a$, respectively.

Let $z\in\cO(\CC^\times)$ denote the complex coordinate
(i.e., the inclusion map $\CC^\times\hookrightarrow\CC$).
Fix $r\in (0,+\infty]$, and let $I_\DD$, $I_\DD^\rT$, and $I_\BB$ denote the closed two-sided
ideals of $\cO(\CC^\times,\cF(\DD^n_r))$,  $\cO(\CC^\times,\cF^\rT(\DD^n_r))$,
and $\cO(\CC^\times,\cF(\BB^n_r))$, respectively, generated
by the elements $\zeta_i \zeta_j-z \zeta_j \zeta_i$\; $(i<j)$. Consider the
Fr\'echet $\cO(\CC^\times)$-algebras
\begin{align}
\label{def_DD_BB}
\cO_\defo(\DD^n_r)&=\cO(\CC^\times,\cF(\DD^n_r))/I_\DD,\notag\\
\cO_\defo^\rT(\DD^n_r)&=\cO(\CC^\times,\cF^\rT(\DD^n_r))/I_\DD^\rT,\\
\cO_\defo(\BB^n_r)&=\cO(\CC^\times,\cF(\BB^n_r))/I_\BB.\notag
\end{align}
We will use the following simplified notation for the respective Fr\'echet algebra bundles
(see Theorem~\ref{thm:Kalg-Bnd}):
\[
\sE(\DD^n_r)=\sE(\cO_\defo(\DD^n_r)),\quad
\sE^\rT(\DD^n_r)=\sE(\cO_\defo^\rT(\DD^n_r)),\quad
\sE(\BB^n_r)=\sE(\cO_\defo(\BB^n_r)).
\]
Our first goal is to show that the fibers of $\cO_\defo(\DD^n_r)$ and $\cO_\defo^\rT(\DD^n_r)$
over $q\in\CC^\times$ are isomorphic to $\cO_q(\DD^n_r)$, while the
fiber of $\cO_\defo(\BB^n_r)$ over $q\in\CC^\times$ is isomorphic to $\cO_q(\BB^n_r)$.

\begin{lemma}
\label{lemma:quot-quot}
Let $\fA$ be a Fr\'echet algebra, and let $I,J\subset\fA$ be closed two-sided ideals.
Denote by $q_I\colon\fA\to\fA/I$ and $q_J\colon\fA\to\fA/J$ the quotient maps,
and let $I_0=\ol{q_J(I)}$, $J_0=\ol{q_I(J)}$. Then there exist topological algebra
isomorphisms
\begin{equation}
\label{quot-quot}
\fA/(\ol{I+J})\cong(\fA/I)/J_0\cong(\fA/J)/I_0
\end{equation}
induced by the identity map on $\fA$.
\end{lemma}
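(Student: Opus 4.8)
The plan is to treat this as a topological refinement of the third isomorphism theorem: I would establish one of the two isomorphisms in~\eqref{quot-quot} and obtain the other by symmetry. First I would form the composite continuous algebra homomorphism
\[
\phi\colon\fA\xrightarrow{\;q_I\;}\fA/I\xrightarrow{\;p\;}(\fA/I)/J_0,
\]
where $p$ is the quotient map onto $(\fA/I)/J_0$. Being a composite of surjections, $\phi$ is surjective, so the whole problem reduces to computing $\Ker\phi$ and showing that it equals $\ol{I+J}$.

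The key step is the identity $\Ker\phi=q_I^{-1}(J_0)=\ol{I+J}$. On the algebraic level, $q_I^{-1}(q_I(J))=I+J$ because $\Ker q_I=I$, so it suffices to prove the set-theoretic identity
\[
q_I^{-1}\bigl(\ol{q_I(J)}\bigr)=\ol{q_I^{-1}(q_I(J))}.
\]
This is where the topology enters: the quotient map $q_I$ of Fr\'echet spaces is both continuous and \emph{open}, and for any continuous open surjection $q$ one has $q^{-1}(\ol S)=\ol{q^{-1}(S)}$ for every subset $S$ of the target. The inclusion $\supseteq$ is immediate from continuity, since the left-hand side is closed and contains $q^{-1}(S)$. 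For $\subseteq$, I would take $a\in\fA$ with $q_I(a)\in\ol{q_I(J)}$ and, using that $q_I$ carries each neighborhood of $a$ onto a neighborhood of $q_I(a)$, conclude that every neighborhood of $a$ meets $q_I^{-1}(q_I(J))=I+J$; hence $a\in\ol{I+J}$. This yields $\Ker\phi=\ol{I+J}$.

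With the kernel identified, $\phi$ factors through a continuous bijective algebra homomorphism $\fA/\ol{I+J}\to(\fA/I)/J_0$ induced by the identity map on $\fA$. Since both source and target are Fr\'echet algebras, the Open Mapping Theorem upgrades this bijection to a topological isomorphism. Interchanging the roles of $I$ and $J$ produces, in exactly the same way, a topological isomorphism $\fA/\ol{I+J}\cong(\fA/J)/I_0$; composing the two gives the full chain~\eqref{quot-quot}, with all maps induced by the identity on $\fA$.

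The only genuine subtlety --- and the step I expect to be the main obstacle --- is the identity $q_I^{-1}(\ol{q_I(J)})=\ol{I+J}$, which would fail without the openness of the quotient map. The remaining ingredients, namely the algebraic bookkeeping for $\Ker\phi$ and the final appeal to the Open Mapping Theorem, are entirely routine.
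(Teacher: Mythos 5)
Your proof is correct; the paper itself disposes of this lemma with the single word ``Elementary,'' and your argument is exactly the standard topological third-isomorphism argument the author clearly had in mind: identify $\Ker\bigl(\fA\to(\fA/I)/J_0\bigr)=q_I^{-1}(\ol{q_I(J)})=\ol{I+J}$ using openness of the quotient map, then invoke the Open Mapping Theorem, and conclude by symmetry. The only remark worth adding is that the Open Mapping Theorem is not strictly needed, since your map $\phi=p\circ q_I$ is itself open (a composite of open maps), so the induced bijection on the quotient is automatically a homeomorphism --- but your appeal to it is perfectly valid in the Fr\'echet setting.
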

\begin{proof}
Elementary.
\end{proof}

\begin{lemma}
\label{lemma:F_I_bnd}
Let $F$ be a Fr\'echet algebra, $X$ be a reduced Stein space, $I\subset\cO(X,F)$ be a closed
two-sided ideal, and $A=\cO(X,F)/I$. For each $x\in X$, let $\eps_x^F\colon\cO(X,F)\to F$
denote the evaluation map at $x$,
and let $I_x=\ol{\eps_x^F(I)}$. Then there exists a Fr\'echet algebra isomorphism
$A_x\cong F/I_x$ making the diagram
\begin{equation}
\label{quot-quot-diag}
\xymatrix@C-20pt@R-5pt{
& \cO(X,F) \ar[dl]_(.6){\quot} \ar[dr]^(.6){\eps_x^F} \\
A \ar[d]_{\quot} && F \ar[d]^{\quot} \\
A_x \ar[rr]^{\sim} && F/I_x
}
\end{equation}
commute (here $\quot$ are the respective quotient maps).
\end{lemma}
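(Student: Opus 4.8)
The plan is to realize both $A_x$ and $F/I_x$ as quotients of $\fA = \cO(X,F)$ and to pass between them using Lemma~\ref{lemma:quot-quot}. Recall that the fiber is $A_x = A/\ol{\fm_x A}$, where $\fm_x\subset\cO(X)$ is the maximal ideal of functions vanishing at $x$. I would set $J = \ol{\fm_x\fA}$, a closed two-sided ideal of $\fA$, and apply Lemma~\ref{lemma:quot-quot} to $\fA$ with the pair $I,J$; in the notation of that lemma, $q_I\colon\fA\to\fA/I=A$ and $q_J\colon\fA\to\fA/J$ are the quotient maps and $I_0=\ol{q_J(I)}$, $J_0=\ol{q_I(J)}$.

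The crux is to show $\fA/J\cong F$ via $\eps_x^F$, equivalently that $\Ker\eps_x^F=\ol{\fm_x\fA}$; this is just the computation of the fiber of the trivial bundle $\cO(X,F)$, and is where I expect the only real work to lie. I would use that $\eps_x\colon\cO(X)\to\CC$ is split by the inclusion of constants, so that $\cO(X)=\CC\cdot 1\oplus\fm_x$ is a topological direct sum. Applying $\Ptens F$ and invoking \eqref{vec_hol} together with the compatibility of $\Ptens F$ with finite direct sums, one obtains $\fA = F\oplus(\fm_x\Ptens F)$ with $\eps_x^F$ the first projection; hence $\Ker\eps_x^F=\fm_x\Ptens F$ is closed. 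To identify this kernel with $\ol{\fm_x\fA}$ I would argue by two inclusions: the image of $\fm_x\otimes F$ lies in $\fm_x\fA$ and is dense in $\fm_x\Ptens F$, giving $\fm_x\Ptens F\subset\ol{\fm_x\fA}$; conversely $\eps_x^F(fa)=f(x)\,\eps_x^F(a)=0$ for $f\in\fm_x$ shows $\fm_x\fA\subset\Ker\eps_x^F$, whence $\ol{\fm_x\fA}\subset\Ker\eps_x^F$. By the Open Mapping Theorem, $\eps_x^F$ then induces a topological algebra isomorphism $\fA/J\cong F$ under which $q_J$ becomes $\eps_x^F$.

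Next I would identify the two induced ideals. Since $q_I$ is surjective and $\cO(X)$-linear, $q_I(\fm_x\fA)=\fm_x A$, so $J_0=\ol{q_I(J)}=\ol{\fm_x A}$ and therefore $(\fA/I)/J_0=A/\ol{\fm_x A}=A_x$. On the other side, under $\fA/J\cong F$ the map $q_J$ is $\eps_x^F$, so $I_0=\ol{q_J(I)}$ corresponds to $\ol{\eps_x^F(I)}=I_x$, giving $(\fA/J)/I_0\cong F/I_x$. Substituting into \eqref{quot-quot} yields topological algebra isomorphisms $A_x\cong\fA/\ol{I+J}\cong F/I_x$, whose composite is the required isomorphism.

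It remains to check that \eqref{quot-quot-diag} commutes, but this is automatic: by Lemma~\ref{lemma:quot-quot} both isomorphisms in \eqref{quot-quot} are induced by $\id_\fA$, so each of the composites $\fA\xrightarrow{q_I}A\to A_x\xrightarrow{\sim}\fA/\ol{I+J}$ and $\fA\xrightarrow{\eps_x^F}F\to F/I_x\xrightarrow{\sim}\fA/\ol{I+J}$ equals the canonical quotient $\fA\to\fA/\ol{I+J}$. Reading this off along the two legs gives exactly the commutativity of \eqref{quot-quot-diag}. As noted, the fiber computation $\Ker\eps_x^F=\ol{\fm_x\fA}$ is the main point; everything downstream is a formal application of Lemma~\ref{lemma:quot-quot} and bookkeeping of the induced ideals, and the fiber identity itself is presumably also available from the construction of $\sE$ in Theorem~\ref{thm:Kalg-Bnd}.
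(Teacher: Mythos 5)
Your proof is correct and follows essentially the same route as the paper's: both compute $\Ker\eps_x^F=\ol{\fm_x\cO(X,F)}$ by tensoring the $\CC$-split sequence $0\to\fm_x\to\cO(X)\xra{\eps_x}\CC\to 0$ with $F$, identify $F\cong\fA/\ol{\fm_x\fA}$, and then conclude via Lemma~\ref{lemma:quot-quot} with $J=\ol{\fm_x\fA}$ and the observation $\ol{q_I(J)}=\ol{\fm_x A}$. Your extra bookkeeping (the explicit two-inclusion argument for the kernel and the commutativity check via the identity-induced isomorphisms) merely spells out steps the paper leaves as ``obvious'' or ``clear from the construction.''
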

\begin{proof}
Given $x\in X$, let $\fm_x=\{ f\in\cO(X) : f(x)=0\}$. We claim that $\eps_x^F$ is onto, and that
$\Ker\eps_x^F=\ol{\fm_x\cO(X,F)}$. Indeed, the exact sequence
\[
0\to \fm_x\to\cO(X)\xra{\eps_x}\CC\to 0
\]
is obviously $\CC$-split (i.e., it splits in the category of Fr\'echet spaces).
Applying the functor $(-)\Ptens F$
and using the canonical isomorphism $\cO(X)\Ptens F\cong\cO(X,F)$, we obtain
the $\CC$-spilt sequence
\begin{equation*}
0\to \fm_x\Ptens F\to\cO(X,F)\xra{\eps_x^F} F\to 0.
\end{equation*}
This implies that $\Ker\eps_x^F\subset\ol{\fm_x\cO(X,F)}$.
The reverse inclusion is obvious. Thus $\Ker\eps_x^F=\ol{\fm_x\cO(X,F)}$, as claimed.

Let now $\fA=\cO(X,F)$ and $J=\ol{\fm_x\fA}$.
It follows from the above discussion that
we can identify $F$ with $\fA/J$. Under this identification, $\eps_x^F$ becomes the
quotient map $q_J\colon\fA\to\fA/J$. Observe also that
$\ol{\fm_x A}=\ol{q_I(J)}$, where $q_I\colon\fA\to \fA/I=A$ is the quotient map.
Now Lemma~\ref{lemma:quot-quot} implies that
\[
F/I_x\cong (\fA/J)/\ol{q_J(I)} \cong (\fA/I)/\ol{q_I(J)}=A/\ol{\fm_x A}=A_x.
\]
The commutativity of \eqref{quot-quot-diag} is clear from the construction.
\end{proof}

\begin{theorem}
For each $q\in\CC^\times$, we have topological algebra isomorphisms
\begin{gather}
\label{poly_bnd_iso}
\cO_\defo(\DD^n_r)_q\cong \sE(\DD^n_r)_q\cong\cO_q(\DD^n_r),\\
\label{poly^T_bnd_iso}
\cO_\defo^\rT(\DD^n_r)_q\cong \sE^\rT(\DD^n_r)_q\cong\cO_q(\DD^n_r),\\
\label{ball_bnd_iso}
\cO_\defo(\BB^n_r)_q\cong \sE(\BB^n_r)_q\cong\cO_q(\BB^n_r).
\end{gather}
\end{theorem}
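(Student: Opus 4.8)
The three displayed lines are proved by one and the same argument, so I would treat them uniformly. In each line the left-hand isomorphism $(\,\cdot\,)_q\cong\sE(\,\cdot\,)_q$ is nothing but the identification of the algebraic fiber of a Fr\'echet $\cO(\CC^\times)$-algebra with the fiber of its associated bundle, which is furnished by Theorem~\ref{thm:Kalg-Bnd}. Thus the entire content lies in the right-hand isomorphisms $(\,\cdot\,)_q\cong\cO_q(\,\cdot\,)$, and these I would deduce from Lemma~\ref{lemma:F_I_bnd} together with the quotient descriptions of $\cO_q(\DD^n_r)$ and $\cO_q(\BB^n_r)$ obtained in Sections~\ref{sect:free_poly} and~\ref{sect:free_ball}.

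Concretely, to handle $\cO_\defo(\DD^n_r)$ I would apply Lemma~\ref{lemma:F_I_bnd} with $X=\CC^\times$, $F=\cF(\DD^n_r)$, $I=I_\DD$, and $A=\cO_\defo(\DD^n_r)$, obtaining a topological algebra isomorphism
\[
\cO_\defo(\DD^n_r)_q\cong\cF(\DD^n_r)/I_q,\qquad I_q=\ol{\eps_q^{F}(I_\DD)}.
\]
The same recipe, with $F=\cF^\rT(\DD^n_r)$, $I=I_\DD^\rT$, and with $F=\cF(\BB^n_r)$, $I=I_\BB$, covers the remaining two lines. Everything therefore reduces to computing the specialized ideal $I_q$.

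The crux is to show that $I_q$ coincides with the closed two-sided ideal $J_q$ of $F$ generated by $\zeta_i\zeta_j-q\zeta_j\zeta_i$ $(i<j)$, i.e.\ with the kernel of the specialization map of Theorem~\ref{thm:q_poly_quot_free_poly}. Since $\eps_q^{F}$ acts as the identity on $F$ and sends the coordinate $z$ to the scalar $q$, it carries each generator $\zeta_i\zeta_j-z\zeta_j\zeta_i$ of $I_\DD$ to $\zeta_i\zeta_j-q\zeta_j\zeta_i$. Because $\eps_q^{F}$ is surjective (as established in the proof of Lemma~\ref{lemma:F_I_bnd}) and continuous, its image $\eps_q^{F}(I_\DD)$ is a two-sided ideal of $F$ containing these $q$-commutators, whence $J_q\subseteq I_q$. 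For the reverse inclusion I would use that the \emph{algebraic} ideal generated by the $z$-commutators is dense in $I_\DD$; its image under $\eps_q^{F}$ lands in the algebraic ideal generated by the $q$-commutators, hence in $J_q$, and continuity together with the closedness of $J_q$ then gives $I_q=\ol{\eps_q^{F}(I_\DD)}\subseteq J_q$. Thus $I_q=J_q$, and
\[
\cO_\defo(\DD^n_r)_q\cong\cF(\DD^n_r)/J_q\cong\cO_q(\DD^n_r)
\]
by Theorem~\ref{thm:q_poly_quot_free_poly}. The $\cF^\rT$ and $\cF(\BB^n_r)$ cases are verbatim, invoking Theorems~\ref{thm:q_poly_quot_free^T_poly} and~\ref{thm:q_ball_quot_free_ball} respectively.

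I expect the only delicate point to be this identification $\ol{\eps_q^{F}(I_\DD)}=J_q$: one must remember that $I_q$ is the closure of the image of the \emph{entire closed} ideal $I_\DD$, not merely of the image of its generating set, so both the surjectivity of $\eps_q^{F}$ (for one inclusion) and the continuity--density argument (for the other) are genuinely needed. Everything else is a direct appeal to results already in hand.
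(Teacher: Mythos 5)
Your proposal is correct and follows the paper's proof essentially verbatim: both apply Lemma~\ref{lemma:F_I_bnd} with the three choices of $(F,I)$ and then invoke Theorems~\ref{thm:q_poly_quot_free_poly}, \ref{thm:q_poly_quot_free^T_poly}, and~\ref{thm:q_ball_quot_free_ball} after identifying $I_q=\ol{\eps_q^F(I)}$ with the closed two-sided ideal generated by $\zeta_i\zeta_j-q\zeta_j\zeta_i$ $(i<j)$. The only difference is that the paper merely asserts this identification with an ``observe that,'' whereas you supply the two-inclusion argument (surjectivity of $\eps_q^F$ for one direction, density of the algebraic ideal plus continuity for the other), and that argument is sound.
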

\begin{proof}
Applying Lemma~\ref{lemma:F_I_bnd} with
$F=\cF(\DD^n_r)$ and $I=I_\DD$, we see that
for each $q\in\CC^\times$ there exists a topological algebra isomorphism
$\cO_\defo(\DD^n_r)_q\cong F/I_q$, where $I_q=\ol{\eps_q^F(I)}$.
Observe that $I_q$ is precisely the closed two-sided ideal of $F$ generated by the elements
$\zeta_i \zeta_j-q \zeta_j \zeta_i\; (i<j)$. Now~\eqref{poly_bnd_iso} follows from
Theorem~\ref{thm:q_poly_quot_free_poly}.
Similarly, letting $F=\cF^\rT(\DD^n_r)$ and $I=I_\DD^\rT$, and using
Theorem~\ref{thm:q_poly_quot_free^T_poly} instead of Theorem~\ref{thm:q_poly_quot_free_poly},
we obtain~\eqref{poly^T_bnd_iso}. Finally,
letting $F=\cF(\BB^n_r)$ and $I=I_\BB$, and using
Theorem~\ref{thm:q_ball_quot_free_ball} instead of Theorem~\ref{thm:q_poly_quot_free_poly},
we obtain~\eqref{ball_bnd_iso}.
\end{proof}

Our next goal is to show that the Fr\'echet
$\cO(\CC^\times)$-algebras $\cO_\defo(\DD^n_r)$ and $\cO_\defo^\rT(\DD^n_r)$
not only have isomorphic fibers (see~\eqref{poly_bnd_iso} and~\eqref{poly^T_bnd_iso}),
but are in fact isomorphic.
Towards this goal, we need some notation and several lemmas.
For brevity, we denote the elements $\zeta_i+I_\DD$ and $z+I_\DD$ of $\cO_\defo(\DD^n_r)$
by $x_i$ and $z$, respectively. The same convention applies to $\cO_\defo^\rT(\DD^n_r)$
and $\cO_\defo(\BB^n_r)$.
Recall from Propositions~\ref{prop:FD_vs_FTD} and~\ref{prop:FTD_vs_FB} that
we have continuous inclusions
\[
\cF(\DD^n_r)\subset\cF^\rT(\DD^n_r)\subset\cF(\BB^n_r).
\]
These inclusions induce Fr\'echet $\cO(\CC^\times)$-algebra homomorphisms
\begin{equation}
\label{D-DT-B}
\cO_\defo(\DD^n_r)\xra{i_{\DD\DD}} \cO_\defo^\rT(\DD^n_r)
\xra{i_{\DD\BB}}\cO_\defo(\BB^n_r)
\end{equation}
taking each $x_i$ to $x_i$ and $z$ to $z$. We aim to prove that the first map in \eqref{D-DT-B}
is a topological isomorphism.

Let $\cO^\reg_\defo(\CC^n)$ denote the algebra generated by $n+2$ elements
$x_1,\ldots ,x_n,z,z^{-1}$ subject to the relations
\begin{alignat*}{2}
x_i x_j &= zx_j x_i && \quad(1\le i<j\le n),\\
x_i z &= z x_i && \quad (1\le i\le n);\\
zz^{-1} &= z^{-1} z=1.
\end{alignat*}
Observe that $\cO^\reg_\defo(\CC^n)$ is exactly the algebra $R$ given by \eqref{algdef_free_quot}.
Clearly, there are algebra homomorphisms from $\cO^\reg_\defo(\CC^n)$ to each of the algebras
$\cO_\defo(\DD^n_r)$, $\cO_\defo^\rT(\DD^n_r)$, and $\cO_\defo(\BB^n_r)$,
taking each $x_i$ to $x_i$ and $z$ to $z$. Together with~\eqref{D-DT-B}, these
homomorphisms fit into the commutative diagram
\begin{equation}
\label{D-DT-B2}
\xymatrix{
\cO_\defo(\DD^n_r) \ar[r]^{i_{\DD\DD}} & \cO_\defo^\rT(\DD^n_r) \ar[r]^{i_{\DD\BB}}
& \cO_\defo(\BB^n_r)\\
& \cO_\defo^\reg(\CC^n) \ar[ul]^{i_\DD} \ar[u]^{i_\DD^\rT} \ar[ur]_{i_\BB}
}
\end{equation}

\begin{lemma}
\label{lemma:O_reg_def_dense}
\begin{compactenum}
\item[{\upshape (i)}]
The set $\{ x^k z^p : k\in\Z_+^n,\; p\in\Z\}$ is a vector space basis of
$\cO^\reg_\defo(\CC^n)$.
\item[{\upshape (ii)}]
The homomorphisms $i_\DD$, $i_\DD^\rT$, and $i_\BB$ are injective and have dense ranges.
\end{compactenum}
\end{lemma}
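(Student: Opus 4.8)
\emph{Part (i).} The plan is to treat spanning and linear independence separately. For spanning, I would use the defining relations of $\cO^\reg_\defo(\CC^n)$ to reduce every word in the generators to a normal form $z^p x^k$: since $z$ and $z^{-1}$ are central, all powers of $z$ can be collected into a single $z^p$, and the relation $x_i x_j = z x_j x_i$ $(i<j)$ is exactly the quantum affine relation with parameter $z$, so the computation of Lemma~\ref{lemma:t(alpha)} (carried out verbatim with $q$ replaced by $z$) gives $x_\alpha = z^{-\rrm(\alpha)} x^{\rp(\alpha)}$ for every $\alpha\in W_n$. Hence $\{x^k z^p : k\in\Z_+^n,\ p\in\Z\}$ spans. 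For linear independence, for each $q\in\CC^\times$ the assignment $x_i\mapsto x_i$, $z\mapsto q$ respects the relations and therefore defines an algebra homomorphism $\pi_q\colon\cO^\reg_\defo(\CC^n)\to\cO^\reg_q(\CC^n)$. Applying $\pi_q$ to a putative relation $\sum_{k,p} c_{k,p} x^k z^p = 0$ yields $\sum_k\bigl(\sum_p c_{k,p} q^p\bigr)x^k = 0$ in $\cO^\reg_q(\CC^n)$; since the monomials $x^k$ form a basis of the quantum affine space \cite{Br_Good}, each coefficient $\sum_p c_{k,p} q^p$ vanishes for all $q\in\CC^\times$. A Laurent polynomial vanishing on all of $\CC^\times$ is zero, so every $c_{k,p}=0$.

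\emph{Part (ii), injectivity.} Here the subtle point, and the main obstacle, is that injectivity cannot be detected in any single fiber: passing to the fiber over $q$ substitutes $z\mapsto q$ and collapses the grading by powers of $z$, so one must let $q$ vary. I would argue as follows for $i_\BB$ (the cases $i_\DD$, $i_\DD^\rT$ being identical, or deducible from \eqref{D-DT-B2} via $i_\BB = i_{\DD\BB}\circ i_{\DD\DD}\circ i_\DD = i_{\DD\BB}\circ i_\DD^\rT$, which shows that injectivity of $i_\BB$ forces that of $i_\DD$ and $i_\DD^\rT$). Let $a\ne 0$; by part (i) write $a=\sum_{k,p} c_{k,p} x^k z^p$ with uniquely determined coefficients, not all zero, and fix $k_0$ with the Laurent polynomial $P_{k_0}(q)=\sum_p c_{k_0,p} q^p$ not identically zero. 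Choosing $q_0\in\CC^\times$ with $P_{k_0}(q_0)\ne 0$ and composing $i_\BB$ with the quotient onto the fiber $\cO_\defo(\BB^n_r)_{q_0}\cong\cO_{q_0}(\BB^n_r)$ (Lemma~\ref{lemma:F_I_bnd} and the fiber isomorphisms~\eqref{ball_bnd_iso}), which sends $x_i\mapsto x_i$ and $z\mapsto q_0$, carries $a$ to $\sum_k P_k(q_0) x^k$. As the monomials $x^k$ are linearly independent in $\cO_{q_0}(\BB^n_r)$ and $P_{k_0}(q_0)\ne 0$, this image is nonzero, whence $i_\BB(a)\ne 0$.

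\emph{Part (ii), dense range.} This is comparatively soft. I would use that $\CC[z^{\pm1}]$ is dense in $\cO(\CC^\times)$ and that the free algebra $F_n$ is dense in each of $\cF(\DD^n_r)$, $\cF^\rT(\DD^n_r)$, and $\cF(\BB^n_r)$. Since the algebraic tensor product of dense subspaces is dense in the completed projective tensor product, $\CC[z^{\pm1}]\otimes F_n$ is dense in $\cO(\CC^\times)\Ptens F\cong\cO(\CC^\times,F)$ for each relevant $F$. The quotient maps in~\eqref{def_DD_BB} are continuous and surjective, so they carry this dense subalgebra onto a dense subalgebra of the target; but the resulting image is exactly the subalgebra generated by $x_1,\ldots,x_n,z,z^{-1}$, i.e.\ the range of $i_\DD$, $i_\DD^\rT$, or $i_\BB$, respectively. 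Hence each of these three homomorphisms has dense range, completing the proof.

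In summary, the only genuine difficulty is the injectivity of~(ii): the single-fiber maps do not separate the variable $z$ from the scalars $q$, so the argument must combine the fiberwise linear independence of the monomials $x^k$ (available from the power-series descriptions of $\cO_q(\DD^n_r)$ and $\cO_q(\BB^n_r)$) with the elementary fact that a nonzero Laurent polynomial does not vanish identically on $\CC^\times$. Everything else reduces to the normal-form computation of~(i), the standard basis theorem for the quantum affine space, and routine density statements.
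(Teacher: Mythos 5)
Your proposal is correct and takes essentially the same route as the paper: the paper likewise identifies the fiber of $\cO_\defo(\BB^n_r)$ over each $q\in\CC^\times$ with $\cO_q(\BB^n_r)$, concludes that the Laurent polynomials $\sum_p c_{kp}q^p$ vanish for every $q$ and hence identically, thereby obtaining linear independence and $\Ker i_\BB=0$ in a single stroke, and then reads off the injectivity of $i_\DD$ and $i_\DD^\rT$ from the diagram~\eqref{D-DT-B2}, with the density of the ranges dismissed as clear (your tensor-product density argument is the intended justification). The only difference is organizational: you prove (i) first via the purely algebraic specializations $\pi_q\colon\cO^\reg_\defo(\CC^n)\to\cO^\reg_q(\CC^n)$ and then need just one well-chosen fiber $q_0$ for injectivity, whereas the paper runs the all-$q$ fiber argument once and extracts both conclusions simultaneously.
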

\begin{proof}
Obviously, $\cO^\reg_\defo(\CC^n)$ is spanned by $\{ x^k z^p : k\in\Z_+^n,\; p\in\Z\}$.
Let $a=\sum_{k,p} c_{kp} x^k z^p\in \cO^\reg_\defo(\CC^n)$,
and suppose that $i_\BB(a)=0$. Then for each $q\in\CC^\times$ we have
$i_\BB(a)_q=0$. Identifying $\cO_\defo(\BB^n_r)_q$ with $\cO_q(\BB^n_r)$
(see~\eqref{ball_bnd_iso}), we see that $\sum_k (\sum_p c_{kp} q^p) x^k=0$
in $\cO_q(\BB^n_r)$. Hence $\sum_p c_{kp} q^p=0$ for each $k$. Since this holds for
every $q\in\CC^\times$, we conclude that $c_{kp}=0$ for all $k$ and $p$.
This implies that the set $\{ x^k z^p : k\in\Z_+^n,\; p\in\Z\}$ is linearly independent
in $\cO^\reg_\defo(\CC^n)$, and that $\Ker i_\BB=0$.
By looking at \eqref{D-DT-B2},
we see that $\Ker i_\DD^\rT=\Ker i_\DD=0$. The rest is clear.
\end{proof}

\begin{lemma}
\label{lemma:alpha_opt}
For each $k\in\Z_+^n$ and each integer $m\in [0,\sum_{i<j} k_i k_j]$ there exists $\alpha\in W_n$
such that $\rp(\alpha)=k$, $\rs(\alpha)\le n+2$, and $x^k=x_\alpha z^m$ in $\cO^\reg_\defo(\CC^n)$.
\end{lemma}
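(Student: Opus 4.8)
The plan is to reduce the statement to a purely combinatorial fact about words and then prove that fact by induction on $n$.

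First I would observe that the relations defining $\cO^\reg_\defo(\CC^n)$ are exactly those of $\cO_q^\reg(\CC^n)$ with the central element $z$ playing the role of $q$. Hence the computation behind Lemma~\ref{lemma:t(alpha)} applies verbatim and yields, for every $\alpha\in W_n$ with $\rp(\alpha)=k$, the identity $x^k=x_\alpha z^{\rrm(\alpha)}$ in $\cO^\reg_\defo(\CC^n)$ (here I use that $z$ is central to move it to the right). Thus the lemma will follow once I produce, for each $k\in\Z_+^n$ and each integer $m\in[0,\sum_{i<j}k_ik_j]$, a word $\alpha\in W_n$ with $\rp(\alpha)=k$, $\rrm(\alpha)=m$, and $\rs(\alpha)\le n+2$. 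As $\alpha$ ranges over all rearrangements of the multiset determined by $k$, the inversion statistic $\rrm(\alpha)$ runs over the whole interval $[0,\sum_{i<j}k_ik_j]$ (minimum $0$ at the increasing word, maximum at the decreasing one), so the point to be overcome is not the attainability of the value $m$, but its attainability with a uniformly bounded number of runs.

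To realize a prescribed $m$ with few runs I would argue by induction on $n$. Write $M=\sum_{i<j}k_ik_j$, let $M'=\sum_{2\le i<j\le n}k_ik_j$ be the maximal number of inversions among the symbols $\{2,\ldots,n\}$, and put $K=k_2+\cdots+k_n$, so that $M=M'+k_1K$. I would split $[0,M]=[0,M']\cup[M',M]$. If $m\le M'$, I place all $k_1$ copies of the symbol $1$ at the very front (this creates no inversion, since $1$ is minimal) and arrange the symbols $\{2,\ldots,n\}$, by the inductive hypothesis, so as to have exactly $m$ inversions; the front block costs a single run. If $m\ge M'$, I first arrange $\{2,\ldots,n\}$ in decreasing order $n^{k_n}\cdots 2^{k_2}$, which has exactly $M'$ inversions and $n-1$ runs, and then insert the copies of $1$ so as to create exactly $m-M'\in[0,k_1K]$ further inversions, each inserted $1$ contributing the number of larger symbols to its left. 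This last step is the two-symbol problem: writing $m-M'=tK+s$ with $0\le s<K$, I put $t$ copies of $1$ after all of $\{2,\ldots,n\}$, one copy in the gap having exactly $s$ larger symbols to its left, and the remaining copies at the front (with the obvious adjustments at the extremes $s=0$ or $t=k_1$).

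The only delicate point is the run count, and this is where the construction has to be chosen carefully. The naive recursion on the largest symbol would add two runs per level and produce a bound of order $2n$; the gain here is that the recursion is invoked \emph{only} in the low range $m\le M'$, where the minimal symbol costs just one run as a single front block, whereas the high range $m\ge M'$ is handled without any recursion. In the inductive (low) case the hypothesis supplies at most $n+2$ runs and the extra front block gives at most $n+3$ runs; in the non-recursive (high) case the decreasing word on $\{2,\ldots,n\}$ contributes $n-1$ runs and the three placements of the symbol $1$ (a front block, a single middle copy that may split one run, and a back block) contribute at most four more, again at most $n+3$ runs. Hence $\rs(\alpha)\le n+2$ in both cases, and the base case $n=1$ (where $m=0$ and $\alpha=1^{k_1}$ has $\rs=0$) is trivial. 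Unwinding the recursion confirms the bound directly: a chain of low-case steps through symbols $1,\ldots,c-1$ contributes $c-1$ front runs and terminates in a single high-case step on $\{c,\ldots,n\}$ contributing at most $(n-c)+4$ runs, for a total of at most $n+3$. Feeding the resulting $\alpha$ into $x^k=x_\alpha z^{\rrm(\alpha)}=x_\alpha z^{m}$ then completes the proof.
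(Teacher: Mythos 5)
Your proof is correct, and it takes a genuinely different route from the paper's. The paper argues dynamically: starting from the increasing word $\delta(k)$ it runs an explicit bubble-sort procedure (repeatedly carrying the current first letter rightward), observes that each swap of two distinct letters raises the exponent of $z$ by exactly one, so that the exponent sweeps through all of $[0,\sum_{i<j}k_ik_j]$ in unit steps and the prescribed $m$ is attained at some intermediate state; the bound $\rs(\alpha)\le n+2$ is then read off from the explicit shape \eqref{perm_form} common to all intermediate words of the algorithm. You instead isolate the combinatorial core --- realize any prescribed inversion number $m\in[0,\sum_{i<j}k_ik_j]$ for the content $k$ with at most $n+3$ runs --- and solve it by a direct induction on the alphabet: minimal symbol as a front block when $m\le M'$, and, when $m\ge M'$, the decreasing word plus a division-algorithm placement $m-M'=tK+s$ of the copies of the minimal symbol (in your notation $M'=\sum_{2\le i<j\le n}k_ik_j$ and $K=k_2+\cdots+k_n$); you then transport the word to the algebra via $x^k=x_\alpha z^{\rrm(\alpha)}$, the $z$-version of Lemma~\ref{lemma:t(alpha)}, which is legitimate and is in fact invoked in exactly this form later in the paper (proof of Lemma~\ref{lemma:x_alpha_sprad}). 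What each approach buys: the paper's sorting path gets the attainability of every $m$ for free (discrete intermediate value along the path) but must verify the run bound uniformly over all states of the algorithm; your construction makes both the target value $m$ and the run count completely explicit and needs no path argument, at the modest cost of the edge-case bookkeeping ($K=0$, $k_1=0$, $t=k_1$, $s=0$) that you correctly flag as routine --- and your run accounting (inductive low case: at most $n+2$ runs from the hypothesis plus one front block; terminal high case: at most $n-1$ runs for the decreasing block plus at most four for the front block, the single migrating copy, and the back block) checks out, as does the unwound count $(c-1)+(n-c+4)=n+3$ runs, i.e.\ $\rs(\alpha)\le n+2$. It is worth noting that the two constructions produce near-mirror-image words: the paper's intermediate words keep the small letters as a decreasing suffix with a single migrating letter inside an increasing middle, whereas yours keep the small letters as an increasing prefix and migrate copies of the minimal remaining symbol through a decreasing block.
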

\begin{proof}
Given $d\in\Z_+$, let $Z_d=\{\zeta_\alpha : \alpha\in W_{n,d}\}\subset F_n$. Clearly,
$\alpha\mapsto\zeta_\alpha$ is a $1$-$1$ correspondence between $W_{n,d}$ and $Z_d$.
Hence we have an action of $S_d$ on $Z_d$ given by $\sigma(\zeta_\alpha)=\zeta_{\sigma(\alpha)}$
($\alpha\in W_{n,d},\; \sigma\in S_d$).

We now present an explicit procedure of constructing $\alpha$ out of $k$.
Let $d=|k|$, $\beta=\delta(k)$, and $w_0=\zeta^k=\zeta_\beta$.
Let also $T=\{ (i\;\; i+1) : 1\le i\le d-1\}\subset S_d$.
By interchanging the first letter $\beta_1$ of $w_0$
with the subsequent letters, we obtain $d-1$ elements of $T$
such that the action of their composition on $w_0$ yields
\begin{equation}
\label{iter_1}
\zeta_{\beta_2}\cdots\zeta_{\beta_d}\zeta_{\beta_1}.
\end{equation}
Next, by interchanging the first letter $\beta_2$ of \eqref{iter_1}
with the subsequent letters (except for the last one), we obtain $d-2$ elements of $T$
such that the action of their composition on \eqref{iter_1} yields
\begin{equation*}
\zeta_{\beta_3}\cdots\zeta_{\beta_d}\zeta_{\beta_2}\zeta_{\beta_1}.
\end{equation*}
Continuing this procedure, after finitely many steps we obtain $\sigma_1,\ldots ,\sigma_r\in T$
such that
\[
(\sigma_r\cdots\sigma_1)(w_0)=\zeta_{\beta_d}\cdots\zeta_{\beta_1}
=\zeta_n^{k_n}\cdots\zeta_1^{k_1}.
\]
Let $w_i=(\sigma_i\cdots\sigma_1)(w_0)\; (i=1,\ldots ,r)$, and let
$\pi\colon F_n\to\cO^\reg_\defo(\CC^n)$ denote the homomorphism taking each $\zeta_i$ to $x_i$.
It follows from the above construction that for each $i=0,\ldots ,r-1$ we have either
$\pi(w_i)=\pi(w_{i+1})z$ or $\pi(w_i)=\pi(w_{i+1})$. We also have
\[
\pi(w_0)=x^k=x_n^{k_n}\cdots x_1^{k_1} z^{\sum_{i<j} k_i k_j}
=\pi(w_r) z^{\sum_{i<j} k_i k_j}.
\]
Hence there exists $t\in\{ 0,\ldots ,r\}$ such that $\pi(w_0)=\pi(w_t)z^m$.
In other words, if $\alpha\in W_{n,d}$ is such that $w_t=\zeta_\alpha$, then we have
$x^k=x_\alpha z^m$, as required. Since $\rp^{-1}(k)$ is invariant under the action of $S_d$,
we have $\rp(\alpha)=k$.

To complete the proof, we have to show that $\rs(\alpha)\le n+2$.
It follows from the construction that $w_t$ is of the form
\begin{gather}
\label{perm_form}
\zeta_r^{k_r-\ell}\zeta_{r+1}^{k_{r+1}}\cdots\zeta_{s-1}^{k_{s-1}}
\zeta_s^p\zeta_r\zeta_s^{k_s-p}\zeta_{s+1}^{k_{s+1}}\cdots\zeta_n^{k_n}
\zeta_r^{\ell-1}\zeta_{r-1}^{k_{r-1}}\cdots\zeta_1^{k_1}\\
(1\le r\le n-1,\quad 1\le\ell\le k_r,\quad r+1\le s\le n,\quad 0\le p\le k_s)\notag
\end{gather}
(here $r$ is the number of the letter that is currently moving from left to right,
$\ell-1$ is the number of instances of $\zeta_r$ that are already moved to their final
destination, $s$ shows the position where the moving letter is located,
and $p$ shows how many copies of $\zeta_s$ are already interchanged with the moving letter).
An elementary computation shows that for each word $\zeta_\alpha$ of the form~\eqref{perm_form}
we have $\rs(\alpha)\le n+2$.
\end{proof}

\begin{lemma}
\label{lemma:m<p}
For each $n\in\Z_+$ and each $\alpha\in W_n$,
we have $\rrm(\alpha)\le\sum_{i<j} \rp_i(\alpha)\rp_j(\alpha)$.
\end{lemma}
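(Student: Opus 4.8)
The plan is to reduce the claim to an elementary comparison of two ways of counting pairs of positions in the word $\alpha=(\alpha_1,\ldots,\alpha_d)\in W_{n,d}$ (with $d=|\alpha|$). First I would unwind the right-hand side. For a pair of distinct values $i<j$, the product $\rp_i(\alpha)\rp_j(\alpha)$ counts exactly the unordered pairs $\{s,t\}$ of positions with $\{\alpha_s,\alpha_t\}=\{i,j\}$: each of the $\rp_i(\alpha)$ positions carrying the letter $i$ is paired with each of the $\rp_j(\alpha)$ positions carrying the letter $j$, and $i\ne j$ forces $s\ne t$, so no pair is counted twice and no pair is missed. Summing over $i<j$ then shows that $\sum_{i<j}\rp_i(\alpha)\rp_j(\alpha)$ equals the total number of unordered pairs $\{s,t\}$ of positions with $\alpha_s\ne\alpha_t$. (As a sanity check one may note the identity $\sum_{i<j}\rp_i(\alpha)\rp_j(\alpha)=\binom{d}{2}-\sum_{i=1}^n\binom{\rp_i(\alpha)}{2}$, the right-hand side being all position pairs minus those with equal letters.)

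Next I would examine the left-hand side. By definition $\rrm(\alpha)$ counts the ordered pairs $(s,t)$ with $s<t$ and $\alpha_s>\alpha_t$. Every such inversion is in particular a pair of positions carrying \emph{distinct} letter values, so the forgetful assignment $(s,t)\mapsto\{s,t\}$ sends each inversion to one of the unordered distinct-value pairs counted in the previous paragraph. This assignment is injective: an unordered pair $\{s,t\}$ of positions with distinct values has a unique increasing ordering, so it can arise from at most one inversion. Hence the set of inversions of $\alpha$ embeds into the set of unordered distinct-value position pairs.

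Since an injection between finite sets cannot increase cardinality, this embedding yields $\rrm(\alpha)\le\sum_{i<j}\rp_i(\alpha)\rp_j(\alpha)$, as required. There is no real obstacle here; the argument is purely combinatorial and the only point demanding care is the ordered-versus-unordered bookkeeping, namely checking that distinct inversions land on distinct unordered pairs. I would also remark, although it is not needed for the statement, that equality holds precisely when every distinct-value pair is an inversion, i.e.\ when $\alpha$ is weakly decreasing; this is consistent with Lemma~\ref{lemma:t(alpha)} and formula~\eqref{w_q_3}, where the minimal value $|\rt(\alpha)|^{-1}=|q|^{\rrm(\alpha)}$ over $\alpha\in\rp^{-1}(k)$ recovers $w_q(k)$ through the exponent $\sum_{i<j}k_ik_j$ appearing in $u_q(k)$.
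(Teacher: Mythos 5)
Your proof is correct, and it takes a genuinely different route from the paper's. You argue by direct double counting: $\sum_{i<j}\rp_i(\alpha)\rp_j(\alpha)$ is exactly the number of unordered pairs $\{s,t\}$ of positions with $\alpha_s\ne\alpha_t$ (partition such pairs by the value pair $\{i,j\}$ they carry), and the inversions counted by $\rrm(\alpha)$ inject into this set via $(s,t)\mapsto\{s,t\}$, injectivity holding because an unordered pair of positions has a unique increasing ordering. The paper instead proceeds by induction on the alphabet size $n$: it decomposes $\alpha=\bar n_{r_1}\beta_1\bar n_{r_2}\beta_2\cdots\bar n_{r_k}\beta_k\bar n_{r_{k+1}}$ by isolating the occurrences of the largest letter, establishes the exact identity $\rrm(\alpha)=\rrm(\beta)+r_1(|\beta_1|+\cdots+|\beta_k|)+r_2(|\beta_2|+\cdots+|\beta_k|)+\cdots+r_k|\beta_k|$ for the subword $\beta=\beta_1\cdots\beta_k\in W_{n-1}$, bounds the cross terms by $\rp_n(\alpha)|\beta|$, and invokes the induction hypothesis on $\beta$. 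Your argument is shorter and more transparent, and it yields the equality case (weakly decreasing $\alpha$, i.e.\ the words realizing the minimum in \eqref{w_q_3} when $|q|<1$) essentially for free, which the inductive proof leaves implicit; what the paper's route buys is the exact recursion for $\rrm(\alpha)$ under removal of the top letter, but since that identity is not used elsewhere in the paper, your direct count is arguably preferable for the lemma as stated.
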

\begin{proof}
We use induction on $n$. For $n=0$, there is nothing to prove.
Suppose now that $n\ge 1$, and that the assertion holds for all words in $W_{n-1}$.
Given $r\in\Z_+$, let $\bar n_r=(n,\ldots ,n)\in W_{n,r}$ ($r$ copies of $n$).
If $\alpha=\bar n_{|\alpha|}$, then there is nothing to prove.
Assume that $\alpha\ne \bar n_{|\alpha|}$, and represent $\alpha$ as
\[
\alpha=\bar n_{r_1}\beta_1 \bar n_{r_2}\beta_2\ldots \bar n_{r_k}\beta_k \bar n_{r_{k+1}},
\]
where $r_1,\ldots ,r_{k+1}\ge 0$,
$\beta_i\in W_{n-1}$, $|\beta_i|>0$ for all $i=1,\ldots ,k$.
Let $\beta=\beta_1\ldots\beta_k\in W_{n-1}$.
We have
\begin{equation}
\label{m_alpha_beta}
\rrm(\alpha)=\rrm(\beta)+r_1(|\beta_1|+\cdots +|\beta_k|)
+r_2(|\beta_2|+\cdots +|\beta_k|)
+\cdots +r_k|\beta_k|.
\end{equation}
On the other hand, $\rp_i(\alpha)=\rp_i(\beta)$
for all $i\le n-1$, and $\rp_n(\alpha)=r_1+\cdots +r_{k+1}$.
Together with~\eqref{m_alpha_beta} and the induction hypothesis, this yields
\[
\begin{split}
\rrm(\alpha)
&\le\rrm(\beta)+(r_1+\cdots +r_k)|\beta|
\le\sum_{i<j<n} \rp_i(\beta)\rp_j(\beta)+\rp_n(\alpha)|\beta|\\
&=\sum_{i<j<n} \rp_i(\alpha)\rp_j(\alpha)+\rp_n(\alpha)\sum_{i=1}^{n-1} \rp_i(\alpha)
=\sum_{i<j} \rp_i(\alpha)\rp_j(\alpha). \qedhere
\end{split}
\]
\end{proof}

Given $f\in\cO(\CC^\times)$ and $t\ge 1$, let
\[
\| f\|_t=\sum_{n\in\Z} |c_n(f)| t^{|n|},
\]
where $c_n(f)$ is the $n$th Laurent coefficient of $f$ at $0$. We will use the simple fact
that the family $\{\|\cdot\|_t : t\ge 1\}$ of norms generates the standard (i.e., compact-open)
topology on $\cO(\CC^\times)$.

\begin{lemma}
\label{lemma:x_alpha_sprad}
Given $\rho\in (0,r)$ and $\tau,t\ge 1$, let $\|\cdot\|_{\rho,\tau,t}$ denote the
quotient seminorm on $\cO_\defo(\DD^n_r)$ associated to the projective tensor norm
$\|\cdot\|_t\otimes_\pi \|\cdot\|_{\rho,\tau}$ on $\cO(\CC^\times)\Ptens\cF(\DD^n_r)$.
We have
\begin{equation}
\label{x_alpha_sprad}
\lim_{d\to\infty}\Bigl(\sup_{\alpha\in W_{n,d}} \| x_\alpha\|_{\rho,\tau,t}\Bigr)^{1/d}\le\rho.
\end{equation}
\end{lemma}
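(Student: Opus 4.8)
The plan is to exhibit, for each $\alpha\in W_{n,d}$, a single economical preimage of $x_\alpha$ in $\cO(\CC^\times)\Ptens\cF(\DD^n_r)$ and to estimate its projective tensor norm directly, since the quotient seminorm is bounded above by the norm of any preimage. The whole point is that $x_\alpha$, despite having length $d$, can be represented by a monomial $\zeta_\beta$ whose switch number $\rs(\beta)$ is bounded independently of $d$; this is exactly what Lemma~\ref{lemma:alpha_opt} delivers. First I would record the relevant algebra inside $\cO^\reg_\defo(\CC^n)$, which embeds densely into $\cO_\defo(\DD^n_r)$ by Lemma~\ref{lemma:O_reg_def_dense}. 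There $z$ is central and invertible, and the relations $x_ix_j=zx_jx_i\ (i<j)$ are precisely the quantum affine relations with $q$ replaced by $z$. Hence the inductive computation of Lemma~\ref{lemma:t(alpha)} goes through verbatim and gives $x_\alpha=z^{-\rrm(\alpha)}x^{\rp(\alpha)}$.

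Next, put $k=\rp(\alpha)$ and $m=\rrm(\alpha)$. By Lemma~\ref{lemma:m<p} we have $0\le m\le\sum_{i<j}k_ik_j$, so $m$ lies in the range required by Lemma~\ref{lemma:alpha_opt}. Applying that lemma produces a word $\beta\in W_n$ with $\rp(\beta)=k$, $\rs(\beta)\le n+2$, and $x^k=x_\beta z^m$ in $\cO^\reg_\defo(\CC^n)$. Substituting this into the identity of the previous paragraph and using the centrality of $z$ yields
\[
x_\alpha=z^{-m}x^k=z^{-m}x_\beta z^m=x_\beta,
\]
and note $|\beta|=|k|=d$. Thus the constant function $1\otimes\zeta_\beta$, with $1\in\cO(\CC^\times)$ and $\zeta_\beta\in\cF(\DD^n_r)$, is a preimage of $x_\alpha=x_\beta$ under the quotient map onto $\cO_\defo(\DD^n_r)$.

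Finally I would bound the quotient seminorm by this single preimage. The projective tensor norm of a simple tensor factorizes, $\|1\|_t=1$, and formula~\eqref{F_poly_expl} gives $\|\zeta_\beta\|_{\rho,\tau}=\rho^{|\beta|}\tau^{\rs(\beta)+1}$, whence
\[
\|x_\alpha\|_{\rho,\tau,t}\le\|1\|_t\,\|\zeta_\beta\|_{\rho,\tau}\le\rho^d\tau^{n+3}.
\]
Taking the supremum over $\alpha\in W_{n,d}$ and then $d$-th roots gives $\bigl(\sup_{\alpha\in W_{n,d}}\|x_\alpha\|_{\rho,\tau,t}\bigr)^{1/d}\le\rho\,\tau^{(n+3)/d}\to\rho$, which is~\eqref{x_alpha_sprad}. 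The limit exists since $\|\cdot\|_{\rho,\tau,t}$, being a quotient of the submultiplicative projective tensor norm, is itself submultiplicative, so Fekete's lemma applies.

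The crux is the uniform bound $\rs(\beta)\le n+2$ supplied by Lemma~\ref{lemma:alpha_opt}. A naive choice such as $\zeta_\alpha$ itself could have $\rs(\alpha)$ growing linearly in $d$, forcing $\|\zeta_\alpha\|_{\rho,\tau}=\rho^d\tau^{\rs(\alpha)+1}$ to behave like $(\rho\tau)^d$ and producing the wrong value $\rho\tau$ in the limit. It is the interplay of Lemmas~\ref{lemma:m<p} and~\ref{lemma:alpha_opt}—the former ensuring that the exponent $m=\rrm(\alpha)$ is admissible, the latter furnishing a short representative with that exponent—that confines the $\tau$-exponent to the constant $n+3$ and collapses the limit to $\rho$. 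I expect the only point demanding care to be the verification that the chosen preimage really maps to $x_\alpha$, i.e.\ the identity $x_\alpha=x_\beta$, which the centrality of $z$ renders routine.
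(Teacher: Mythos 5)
Your proposal is correct and follows essentially the same route as the paper's proof: the identity $x_\alpha=x^{\rp(\alpha)}z^{-\rrm(\alpha)}$ obtained by rerunning Lemma~\ref{lemma:t(alpha)}, then Lemma~\ref{lemma:m<p} to place $m=\rrm(\alpha)$ in the admissible range, Lemma~\ref{lemma:alpha_opt} to produce a representative $\zeta_\beta$ with $\rs(\beta)\le n+2$, and the resulting bound $\|x_\alpha\|_{\rho,\tau,t}\le\rho^d\tau^{n+3}$ before taking $d$-th roots. Your added remark on the existence of the limit via submultiplicativity and Fekete's lemma is a small refinement the paper leaves implicit, not a difference in method.
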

\begin{proof}
Let $\alpha\in W_{n,d}$. Repeating the argument of Lemma~\ref{lemma:t(alpha)},
we see that $x_\alpha=x^{\rp(\alpha)} z^{-\rrm(\alpha)}$ in $\cO_\defo^\reg(\CC^n)$
(and hence in $\cO_\defo(\DD^n_r)$). By Lemma~\ref{lemma:m<p}, the $n$-tuple
$k=\rp(\alpha)$ and the integer $m=\rrm(\alpha)$ satisfy the conditions of Lemma~\ref{lemma:alpha_opt}.
Hence there exists $\beta\in W_{n,d}$ such that $x^{\rp(\alpha)}=x_\beta z^{\rrm(\alpha)}$
and $\rs(\beta)\le n+2$. Thus we have $x_\alpha=x_\beta=1\otimes\zeta_\beta+I_\DD$, whence
\begin{equation}
\label{x_alpha_est}
\| x_\alpha\|_{\rho,\tau,t}\le
\|\zeta_\beta\|_{\rho,\tau}=\rho^d\tau^{s(\beta)+1}\le\rho^d\tau^{n+3}.
\end{equation}
Raising \eqref{x_alpha_est} to the power $1/d$, taking the supremum over $\alpha\in W_{n,d}$,
and letting $d\to\infty$, we obtain~\eqref{x_alpha_sprad}, as required.
\end{proof}

\begin{theorem}
\label{thm:D_DT_iso}
For each $n\in\N$ and each $r\in (0,+\infty]$,
$i_{\DD\DD}\colon\cO_\defo(\DD^n_r)\to\cO_\defo^\rT(\DD^n_r)$
is a topological isomorphism.
\end{theorem}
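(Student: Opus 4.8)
The plan is to produce an explicit continuous two-sided inverse $\phi\colon\cO_\defo^\rT(\DD^n_r)\to\cO_\defo(\DD^n_r)$ of $i_{\DD\DD}$; since $i_{\DD\DD}$ is already a continuous homomorphism, this suffices. The construction rests on the universal property of Taylor's algebra recorded in Proposition~\ref{prop:univ_F_poly_T}, applied with the target $A=\cO_\defo(\DD^n_r)$. Note first that $\cO_\defo(\DD^n_r)$ is an Arens-Michael algebra: its topology is defined by the quotient seminorms $\|\cdot\|_{\rho,\tau,t}$ of Lemma~\ref{lemma:x_alpha_sprad}, which are submultiplicative because the projective tensor norms $\|\cdot\|_t\otimes_\pi\|\cdot\|_{\rho,\tau}$ are submultiplicative and quotients of submultiplicative norms are again submultiplicative.

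The crux is to check that the generating $n$-tuple $x=(x_1,\dots,x_n)$ of $\cO_\defo(\DD^n_r)$ is strictly spectrally $r$-contractive. By Proposition~\ref{prop:spec_contr} this reduces to the limit condition $\lim_{d\to\infty}\bigl(\sup_{\alpha\in W_{n,d}}\|x_\alpha\|_{\rho,\tau,t}\bigr)^{1/d}<r$ for every defining seminorm, and this is precisely the content of Lemma~\ref{lemma:x_alpha_sprad}, which gives the bound $\le\rho<r$. I regard this spectral estimate as the only substantial ingredient: it is exactly the place where the deformation relations are used, since Lemma~\ref{lemma:alpha_opt} lets one rewrite every $x_\alpha$ as $x_\beta$ with $\rs(\beta)\le n+2$, so that the $\tau$-weight $\tau^{\rs(\beta)+1}$ stays bounded as $|\alpha|\to\infty$ and only the factor $\rho^{|\alpha|}$ survives in the limit.

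Granting the contractivity, Proposition~\ref{prop:univ_F_poly_T} yields a unique continuous homomorphism $\gamma_x\colon\cF^\rT(\DD^n_r)\to\cO_\defo(\DD^n_r)$ with $\gamma_x(\zeta_i)=x_i$. Combining $\gamma_x$ with the structure homomorphism $\cO(\CC^\times)\to\cO_\defo(\DD^n_r)$ (whose image is central, as $z$ comes from the base ring) and invoking the universal property of the projective tensor product for maps with commuting images, together with the identification $\cO(\CC^\times)\Ptens\cF^\rT(\DD^n_r)\cong\cO(\CC^\times,\cF^\rT(\DD^n_r))$ from~\eqref{vec_hol}, I obtain a continuous homomorphism $\cO(\CC^\times,\cF^\rT(\DD^n_r))\to\cO_\defo(\DD^n_r)$ sending $z\mapsto z$ and $\zeta_i\mapsto x_i$. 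Each generator $\zeta_i\zeta_j-z\zeta_j\zeta_i$ of $I_\DD^\rT$ is sent to $x_ix_j-zx_jx_i=0$, so this map annihilates the closed ideal $I_\DD^\rT$ and descends to the desired continuous homomorphism $\phi\colon\cO_\defo^\rT(\DD^n_r)\to\cO_\defo(\DD^n_r)$ with $\phi(x_i)=x_i$ and $\phi(z)=z$.

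It remains to see that $\phi$ and $i_{\DD\DD}$ are mutually inverse. Both composites $\phi\circ i_{\DD\DD}$ and $i_{\DD\DD}\circ\phi$ fix $z$ and each $x_i$, hence restrict to the identity on the image of $\cO^\reg_\defo(\CC^n)$; by Lemma~\ref{lemma:O_reg_def_dense}(ii) this image is dense, so by continuity both composites equal the respective identity maps. Therefore $i_{\DD\DD}$ is a topological isomorphism with inverse $\phi$.
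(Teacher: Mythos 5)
Your proposal is correct and takes essentially the same route as the paper's own proof: both establish that $(x_1,\ldots,x_n)\in\cO_\defo(\DD^n_r)^n$ is strictly spectrally $r$-contractive via Lemma~\ref{lemma:x_alpha_sprad}, invoke Proposition~\ref{prop:univ_F_poly_T} together with the central structure map $\cO(\CC^\times)\to\cO_\defo(\DD^n_r)$ to build a continuous homomorphism killing $I_\DD^\rT$, and then conclude that the two composites are identities by density of the image of $\cO^\reg_\defo(\CC^n)$. Your only addition is the explicit remark that $\cO_\defo(\DD^n_r)$ is an Arens-Michael algebra (needed to apply the universal property), which the paper leaves implicit but which follows, as you say, from submultiplicativity of the quotient seminorms.
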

\begin{proof}
Let $\varphi_1\colon\cO(\CC^\times)\to\cO_\defo(\DD^n_r)$ denote the homomorphism given
by $\varphi_1(f)=f\otimes 1+I_\DD$.
By Lemma~\ref{lemma:x_alpha_sprad}, the $n$-tuple $(x_1,\ldots ,x_n)\in\cO_\defo(\DD^n_r)^n$
is strictly spectrally $r$-contractive. Applying Proposition~\ref{prop:univ_F_poly_T},
we obtain a continuous homomorphism
\[
\varphi_2\colon\cF^\rT(\DD^n_r)\to\cO_\defo(\DD^n_r),\qquad\zeta_i\mapsto x_i
\quad (i=1,\ldots ,n).
\]
Since the images of $\varphi_1$ and $\varphi_2$ commute, the map
\[
\varphi\colon\cO(\CC^\times)\Ptens\cF^\rT(\DD^n_r)\to\cO_\defo(\DD^n_r), \quad
\varphi(f\otimes a)=\varphi_1(f)\varphi_2(a),
\]
is an algebra homomorphism. By construction, $\varphi(\zeta_i\zeta_j-z\zeta_j\zeta_i)=0$
for all $i<j$. Hence $\varphi$ vanishes on $I_\DD^\rT$ and induces a homomorphism
\[
\psi\colon\cO_\defo^\rT(\DD^n_r)\to\cO_\defo(\DD^n_r).
\]
Clearly, $\psi$ takes each $x_i$ to $x_i$ and $z$ to $z$.
Since the canonical images of $\cO^\reg_\defo(\CC^n)$ are dense in $\cO_\defo(\DD^n_r)$
and in $\cO_\defo^\rT(\DD^n_r)$, we conclude that $\psi i_{\DD\DD}$ and
$i_{\DD\DD}\psi$ are the identity maps on $\cO_\defo(\DD^n_r)$
and $\cO_\defo^\rT(\DD^n_r)$, respectively. This completes the proof.
\end{proof}

\begin{corollary}
The Fr\'echet algebra bundles $\sE(\DD^n_r)$ and $\sE^\rT(\DD^n_r)$ are isomorphic.
\end{corollary}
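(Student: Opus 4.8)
The plan is to deduce this from Theorem~\ref{thm:D_DT_iso} via the functoriality of the bundle construction. By definition $\sE(\DD^n_r)=\sE(\cO_\defo(\DD^n_r))$ and $\sE^\rT(\DD^n_r)=\sE(\cO_\defo^\rT(\DD^n_r))$, so it suffices to show that the topological $\cO(\CC^\times)$-algebra isomorphism $i_{\DD\DD}\colon\cO_\defo(\DD^n_r)\to\cO_\defo^\rT(\DD^n_r)$ provided by Theorem~\ref{thm:D_DT_iso} induces an isomorphism of the associated Fr\'echet algebra bundles.

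First I would recall the bundle construction of Theorem~\ref{thm:Kalg-Bnd}, which assigns to each Fr\'echet $\cO(\CC^\times)$-algebra $R$ a continuous Fr\'echet algebra bundle $\sE(R)$ over $\CC^\times$ with fibers $R_q$. The key observation is that this assignment is functorial: a homomorphism $f\colon R\to S$ of Fr\'echet $\cO(\CC^\times)$-algebras passes to the fibers and yields a bundle morphism $\sE(f)\colon\sE(R)\to\sE(S)$ compatible with the fiberwise maps $R_q\to S_q$, and this assignment preserves compositions and identity morphisms.

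Granting this, I would apply $\sE(-)$ to $i_{\DD\DD}$. Since $i_{\DD\DD}$ is a topological isomorphism, its inverse is again an $\cO(\CC^\times)$-algebra homomorphism, so functoriality gives $\sE(i_{\DD\DD}^{-1})\circ\sE(i_{\DD\DD})=\sE(i_{\DD\DD}^{-1}\circ i_{\DD\DD})$, the identity morphism of $\sE(\DD^n_r)$, and symmetrically $\sE(i_{\DD\DD})\circ\sE(i_{\DD\DD}^{-1})$ is the identity of $\sE^\rT(\DD^n_r)$. Hence $\sE(i_{\DD\DD})$ is the desired bundle isomorphism.

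The only point needing verification is that the bundle functor sends isomorphisms to isomorphisms, which reduces to checking that it preserves compositions and identities; this is part of the apparatus established in Appendix~A. I therefore expect no genuine obstacle: the substance of the argument lies entirely in Theorem~\ref{thm:D_DT_iso}, and the corollary is a formal consequence.
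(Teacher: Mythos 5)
Your proposal is correct and is exactly the argument the paper intends: the corollary follows by applying the functor $\sE\colon\cO(\CC^\times)\lalg\to\AlgBnd(\CC^\times)$ of Theorem~\ref{thm:Kalg-Bnd} to the isomorphism $i_{\DD\DD}$ of Theorem~\ref{thm:D_DT_iso}, functoriality guaranteeing that $\sE(i_{\DD\DD})$ and $\sE(i_{\DD\DD}^{-1})$ are mutually inverse algebra bundle morphisms. The only slight imprecision is calling the output of Theorem~\ref{thm:Kalg-Bnd} a \emph{continuous} bundle --- continuity of $\sE(\DD^n_r)$ is a separate result proved later (Section~\ref{sect:cont_bnd}) and is not needed here --- but this does not affect the argument.
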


From now on we identify $\cO_\defo^\rT(\DD^n_r)$ with $\cO_\defo(\DD^n_r)$
and $\sE^\rT(\DD^n_r)$ with $\sE(\DD^n_r)$ via $i_{\DD\DD}$.

\section{The nonprojectivity of $\cO_\defo(\DD^n_r)$}
\label{sect:nonproj}

In this section we show that $\cO_\defo(\DD^n_r)$ is not topologically projective
(and, as a consequence, is not topologically free) over $\cO(\CC^\times)$.
This is a new phenomenon as compared to the formal deformation theory
and to the free holomorphic deformations considered in \cite{Pfl_Schott}
(see Section~\ref{sect:intro} for more details).
To prove the nonprojectivity of $\cO_\defo(\DD^n_r)$,
we first give a power series representation of $\cO_\defo(\DD^n_r)$,
which may be of independent interest.

Define $\omega\colon\Z_+^n\times\Z\to\R$ by
\[
\omega(k,p)=
\begin{cases}
p & \text{if $p\ge 0$};\\
0 & \text{if $p<0$ and $p+\sum\limits_{i<j} k_i k_j\ge 0$};\\
p+\sum\limits_{i<j} k_i k_j & \text{if $p+\sum\limits_{i<j} k_i k_j<0$}.
\end{cases}
\]
Observe that
\begin{equation}
\label{omega_mod}
|\omega(k,p)|=\min\Bigl\{ |\lambda| : \lambda\in
\Bigl[p,p+\mathop{\textstyle\sum}\limits_{i<j} k_i k_j\Bigr]\Bigr\}.
\end{equation}
\begin{lemma}
\label{lemma:omega_ineq}
For all $k,\ell\in\Z_+^n,\; p,q\in\Z$ we have
\[
|\omega(k+\ell,p+q-\mathop{\textstyle\sum}\limits_{i>j} k_i \ell_j)|\le |\omega(k,p)|+|\omega(\ell,q)|.
\]
\end{lemma}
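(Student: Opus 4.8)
The plan is to rely entirely on the geometric reading of $|\omega|$ supplied by \eqref{omega_mod}. Writing $S(k)=\sum_{i<j}k_ik_j$ for brevity, equation \eqref{omega_mod} says that $|\omega(k,p)|$ is the distance from $0$ to the closed interval $[p,p+S(k)]$, which is a genuine (nonempty, correctly oriented) interval because $S(k)\ge 0$ for $k\in\Z_+^n$. First I would record two elementary facts about the function $\operatorname{dist}(0,\cdot)$ on compact real intervals: (a) it is \emph{monotone decreasing under inclusion}, i.e.\ $\operatorname{dist}(0,J)\le\operatorname{dist}(0,J')$ whenever $J'\subseteq J$; and (b) it is \emph{subadditive under Minkowski sums}, $\operatorname{dist}(0,J'+J'')\le\operatorname{dist}(0,J')+\operatorname{dist}(0,J'')$. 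Fact (b) follows immediately by choosing minimizers $x'\in J'$ and $x''\in J''$ with $|x'|=\operatorname{dist}(0,J')$ and $|x''|=\operatorname{dist}(0,J'')$, noting $x'+x''\in J'+J''$, and invoking the triangle inequality.

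The second step is a direct expansion. Set $C=\sum_{i<j}k_i\ell_j$ and $D=\sum_{i>j}k_i\ell_j$, both of which are nonnegative since $k,\ell\in\Z_+^n$. Expanding $(k_i+\ell_i)(k_j+\ell_j)$ over $i<j$ and using the index-renaming identity $\sum_{i<j}\ell_ik_j=\sum_{i>j}k_i\ell_j=D$ to collect the two mixed terms, I obtain
\[
S(k+\ell)=S(k)+S(\ell)+C+D.
\]
The crucial observation is that $D$ is exactly the quantity subtracted in the left-hand side of the lemma, which is what makes the argument align.

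With this in hand, the three intervals attached to the three occurrences of $\omega$ are
\[
I_1=[p,\,p+S(k)],\qquad I_2=[q,\,q+S(\ell)],\qquad
I_3=\bigl[p+q-D,\;(p+q-D)+S(k+\ell)\bigr].
\]
Substituting the expansion of $S(k+\ell)$ gives $I_3=[p+q-D,\;p+q+S(k)+S(\ell)+C]$, whereas the Minkowski sum is $I_1+I_2=[p+q,\;p+q+S(k)+S(\ell)]$. Since $C,D\ge 0$, the left endpoint of $I_3$ lies to the left of that of $I_1+I_2$ while the right endpoint lies to the right, so $I_1+I_2\subseteq I_3$. Combining facts (a) and (b) then yields
\[
|\omega(k+\ell,\,p+q-D)|=\operatorname{dist}(0,I_3)\le\operatorname{dist}(0,I_1+I_2)\le\operatorname{dist}(0,I_1)+\operatorname{dist}(0,I_2)=|\omega(k,p)|+|\omega(\ell,q)|,
\]
which is the claim.

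The argument is short and I do not anticipate a genuine obstacle. The only point requiring care is the bookkeeping in the expansion of $S(k+\ell)$: one must verify that the term collected as $D=\sum_{i>j}k_i\ell_j$ matches \emph{precisely} the quantity subtracted in the statement, and that the signs line up so that the inclusion runs $I_1+I_2\subseteq I_3$ rather than the reverse. This is exactly where the hypothesis $k,\ell\in\Z_+^n$ enters, through the nonnegativity of $C$ and $D$; without it the inclusion (and hence the inequality) could fail.
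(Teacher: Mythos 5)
Your proposal is correct and follows essentially the same route as the paper: both arguments read $|\omega|$ via \eqref{omega_mod} as the distance from $0$ to an interval, expand $\sum_{i<j}(k_i+\ell_i)(k_j+\ell_j)$ to identify the subtracted term $\sum_{i>j}k_i\ell_j$, and conclude by combining the triangle inequality (your Minkowski-sum subadditivity, the paper's $\min\{|\lambda|+|\mu|\}\ge\min\{|\lambda+\mu|\}$ step) with monotonicity of the distance under interval inclusion. The only difference is packaging, not substance.
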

\begin{proof}
By \eqref{omega_mod}, we have
\begin{align}
|\omega(k,p)|+|\omega(\ell,q)|
&=\min\Bigl\{ |\lambda|+|\mu| : \lambda\in \Bigl[p,p+\mathop{\textstyle\sum}\limits_{i<j} k_i k_j\Bigr],\;
\mu\in \Bigl[q,q+\mathop{\textstyle\sum}\limits_{i<j} \ell_i \ell_j\Bigr]\Bigr\}\notag\\
&\ge\min\Bigl\{ |\lambda+\mu| : \lambda\in \Bigl[p,p+\mathop{\textstyle\sum}\limits_{i<j} k_i k_j\Bigr],\;
\mu\in \Bigl[q,q+\mathop{\textstyle\sum}\limits_{i<j} \ell_i \ell_j\Bigr]\Bigr\}\notag\\
\label{min1}
&=\min\Bigl\{ |\lambda| : \lambda\in \Bigl[p+q,p+q+\mathop{\textstyle\sum}\limits_{i<j} k_i k_j
+\mathop{\textstyle\sum}\limits_{i<j} \ell_i \ell_j\Bigr]\Bigr\}.
\end{align}
On the other hand,
\begin{align}
|&\omega(k+\ell,p+q-\mathop{\textstyle\sum}\limits_{i>j} k_i \ell_j)|\notag\\
&=\min\Bigl\{ |\lambda| : \lambda\in \Bigl[p+q-\mathop{\textstyle\sum}\limits_{i>j} k_i \ell_j,
p+q-\mathop{\textstyle\sum}\limits_{i>j} k_i \ell_j
+\mathop{\textstyle\sum}\limits_{i<j}(k_i+\ell_i)(k_j+\ell_j)\Bigr]\Bigr\}\notag\\
\label{min2}
&=\min\Bigl\{ |\lambda| : \lambda\in \Bigl[p+q-\mathop{\textstyle\sum}\limits_{i>j} k_i \ell_j,
p+q+\mathop{\textstyle\sum}\limits_{i<j} k_i k_j
+\mathop{\textstyle\sum}\limits_{i<j} \ell_i \ell_j
+\mathop{\textstyle\sum}\limits_{i<j} k_i \ell_j\Bigr]\Bigr\}.
\end{align}
To complete the proof, it remains to observe that the interval over which the minimum is taken
in \eqref{min1} is contained in the respective interval in \eqref{min2}.
\end{proof}

Given $n\in\N$ and $r\in (0,+\infty]$, let
\[
D_{n,r}=\biggl\{ a=\sum_{k\in\Z_+^n,\; p\in\Z} c_{kp} x^k z^p :
\| a\|_{\rho,\tau}=\sum_{k,p} |c_{kp}|\rho^{|k|}\tau^{|\omega(k,p)|}<\infty
\;\forall\rho\in (0,r),\;\forall\tau\ge 1\biggr\}.
\]
Clearly, $D_{n,r}$ is a Fr\'echet space for the topology determined by
the family $\{\|\cdot\|_{\rho,\tau} : \rho\in (0,r),\;\tau\ge 1\}$ of norms.
By Lemma~\ref{lemma:O_reg_def_dense} (i), we may identify $\cO_\defo^\reg(\CC^n)$
with a dense vector subspace of $D_{n,r}$.

\begin{prop}
The multiplication on $\cO_\defo^\reg(\CC^n)$ uniquely extends to
a continuous multiplication on $D_{n,r}$. Moreover, each norm
$\|\cdot\|_{\rho,\tau}\; (\rho\in (0,r),\;\tau\ge 1)$ is submultiplicative on $D_{n,r}$.
Thus $D_{n,r}$ is an Arens-Michael algebra.
\end{prop}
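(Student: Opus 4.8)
The plan is to reduce everything to an explicit formula for the product of two basis monomials together with the inequality already recorded in Lemma~\ref{lemma:omega_ineq}. First I would compute, for $k,\ell\in\Z_+^n$ and $p,q\in\Z$, the product $(x^k z^p)(x^\ell z^q)$ in $\cO^{\reg}_\defo(\CC^n)$. Since $z$ is central, this equals $x^k x^\ell z^{p+q}$, so the only issue is to normal-order $x^k x^\ell$. Writing $x^k x^\ell$ as the concatenated word $x_1^{k_1}\cdots x_n^{k_n}x_1^{\ell_1}\cdots x_n^{\ell_n}$ and bubbling it into normal form $x^{k+\ell}$ by means of $x_a x_b=z^{-1}x_b x_a$ for $a>b$, each adjacent swap of a wrongly ordered pair contributes one factor $z^{-1}$; the number of such swaps is the number of cross-block inversions, namely $\sum_{i>j}k_i\ell_j$. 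Hence I obtain
\begin{equation*}
(x^k z^p)(x^\ell z^q)=x^{k+\ell}\,z^{\,p+q-\sum_{i>j}k_i\ell_j}.
\end{equation*}
This is well defined by Lemma~\ref{lemma:O_reg_def_dense}~(i), which guarantees that the monomials form a basis, so no consistency check on the choice of swaps is needed.

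With this formula in hand, submultiplicativity on the dense subalgebra is immediate. For two basis monomials, using $\tau\ge 1$ and Lemma~\ref{lemma:omega_ineq} applied to the $z$-exponent $p+q-\sum_{i>j}k_i\ell_j$, I get
\begin{equation*}
\|(x^k z^p)(x^\ell z^q)\|_{\rho,\tau}
=\rho^{|k|+|\ell|}\tau^{|\omega(k+\ell,\,p+q-\sum_{i>j}k_i\ell_j)|}
\le\rho^{|k|}\tau^{|\omega(k,p)|}\cdot\rho^{|\ell|}\tau^{|\omega(\ell,q)|},
\end{equation*}
which is exactly $\|x^k z^p\|_{\rho,\tau}\,\|x^\ell z^q\|_{\rho,\tau}$. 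Expanding arbitrary elements $a=\sum_{k,p}c_{kp}x^k z^p$ and $b=\sum_{\ell,q}d_{\ell q}x^\ell z^q$ of $\cO^{\reg}_\defo(\CC^n)$ bilinearly and applying the triangle inequality then yields $\|ab\|_{\rho,\tau}\le\|a\|_{\rho,\tau}\|b\|_{\rho,\tau}$ for all $\rho\in(0,r)$ and $\tau\ge 1$.

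To pass from the dense subalgebra to $D_{n,r}$, I would run the same estimate with absolute values. For arbitrary $a,b\in D_{n,r}$ the double series $\sum_{k,p,\ell,q}|c_{kp}|\,|d_{\ell q}|\,\rho^{|k+\ell|}\tau^{|\omega(k+\ell,\,p+q-\sum_{i>j}k_i\ell_j)|}$ is dominated by $\|a\|_{\rho,\tau}\|b\|_{\rho,\tau}$ and hence converges; this legitimizes collecting the coefficients of each monomial $x^m z^s$ and shows that the resulting series lies in $D_{n,r}$ with $\|ab\|_{\rho,\tau}\le\|a\|_{\rho,\tau}\|b\|_{\rho,\tau}$. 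The product so defined is jointly continuous and restricts to the original multiplication on $\cO^{\reg}_\defo(\CC^n)$; its uniqueness as a continuous extension follows from the density of $\cO^{\reg}_\defo(\CC^n)$ in $D_{n,r}$ (Lemma~\ref{lemma:O_reg_def_dense}~(i)). Associativity and the unit then pass to $D_{n,r}$ by continuity. Finally, $D_{n,r}$ is a complete (indeed Fr\'echet) space whose topology is defined by the submultiplicative norms $\|\cdot\|_{\rho,\tau}$, so it is an Arens-Michael algebra.

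The only genuinely substantive step is the product formula, that is, correctly identifying the $z$-exponent $p+q-\sum_{i>j}k_i\ell_j$ as the normal-ordering inversion count and matching it against the interval bookkeeping of Lemma~\ref{lemma:omega_ineq}; everything analytic afterwards is routine. I therefore expect the main (and fairly modest) obstacle to be the careful verification that the inversion count is exactly $\sum_{i>j}k_i\ell_j$ and that the interval appearing in Lemma~\ref{lemma:omega_ineq} is precisely the one produced by this exponent, rather than any difficulty of an analytic nature.
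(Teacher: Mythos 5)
Your proposal is correct and follows essentially the same route as the paper: the paper likewise reduces everything to submultiplicativity of each $\|\cdot\|_{\rho,\tau}$ on the dense subalgebra $\cO_\defo^\reg(\CC^n)$, using the monomial product formula $(x^k z^p)(x^\ell z^q)=x^{k+\ell}z^{p+q-\sum_{i>j}k_i\ell_j}$ together with Lemma~\ref{lemma:omega_ineq}, and then passes to the completion $D_{n,r}$. Your explicit verification of the inversion count $\sum_{i>j}k_i\ell_j$ and of the double-series convergence merely spells out details the paper leaves implicit.
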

\begin{proof}
Since $D_{n,r}$ is the completion of $\cO_\defo^\reg(\CC^n)$, it suffices to show that
every norm $\|\cdot\|_{\rho,\tau}$ ($\rho\in (0,r),\;\tau\ge 1$) is submultiplicative on
$\cO_\defo^\reg(\CC^n)$.
Let $a=x^k z^p$ and $b=x^\ell z^q\in\cO_\defo^\reg(\CC^n)$.
By Lemma~\ref{lemma:omega_ineq}, we have
\[
\begin{split}
\| ab\|_{\rho,\tau}
=\| x^{k+\ell} z^{p+q-\sum_{i>j} k_i\ell_j}\|_{\rho,\tau}
&=\rho^{|k+\ell|}\tau^{|\omega(k+\ell,p+q-\sum_{i>j} k_i\ell_j)|}\\
&\le\rho^{|k|}\rho^{|\ell|}\tau^{|\omega(k,p)|}\tau^{|\omega(\ell,q)|}
=\| a\|_{\rho,\tau} \| b\|_{\rho,\tau}.
\end{split}
\]
Since the norm of every element of $\cO_\defo^\reg(\CC^n)$ is the sum of the norms
of the respective monomials, we conclude that the inequality
$\| ab\|_{\rho,\tau}\le\| a\|_{\rho,\tau} \| b\|_{\rho,\tau}$ holds for all
$a,b\in \cO_\defo^\reg(\CC^n)$.
\end{proof}

\begin{theorem}
\label{thm:O_def_D_powrep}
There exists a topological algebra isomorphism $\cO_\defo(\DD^n_r)\to D_{n,r}$
uniquely determined by $z\mapsto z$, $x_i\mapsto x_i\; (i=1,\ldots ,n)$.
\end{theorem}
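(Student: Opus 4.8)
The plan is to exhibit two mutually inverse continuous algebra homomorphisms between $\cO_\defo(\DD^n_r)$ and $D_{n,r}$, both restricting to the identity on the dense subalgebra $\cO^\reg_\defo(\CC^n)$, which sits densely in $\cO_\defo(\DD^n_r)$ by Lemma~\ref{lemma:O_reg_def_dense}~(ii) and in $D_{n,r}$ by construction. Once both maps are continuous and fix $\cO^\reg_\defo(\CC^n)$ pointwise, their two composites agree with the identity on this dense subalgebra, hence everywhere, so each is a topological isomorphism. No open mapping argument is then needed. Throughout I use the identification $\cO_\defo(\DD^n_r)\cong\cO_\defo^\rT(\DD^n_r)$ of Theorem~\ref{thm:D_DT_iso}.

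First I would construct $\Phi\colon\cO_\defo(\DD^n_r)\to D_{n,r}$ by the universal properties of the factors. It suffices to produce a continuous homomorphism $\cO(\CC^\times)\Ptens\cF^\rT(\DD^n_r)\to D_{n,r}$ killing the generators $\zeta_i\zeta_j-z\zeta_j\zeta_i$ of $I_\DD^\rT$; this amounts to a pair of continuous homomorphisms with commuting ranges. For the $\cO(\CC^\times)$-factor, one checks directly that $\|z^p\|_{\rho,\tau}=\tau^{|p|}$ in $D_{n,r}$ (since $\omega(0,p)=p$), so that $\|\sum_p c_p z^p\|_{\rho,\tau}\le\sum_p|c_p|\tau^{|p|}$, which equals the defining norm of $\cO(\CC^\times)$ at $t=\tau$; hence $z\mapsto z$ extends to a continuous homomorphism $\theta_1\colon\cO(\CC^\times)\to D_{n,r}$. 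For the $\cF^\rT(\DD^n_r)$-factor I would verify that $(x_1,\ldots,x_n)$ is strictly spectrally $r$-contractive in $D_{n,r}$ and invoke Proposition~\ref{prop:univ_F_poly_T}. The decisive computation is that $x_\alpha=x^{\rp(\alpha)}z^{-\rrm(\alpha)}$ together with $0\le\rrm(\alpha)\le\sum_{i<j}\rp_i(\alpha)\rp_j(\alpha)$ (Lemma~\ref{lemma:m<p}) forces $0$ to lie in the interval of~\eqref{omega_mod} defining $\omega(\rp(\alpha),-\rrm(\alpha))$; thus $\|x_\alpha\|_{\rho,\tau}=\rho^{|\alpha|}$ for every $\alpha$, and $\lim_{d\to\infty}\bigl(\sup_{\alpha\in W_{n,d}}\|x_\alpha\|_{\rho,\tau}\bigr)^{1/d}=\rho<r$. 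Since $z$ is central in $D_{n,r}$, the ranges of $\theta_1$ and $\theta_2$ commute, the resulting homomorphism annihilates $I_\DD^\rT$, and $\Phi$ is induced on the quotient.

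Next I would construct $\Psi\colon D_{n,r}\to\cO_\defo(\DD^n_r)$. As $D_{n,r}$ is the completion of $\cO^\reg_\defo(\CC^n)$ in the norms $\|\cdot\|_{\rho,\tau}$, it is enough to show that the inclusion $\cO^\reg_\defo(\CC^n)\hookrightarrow\cO_\defo(\DD^n_r)$ is continuous for these norms, i.e. that each quotient seminorm $\|\cdot\|_{\rho,\tau,t}$ of Lemma~\ref{lemma:x_alpha_sprad} is dominated by a $D_{n,r}$-norm. Here I would use Lemma~\ref{lemma:alpha_opt}: choosing $m\in[0,\sum_{i<j}k_ik_j]$ that realizes $|\omega(k,p)|=\min\{|\lambda|:\lambda\in[p,p+\sum_{i<j}k_ik_j]\}$ yields $\alpha$ with $\rp(\alpha)=k$, $\rs(\alpha)\le n+2$, and $x^k=x_\alpha z^m$, so that $x^kz^p$ is the image of $z^{m+p}\otimes\zeta_\alpha$. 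This gives $\|x^kz^p\|_{\rho,\tau,t}\le t^{|m+p|}\rho^{|k|}\tau^{\rs(\alpha)+1}\le\tau^{n+3}\,t^{|\omega(k,p)|}\rho^{|k|}$, and summing over the finitely many monomials of an element of $\cO^\reg_\defo(\CC^n)$ yields $\|\cdot\|_{\rho,\tau,t}\le\tau^{n+3}\,\|\cdot\|_{\rho,t}$, where $\|\cdot\|_{\rho,t}$ is the $D_{n,r}$-norm. Since the seminorms $\|\cdot\|_{\rho,\tau,t}$ generate the topology of $\cO_\defo(\DD^n_r)$, the inclusion extends to the desired continuous homomorphism $\Psi$.

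The main obstacle is precisely this $\Psi$-direction, namely bounding the quotient seminorm of $\cO_\defo(\DD^n_r)$ from above by a norm of $D_{n,r}$: the naive representative $z^{m+p}\otimes\zeta_\alpha$ of $x^kz^p$ carries a factor $\tau^{\rs(\alpha)+1}$ that is uncontrolled unless the optimal $z$-exponent can be realized by a word of bounded alternation length. This is exactly what Lemma~\ref{lemma:alpha_opt} supplies (via $\rs(\alpha)\le n+2$), with the matching of the $\omega$-weight guaranteed by Lemma~\ref{lemma:m<p} and formula~\eqref{omega_mod}. With both $\Phi$ and $\Psi$ in hand, the fact that they fix $\cO^\reg_\defo(\CC^n)$ makes $\Phi\Psi$ and $\Psi\Phi$ the identity on a dense subalgebra, and hence on all of $D_{n,r}$ and $\cO_\defo(\DD^n_r)$, which completes the proof.
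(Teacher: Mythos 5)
Your proposal is correct and follows essentially the same route as the paper's own proof: the easy direction via universal properties (continuity of $z\mapsto z$ plus a universal property of the free polydisk algebra), the hard direction via Lemma~\ref{lemma:alpha_opt}, whose bound $\rs(\alpha)\le n+2$ on words realizing the optimal power of $z$ yields exactly the controlled factor $\tau^{n+3}$, and the conclusion by observing that both maps fix the dense subalgebra $\cO_\defo^\reg(\CC^n)$ spanned by the monomials $x^k z^p$. The only cosmetic differences are that you route through $\cF^\rT(\DD^n_r)$ via Theorem~\ref{thm:D_DT_iso} and check strict spectral $r$-contractivity of $(x_1,\ldots,x_n)$ in $D_{n,r}$ (using Lemma~\ref{lemma:m<p} to get $\| x_\alpha\|_{\rho,\tau}=\rho^{|\alpha|}$), whereas the paper invokes the coproduct property of $\cF(\DD^n_r)$ with single-variable estimates and builds the inverse as an explicit lift $\tilde\psi$ into $\cO(\CC^\times)\Ptens\cF(\DD^n_r)$ rather than as your quotient-seminorm estimate; the underlying computations coincide.
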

\begin{proof}
Since $D_{n,r}$ is an Arens-Michael algebra, and since $z\in D_{n,r}$ is invertible,
there exists a unique continuous homomorphism $\varphi_1\colon\cO(\CC^\times)\to D_{n,r}$
such that $\varphi_1(w)=z$ (where $w$ is the coordinate on $\CC$).
Observe also that for each $i=1,\ldots ,n$ we have a continuous homomorphism from
$\cO(\DD_r)$ to $D_{n,r}$ uniquely determined by $w\mapsto x_i$ (cf. \eqref{poly_power_rep}).
By the universal property of $\cF(\DD^n_r)$ (see \eqref{F_poly}),
there exists a unique continuous homomorphism $\varphi_2\colon\cF(\DD^n_r)\to D_{n,r}$
such that $\varphi_2(\zeta_i)=x_i$ ($i=1,\ldots ,n$).
Since the images of $\varphi_1$ and $\varphi_2$ commute, the map
\[
\tilde\varphi\colon\cO(\CC^\times)\Ptens\cF(\DD^n_r)\to D_{n,r},\quad
f\otimes a\mapsto\varphi_1(f)\varphi_2(a),
\]
is an algebra homomorphism. By construction, $\tilde\varphi(I_\DD)=0$. Hence $\tilde\varphi$
induces a continuous homomorphism
\[
\varphi\colon\cO_\defo(\DD^n_r)\to D_{n,r},\quad z\mapsto z,
\quad x_i\mapsto x_i\quad (i=1,\ldots ,n).
\]
We claim that $\varphi$ is a topological isomorphism. To see this, observe first that
for each $k\in\Z_+^n$ and each $p\in\Z$ we have $\omega(k,p)-p\in [0,\sum_{i<j} k_i k_j]$.
By Lemma~\ref{lemma:alpha_opt}, there exists $\alpha(k,p)\in W_n$ such that
\begin{equation}
\label{alpha_k_p}
\rp(\alpha(k,p))=k,\quad \rs(\alpha(k,p))\le n+2,\quad x^k=x_{\alpha(k,p)}z^{\omega(k,p)-p}.
\end{equation}
We now define
\[
\tilde\psi\colon D_{n,r}\to\cO(\CC^\times)\Ptens\cF(\DD^n_r),\quad
\sum_{k,p} c_{kp} x^k z^p\mapsto
\sum_{k,p} c_{kp} z^{\omega(k,p)}\otimes\zeta_{\alpha(k,p)}.
\]
To see that $\tilde\psi$ is a continuous linear map from $D_{n,r}$ to
$\cO(\CC^\times)\Ptens\cF(\DD^n_r)$, take $t,\tau\ge 1$, $\rho\in (0,r)$, and let
$\|\cdot\|_{t;\rho,\tau}$ denote the projective tensor norm $\|\cdot\|_t\otimes_\pi\|\cdot\|_{\rho,\tau}$
on $\cO(\CC^\times)\Ptens\cF(\DD^n_r)$ (cf. Lemma~\ref{lemma:x_alpha_sprad}).
Using the first two formulas in~\eqref{alpha_k_p}, we see that
\[
\sum_{k,p} |c_{kp}| \| z^{\omega(k,p)}\otimes\zeta_{\alpha(k,p)}\|_{t;\rho,\tau}
\le\sum_{k,p} |c_{kp}| t^{|\omega(k,p)|}\rho^{|k|}\tau^{n+3}
=\tau^{n+3} \Bigl\|\sum_{k,p} c_{kp} x^k z^p\Bigr\|_{\rho,t}.
\]
This implies that $\tilde\psi$ indeed takes $D_{n,r}$ to $\cO(\CC^\times)\Ptens\cF(\DD^n_r)$
and is continuous. Let now $\psi\colon D_{n,r}\to\cO_{\defo}(\DD^n_r)$ be the composition
of $\tilde\psi$ and the quotient map of $\cO(\CC^\times)\Ptens\cF(\DD^n_r)$ onto
$\cO_{\defo}(\DD^n_r)$. By the third equality in~\eqref{alpha_k_p},
we have
\[
\psi(x^k z^p)=x_{\alpha(k,p)}z^{\omega(k,p)}=x^k z^p
\qquad (k\in\Z_+^n,\; p\in\Z).
\]
Since $\varphi(x^k z^p)=x^k z^p$ as well, and since the monomials $x^k z^p$ span
dense vector subspaces of $D_{n,r}$ and $\cO_\defo(\DD^n_r)$, we conclude that
$\varphi\psi$ and $\psi\varphi$ are the identity maps. This completes the proof.
\end{proof}

Recall some definitions and facts from \cite{X1} (see also \cite{X2}).
Let $A$ be a Fr\'echet algebra.
By a {\em left Fr\'echet $A$-module} we mean a left $A$-module $X$ together with
a Fr\'echet space topology such that the action
$A\times X\to X$ is continuous. Morphisms of Fr\'echet $A$-modules are assumed
to be continuous. A morphism
$\sigma\colon X\to Y$ of left Fr\'echet $A$-modules is an {\em admissible epimorphism}
if there exists a continuous linear map $\varkappa\colon Y\to X$ such that $\sigma\varkappa=\id_Y$.
A left Fr\'echet $A$-module $P$ is {\em topologically projective} if for each admissible epimorphism
$X\to Y$ of left Fr\'echet $A$-modules the induced map
$\Hom_A(P,X)\to\Hom_A(P,Y)$ is onto. If $E$ is a Fr\'echet space, then the projective tensor
product $A\Ptens E$ is a left Fr\'echet $A$-module in a natural way.
A left Fr\'echet $A$-module $F$ is {\em topologically free} if $F\cong A\Ptens E$ for some Fr\'echet space $E$.
Since $\Hom_A(A\Ptens E,-)\cong\Hom_{\CC}(E,-)$, it follows that each topologically free Fr\'echet
$A$-module is topologically projective. Given a left Fr\'echet $A$-module $P$, let $\mu_P\colon A\Ptens P\to P$
denote the $A$-module morphism uniquely determined by $a\otimes x\mapsto ax$.
By \cite[Chap.~III, Theorem~1.27]{X1}, $P$ is topologically projective if and only if
there exists an $A$-module morphism
$\nu\colon P\to A\Ptens P$ such that $\mu_P\nu=\id_P$.

\begin{theorem}
\label{thm:nonproj}
For each $n\ge 2$ and each $r\in (0,+\infty]$,
the Fr\'echet $\cO(\CC^\times)$-module $\cO_\defo(\DD^n_r)$ is not topologically projective
(and hence is not topologically free).
\end{theorem}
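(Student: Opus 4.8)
The plan is to argue by contradiction through the criterion recalled just above the statement: if $\cO_\defo(\DD^n_r)$ were topologically projective over $A=\cO(\CC^\times)$, there would be an $A$-module morphism $\nu\colon P\to A\Ptens P$ with $\mu_P\nu=\id_P$, where $P=\cO_\defo(\DD^n_r)$. Throughout I would use the power series model $P\cong D_{n,r}$ of Theorem~\ref{thm:O_def_D_powrep}, so that $P$ carries the weighted $\ell^1$-norms $\|a\|_{\rho,\tau}=\sum_{k,p}|c_{kp}|\rho^{|k|}\tau^{|\omega(k,p)|}$ on the monomial basis $\{x^kz^p\}$, with $A$ acting by $z^m\cdot x^kz^p=x^kz^{p+m}$. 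The first step is a clean description of the target module: since $\{z^m\}_{m\in\Z}$ is an $\ell^1$-basis of $(\cO(\CC^\times),\|\cdot\|_t)$ with $\|z^m\|_t=t^{|m|}$, the standard isometric identification $\ell^1(S)\Ptens X\cong\ell^1(S,X)$ gives, for $G=\sum_m z^m\otimes g_m\in A\Ptens P$ (the Laurent expansion of the corresponding $P$-valued function on $\CC^\times$), the exact formula $\|G\|_{t;\rho,\tau}=\sum_m t^{|m|}\,\|g_m\|_{\rho,\tau}$, where $\|\cdot\|_{t;\rho,\tau}=\|\cdot\|_t\otimes_\pi\|\cdot\|_{\rho,\tau}$.

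Next I would reduce everything to finitely many scalars attached to each $k$. By $A$-linearity $\nu(x^kz^p)=z^p\cdot\nu(x^k)$; writing $G_k=\nu(x^k)=\sum_m z^m\otimes g^{(k)}_m$ with $g^{(k)}_m=\sum_{\ell,s}d^{(k)}_{m,\ell,s}\,x^\ell z^s$, the splitting identity $\mu_P(G_k)=x^k$, read off on the coefficient of $x^kz^0$, becomes the single normalization $\sum_m d^{(k)}_{m,k,-m}=1$. Fix a target seminorm with $t>1$, $\tau\ge1$ and $\rho\in(0,r)$; continuity of $\nu$ then yields $\rho'\in(0,r)$, $\tau'\ge1$ and $C>0$ with $\|z^p\cdot G_k\|_{t;\rho,\tau}\le C\rho'^{|k|}\tau'^{|\omega(k,p)|}$ for all $k,p$. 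Using the norm formula and discarding all but the distinguished diagonal terms $(\ell,s)=(k,-m)$ gives the lower bound $\|z^p\cdot G_k\|_{t;\rho,\tau}\ge\rho^{|k|}\sum_m t^{|m+p|}\,|d^{(k)}_{m,k,-m}|$.

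The heart of the argument is to play these inequalities off one another over the range $p\in[-N(k),0]$, where $N(k)=\sum_{i<j}k_ik_j$. Crucially, the definition of $\omega$ gives $\omega(k,p)=0$ for every integer $p\in[-N(k),0]$, so the right-hand sides collapse to $C\rho'^{|k|}$. Summing over these $N(k)+1$ values of $p$ and interchanging the order of summation, each $|d^{(k)}_{m,k,-m}|$ gets multiplied by $\sum_{p=-N(k)}^{0}t^{|m+p|}$; since $\{\,m+p:-N(k)\le p\le0\,\}$ is an integer interval of length $N(k)$, it always contains a point at distance $\ge N(k)/2$ from the origin, whence this inner sum is $\ge t^{N(k)/2}$ uniformly in $m$. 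Combined with $\sum_m|d^{(k)}_{m,k,-m}|\ge 1$ from the normalization, this yields $\rho^{|k|}t^{N(k)/2}\le C(N(k)+1)\rho'^{|k|}$, that is, $t^{N(k)/2}\le C(N(k)+1)(\rho'/\rho)^{|k|}$.

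Finally I would specialize to $k=(K,K,0,\dots,0)$, which is available precisely because $n\ge2$: here $|k|=2K$ while $N(k)=K^2$, so the inequality reads $t^{K^2/2}\le C(K^2+1)(\rho'/\rho)^{2K}$. As $K\to\infty$ the left side grows like $t^{K^2/2}$, faster than the exponential-times-polynomial right side, a contradiction. Since every topologically free module is topologically projective, this also shows $\cO_\defo(\DD^n_r)$ is not topologically free. The main obstacle to keep in view is that $\nu$ may exploit cancellations, so $G_k$ cannot be controlled termwise; what makes the proof work is that only the diagonal coefficients $d^{(k)}_{m,k,-m}$ are pinned down by $\mu_P(G_k)=x^k$, and isolating exactly these, together with the vanishing of $\omega$ on the long interval $[-N(k),0]$, converts the quadratic growth of $N(k)$ into the desired contradiction.
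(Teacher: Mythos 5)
Your argument is correct, and it follows the same skeleton as the paper's proof: the power-series model $D_{n,r}$ of Theorem~\ref{thm:O_def_D_powrep}, the splitting criterion $\mu\nu=\id$, the weighted-$\ell^1$ (K\"othe) formula for the seminorms on $\cO(\CC^\times)\Ptens D_{n,r}$, the observation that $\omega(k,\cdot)$ vanishes on all of $[-N(k),0]$, and the final clash between the quadratic growth of $N(k)=\sum_{i<j}k_ik_j$ and the linear growth of $|k|$ (your monomials $x_1^K x_2^K$ versus the paper's $x_1^{2m}x_2^m$ is immaterial). Two points of execution differ, and the first is worth highlighting because your version is tighter than the paper's text. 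The paper asserts that $\mu\nu=\id$ forces the coefficients of $\nu(x_1^{2m}x_2^m)$ to vanish off the anti-diagonal $p+r=0$; as literally stated this does not follow, since $\mu\nu=\id$ only constrains the \emph{sums} of coefficients along each anti-diagonal, and $\nu$ can be perturbed by any continuous module morphism with values in $\Ker\mu$ (for instance $u\mapsto \lambda(u)\,(z\otimes x_1-1\otimes x_1 z)$, with $\lambda$ the coefficient-of-$x^{\bar m}$ functional), so that reduction genuinely requires an additional argument, e.g.\ averaging $\nu$ over the natural rotation action. You avoid the issue altogether: you extract only the diagonal coefficients $d^{(k)}_{m,k,-m}$, whose sum is pinned to $1$ directly by $\mu\nu=\id$, and you use them solely in a one-sided lower bound (all other terms in the norm being nonnegative), which is legitimate no matter what the off-diagonal part of $\nu(x^k)$ looks like. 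Second, in the endgame the paper splits into two cases according to where the $\ell^1$-mass of these coefficients sits and evaluates at $s=0$ or $s=2m^2$, whereas you sum the estimates over all $s\in[0,N(k)]$ and use that an integer interval of length $N(k)$ contains a point at distance at least $N(k)/2$ from the origin; this costs only the harmless polynomial factor $N(k)+1$ and is equivalent in strength. So your proposal is a correct, and in one respect more careful, rendering of the paper's argument.
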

\begin{proof}
We identify $\cO_\defo(\DD^n_r)$ with $D_{n,r}$ via the isomorphism constructed in
Theorem~\ref{thm:O_def_D_powrep}. Assume, towards a contradiction, that $\cO_\defo(\DD^n_r)$
is topologically projective over $\cO(\CC^\times)$. Then there exists a Fr\'echet $\cO(\CC^\times)$-module
morphism $\nu\colon\cO_\defo(\DD^n_r)\to\cO(\CC^\times)\Ptens\cO_\defo(\DD^n_r)$
such that $\mu\nu=\id$ (where $\mu=\mu_{\cO_\defo(\DD^n_r)}$).
Recall the standard fact that the projective tensor product of two K\"othe sequence
spaces is again a K\"othe sequence space (cf. \cite[41.7]{Kothe_II}).
Hence we have a topological isomorphism
\[
\begin{split}
&\cO(\CC^\times)\Ptens\cO_\defo(\DD^n_r)\\
&\cong
\biggl\{ v=\sum_{\substack{k\in\Z_+^n\\ r,p\in\Z}} c_{rkp} z^r \otimes x^k z^p :
\| v\|_{t,\rho,\tau}=\sum_{r,k,p} |c_{rkp}| t^{|r|} \rho^{|k|} \tau^{|\omega(k,p)|}<\infty
\;\forall \rho\in (0,r),\;\forall t,\tau\ge 1\biggr\}.
\end{split}
\]
For each $m\in\N$, let
\[
\nu(x_1^{2m} x_2^{m})=\sum_{r,k,p} c_{rkp}^{(m)} z^r \otimes x^k z^p.
\]
Since $\mu\nu=\id$, we see that $c_{rkp}^{(m)}=0$ unless $p+r=0$. Hence
\begin{equation}
\label{nu_pow}
\nu(x_1^{2m} x_2^{m})=\sum_{k,p} c_{kp}^{(m)} z^{-p} \otimes x^k z^p,
\end{equation}
where $c_{kp}^{(m)}=c_{-pkp}^{(m)}$. This implies that
\[
x_1^{2m} x_2^{m}=\mu\Bigl(\sum_{k,p} c_{kp}^{(m)} z^{-p} \otimes x^k z^p\Bigr)
=\sum_{k,p} c_{kp}^{(m)} x^k.
\]
Letting $\bar m=(2m,m,0,\ldots ,0)\in\Z_+^n$, we conclude that
\begin{equation}
\label{sum_c_kp}
\sum_p c_{kp}^{(m)}=
\begin{cases}
1 & \text{if $k=\bar m$};\\
0 & \text{if $k\ne\bar m$.}
\end{cases}
\end{equation}
Fix $\rho\in (0,r)$ and choose $\rho_1\in (0,r)$, $\tau_1\ge 1$, and $C>0$ such that
\begin{equation}
\label{coretr_estim}
\| \nu(u)\|_{2,\rho,1}\le C\| u\|_{\rho_1,\tau_1}
\qquad (u\in\cO_\defo(\DD^n_r)).
\end{equation}
Letting $u_{sm}=z^{-s} x_1^{2m} x_2^{m}$, where $0\le s\le 2m^2$,
we obtain from \eqref{coretr_estim}
\begin{equation}
\label{nu_le}
\| \nu(u_{sm})\|_{2,\rho,1}\le C\| u_{sm}\|_{\rho_1,\tau_1}=C\rho_1^{3m},
\end{equation}
because $\omega(\bar m,-s)=0$. On the other hand, \eqref{nu_pow} implies that
\begin{align}
\| \nu(u_{sm})\|_{2,\rho,1}
=\| z^{-s}\nu(x_1^{2m} x_2^{m})\|_{2,\rho,1}
&=\Bigl\|\sum_{k,p} c_{kp}^{(m)} z^{-s-p}\otimes x^k z^p\Bigr\|_{2,\rho,1}\notag\\
\label{nu_ge}
&=\sum_{k,p} |c_{kp}^{(m)}| 2^{|s+p|}\rho^{|k|}
\ge \sum_p |c_{\bar m p}^{(m)}| 2^{|s+p|}\rho^{3m}.
\end{align}
Combining \eqref{nu_le} and \eqref{nu_ge}, we see that
\begin{equation}
\label{cmp_est}
\sum_p |c_{\bar m p}^{(m)}| 2^{|s+p|}\rho^{3m} \le C\rho_1^{3m}
\qquad (m\in\N,\; 0\le s\le 2m^2).
\end{equation}
Since $\sum_{p\in\Z} |c_{\bar mp}^{(m)}|\ge 1$ by \eqref{sum_c_kp}, we have
\begin{align}
\label{ge-m^2}
\text{either }&\sum_{p\ge -m^2} |c_{\bar mp}^{(m)}|\ge 1/2\\
\label{le-m^2}
\text{or } &\sum_{p< -m^2} |c_{\bar mp}^{(m)}|\ge 1/2.
\end{align}
If \eqref{ge-m^2} holds, then, letting $s=2m^2$ in \eqref{cmp_est}, we obtain
\[
C\rho_1^{3m} \ge
\sum_{p\ge -m^2} |c_{\bar m p}^{(m)}| 2^{|2m^2+p|}\rho^{3m}
\ge \frac{1}{2}\, 2^{m^2} \rho^{3m}.
\]
On the other hand, if \eqref{le-m^2} holds, then, letting $s=0$ in \eqref{cmp_est}, we see that
\[
C\rho_1^{3m} \ge
\sum_{p< -m^2} |c_{\bar m p}^{(m)}| 2^{|p|}\rho^{3m}
\ge \frac{1}{2}\, 2^{m^2} \rho^{3m}.
\]
Thus for each $m\in\N$ we have $2^{m^2-1}\le C(\rho_1/\rho)^{3m}$, which is impossible.
The resulting contradiction completes the proof.
\end{proof}

\begin{remark}
In \cite[Example 3.2]{Pfl_Schott}, the authors construct the algebra $\cO_\defo(\CC^n)$ and claim
(essentially without proof) that it is topologically free over $\cO(\CC^\times)$.
Theorem~\ref{thm:nonproj} shows that this is not the case.
\end{remark}

\begin{remark}
We conjecture that a result similar to Theorem~\ref{thm:nonproj} holds for $\cO_\defo(\BB^n_r)$
as well.
\end{remark}

\section{The continuity of $\sE(\DD^n_r)$ and $\sE(\BB^n_r)$}
\label{sect:cont_bnd}

In operator algebra theory and in related fields of mathematics one usually works
with {\em continuous} Banach bundles, i.e., with those bundles of Banach spaces
whose norm is a continuous function on the total space
(see, e.g., \cite{Dupre,Fell_Mackey,FD}).
A similar notion makes sense in the more general context
of locally convex bundles (cf. Definition~\ref{def:cont_bnd}).
Thus a natural question is whether or not the bundles $\sE(\DD^n_r)$ and $\sE(\BB^n_r)$
constructed in Section~\ref{sect:deforms} are continuous.
The answer would obviously be yes if the algebras $\cO_\defo(\DD^n_r)$
and $\cO_\defo(\BB^n_r)$ were topologically free over $\cO(\CC^\times)$.
However, we know from Theorem~\ref{thm:nonproj} that this is not the case,
at least for $\cO_\defo(\DD^n_r)$.
In this section our goal is to show that the Fr\'echet algebra
bundles $\sE(\DD^n_r)$ and $\sE(\BB^n_r)$
are nevertheless continuous. This will be deduced from the following general result.

\begin{theorem}
\label{thm:cont_bnd}
Let $X$ be a reduced Stein space,  $F$ be a Fr\'echet algebra, $I\subset\cO(X,F)$ be a closed
two-sided ideal, and $A=\cO(X,F)/I$. For every $x\in X$, we identify $A_x$ with $F/I_x$
by Lemma~{\upshape\ref{lemma:F_I_bnd}}. Suppose that there exist a dense subalgebra
$A_0\subset A$ and a directed defining family $\cN_F=\{\|\cdot\|_\lambda : \lambda\in\Lambda\}$
of seminorms on $F$ such that for each $a\in A_0$ and each $\lambda\in\Lambda$
the function $X\to\R,\; x\mapsto \| a_x\|_{\lambda,x}$, is continuous
(where $\|\cdot\|_{\lambda,x}$ is the quotient seminorm of $\|\cdot\|_\lambda$ on $F/I_x$).
Then the bundle $\sE(A)$ is continuous.
\end{theorem}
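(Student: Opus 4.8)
The plan is to reduce the continuity of $\sE(A)$ to the continuity of a family of scalar functions on the base $X$, and then to propagate the hypothesis from the dense subalgebra $A_0$ to the whole of $A$. By Lemma~\ref{lemma:F_I_bnd} the fiber $A_x$ is identified with $F/I_x$, so the fiber topology is generated by the quotient seminorms $\{\|\cdot\|_{\lambda,x} : \lambda\in\Lambda\}$. Hence, by the definition of a continuous bundle (Definition~\ref{def:cont_bnd}), it suffices to show that for \emph{every} $a\in A$ and every $\lambda\in\Lambda$ the function $u_{a,\lambda}\colon X\to\R$, $u_{a,\lambda}(x)=\|a_x\|_{\lambda,x}$, is continuous. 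The assumption provides exactly this for $a\in A_0$, so the only task is to remove the restriction $a\in A_0$.

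The key device is a uniform contractivity estimate for the fiber maps $a\mapsto a_x$. Fix a compact set $K\subset X$ and $\lambda\in\Lambda$, let $\|\cdot\|_K$ be the sup-seminorm on $\cO(X)$ associated with $K$, and let $q_{K,\lambda}$ be the quotient on $A$ of the projective tensor seminorm $\|\cdot\|_K\otimes_\pi\|\cdot\|_\lambda$ on $\cO(X,F)\cong\cO(X)\Ptens F$ (see~\eqref{vec_hol}); this is a continuous seminorm on $A$. Lifting $a\in A$ to a representative $\wt a\in\cO(X,F)$ and using the commuting diagram~\eqref{quot-quot-diag} together with $I_x=\ol{\eps_x^F(I)}$, I would record the identity $\|a_x\|_{\lambda,x}=\inf_{c\in I}\|(\wt a+c)(x)\|_\lambda$ and then estimate, for $x\in K$,
\[
\|a_x\|_{\lambda,x}
=\inf_{c\in I}\|(\wt a+c)(x)\|_\lambda
\le\inf_{c\in I}\sup_{y\in K}\|(\wt a+c)(y)\|_\lambda
\le q_{K,\lambda}(a).
\]
Since the fiber map is linear, the reverse triangle inequality in $A_x$ then yields
\[
|u_{a,\lambda}(x)-u_{b,\lambda}(x)|\le\|(a-b)_x\|_{\lambda,x}\le q_{K,\lambda}(a-b)
\qquad(x\in K,\; a,b\in A),
\]
so $a\mapsto u_{a,\lambda}|_K$ is Lipschitz from $(A,q_{K,\lambda})$ to the bounded functions on $K$ with the sup-norm.

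With this estimate the propagation is routine. Given $a\in A$, $\lambda\in\Lambda$, $x_0\in X$ and $\eps>0$, I would pick a compact neighborhood $K$ of $x_0$ and, using the density of $A_0$ in $A$ and the continuity of $q_{K,\lambda}$, choose $a_0\in A_0$ with $q_{K,\lambda}(a-a_0)<\eps$. The previous estimate gives $\sup_{x\in K}|u_{a,\lambda}(x)-u_{a_0,\lambda}(x)|<\eps$, so $u_{a,\lambda}$ is a uniform limit on $K$ of the functions $u_{a_0,\lambda}$, which are continuous by hypothesis. A uniform limit of continuous functions being continuous, $u_{a,\lambda}$ is continuous on $K$, hence at $x_0$; as $x_0$ is arbitrary, $u_{a,\lambda}$ is continuous on $X$. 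This holds for all $a\in A$ and $\lambda\in\Lambda$, which is precisely what the continuity of $\sE(A)$ demands.

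The genuinely delicate point — and the only one requiring care — is the uniform contractivity estimate of the second paragraph, which is what reconciles the $F$-quotient fiber seminorms appearing in the hypothesis with the density that lives in $A$. Its proof rests on the identity $\|a_x\|_{\lambda,x}=\inf_{c\in I}\|(\wt a+c)(x)\|_\lambda$, obtained from $I_x=\ol{\eps_x^F(I)}$ and~\eqref{quot-quot-diag}, and on the comparison $\sup_{x\in K}\|g(x)\|_\lambda\le(\|\cdot\|_K\otimes_\pi\|\cdot\|_\lambda)(g)$ for $g\in\cO(X,F)$, which follows by testing on elementary tensors. Everything else is soft functional-analytic bookkeeping.
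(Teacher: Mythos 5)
Your second and third paragraphs are correct, and they in fact prove something slightly stronger than the paper makes explicit: the identity $\|a_x\|_{\lambda,x}=\inf_{c\in I}\|(\wt a+c)(x)\|_\lambda$, the estimate $\|a_x\|_{\lambda,x}\le q_{K,\lambda}(a)$ for $x\in K$ (which needs no holomorphic convexity of $K$), and the resulting Lipschitz bound legitimately propagate the continuity of $x\mapsto\|a_x\|_{\lambda,x}$ from $a\in A_0$ to all $a\in A$, by uniform approximation on compact neighborhoods. The gap is in your first paragraph, namely the claim that this is ``precisely what the continuity of $\sE(A)$ demands.'' By Definition~\ref{def:cont_bnd}, continuity of the bundle means the existence of an admissible directed family $\cN$ of seminorms that are continuous \emph{as functions on the total space} $\sE(A)$ and satisfy $\cU=\cU_{\cN}$. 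The unlocalized family of fiber seminorms $u\mapsto\|u\|_{\lambda,p(u)}$ does not induce the uniform vector structure $\cU$ of $\sE(A)$: that structure is generated by the quotient seminorms $\|\cdot\|_{K,\lambda}$ of $\|\cdot\|_K\otimes_\pi\|\cdot\|_\lambda$ with $K$ ranging over holomorphically convex compacta, and by~\eqref{semi_on_A_x} these restrict to $\|\cdot\|_{\lambda,x}$ on fibers over $K$ but vanish identically on fibers off $K$. Every member of $\cU$ therefore contains some basic set $\{u : \|u\|_{K,\mu}<\delta\}$, which contains entire fibers over $X\setminus K$, whereas a ball $\{u : \|u\|_{\lambda,p(u)}<\eps\}$ of your family contains no whole nonzero fiber; so (except in degenerate cases) your family is strictly finer than $\cU$ and cannot serve as the required $\cN$. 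Conversely, the seminorms $\|\cdot\|_{K,\lambda}$ that do induce $\cU$ are in general discontinuous at boundary points of $K$, precisely because of the jump in~\eqref{semi_on_A_x}. So continuity of all the base functions $x\mapsto\|a_x\|_{\lambda,x}$ does not, by itself, verify the definition.

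The missing step is exactly the point the paper singles out as delicate: one must replace $\cN_A$ by the bump-modified family $\|u\|'_{K,\lambda}=h_K(p(u))\,\|u\|_{K',\lambda}$, where $h_K$ is a continuous compactly supported function equal to $1$ on $K$ and $K'$ is the holomorphically convex hull of $\supp h_K$; check that $\cN'_A\sim\cN_A$ (so $\cU_{\cN'_A}=\cU$) and that $\cN'_A$ is admissible (Lemma~\ref{lemma:eqv_fam_1}~(ii)); and then pass from continuity along sections to continuity on the total space via Proposition~\ref{prop:cont_semi}, using upper semicontinuity of $\|\cdot\|'_{K,\lambda}$. With your strengthened intermediate result this last application is immediate (take $\Gamma=\{\tilde a : a\in A\}$, so fiberwise density is trivial), so the gap is repairable in a few lines; but as written your argument stops one bundle-theoretic layer short of the statement. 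It is worth noting the structural difference from the paper: there, Proposition~\ref{prop:cont_semi} itself performs the density propagation on the total space using only $a\in A_0$, whereas you performed the propagation on the base with a quantitative estimate --- that part of your route is a sound alternative, it just does not substitute for producing an admissible family of continuous seminorms inducing $\cU$.
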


To prove Theorem~\ref{thm:cont_bnd}, we need two lemmas.

\begin{lemma}
\label{lemma:quot-quot-semi}
Under the conditions of Lemma~{\upshape\ref{lemma:quot-quot}}, suppose that
$\|\cdot\|$ is a continuous seminorm on $\fA$. Let $\|\cdot\|_I$ be the quotient seminorm
of $\|\cdot\|$ on $\fA/I$, and let $\|\cdot\|_{I,J}$ be the quotient seminorm of $\|\cdot\|_I$
on $(\fA/I)/J_0$. Then the isomorphisms~\eqref{quot-quot} are isometric with respect to
$\|\cdot\|_{\ol{I+J}}$, $\|\cdot\|_{I,J}$, and $\|\cdot\|_{J,I}$, respectively.
\end{lemma}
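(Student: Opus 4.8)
The plan is to reduce the claim to the elementary identity expressing an iterated quotient seminorm as an iterated infimum, combined with the remark that the infimum of a continuous seminorm over a subset coincides with its infimum over the closure of that subset. I would treat the isomorphism $\fA/\overline{I+J}\cong(\fA/I)/J_0$ in detail; the isomorphism $\fA/\overline{I+J}\cong(\fA/J)/I_0$ then follows verbatim after interchanging the roles of $I$ and $J$.

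First I would record the relevant formulas. Writing $q_I(a)$ for the class of $a\in\fA$ in $\fA/I$, the quotient seminorm is $\|q_I(a)\|_I=\inf_{i\in I}\|a+i\|$, and, at the class of $q_I(a)$ in $(\fA/I)/J_0$, one has $\|q_I(a)\|_{I,J}=\inf_{\eta\in J_0}\|q_I(a)+\eta\|_I$. A preliminary observation is that $\|\cdot\|_I$ is again a continuous seminorm: its pullback along the open quotient map $q_I$ is the seminorm $a\mapsto\inf_{i\in I}\|a+i\|\le\|a\|$, which is dominated by the continuous seminorm $\|\cdot\|$ and hence continuous, and a seminorm on a quotient space is continuous precisely when its pullback is.

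The computation then runs as follows. Since $J_0=\overline{q_I(J)}$ by Lemma~\ref{lemma:quot-quot} and $\|\cdot\|_I$ is continuous, the infimum defining $\|q_I(a)\|_{I,J}$ may be taken over the dense set $q_I(J)$ in place of $J_0$, giving
\[
\|q_I(a)\|_{I,J}=\inf_{j\in J}\|q_I(a+j)\|_I=\inf_{j\in J}\inf_{i\in I}\|a+i+j\|=\inf_{w\in I+J}\|a+w\|.
\]
On the other hand, continuity of $\|\cdot\|$ lets me replace $\overline{I+J}$ by $I+J$ in the quotient seminorm on $\fA/\overline{I+J}$, so that
\[
\|\overline a\|_{\overline{I+J}}=\inf_{w\in\overline{I+J}}\|a+w\|=\inf_{w\in I+J}\|a+w\|.
\]
Comparing the two displays yields $\|\overline a\|_{\overline{I+J}}=\|q_I(a)\|_{I,J}$, which is exactly the assertion that the isomorphism of Lemma~\ref{lemma:quot-quot} is isometric for these seminorms.

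The computation is entirely routine; the only genuine point is the passage from each closure (namely $\overline{I+J}$ and $J_0=\overline{q_I(J)}$) to its dense subset, and this is precisely where the hypothesis that $\|\cdot\|$ is continuous enters — directly in the second reduction and, via the preliminary continuity of $\|\cdot\|_I$, in the first. I therefore expect no real obstacle beyond carefully recording that continuity is what legitimises interchanging the infimum with the closure.
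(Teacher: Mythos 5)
Your proof is correct, and it is precisely the elementary argument the paper has in mind: the paper's own proof of Lemma~\ref{lemma:quot-quot-semi} is simply the word ``Elementary,'' and your iterated-infimum computation, together with the two continuity observations (that a quotient seminorm of a continuous seminorm is continuous, and that the infimum of a continuous function over a set equals its infimum over the closure), is exactly the standard way to fill in that omitted argument. Nothing is missing: both sides reduce to $\inf_{w\in I+J}\|a+w\|$, and the second isomorphism follows by symmetry, as you note.
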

\begin{proof}
Elementary.
\end{proof}

\begin{lemma}
Let $K\subset X$ be a holomorphically convex compact set. For each $f\in\cO(X)$,
we let $\| f\|_K=\sup_{x\in K} |f(x)|$. Given $\lambda\in\Lambda$,
let $\|\cdot\|_{K,\lambda}^\pi$ denote the projective tensor seminorm
$\|\cdot\|_K\otimes_\pi \|\cdot\|_\lambda$ on $\cO(X,F)$, and let $\|\cdot\|_{K,\lambda}$
denote the quotient seminorm of $\|\cdot\|_{K,\lambda}^\pi$ on $A$.
Finally, given $x\in X$, let
$\|\cdot\|_{K,\lambda,x}$ denote the quotient seminorm of $\|\cdot\|_{K,\lambda}$
on $A_x$. Then
\begin{equation}
\label{semi_on_A_x}
\|\cdot\|_{K,\lambda,x}=
\begin{cases}
\|\cdot\|_{\lambda,x} & \text{\emph{if} } x\in K;\\
0 & \text{\emph{if} } x\notin K.
\end{cases}
\end{equation}
\end{lemma}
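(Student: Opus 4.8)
The plan is to reduce the computation of $\|\cdot\|_{K,\lambda,x}$ to an evaluation-at-$x$ quotient seminorm on $F$ alone, and then to split into the cases $x\in K$ and $x\notin K$. Following the proof of Lemma~\ref{lemma:F_I_bnd}, I set $\fA=\cO(X,F)$ and $J=\ol{\fm_x\fA}=\Ker\eps_x^F$, so that $\eps_x^F$ becomes the quotient map $\fA\to\fA/J=F$, the ideal $I_0=\ol{\eps_x^F(I)}$ of Lemma~\ref{lemma:quot-quot} is exactly $I_x$, and $J_0=\ol{\fm_x A}$. Since $\|\cdot\|_{K,\lambda}^\pi$ is a continuous seminorm on $\fA$, Lemma~\ref{lemma:quot-quot-semi} applies: the quotient seminorm obtained by descending $\|\cdot\|_{K,\lambda}^\pi$ through $\fA\to A\to A/J_0=A_x$, which is precisely $\|\cdot\|_{K,\lambda,x}$, coincides under the isomorphism of Lemma~\ref{lemma:quot-quot} with the one obtained by descending through $\fA\to\fA/J=F\to F/I_0=F/I_x$. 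It therefore suffices to analyze the latter chain.

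Write $\|\cdot\|_{K,\lambda}^J$ for the quotient of $\|\cdot\|_{K,\lambda}^\pi$ on $F=\fA/J$; by transitivity of quotient seminorms, $\|\cdot\|_{K,\lambda,x}$ is then the quotient of $\|\cdot\|_{K,\lambda}^J$ on $F/I_x$. Because $\eps_x^F$ is the quotient map onto $F$ and $\eps_x^F(\sum_i f_i\otimes b_i)=\sum_i f_i(x) b_i$, for $b\in F$ I expect
\[
\| b\|_{K,\lambda}^J=\inf\Bigl\{ \mathop{\textstyle\sum}_i \| f_i\|_K \| b_i\|_\lambda : \mathop{\textstyle\sum}_i f_i(x) b_i=b \Bigr\},
\]
the infimum over finite representations in $\cO(X)\Tens F$. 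In both cases the upper bound $\| b\|_{K,\lambda}^J\le\| b\|_\lambda$ is immediate from the single-term representation $1\otimes b$, since $\| 1\|_K=1$ and $1(x)=1$.

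For $x\in K$ I claim $\|\cdot\|_{K,\lambda}^J=\|\cdot\|_\lambda$. The matching lower bound uses that $\eps_x^F$ is contractive from $(\fA,\|\cdot\|_{K,\lambda}^\pi)$ to $(F,\|\cdot\|_\lambda)$: for any representation with $\sum_i f_i(x) b_i=b$, the inequality $|f_i(x)|\le\| f_i\|_K$ (valid as $x\in K$) gives $\| b\|_\lambda\le\sum_i \| f_i\|_K \| b_i\|_\lambda$, and taking the infimum yields $\| b\|_\lambda\le\| b\|_{K,\lambda}^J$. Passing to the further quotient on $F/I_x$ turns $\|\cdot\|_\lambda$ into its quotient seminorm $\|\cdot\|_{\lambda,x}$, giving the first case of~\eqref{semi_on_A_x}.

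For $x\notin K$ I claim $\|\cdot\|_{K,\lambda}^J=0$, so its quotient on $F/I_x$ vanishes as well. This is where holomorphic convexity of $K$ enters: since $x$ is not in the (holomorphically convex) set $K$, there is $f\in\cO(X)$ with $|f(x)|>\| f\|_K$, and after rescaling I may take $f(x)=1$ and $\| f\|_K<1$. Then for each $b\in F$ and each $m\in\N$ the representation $f^m\otimes b$ satisfies $\eps_x^F(f^m\otimes b)=f^m(x)b=b$, whence $\| b\|_{K,\lambda}^J\le\| f\|_K^m\, \| b\|_\lambda\to 0$ as $m\to\infty$. This establishes~\eqref{semi_on_A_x}. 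The only genuinely nontrivial ingredient is this holomorphic-convexity argument producing the vanishing family $f^m$; the rest is bookkeeping with quotient seminorms, entirely governed by Lemmas~\ref{lemma:quot-quot} and~\ref{lemma:quot-quot-semi}, and I expect the main care to go into invoking Lemma~\ref{lemma:quot-quot-semi} with the correct identifications $I_0=I_x$, $J_0=\ol{\fm_x A}$.
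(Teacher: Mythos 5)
Your proposal is correct and follows essentially the same route as the paper's proof: the same reduction via Lemma~\ref{lemma:quot-quot-semi} with $\fA=\cO(X,F)$ and $J=\ol{\fm_x\fA}=\Ker\eps_x^F$, the same upper bound from the representative $1\otimes b$, the same lower bound for $x\in K$ from contractivity of $\eps_x^F\colon(\fA,\|\cdot\|_{K,\lambda}^\pi)\to(F,\|\cdot\|_\lambda)$, and the same $f^m\otimes b$ trick (with $f(x)=1$, $\|f\|_K<1$ from holomorphic convexity) for $x\notin K$. One cosmetic caution: your displayed infimum over \emph{finite} representations is not literally the quotient seminorm on the completed tensor product $\cO(X)\Ptens F$, but this is harmless since you use it only for upper bounds and your lower bound rests on the contractivity estimate $\|\eps_x^F(u)\|_\lambda\le\|u\|_{K,\lambda}^\pi$, which extends from the algebraic tensor product to all $u\in\cO(X,F)$ by continuity, exactly as in the paper.
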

\begin{proof}
As in Lemma~\ref{lemma:F_I_bnd}, we identify $F$ with $\cO(X,F)/\Ker\eps_x^F$.
Let $\|\cdot\|_{K,\lambda}^{(x)}$ denote the quotient seminorm of $\|\cdot\|_{K,\lambda}^\pi$ on $F$.
By applying Lemma~\ref{lemma:quot-quot-semi} to $\fA=\cO(X,F)$ and $J=\ol{\fm_x\fA}$
(see~\eqref{quot-quot-diag}), we conclude that $\|\cdot\|_{K,\lambda,x}$ equals the
quotient seminorm of $\|\cdot\|_{K,\lambda}^{(x)}$ on $F/I_x=A_x$.
To complete the proof, it remains to compare the seminorms $\|\cdot\|_{K,\lambda}^{(x)}$
and $\|\cdot\|_\lambda$ on $F$, i.e., to show that
\begin{equation}
\label{semi_on_F}
\|\cdot\|_{K,\lambda}^{(x)}=
\begin{cases}
\|\cdot\|_{\lambda} & \text{if } x\in K,\\
0 & \text{if } x\notin K.
\end{cases}
\end{equation}
Observe that $\eps_x^F=\eps_x\otimes\id_F$, where $\eps_x=\eps_x^{\CC}\colon\cO(X)\to\CC$
is the evaluation map. If $x\in K$, then for every $f\in\cO(X)$ we clearly have
$|\eps_x(f)|\le\| f\|_K$. This implies that $\|\eps_x^F(u)\|_\lambda\le \| u\|_{K,\lambda}^\pi$
for each $u\in\cO(X,F)$. Hence $\|\cdot\|_\lambda\le\|\cdot\|_{K,\lambda}^{(x)}$.
On the other hand, for each $v\in F$ we have $v=\eps_x^F(1\otimes v)$, and
$\| 1\otimes v\|_{K,\lambda}^\pi=\| v\|_\lambda$. Therefore
$\|\cdot\|_\lambda=\|\cdot\|_{K,\lambda}^{(x)}$ whenever $x\in K$.

Now assume that $x\notin K$. Since $K$ is holomorphically convex, there exists
$f\in\cO(X)$ such that $f(x)=1$ and $\| f\|_K<1$. For each $v\in F$ and each $n\in\N$ we have
$v=f^n(x)v=\eps_x^F(f^n\otimes v)$, whence
\[
\| v\|_{K,\lambda}^{(x)}\le\| f^n\otimes v\|_{K,\lambda}^\pi\le\| f\|_K^n \| v\|_\lambda\to 0
\quad (n\to\infty).
\]
Thus $\| \cdot\|_{K,\lambda}^{(x)}=0$ whenever $x\notin K$. This implies~\eqref{semi_on_F}
and completes the proof.
\end{proof}

\begin{proof}[Proof of Theorem~{\upshape\ref{thm:cont_bnd}}]
By construction of $\sE(A)$ and by Remark~\ref{rem:mod_bnd_equiv_fam},
the locally convex uniform vector structure on $\sE(A)$
is given by the family
\[
\cN_A=\{\|\cdot\|_{K,\lambda} : K\in\HCC(X),\; \lambda\in\Lambda\},
\]
where $\HCC(X)$ denotes the collection of all holomorphically convex compact subsets of $X$,
and $\|\cdot\|_{K,\lambda}$ is the seminorm on $\sE(A)$ whose restriction to each fiber $A_x$
is $\|\cdot\|_{K,\lambda,x}$.
Let $\Gamma=\{\tilde a : a\in A_0\}\subset\Gamma(X,E)$. Since $A_0$ is dense in $A$,
it follows that the set $\{ s_x : s\in\Gamma\}$ is dense in $A_x$ for each $x\in X$.
Unfortunately, we cannot directly apply Proposition~\ref{prop:cont_semi} to $\Gamma$
and $\|\cdot\|_{K,\lambda}$ because of~\eqref{semi_on_A_x}, which implies that
the function $x\mapsto\| s_x\|_{K,\lambda}=\| s_x\|_{K,\lambda,x}$
is continuous on $K$, but, in general,
not on the whole of $X$. Thus we have to modify $\cN_A$ as follows.
Given $K\in\HCC(X)$, choose a continuous, compactly supported function
$h_K\colon X\to [0,1]$ such that $h_K(x)=1$ for all $x\in K$, and let $K'$ denote the
holomorphically convex hull of $\supp h_K$. Define a new seminorm $\|\cdot\|'_{K,\lambda}$
on $\sE(A)$ by
\[
\| u\|'_{K,\lambda}=h_K(p(u))\| u\|_{K',\lambda}\qquad (u\in\sE(A)),
\]
and let $\cN'_A=\{\|\cdot\|'_{K,\lambda} : K\in\HCC(X),\; \lambda\in\Lambda\}$.
Clearly, $\|\cdot\|'_{K,\lambda}$ is upper semicontinuous (being the product of two
nonnegative, upper semicontinuous functions).
Taking into account~\eqref{semi_on_A_x}, we see that
\[
\|\cdot\|_{K,\lambda} \le \|\cdot\|'_{K,\lambda} \le \|\cdot\|_{K',\lambda}.
\]
This implies that $\cN'_A\sim\cN_A$, whence $\cU_{\cN_A}=\cU_{\cN'_A}$.
Moreover, $\cN'_A$ is admissible by Lemma~\ref{lemma:eqv_fam_1}~(ii).
Let now $a\in A_0$ and $x\in X$. By~\eqref{semi_on_A_x} and by the choice of $h_K$, we see that
\[
\|\tilde a_x\|'_{K,\lambda}
=\| a_x\|'_{K,\lambda}
=h_K(x)\| a_x\|_{K',\lambda}
=h_K(x)\| a_x\|_{\lambda,x},
\]
which is a continuous function on $X$ by assumption.
Now the result follows from Proposition~\ref{prop:cont_semi} applied to $\Gamma$
and $\cN'_A$.
\end{proof}

\begin{corollary}
The bundles $\sE(\DD^n_r)$ and $\sE(\BB^n_r)$ are continuous.
\end{corollary}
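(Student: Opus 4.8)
The plan is to derive the corollary from Theorem~\ref{thm:cont_bnd} applied separately to each bundle. For $\sE(\DD^n_r)$ I would take $X=\CC^\times$, $F=\cF^\rT(\DD^n_r)$, and $I=I_\DD^\rT$; this is legitimate because Theorem~\ref{thm:D_DT_iso} identifies $\sE(\DD^n_r)$ with $\sE^\rT(\DD^n_r)$, and Taylor's algebra carries the simpler seminorms $\|\cdot\|_\rho$. For $\sE(\BB^n_r)$ I would take $X=\CC^\times$, $F=\cF(\BB^n_r)$, and $I=I_\BB$. In both cases the natural candidate for the dense subalgebra $A_0$ is $\cO_\defo^\reg(\CC^n)$, which is dense in $A=\cO_\defo(\DD^n_r)$ (resp. $\cO_\defo(\BB^n_r)$) by Lemma~\ref{lemma:O_reg_def_dense}~(ii). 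As the directed defining family $\cN_F$ I would use $\{\|\cdot\|_\rho:\rho\in(0,r)\}$ for the polydisk and $\{\|\cdot\|_\rho^\circ:\rho\in(0,r)\}$ for the ball.

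The content of Theorem~\ref{thm:cont_bnd} then reduces to verifying that, for each fixed seminorm index and each fixed $a\in\cO_\defo^\reg(\CC^n)$, the function $q\mapsto\|a_q\|_{\lambda,q}$ is continuous on $\CC^\times$. Here the crucial simplification is supplied by the quantitative parts~(iv) of Theorems~\ref{thm:q_poly_quot_free^T_poly} and~\ref{thm:q_ball_quot_free_ball}: under the identification $A_q\cong F/I_q\cong\cO_q(\DD^n_r)$ (resp. $\cO_q(\BB^n_r)$), the quotient seminorm $\|\cdot\|_{\lambda,q}$ on the fiber is exactly the explicit Arens--Michael norm $\|\cdot\|_{\DD,\rho}$ (resp. $\|\cdot\|_{\BB,\rho}$). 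Thus the abstract fiber seminorm is replaced by a closed-form expression in $q$.

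To finish, write a generic $a\in\cO_\defo^\reg(\CC^n)$ as a finite sum $a=\sum_{k,p}c_{kp}x^kz^p$. Since $z$ evaluates to $q$ in the fiber, $a_q=\sum_k P_k(q)\,x^k$ with $P_k(q)=\sum_p c_{kp}q^p$ a Laurent polynomial, hence continuous on $\CC^\times$. For the polydisk this gives $\|a_q\|_{\DD,\rho}=\sum_k|P_k(q)|\,w_q(k)\,\rho^{|k|}$, a finite sum in which each weight $w_q(k)=\min\{|q|^{\sum_{i<j}k_ik_j},1\}$ is a minimum of two continuous functions of $|q|$ and is therefore continuous; the whole expression is thus continuous in $q$. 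For the ball one obtains $\|a_q\|_{\BB,\rho}=\sum_k|P_k(q)|\,\bigl([k]_{|q|^2}!/[|k|]_{|q|^2}!\bigr)^{1/2}u_q(k)\,\rho^{|k|}$, and each $q$-factorial is a polynomial in $|q|^2$ with strictly positive value at every $q\in\CC^\times$, so the ratio, its square root, and the factor $u_q(k)=|q|^{\sum_{i<j}k_ik_j}$ are all continuous; again the finite sum is continuous. Feeding this continuity back into Theorem~\ref{thm:cont_bnd} yields the continuity of both bundles.

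I do not anticipate a serious obstacle: the real work has already been done in parts~(iv) of the quotient theorems, which convert the abstract fiber seminorm into an explicit formula, and in Lemma~\ref{lemma:O_reg_def_dense}, which provides the dense subalgebra. The only point requiring a moment of care is the continuity of the weight $w_q(k)$ across the circle $|q|=1$, where the two branches of the minimum meet; but since both branches agree and equal $1$ there, continuity is immediate.
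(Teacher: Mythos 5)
Your proposal is correct and follows essentially the same route as the paper's own proof: the paper likewise applies Theorem~\ref{thm:cont_bnd} with $F=\cF^\rT(\DD^n_r)$, $I=I_\DD^\rT$ (respectively $F=\cF(\BB^n_r)$, $I=I_\BB$), takes $A_0=\cO_\defo^\reg(\CC^n)$ via Lemma~\ref{lemma:O_reg_def_dense}, and invokes parts~(iv) of Theorems~\ref{thm:q_poly_quot_free^T_poly} and~\ref{thm:q_ball_quot_free_ball} to identify the fiber seminorms with the explicit norms $\|\cdot\|_{\DD,\rho}$ and $\|\cdot\|_{\BB,\rho}$, from which the continuity of $q\mapsto\|a_q\|_{\lambda,q}$ is read off just as you describe. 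The only difference is cosmetic: you spell out the continuity of the weights $w_q(k)$, $u_q(k)$, and the $q$-factorial factors, which the paper leaves implicit.
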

\begin{proof}
For convenience, let us denote the norm $\|\cdot\|_{\DD,\rho}$
on $\cO_q(\DD^n_r)$ by $\|\cdot\|_{\DD,q,\rho}$. Similarly, we write $\|\cdot\|_{\BB,q,\rho}$
for the norm $\|\cdot\|_{\BB,\rho}$ on $\cO_q(\BB^n_r)$.
Let $F=\cF^\rT(\DD^n_r)$, and let $\{\|\cdot\|_\rho : \rho\in (0,r)\}$ be the
standard defining family of seminorms on $F$, where $\|\cdot\|_\rho$ is given by~\eqref{F_poly_T}.
Let also $I=I_{\DD}^\rT$ and
$A=\cO(\CC^\times,F)/I=\cO_\defo^\rT(\DD^n_r)\cong\cO_\defo(\DD^n_r)$
(see Theorem~\ref{thm:D_DT_iso}). As in Lemma~\ref{lemma:F_I_bnd}, given
$q\in\CC^\times$, let $I_q=\ol{\eps_q^F(I)}\subset F$.
By Theorem~\ref{thm:q_poly_quot_free^T_poly}, we can identify $F/I_q$ with $\cO_q(\DD^n_r)$.
Moreover, the quotient seminorm $\|\cdot\|_{\rho,q}$ of $\|\cdot\|_\rho$ on $F/I_q$ becomes
$\|\cdot\|_{\DD,q,\rho}$ under this identification.

Let now $A_0=\cO^\reg_\defo(\CC^n)$. By Lemma~\ref{lemma:O_reg_def_dense},
$A_0$ is a dense subalgebra of $A$. Given $a=\sum_{k,p} c_{kp} x^k z^p\in A_0$,
we have
\[
\| a_q\|_{\rho,q}
=\| a_q\|_{\DD,q,\rho}
=\sum_k \biggl| \sum_p c_{kp} q^p\biggr| w_q(k) \rho^{|k|}.
\]
This implies that the function $q\mapsto\| a_q\|_{\rho,q}$ is continuous on $\CC^\times$.
By applying Theorem~\ref{thm:cont_bnd}, we conclude that $\sE(\DD^n_r)$ is continuous.

A similar argument applies to $\sE(\BB^n_r)$. Specifically, we have to replace
$\cF^\rT(\DD^n_r)$ by $\cF(\BB^n_r)$, $\|\cdot\|_\rho$ by $\|\cdot\|_\rho^\circ$,
$I_{\DD}^\rT$ by $I_{\BB}$, and to apply Theorem~\ref{thm:q_ball_quot_free_ball}
instead of Theorem~\ref{thm:q_poly_quot_free^T_poly}.
The continuity of $q\mapsto\| a_q\|^\circ_{\rho,q}$ now follows from
\[
\| a_q\|^\circ_{\rho,q}
=\| a_q\|_{\BB,q,\rho}
=\sum_k \biggl| \sum_p c_{kp} q^p\biggr|
\left(\frac{[k]_{|q|^2}!}{\bigl[ |k|\bigr]_{|q|^2}!}\right)^{1/2}\!\!\!\!
u_q(k) \rho^{|k|}. \qedhere
\]
\end{proof}

\section{Relations to Rieffel's quantization}
\label{sect:Rief_quant}

The strict ($C^*$-algebraic) version of deformation quantization was introduced by
Rieffel \cite{Rf_Heis,Rf_dq_oa,Rf_Lie,Rf_mem,Rf_qst,Rf_quest}
(see Section~\ref{sect:intro} for more details).
In this section we show that the bundles $\sE(\DD^n_r)$ and
$\sE(\BB^n_r)$ fit into Rieffel's framework adapted to the Fr\'echet algebra setting.

Towards this goal, it will be convenient to modify the bundles $\sE(\DD^n_r)$ and
$\sE(\BB^n_r)$ by replacing the deformation parameter $q\in\CC^\times$
with $h\in\CC$, where $q=\exp(ih)$. Specifically, let $h\in\cO(\CC)$ denote the complex
coordinate (i.e., the identity map of $\CC$), and let
$\hI_\DD$, $\hI_\DD^\rT$, and $\hI_\BB$ denote the closed two-sided
ideals of $\cO(\CC,\cF(\DD^n_r))$,  $\cO(\CC,\cF^\rT(\DD^n_r))$,
and $\cO(\CC,\cF(\BB^n_r))$, respectively, generated
by the elements $\zeta_j \zeta_k-e^{ih} \zeta_k \zeta_j\; (j<k)$.
By analogy with~\eqref{def_DD_BB}, consider the
Fr\'echet $\cO(\CC)$-algebras
\begin{align*}
\hcO_\defo(\DD^n_r)&=\cO(\CC,\cF(\DD^n_r))/\hI_\DD,\\
\hcO_\defo^\rT(\DD^n_r)&=\cO(\CC,\cF^\rT(\DD^n_r))/\hI_\DD^\rT,\\
\hcO_\defo(\BB^n_r)&=\cO(\CC,\cF(\BB^n_r))/\hI_\BB.
\end{align*}
We will use the following simplified notation for the respective Fr\'echet algebra bundles:
\[
\hsE(\DD^n_r)=\sE(\hcO_\defo(\DD^n_r)),\quad
\hsE^\rT(\DD^n_r)=\sE(\hcO_\defo^\rT(\DD^n_r)),\quad
\hsE(\BB^n_r)=\sE(\hcO_\defo(\BB^n_r)).
\]
Exactly as in Sections~\ref{sect:deforms} and \ref{sect:cont_bnd}, we see that
the fibers of $\hsE(\DD^n_r)$ and $\hsE(\BB^n_r)$ over $h\in\CC$
are isomorphic to $\cO_{\exp(ih)}(\DD^n_r)$ and $\cO_{\exp(ih)}(\BB^n_r)$, respectively,
that $\hsE(\DD^n_r)$ and $\hsE^\rT(\DD^n_r)$ are isomorphic, and that
$\hsE(\DD^n_r)$ and $\hsE(\BB^n_r)$ are continuous.

\begin{remark}
Alternatively, we can define the bundles $\hsE(\DD^n_r)$ and $\hsE(\BB^n_r)$
to be the pullbacks of $\sE(\DD^n_r)$ and $\sE(\BB^n_r)$ under the
exponential map $e\colon\CC\to\CC^\times,\; h\mapsto\exp(ih)$.
Specifically, suppose that $X$ and $Y$ are topological spaces, $f\colon X\to Y$ is
a continuous map, and $(E,p,\cU)$ is a locally convex bundle over $Y$.
The pullback $f^* E=E\times_Y X$ is a prebundle of topological vector spaces
over $X$ in a canonical way; the projection $\tilde p\colon f^* E\to X$
is given by $\tilde p(v,x)=x$ (cf. \cite[2.5]{Husemoller}). Define $\tilde f\colon f^* E\to E$
by $\tilde f(v,x)=v$, and let $\tilde f^{-1}(\cU)$ denote the locally convex
uniform vector structure on $f^* E$ with base $\{\tilde f^{-1}(U) : U\in\cU\}$.
It is easy to show that $(f^* E,\tilde p,\tilde f^{-1}(\cU))$ is a locally
convex bundle. Moreover, if $\cN=\{\|\cdot\|_\lambda : \lambda\in\Lambda\}$
is an admissible family of seminorms for $E$, then the collection
$\tilde\cN=\{\|\cdot\|_\lambda^f : \lambda\in\Lambda\}$
is an admissible family of seminorms for $f^* E$, where $\|\cdot\|_\lambda^f$
is given by $\| (v,x)\|_\lambda^f=\| v\|_\lambda$. Clearly, this implies that
if $E$ is continuous, then so is $f^* E$. Finally, it can be shown that
$\hsE(\DD^n_r)\cong e^*\sE(\DD^n_r)$ and $\hsE(\BB^n_r)\cong e^*\sE(\BB^n_r)$.
We will not use these results below, so we omit the details.
\end{remark}

Recall (see, e.g., \cite{Laur_Geng}) that a {\em Poisson algebra} is a commutative algebra $\cA$
together with a bilinear operation $\{\cdot , \cdot\}\colon \cA\times \cA\to \cA$
making $\cA$ into a Lie algebra and such that for each $a\in\cA$
the map $\{ a,\cdot\}$ is a derivation of the associative algebra $\cA$.

The following definition is a straightforward modification of Rieffel's quantization
to the Fr\'echet algebra case.

\begin{definition}
Let $\cA$ be a Poisson algebra, and let $X$ be an open connected subset of $\CC$
containing $0$. A {\em strict Fr\'echet deformation quantization} of $\cA$ is
the following data:
\begin{mycompactenum}
\item[$\mathrm{(DQ1)}$]
A continuous Fr\'echet algebra bundle $(A,p,\cU)$ over $X$;
\item[$\mathrm{(DQ2)}$]
A family of dense subalgebras $\{\cA_h\subset A_h : h\in X\}$;
\item[$\mathrm{(DQ3)}$]
A family of vector space isomorphisms
\[
i_h\colon\cA\to\cA_h,\quad a\mapsto a_h\qquad (h\in X)
\]
such that $i_0$ is an algebra isomorphism.
\end{mycompactenum}
Moreover, we require that for each $a,b\in\cA$
\begin{equation}
\label{comm_Pois}
\frac{a_h b_h-b_h a_h}{h}-i\{ a,b\}_h\to 0_0\quad (h\to 0).
\end{equation}
\end{definition}

\begin{remark}
\label{rem:comm_Pois}
If $\{\|\cdot\|_\lambda: \lambda\in\Lambda\}$ is an admissible family of seminorms on $A$,
then~\eqref{comm_Pois} is equivalent to
\[
\left\|\frac{a_h b_h-b_h a_h}{h}-i\{ a,b\}_h\right\|_\lambda\to 0\quad (h\to 0),
\]
for all $\lambda\in\Lambda$. This is immediate from the fact that
the family
\[
\bigl\{ \sT(V,0,\lambda,\eps) : V\subset X\text{ is an open neighborhood of }0,\;
\lambda\in\Lambda,\; \eps>0\bigr\}
\]
is a local base at $0_0$ (see Lemma~\ref{lemma:bas_unif} (ii)).
\end{remark}

\begin{theorem}
\label{thm:str_def}
Consider the Poisson algebra $\cA=\CC[x_1,\ldots ,x_n]$ with bracket
\[
\{ f,g\}=\sum_{i<j} x_i x_j \left(\frac{\dd f}{\dd x_i}\frac{\dd g}{\dd x_j}
-\frac{\dd f}{\dd x_j}\frac{\dd g}{\dd x_i}\right)\qquad (f,g\in\cA).
\]
Let $(A,p,\cU)$ be a continuous Fr\'echet algebra bundle over an open connected
subset $X\subset\CC$ containing $0$. Assume that for each $h\in X$ the fiber $A_h$
contains $\cA_h=\cO_{\exp(ih)}^\reg(\CC^n)$ as a dense subalgebra, and that
for each $j=1,\ldots ,n$ the constant section $x_j$ of $A$ is continuous
(for example, we can let $A=\hsE(\DD^n_r)$ or $A=\hsE(\BB^n_r)$).
Let $i_h\colon\cA\to\cA_h$
be the vector space isomorphism given by $x^k\mapsto x^k\; (k\in\Z_+^n)$.
Then $\bigl(A,\;\{\cA_h,\; i_h\}_{h\in X}\bigr)$ is a strict Fr\'echet deformation quantization of $\cA$.
\end{theorem}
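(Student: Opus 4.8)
The plan is to check the three data (DQ1)--(DQ3) and then the compatibility condition~\eqref{comm_Pois}. Items (DQ1) and (DQ2) are precisely the standing hypotheses: $(A,p,\cU)$ is a continuous Fr\'echet algebra bundle over $X$, and $\cA_h=\cO_{\exp(ih)}^\reg(\CC^n)$ is a dense subalgebra of $A_h$ for every $h\in X$ (for the bundles $\hsE(\DD^n_r)$ and $\hsE(\BB^n_r)$ these facts were established in the preceding sections). For (DQ3), since the monomials $x^k$ $(k\in\Z_+^n)$ form a vector space basis of both $\cA=\CC[x_1,\ldots,x_n]$ and $\cA_h=\cO_{\exp(ih)}^\reg(\CC^n)$, the map $i_h\colon x^k\mapsto x^k$ is a vector space isomorphism; and at $h=0$ we have $\exp(0)=1$, so $\cA_0=\cO_1^\reg(\CC^n)=\CC[x_1,\ldots,x_n]=\cA$ and $i_0$ is the identity map, hence an algebra isomorphism. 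Thus all the real work lies in verifying~\eqref{comm_Pois}.

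The commutator $a_hb_h-b_ha_h$ and the bracket $\{a,b\}$ are bilinear in $(a,b)$, and the maps $i_h$ are linear, so the left-hand side of~\eqref{comm_Pois} is bilinear in $(a,b)\in\cA\times\cA$. By Remark~\ref{rem:comm_Pois} it therefore suffices to prove, for each fixed pair of monomials $a=x^k$, $b=x^\ell$ and each seminorm $\|\cdot\|_\lambda$ in an admissible family, that the $\|\cdot\|_\lambda$-norm of the expression tends to $0$ as $h\to 0$. I would first record the product rule in $\cA_h$: specializing the identity $x^kz^p\cdot x^\ell z^q=x^{k+\ell}z^{p+q-\sum_{i>j}k_i\ell_j}$ (used in Section~\ref{sect:nonproj}) to $z=\exp(ih)$ gives $x^k_h x^\ell_h=\exp\bigl(-ih\sum_{i>j}k_i\ell_j\bigr)x^{k+\ell}_h$, and hence
\[
x^k_h x^\ell_h-x^\ell_h x^k_h
=\Bigl(e^{-ih\sum_{i>j}k_i\ell_j}-e^{-ih\sum_{i>j}\ell_ik_j}\Bigr)x^{k+\ell}_h.
\]
A relabeling of indices gives $\sum_{i>j}k_i\ell_j-\sum_{i>j}\ell_ik_j=-\sum_{i<j}(k_i\ell_j-k_j\ell_i)$, so dividing the scalar prefactor by $h$ and letting $h\to 0$ yields the limit $i\sum_{i<j}(k_i\ell_j-k_j\ell_i)$.

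On the other side, a direct computation with the given bracket (using $\dd x^k/\dd x_i=k_ix^{k-e_i}$ and $x_ix_j\,x^{k-e_i}x^{\ell-e_j}=x^{k+\ell}$ in the commutative algebra $\cA$) yields $\{x^k,x^\ell\}=\sum_{i<j}(k_i\ell_j-k_j\ell_i)\,x^{k+\ell}$, whence $i\{x^k,x^\ell\}_h=i\sum_{i<j}(k_i\ell_j-k_j\ell_i)\,x^{k+\ell}_h$. Combining the two computations,
\[
\frac{x^k_h x^\ell_h-x^\ell_h x^k_h}{h}-i\{x^k,x^\ell\}_h=c_{k\ell}(h)\,x^{k+\ell}_h,
\]
where the scalar $c_{k\ell}(h)=h^{-1}\bigl(e^{-ih\sum_{i>j}k_i\ell_j}-e^{-ih\sum_{i>j}\ell_ik_j}\bigr)-i\sum_{i<j}(k_i\ell_j-k_j\ell_i)$ tends to $0$ as $h\to 0$. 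This first-order matching of the $q$-commutator with the Poisson bracket is the algebraic heart of the proof.

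It remains to pass from this fiberwise identity to convergence in the total space. The section $h\mapsto x^{k+\ell}_h$ is the fiberwise product of the constant sections $x_1,\ldots,x_n$ taken with multiplicities recorded by $k+\ell$; since these are continuous by hypothesis and the continuous sections of a Fr\'echet algebra bundle are closed under the fiberwise product, $x^{k+\ell}$ is a continuous section. Hence $h\mapsto\|x^{k+\ell}_h\|_\lambda$ is continuous, thus bounded near $0$, and together with $c_{k\ell}(h)\to 0$ this gives $\|c_{k\ell}(h)x^{k+\ell}_h\|_\lambda=|c_{k\ell}(h)|\,\|x^{k+\ell}_h\|_\lambda\to 0$, which is the required convergence. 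The one step demanding care is precisely this transition from a scalar limit to convergence in the bundle: it is what forces us to invoke Remark~\ref{rem:comm_Pois} (reducing~\eqref{comm_Pois} to seminorm estimates) and the continuity of $(A,p,\cU)$, so the nonprojectivity of $\cO_\defo(\DD^n_r)$ does not obstruct the argument.
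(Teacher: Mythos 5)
Your proof is correct and follows essentially the same route as the paper's: the same $q$-commutator computation $x^k_h x^\ell_h=e^{-ih\sum_{i>j}k_i\ell_j}x^{k+\ell}_h$, the same scalar function $c_{k\ell}(h)$ (the paper's $\varphi_{k\ell}(h)$) tending to $0$, and the same final step combining continuity of the bundle seminorms on the continuous monomial sections (hence local boundedness of $h\mapsto\|x^{k+\ell}_h\|_\lambda$) with Remark~\ref{rem:comm_Pois}. The only cosmetic difference is that you reduce to monomial pairs by bilinearity at the outset, whereas the paper carries general polynomials $f=\sum_k a_kx^k$, $g=\sum_k b_kx^k$ through the identical estimate.
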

\begin{proof}
The only thing that needs to be proved is the compatibility relation~\eqref{comm_Pois}.
Let
\begin{equation}
\label{sigma}
\sigma\colon\Z_+^n\times\Z_+^n\to\Z,\quad \sigma(k,\ell)=\sum_{i<j} k_i \ell_j.
\end{equation}
An easy computation shows that for each
$f=\sum_k a_k x^k$ and $g=\sum_k b_k x^k\in\cA$ we have
\[
f_h g_h=\sum_m\left(\sum_{k+\ell=m} a_k b_\ell e^{-ih\sigma(\ell,k)}\right) (x^m)_h
\]
and
\[
\{ f,g\}=
\sum_m\left(\sum_{k+\ell=m} a_k b_\ell \bigl(\sigma(k,\ell)-\sigma(\ell,k)\bigr)\right) x^m.
\]
Hence
\[
\frac{f_h g_h-g_h f_h}{h}-i\{ f,g\}_h
=\sum_m\left(\sum_{k+\ell=m} a_k b_\ell \varphi_{k\ell}(h)\right) (x^m)_h,
\]
where
\[
\varphi_{k\ell}(h)=\frac{e^{-ih\sigma(\ell,k)}-e^{-ih\sigma(k,\ell)}}{h}
-i\bigl(\sigma(k,\ell)-\sigma(\ell,k)\bigr).
\]
For each $k,\ell\in\Z_+^n$ we clearly have $\varphi_{k\ell}(h)\to 0$ as $h\to 0$.

Suppose now that $\{\|\cdot\|_\lambda : \lambda\in\Lambda\}$ is an admissible family
of continuous seminorms on $A$. For each $m\in\Z_+^n$ and each $\lambda\in\Lambda$, the function
$h\mapsto\| (x^m)_h\|_\lambda$ is continuous and hence is locally bounded.
Therefore
\[
\left\| \frac{f_h g_h-g_h f_h}{h}-i\{ f,g\}_h\right\|_\lambda
\le \sum_m\sum_{k+\ell=m} |a_k b_\ell \varphi_{k\ell}(h)| \| (x^m)_h\|_\lambda\to 0
\quad (h\to 0).
\]
By Remark \ref{rem:comm_Pois}, this implies \eqref{comm_Pois} and completes the proof.
\end{proof}

\section{Relations to formal deformations}
\label{sect:form_def}

In this section we establish a relationship between our ``holomorphic deformations''
and the more traditional concept of a formal deformation.
Specifically, we aim to show that the algebras obtained from
$\cO_\defo(\DD^n_r)$ and $\cO_\defo(\BB^n_r)$ via the extension of scalars
functor $\CC[[h]]\ptens{\cO(\CC^\times)}(-)$ are formal deformations
of $\cO(\DD^n_r)$ and $\cO(\BB^n_r)$, respectively.
The nontrivial point here is to prove that the resulting $\CC[[h]]$-algebras
are topologically free over $\CC[[h]]$.
The difficulty comes from the fact that $\cO_\defo(\DD^n_r)$ (and presumably $\cO_\defo(\BB^n_r)$)
are not topologically free over $\cO(\CC^\times)$ (see Section~\ref{sect:nonproj}).

In what follows, given a Fr\'echet algebra $K$ and a Fr\'echet space $E$, we identify
$E$ with a subspace of $K\Ptens E$ via the map $x\mapsto 1\Tens x$.
The proof of the following lemma is elementary and is therefore omitted.

\begin{lemma}
\label{lemma:K-bilin}
Let $K$ be a commutative Fr\'echet algebra, let $E$ be a Fr\'echet space, and let $M$ be a Fr\'echet
$K$-module.
Then each continuous bilinear map $E\times E\to M$ uniquely extends to
a continuous $K$-bilinear map $(K\Ptens E)\times (K\Ptens E)\to M$.
\end{lemma}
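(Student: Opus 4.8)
The plan is to reduce the statement to the universal property of $\Ptens$ together with the joint continuity of the structure maps of $K$ and $M$. First I would linearize the given data into three continuous linear maps. The continuous bilinear map $\beta\colon E\times E\to M$ factors through a continuous linear map $b\colon E\Ptens E\to M$ with $b(x\Tens y)=\beta(x,y)$. The multiplication of $K$, being jointly continuous on the Fr\'echet algebra $K$, factors through a continuous linear map $m\colon K\Ptens K\to K$. Finally, the action $K\times M\to M$, being jointly continuous since $M$ is a Fr\'echet $K$-module, factors through a continuous linear map $a\colon K\Ptens M\to M$.

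Next I would assemble these into a single continuous linear map and then turn it into a bilinear one. Using the associativity and commutativity of $\Ptens$ on Fr\'echet spaces, there is a canonical topological isomorphism
\[
(K\Ptens E)\Ptens(K\Ptens E)\;\cong\;(K\Ptens K)\Ptens(E\Ptens E),
\]
carrying $(k\Tens x)\Tens(k'\Tens y)$ to $(k\Tens k')\Tens(x\Tens y)$. Composing this isomorphism with $m\Ptens b$ and then with $a$ produces a continuous linear map
\[
B\colon (K\Ptens E)\Ptens(K\Ptens E)\to M,\qquad (k\Tens x)\Tens(k'\Tens y)\longmapsto kk'\cdot\beta(x,y).
\]
I would then set $\tilde\beta(u,v)=B(u\Tens v)$, i.e.\ precompose $B$ with the canonical continuous bilinear map $(K\Ptens E)\times(K\Ptens E)\to(K\Ptens E)\Ptens(K\Ptens E)$; this makes $\tilde\beta$ continuous and bilinear automatically, and evaluating at $(1\Tens x,1\Tens y)$ shows that $\tilde\beta$ restricts to $\beta$ under the identification $E\hookrightarrow K\Ptens E$.

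It remains to verify $K$-bilinearity and uniqueness, both of which follow from the explicit formula for $B$ on elementary tensors. For $K$-linearity in the first argument I would fix $k\in K$ and compare the two continuous bilinear maps $(u,v)\mapsto\tilde\beta(k\cdot u,v)$ and $(u,v)\mapsto k\cdot\tilde\beta(u,v)$; on elementary tensors both send $(k_0\Tens x,\,k'\Tens y)$ to $kk_0k'\cdot\beta(x,y)$, using the commutativity of $K$ and the module axioms for $M$, and the same computation handles the second argument. For uniqueness, any continuous $K$-bilinear extension $\tilde\beta'$ must satisfy $\tilde\beta'(k\Tens x,k'\Tens y)=\tilde\beta'\bigl(k\cdot(1\Tens x),\,k'\cdot(1\Tens y)\bigr)=kk'\cdot\beta(x,y)$, so it agrees with $\tilde\beta$ on pairs of elementary tensors.

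The one step I would write out with care---and the only real content behind the word ``elementary''---is the passage from agreement on elementary tensors to agreement everywhere for continuous bilinear maps. Since finite sums of elementary tensors are dense in $K\Ptens E$, I would first fix the second argument to be an elementary tensor and invoke continuity and linearity in the first variable to upgrade agreement on the dense span to all of $K\Ptens E$; then, with the first argument now arbitrary, I would repeat the argument in the second variable. This separate-continuity density argument closes both the $K$-bilinearity check and the uniqueness claim.
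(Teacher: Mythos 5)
Your proof is correct, and since the paper declares this lemma ``elementary'' and omits the proof entirely, there is nothing to diverge from: your argument via the universal property of $\Ptens$ --- linearizing $\beta$, the multiplication of $K$, and the module action, assembling them through the shuffle isomorphism $(K\Ptens E)\Ptens(K\Ptens E)\cong(K\Ptens K)\Ptens(E\Ptens E)$, and closing the $K$-bilinearity and uniqueness checks by density of elementary tensors with a separate-continuity argument in each variable --- is precisely the standard argument the author had in mind. You were also right to single out the density step as the one real point of content, since continuity of the candidate maps (not mere bilinearity) is what lets agreement on elementary tensors propagate to the completions.
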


Let $A$ be a Fr\'echet algebra. Following \cite{BFGP}, we say that a Fr\'echet $\CC[[h]]$-algebra
$\tilde A$ is a {\em formal Fr\'echet deformation} of $A$ if
(1) $\tilde A$ is topologically free as a Fr\'echet $\CC[[h]]$-module,
and (2) $\tilde A/h\tilde A\cong A$ as Fr\'echet algebras.

Given a Fr\'echet space $E$, let $\CC[[h;E]]$ denote the vector space of all formal power
series with coefficients in $E$. The isomorphism $\CC[[h;E]]\cong E^{\Z_+}$,
$\sum_j x_j h^j \mapsto (x_j)$, makes $\CC[[h;E]]$ into a Fr\'echet space.
Since the completed projective tensor product commutes with products
(see \cite[\S41.6]{Kothe_II} or \cite[II.5.19]{X1}),
we have a Fr\'echet space isomorphism
\[
\CC[[h]]\Ptens E\cong\CC[[h;E]],\quad \Bigl(\sum_j c_j h^j\Bigr)\otimes x\mapsto \sum_j c_j x h^j.
\]
Thus a formal Fr\'echet deformation of $A$ amounts to a continuous
$\CC[[h]]$-bilinear multiplication $\star$ on the topologically free Fr\'echet $\CC[[h]]$-module
$\tilde A=\CC[[h]]\Ptens A\cong\CC[[h;A]]$ such that we have $a\star b=ab \mod h\tilde A$
for all $a,b\in A$.

Our plan in this section will be as follows. First we construct a
formal deformation $\cO_\fdef(U)$ of $\cO(U)$, where $U$ is an open subset of $\CC^n$,
and next we show that $\cO_\fdef(U)\cong\CC[[h]]\ptens{\cO(\CC^\times)}\cO_\defo(U)$
provided that either $U=\DD^n_r$ or $U=\BB^n_r$.

\begin{prop}
\label{prop:fdef}
Let $n\in\N$, and let $\mathcal U$ denote the collection of all open subsets of $\CC^n$.
There exists a unique family $\{\star_U : U\in\mathcal U\}$
of continuous $\CC[[h]]$-bilinear multiplications on $\CC[[h]]\Ptens\cO(U)$
such that
\begin{mycompactenum}
\item
for all $j,k=1,\ldots ,n$ we have
\begin{equation}
\label{relat_fdef}
x_j \star_U x_k=
\begin{cases}
x_j x_k & \text{if } j\le k,\\
e^{-ih} x_j x_k & \text{if } j>k,
\end{cases}
\end{equation}
where $x_1,\ldots ,x_n$ are the coordinates on $\CC^n$
(in particular, $x_j\star_U x_k=e^{ih} x_k\star_U x_j$ for $j<k$);
\item
the correspondence $U\mapsto\cO_\fdef(U)=(\CC[[h]]\Ptens\cO(U),\star_U)$
is a Fr\'echet algebra sheaf on $\CC^n$.
\end{mycompactenum}
Moreover, for each open set $U\subset\CC^n$, the $\CC[[h]]$-algebra
$\cO_\fdef(U)$ is a formal Fr\'echet
deformation of $\cO(U)$.
\end{prop}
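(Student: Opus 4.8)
The plan is to build $\star_U$ explicitly from an $h$-graded family of bidifferential operators, so that all the required structure can be read off directly, and then to extract uniqueness from the sheaf property. On $\cO(U)$ consider the operator $P=\sum_{i<j}(x_j\dd_j)\otimes(x_i\dd_i)$ on $\cO(U)\otimes\cO(U)$, where $\dd_i=\dd/\dd x_i$; on a monomial pair it acts as the scalar from \eqref{sigma}, namely $P(x^k\otimes x^\ell)=\sigma(\ell,k)\,x^k\otimes x^\ell$ with $\sigma(\ell,k)=\sum_{i<j}\ell_i k_j$. For $m\ge 0$ set $C_m(f,g)=\tfrac{(-i)^m}{m!}\,\mu\bigl(P^m(f\otimes g)\bigr)$, where $\mu$ is ordinary multiplication. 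Each $C_m$ is a fixed bidifferential operator of finite order with polynomial coefficients, hence a continuous bilinear map $\cO(U)\times\cO(U)\to\cO(U)$; in particular $C_0(f,g)=fg$ and $C_m(x^k,x^\ell)=\tfrac{(-i)^m}{m!}\sigma(\ell,k)^m x^{k+\ell}$. I would then define $f\star_U g=\sum_{m\ge 0}h^m C_m(f,g)$, a continuous $\CC$-bilinear map into $\CC[[h]]\Ptens\cO(U)$, and extend it uniquely to a continuous $\CC[[h]]$-bilinear multiplication by Lemma~\ref{lemma:K-bilin}. On monomials it reads $x^k\star_U x^\ell=e^{-ih\sigma(\ell,k)}x^{k+\ell}$, so it is exactly the fiberwise product appearing in Theorem~\ref{thm:str_def}, globalized.

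First I would check (i): with $k,\ell$ the $j$th and $k$th standard basis vectors one has $\sigma=1$ precisely when $j>k$ and $\sigma=0$ when $j\le k$, which is \eqref{relat_fdef}, and the displayed consequence $x_j\star_U x_k=e^{ih}x_k\star_U x_j$ for $j<k$ follows. Associativity I would reduce, by $\CC[[h]]$-bilinearity and continuity, to an identity of tridifferential operators in $(f,g,e)$ at each order in $h$; tested on monomials it is $\sigma(\ell,k)+\sigma(p,k+\ell)=\sigma(p,\ell)+\sigma(\ell+p,k)$, which holds because $\sigma$ is biadditive, both sides equalling $\sigma(\ell,k)+\sigma(p,k)+\sigma(p,\ell)$. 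Since a tridifferential operator with holomorphic coefficients that annihilates every triple of monomials must vanish (evaluate its coefficients at any point, using that the jets of monomials there span all jets), the identity holds on all of $\cO(U)$, hence on $\CC[[h]]\Ptens\cO(U)$.

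For (ii), the underlying presheaf $U\mapsto\CC[[h]]\Ptens\cO(U)\cong\CC[[h;\cO(U)]]$ is a sheaf of Fr\'echet spaces because $\cO$ is one and $\CC[[h]]\Ptens(-)\cong\prod_{\Z_+}(-)$ preserves limits; since each $C_m$ is local (a bidifferential operator whose coefficients are restrictions of fixed polynomials), every restriction map commutes with $\star$ and is thus a Fr\'echet-algebra homomorphism, so $U\mapsto\cO_\fdef(U)$ is a sheaf of Fr\'echet algebras. The ``moreover'' is then immediate: $\CC[[h]]\Ptens\cO(U)$ is topologically free over $\CC[[h]]$ by construction, and because $C_0(f,g)=fg$ the product reduces modulo $h$ to the ordinary one, giving a Fr\'echet-algebra isomorphism $\cO_\fdef(U)/h\,\cO_\fdef(U)\cong\cO(U)$; hence $\cO_\fdef(U)$ is a formal Fr\'echet deformation of $\cO(U)$.

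For uniqueness I would first note that, since restriction maps are algebra homomorphisms by (ii) and the sheaf is separated, two admissible families that agree on every polydisk agree on all of $\mathcal U$; so it suffices to fix a polydisk $D$, where the polynomials are dense in $\cO(D)$ and are spanned by the monomials $x^k$. By continuity and $\CC[[h]]$-bilinearity, any admissible $\star'$ on $D$ is then determined once the products $x^k\star' x^\ell$ are known. The main obstacle — and the genuine content of the uniqueness — is that relation (i) constrains only products of \emph{pairs} of coordinate functions, so one must argue that these, together with associativity and continuity, force $x^k\star' x^\ell=e^{-ih\sigma(\ell,k)}x^{k+\ell}$ for all $k,\ell$. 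The strategy I would pursue is to use $x_i\star' x_j=e^{ih}x_j\star' x_i$ to normal-order an arbitrary $\star'$-product of coordinates exactly as in Lemma~\ref{lemma:t(alpha)} with $q=e^{ih}$, producing the factor $e^{-ih\,\rrm(\alpha)}$, and then to identify the ordered $\star'$-monomials with the monomials of the quantum affine space $\cO^\reg_{e^{ih}}(\CC^n)$ sitting densely inside $\cO_\fdef(D)$. I expect this last identification to be the hard part, since the two-factor relations do not visibly pin down the higher sorted products on their own, and closing this gap is where the full quantum-affine-space structure carried by the constructed deformation must be exploited.
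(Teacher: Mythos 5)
Your existence half coincides with the paper's proof. The paper sets $D_j=x_j(\dd/\dd x_j)$, $P=\sum_{j>k}D_j\Tens D_k$, and defines $f\star_U g=(\mu\circ e^{-ihP})(f\Tens g)$, extended by Lemma~\ref{lemma:K-bilin}; this is exactly your family $C_m=\frac{(-i)^m}{m!}\,\mu\circ P^m$. The one divergence is associativity: the paper invokes the universal deformation formula for commuting derivations (Giaquinto--Zhang, with a pointer to Gerstenhaber), whereas you verify the order-by-order tridifferential identity on monomials, using biadditivity of $\sigma$ from \eqref{sigma}, and then kill the coefficients by a jet argument. Your route is sound, and the jet step is genuinely needed: polynomials are \emph{not} dense in $\cO(U)$ for a general open $U$ (an annulus, say), so a pure density argument would not suffice there. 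Your treatment of the sheaf property and of the ``moreover'' clause is the paper's verbatim.

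For uniqueness, the paper argues in two lines: on a polydisk the polynomials are dense, ``so $\star_U$ is uniquely determined by \eqref{relat_fdef}''; for a general $U$ it covers by polydisks $U_i$ and uses injectivity of $\cO_\fdef(U)\to\prod_i\cO_\fdef(U_i)$ --- the latter is precisely your separatedness reduction. Thus the step you flag as the hard part is exactly the step the paper crosses silently: after density one still must know the products $x^k\star' x^\ell$ of \emph{monomials}, and \eqref{relat_fdef} only prescribes products of pairs of coordinates. Your suspicion that the two-factor relations, associativity and continuity do not pin these down is correct, and in fact no amount of ``exploiting the quantum-affine-space structure'' can close the gap under the literal reading of (i): take $S=\tfrac16 x_1^3\partial_1^3$, a differential operator with polynomial coefficients annihilating every polynomial of degree $\le 2$, extend it $\CC[[h]]$-linearly, and put $T=\id+hS$ (invertible, with continuous inverse $\sum_m(-hS)^m$, since the $h^N$-coefficient of the inverse involves only finitely many terms; $T$ commutes with restrictions because $S$ is local). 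Then $f\star'_U g=T^{-1}(Tf\star_U Tg)$ is a continuous $\CC[[h]]$-bilinear associative unital multiplication; since $T$ fixes $1$, the $x_j$, and all the $x_jx_k$, relations \eqref{relat_fdef} still hold, and $U\mapsto(\CC[[h]]\Ptens\cO(U),\star'_U)$ is still a Fr\'echet algebra sheaf; yet $x_1\star'_U x_1^2=(1+h)^{-1}x_1^3\ne x_1^3=x_1\star_U x_1^2$. So your normal-ordering argument (which only rearranges $\star'$-words in the generators, producing the factor $e^{-ih\,\rrm(\alpha)}$) cannot be completed to identify sorted $\star'$-monomials with ordinary monomials, because that identification is genuinely not forced. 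The uniqueness assertion does hold once (i) is read, as the fiberwise computation in the proof of Theorem~\ref{thm:str_def} suggests, as prescribing all monomial products $x^k\star_U x^\ell=e^{-ih\sigma(\ell,k)}x^{k+\ell}$; with that reading, density on polydisks plus your covering reduction finish the proof immediately. In short: your proposal is incomplete exactly where the paper's own argument is too quick, and your diagnosis of the obstruction is accurate.
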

\begin{proof}
Given an open set $U\subset\CC^n$, consider the derivations $D_j=x_j (\dd/\dd x_j)$
of $\cO(U)$ ($j=1,\ldots ,n$), and let
\[
P=\sum_{j>k} D_j\Tens D_k\colon \cO(U)\Ptens\cO(U) \to \cO(U)\Ptens\cO(U).
\]
Letting $\mu\colon\cO(U)\Ptens\cO(U)\to\cO(U)$ denote the multiplication map,
we define a continuous bilinear map
\begin{gather*}
\cO(U)\times\cO(U) \to \CC[[h,\cO(U)]], \quad (f,g)\mapsto f\star_U g,\\
f\star_U g=(\mu\circ e^{-ihP})(f\Tens g)
=\sum_{m=0}^\infty \frac{(-ih)^m}{m!} (\mu\circ P^m)(f\Tens g).
\end{gather*}
By Lemma~\ref{lemma:K-bilin}, this map uniquely extends to a continuous
$\CC[[h]]$-bilinear map $\tilde A\times\tilde A\xra{\star_U}\tilde A$, where
$\tilde A=\CC[[h;\cO(U)]]$. Since the derivations $D_j$ commute, it follows from
\cite[Theorem 2.1]{GZ} (cf. also \cite[Theorem 8]{Gerst_def_3}) that $\star_U$ is associative.
Thus $\cO_\fdef(U)=(\CC[[h]]\Ptens\cO(U),\star_U)$ is a Fr\'echet $\CC[[h]]$-algebra.
We clearly have $f\star_U g=fg \mod h\tilde A$ for all $f,g\in\cO(U)$, so
$\cO_\fdef(U)$ is a formal Fr\'echet deformation of $\cO(U)$.
Relations~\eqref{relat_fdef} are readily verified.
Identifying $\CC[[h]]\Ptens\cO(U)$ with the space $\cO(U,\CC[[h]])$ of
$\CC[[h]]$-valued holomorphic functions on $U$ via \eqref{vec_hol},
we see that the correspondence $U\mapsto \cO_\fdef(U)$ is a sheaf of Fr\'echet
spaces on $\CC^n$. Since the multiplications $\star_U$ are obviously compatible with the
restriction maps, we see that $\cO_\fdef$ is a Fr\'echet algebra sheaf.

If $U$ is a polydisk, then the polynomials in $x_1,\ldots ,x_n$ are dense in $\cO(U)$,
so the multiplication $\star_U$ is uniquely determined by~\eqref{relat_fdef}.
For an arbitrary open set $U$, cover $U$ by a family $(U_i)$ of polydisks, and observe
that the natural map $\cO_\fdef(U) \to \prod_i \cO_\fdef(U_i)$ is an injective algebra
homomorphism. Thus $\star_U$ is uniquely determined by the family $\{ \star_{U_i}\}$.
This completes the proof.
\end{proof}

\begin{remark}
In the terminology of \cite{KS_dqmod}, the sheaf $\cO_\fdef$ is a star-algebra on $\CC^n$.
\end{remark}

We will need the following simple construction. Suppose that
$K\to L$ is a homomorphism of commutative Fr\'echet algebras, and that $A$ is a Fr\'echet $K$-algebra.
It is easy to show that the Fr\'echet $L$-module $L\ptens{K} A$ is a Fr\'echet $L$-algebra
with multiplication uniquely determined by
\[
(\alpha\otimes a)(\beta\otimes b)=\alpha\beta\otimes ab
\qquad (\alpha,\beta\in L,\; a,b\in A).
\]

Consider now the Fr\'echet algebra homomorphism
\[
\lambda\colon\cO(\CC^\times)\to\CC[[h]], \quad
z\mapsto e^{ih},
\]
where $z\in\cO(\CC^\times)$ is the complex coordinate.
We let
\[
\tilde A(\DD^n_r)=\CC[[h]]\ptens{\cO(\CC^\times)}\cO_\defo(\DD^n_r),\quad
\tilde A(\BB^n_r)=\CC[[h]]\ptens{\cO(\CC^\times)}\cO_\defo(\BB^n_r).
\]
It follows from the above discussion that $\tilde A(\DD^n_r)$ and $\tilde A(\BB^n_r)$
are Fr\'echet $\CC[[h]]$-algebras in a canonical way.

Our next goal is to construct Fr\'echet $\CC[[h]]$-algebra isomorphisms
$\tilde A(\DD^n_r)\cong\cO_\fdef(\DD^n_r)$ and $\tilde A(\BB^n_r)\cong\cO_\fdef(\BB^n_r)$.
The technical part of the construction will be contained in the following two lemmas.

\begin{lemma}
\label{lemma:free_to_formal}
There exist Fr\'echet algebra homomorphisms
$\cF(\DD^n_r)\to\cO_\fdef(\DD^n_r)$ and
$\cF(\BB^n_r)\to\cO_\fdef(\BB^n_r)$
uniquely determined by
$\zeta_i\mapsto x_i\; (i=1,\ldots ,n)$.
\end{lemma}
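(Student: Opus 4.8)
The plan is to build each homomorphism directly on the dense free subalgebra $F_n$ and then extend by continuity, rather than invoking the universal properties of $\cF(\DD^n_r)$ and $\cF(\BB^n_r)$. The latter route would force me to verify that $\cO_\fdef(U)$ is an Arens--Michael algebra in which the $x_i$ are spectrally $r$-contractive, and this looks unpleasant: the deformation factors occurring in a $\star$-product $x^k\star x^\ell$ involve $e^{-ih}$ raised to a power of size $\sim|k|\,|\ell|$, so the obvious candidate seminorms on $\cO_\fdef(U)$ fail to be submultiplicative. Working on $F_n$ sidesteps this entirely.

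First I would put a $\star$-monomial into normal form. For $\alpha=(\alpha_1,\dots,\alpha_d)\in W_n$ write $x_\alpha^\star=x_{\alpha_1}\star\cdots\star x_{\alpha_d}$. The relations~\eqref{relat_fdef} give $x_i\star x_j=e^{-ih}\,x_j\star x_i$ for $i>j$, while a nondecreasing word $\beta$ satisfies $x_\beta^\star=x^{\rp(\beta)}$ (the ordinary monomial in $\cO(U)\subset\cO_\fdef(U)$), since only the relations $x_i\star x_j=x_ix_j$ with $i\le j$ are used. Hence, repeating verbatim the inversion count of Lemma~\ref{lemma:t(alpha)} with $q$ replaced by $e^{ih}$, I obtain
\[
x_\alpha^\star=e^{-ih\,\rrm(\alpha)}\,x^{\rp(\alpha)}\qquad\text{in }\cO_\fdef(U).
\]
Since concatenation of words corresponds to $\star$-multiplication, the assignment $\zeta_\alpha\mapsto x_\alpha^\star$ extends linearly to a unital algebra homomorphism $\varphi_0\colon F_n\to(\cO_\fdef(U),\star)$.

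It remains to show $\varphi_0$ is continuous for the seminorms of $\cF(\DD^n_r)$, resp. $\cF(\BB^n_r)$. Since the topology of $\cO_\fdef(U)=\CC[[h]]\Ptens\cO(U)\cong\cO(U)[[h]]$ is generated by the seminorms $f\mapsto\sum_{p\le N}\|f_p\|_{\,\cdot,\rho}$ (where $f=\sum_p f_p h^p$ with $f_p\in\cO(U)$), it suffices to bound the $\cO(U)$-norm of each Taylor coefficient in $h$. For $a=\sum_\alpha c_\alpha\zeta_\alpha\in F_n$ the coefficient of $h^p$ in $\varphi_0(a)$ is $\sum_\alpha c_\alpha\tfrac{(-i\rrm(\alpha))^p}{p!}x^{\rp(\alpha)}$, and the crucial elementary bound is $\rrm(\alpha)\le\binom{|\alpha|}{2}\le|\alpha|^2/2$. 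In the polydisk case, grouping the $\alpha$ by $\rp(\alpha)=k$ and using $\|x^k\|_{\DD,\rho}=\rho^{|k|}$ gives, for any $\rho<\rho'<r$,
\[
\Bigl\|\sum_\alpha c_\alpha\tfrac{(-i\rrm(\alpha))^p}{p!}x^{\rp(\alpha)}\Bigr\|_{\DD,\rho}
\le\frac{1}{p!}\sum_\alpha|c_\alpha|\Bigl(\tfrac{|\alpha|^2}{2}\Bigr)^p\rho^{|\alpha|}
\le\frac{C_{p}}{p!}\,\|a\|_{\rho',1},
\]
where $C_p=\sup_{d\ge 0}(d^2/2)^p(\rho/\rho')^d<\infty$; summing over $p\le N$ yields the desired estimate.

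The ball case is the real point, and I expect the main obstacle there: $\rrm(\alpha)$ can be quadratic in $|\alpha|$ while the $h$-adic topology does not damp this growth. Here I would group by $k=\rp(\alpha)$, apply the Cauchy--Bunyakowsky--Schwarz inequality in the variable $\alpha\in\rp^{-1}(k)$, and use $\card\rp^{-1}(k)=|k|!/k!$ from~\eqref{card_p^{-1}} to get
\[
\Bigl|\sum_{\alpha\in\rp^{-1}(k)}c_\alpha\tfrac{\rrm(\alpha)^p}{p!}\Bigr|
\le\frac{1}{p!}\Bigl(\sum_{\alpha\in\rp^{-1}(k)}|c_\alpha|^2\Bigr)^{1/2}
\Bigl(\tfrac{|k|!}{k!}\Bigr)^{1/2}\Bigl(\tfrac{|k|^2}{2}\Bigr)^{p}.
\]
The weight $(k!/|k|!)^{1/2}$ in the ball norm~\eqref{ball_pow_rep} then cancels the factor $(|k|!/k!)^{1/2}$ exactly---precisely the cancellation already exploited in Theorem~\ref{thm:q_ball_quot_free_ball}---and after enlarging $\rho$ to some $\rho'<r$ one is left with a bound by $\tfrac{C_p}{p!}\|a\|_{\rho'}^\circ$. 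This shows $\varphi_0$ is continuous in both cases, so it extends uniquely to a continuous linear map $\varphi\colon\cF(U)\to\cO_\fdef(U)$; multiplicativity passes to the closure because $\star$ is continuous, and the uniqueness claim follows from the density of $F_n$ and the normalization $\zeta_i\mapsto x_i$.
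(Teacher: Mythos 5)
Your proposal is correct and is essentially the paper's own proof: the same seminorms $\sum_{p\le N}\|f_p\|_{\cdot,\rho}$ on $\CC[[h;\cO(U)]]$, the same expansion of $e^{-i\rrm(\alpha)h}$ in powers of $h$ with the bound $\rrm(\alpha)\le|\alpha|^2$ absorbed by enlarging $\rho$ to $\rho'<r$, and in the ball case the same Cauchy--Bunyakowsky--Schwarz step over $\rp^{-1}(k)$ with $\card\rp^{-1}(k)=|k|!/k!$ cancelling the weight $(k!/|k|!)^{1/2}$ exactly, while the polydisk case reduces to the norm $\|\cdot\|_{\rho',1}$. The only (cosmetic) difference is that you define the homomorphism on the dense subalgebra $F_n$ and extend by continuity, whereas the paper proves absolute summability of $\sum_\alpha c_\alpha x_\alpha$ directly for every element of $\cF(\DD^n_r)$, resp.\ $\cF(\BB^n_r)$.
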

\begin{proof}
Let us start with the (more difficult) case of the homomorphism
$\cF(\BB^n_r)\to\cO_\fdef(\BB^n_r)$.
Suppose that $u=\sum_\alpha c_\alpha\zeta_\alpha\in\cF(\BB^n_r)$.
We claim that the family $\sum_\alpha c_\alpha x_\alpha$ is summable in $\cO_\fdef(\BB^n_r)$.
To see this, consider the defining family $\{\|\cdot\|_N : N\in\Z_+\}$ of seminorms on $\CC[[h]]$
given by
\begin{equation*}
\| f\|_N=\sum_{p=0}^N |c_p|\qquad \Bigl(f=\sum_{p\in\Z_+} c_p h^p\in\CC[[h]]\Bigr)
\end{equation*}
and the defining family
$\{\|\cdot\|_{\BB,\rho} : \rho\in (0,r)\}$ of norms on $\cO(\BB^n_r)$
given by~\eqref{ball_pow_rep}. The projective tensor product of the seminorms
$\|\cdot\|_N$ and $\|\cdot\|_{\BB,\rho}$ will be denoted by $\|\cdot\|_{N,\rho}$.
We have
\begin{align}
\sum_{k\in\Z_+^n}\, \biggl\|\sum_{\alpha\in\rp^{-1}(k)} c_\alpha x_\alpha\biggr\|_{N,\rho}
&=\sum_{k\in\Z_+^n}\, \biggl\|\sum_{\alpha\in\rp^{-1}(k)}
c_\alpha e^{-i\rrm(\alpha)h}  x^k\biggr\|_{N,\rho}\notag\\
&=\sum_{k\in\Z_+^n}\, \biggl\|\sum_{\alpha\in\rp^{-1}(k)}
c_\alpha e^{-i\rrm(\alpha)h}\biggr\|_N \biggl(\frac{k!}{|k|!}\biggr)^{1/2} \rho^{|k|}\notag\\
&=\sum_{k\in\Z_+^n}\sum_{j=0}^N\, \biggl| \sum_{\alpha\in\rp^{-1}(k)} c_\alpha
\frac{(-i\rrm(\alpha))^j}{j!}\,\biggr| \biggl(\frac{k!}{|k|!}\biggr)^{1/2} \rho^{|k|}\notag\\
&\le \sum_{k\in\Z_+^n}\sum_{\alpha\in\rp^{-1}(k)} |c_\alpha|
\biggl(\sum_{j=0}^N \frac{\rrm(\alpha)^j}{j!}\biggr)
\biggl(\frac{k!}{|k|!}\biggr)^{1/2} \rho^{|k|}\notag\\
\label{F_to_formal_1}
&\le \sum_{k\in\Z_+^n}\sum_{\alpha\in\rp^{-1}(k)} |c_\alpha|
\biggl(\sum_{j=0}^N \frac{|\alpha|^{2j}}{j!}\biggr) \biggl(\frac{k!}{|k|!}\biggr)^{1/2} \rho^{|k|}.
\end{align}
Here we have used the obvious inequality $\rrm(\alpha)\le |\alpha|^2$.
Now choose $\rho_1\in (\rho,r)$, and find $C_1,C_2>0$ such that
\begin{gather*}
\sum_{j=0}^N\frac{t^{2j}}{j!}\le C_1(1+t^{2N}) \qquad (t\ge 0);\\
C_1(1+t^{2N})\rho^t \le C_2\rho_1^t \qquad (t\ge 0).
\end{gather*}
Since $|\alpha|=|k|$ whenever $\alpha\in\rp^{-1}(k)$, we can continue \eqref{F_to_formal_1}
as follows:
\begin{align}
\sum_{k\in\Z_+^n}\sum_{\alpha\in\rp^{-1}(k)} |c_\alpha|
\biggl(\sum_{j=0}^N \frac{|\alpha|^{2j}}{j!}\biggr) \biggl(\frac{k!}{|k|!}\biggr)^{1/2} \rho^{|k|}
&\le C_1 \sum_{k\in\Z_+^n}\sum_{\alpha\in\rp^{-1}(k)} |c_\alpha|
(1+|k|^{2N}) \biggl(\frac{k!}{|k|!}\biggr)^{1/2} \rho^{|k|}\notag\\
\label{F_to_formal_2}
&\le C_2 \sum_{k\in\Z_+^n}\sum_{\alpha\in\rp^{-1}(k)} |c_\alpha|
\biggl(\frac{k!}{|k|!}\biggr)^{1/2} \rho_1^{|k|}.
\end{align}
Finally, applying the Cauchy-Bunyakowsky-Schwarz inequality together with~\eqref{card_p^{-1}},
we continue~\eqref{F_to_formal_2} as follows:
\begin{equation}
\label{F_to_formal_3}
\begin{split}
C_2 \sum_{k\in\Z_+^n}\sum_{\alpha\in\rp^{-1}(k)} |c_\alpha|
\biggl(\frac{k!}{|k|!}\biggr)^{1/2} \rho_1^{|k|}
&\le C_2 \sum_{k\in\Z_+^n}\biggl(\sum_{\alpha\in\rp^{-1}(k)} |c_\alpha|^2\biggr)^{1/2}
\rho_1^{|k|}\\
&=C_2 \biggl\|\sum_\alpha c_\alpha \zeta_\alpha\biggr\|_{\rho_1}^\circ.
\end{split}
\end{equation}
Thus we see that for each $u=\sum_\alpha c_\alpha\zeta_\alpha\in\cF(\BB^n_r)$
the family $\sum_\alpha c_\alpha x_\alpha$ is summable in $\cO_\fdef(\BB^n_r)$.
Letting $\varphi(u)=\sum_\alpha c_\alpha x_\alpha$, we obtain a linear map
$\varphi\colon\cF(\BB^n_r)\to\cO_\fdef(\BB^n_r)$ such that $\varphi(\zeta_i)=x_i$
for all $i=1,\ldots ,n$. Moreover, \eqref{F_to_formal_1}--\eqref{F_to_formal_3}
imply that $\|\varphi(u)\|_{N,\rho}\le C_2\| u\|_{\rho_1}^\circ$, so that $\varphi$ is continuous.
Since the restriction of $\varphi$ to the dense subalgebra $F_n\subset\cF(\BB^n_r)$
is clearly an algebra homomorphism, we conclude that $\varphi$ is a homomorphism.

The homomorphism $\cF(\DD^n_r)\to\cO_\fdef(\DD^n_r)$ is constructed similarly.
Specifically, we replace $\|\cdot\|_{\BB,\rho}$ by $\|\cdot\|_{\DD,\rho}$ in the above
argument, remove the factor $\bigl(\frac{k!}{|k|!}\bigr)^{1/2}$ from~\eqref{F_to_formal_1}
and~\eqref{F_to_formal_2}, and observe that the last expression in~\eqref{F_to_formal_2}
will be exactly $C_2\|\sum_\alpha c_\alpha\zeta_\alpha\|_{\rho_1,1}$, where
the norm $\|\cdot\|_{\rho_1,1}$ is given by~\eqref{F_poly_expl}.
Thus~\eqref{F_to_formal_3} is not needed in this case. The last step of the construction
 is identical to the case of $\BB^n_r$.
\end{proof}

\begin{remark}
It follows from the above proof that Lemma~\ref{lemma:free_to_formal}
holds with $\cF(\DD^n_r)$ replaced by $\cF^\rT(\DD^n_r)$.
\end{remark}

\begin{remark}
If we knew that $\cO_\fdef(\DD^n_r)$ is an Arens-Michael algebra, then the construction
of the homomorphism $\cF(\DD^n_r)\to\cO_\fdef(\DD^n_r)$ could easily be
deduced from the universal property of $\cF(\DD^n_r)$. As a matter of fact,
$\cO_\fdef(D)$ is indeed an Arens-Michael algebra provided that $D$
is a complete bounded Reinhardt
domain, but the direct proof of this result is rather technical,
so we omit it. Anyway, the fact that $\cO_\fdef(\DD^n_r)$ and $\cO_\fdef(\BB^n_r)$
are Arens-Michael algebras will be immediate from Theorem~\ref{thm:formal}.
However, we do not know whether $\cO_\fdef(U)$ is an Arens-Michael algebra
for every open set $U\subset\CC^n$. The question seems to be open already
in the case where $U$ is a polydisk centered at some point $p\ne 0$.
\end{remark}

From now on, the image of $x_j\in\cO_\defo(\DD^n_r)\; (j=1,\ldots ,n)$ under the map
\[
\cO_\defo(\DD^n_r)\to \tilde A(\DD^n_r)=\CC[[h]]\ptens{\cO(\CC^\times)}\cO_\defo(\DD^n_r),
\quad u\mapsto 1_{\CC[[h]]}\otimes u,
\]
will be denoted by the same symbol $x_j$. This will not lead to a confusion.
The same convention applies to $\cO_\defo(\BB^n_r)$ and $\tilde A(\BB^n_r)$.

\begin{lemma}
\label{lemma:func_to_tilde}
There exist continuous linear maps $\cO(\DD^n_r)\to\tilde A(\DD^n_r)$
and $\cO(\BB^n_r)\to\tilde A(\BB^n_r)$ uniquely determined by $x^k\mapsto x^k\; (k\in\Z_+^n)$.
\end{lemma}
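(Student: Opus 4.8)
The plan is to construct each map as the continuous linear extension of $x^k\mapsto x^k$ from the span of the monomials, which is dense in $\cO(\DD^n_r)$ and in $\cO(\BB^n_r)$ by \eqref{poly_power_rep} and \eqref{ball_pow_rep}; uniqueness is then automatic. In both cases everything reduces to a single seminorm estimate for the images of the monomials: once we know that $\sum_k|c_k|\,\|x^k\|<\infty$ (in the relevant seminorm of $\tilde A(\DD^n_r)$, resp. $\tilde A(\BB^n_r)$) with a bound by a defining norm of the source, the series $\sum_k c_k x^k$ converges and defines the desired continuous map. Recall throughout that the seminorms on $\tilde A=\CC[[h]]\ptens{\cO(\CC^\times)}B$ are quotient seminorms of projective tensor seminorms $\|\cdot\|_N\otimes_\pi(\text{seminorm on }B)$, so that $x^k$, being the image of $1\otimes x^k$ and $\|1\|_N=1$, is controlled by the seminorm of its lift.

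For the polydisk the estimate is immediate. Identifying $\cO_\defo(\DD^n_r)$ with $D_{n,r}$ via Theorem~\ref{thm:O_def_D_powrep} and using $\omega(k,0)=0$, we get $\|x^k\|_{\rho,\tau}=\rho^{|k|}$ in $D_{n,r}$ for every $\tau\ge 1$. Hence $1\otimes x^k$ has projective tensor seminorm $\rho^{|k|}$ and its image $x^k\in\tilde A(\DD^n_r)$ has quotient seminorm at most $\rho^{|k|}$. Therefore $\sum_k|c_k|\,\|x^k\|\le\sum_k|c_k|\rho^{|k|}=\|f\|_{\DD,\rho}$ for every $f=\sum_k c_k x^k\in\cO(\DD^n_r)$, which gives convergence and the continuity bound at once.

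For the ball the bound $\rho^{|k|}$ is too weak, since the coefficients of $f\in\cO(\BB^n_r)$ are only controlled by $\|f\|_{\BB,\rho}=\sum_k|c_k|(k!/|k|!)^{1/2}\rho^{|k|}$, and the factor $(k!/|k|!)^{1/2}$ must be recovered from a well chosen lift. I would first pass to the free algebra: extension of scalars gives $\CC[[h]]\ptens{\cO(\CC^\times)}(\cO(\CC^\times)\Ptens\cF(\BB^n_r))\cong\CC[[h]]\Ptens\cF(\BB^n_r)$, and the quotient map defining $\cO_\defo(\BB^n_r)$ then induces a continuous homomorphism $\Phi\colon\CC[[h]]\Ptens\cF(\BB^n_r)\to\tilde A(\BB^n_r)$. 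Using $x_\alpha=z^{-\rrm(\alpha)}x^{\rp(\alpha)}$ (as in Lemma~\ref{lemma:x_alpha_sprad}) and the balancing relation $1\otimes z^m a=e^{imh}(1\otimes a)$, one has $\Phi(1\otimes\zeta_\alpha)=1\otimes x_\alpha=e^{-i\rrm(\alpha)h}x^{\rp(\alpha)}$, so that for any weights with $\sum_{\alpha\in\rp^{-1}(k)}c_\alpha=1$ the element $v_k=\sum_{\alpha\in\rp^{-1}(k)}c_\alpha e^{i\rrm(\alpha)h}\otimes\zeta_\alpha$ satisfies $\Phi(v_k)=x^k$. Continuity of $\Phi$ then bounds each seminorm of $x^k$ by a seminorm of $v_k$, and it remains to make $v_k$ small.

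Here I would take the symmetric weights $c_\alpha=k!/|k|!$ and estimate exactly as in Lemma~\ref{lemma:free_to_formal}. Expanding $e^{i\rrm(\alpha)h}=\sum_j\frac{(i\rrm(\alpha))^j}{j!}h^j$, using $\|h^j\|_N\le 1$, the orthogonality of the $\zeta_\alpha$ for $\alpha\in\rp^{-1}(k)$ inside $(F_n)_k$, the bound $\rrm(\alpha)\le|\alpha|^2=|k|^2$, and \eqref{card_p^{-1}} to get $\sum_{\alpha\in\rp^{-1}(k)}|c_\alpha|^2=k!/|k|!$, one finds that the projective tensor seminorm of $v_k$ built from $\|\cdot\|_N$ and $\|\cdot\|_\rho^\circ$ is at most $\rho^{|k|}(k!/|k|!)^{1/2}\sum_{j=0}^N\frac{|k|^{2j}}{j!}$. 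Absorbing the polynomial-times-exponential factor by passing from $\rho$ to some $\rho_1\in(\rho,r)$, just as in Lemma~\ref{lemma:free_to_formal}, yields $\|x^k\|_{\tilde A}\le C(k!/|k|!)^{1/2}\rho_1^{|k|}$, whence $\sum_k|c_k|\,\|x^k\|\le C\|f\|_{\BB,\rho_1}<\infty$. The hard part is precisely this ball estimate: one must both select the correct uniform lift so that the Cauchy--Bunyakowsky--Schwarz step produces the sharp weight $(k!/|k|!)^{1/2}$ and control the $e^{i\rrm(\alpha)h}$-factors, and for the latter the reduction to $\CC[[h]]\Ptens\cF(\BB^n_r)$ is essential, since in $\cO_\defo(\BB^n_r)$ the $x_\alpha$ are linearly dependent and the $z$-powers would reappear.
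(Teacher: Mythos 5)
Your proof is correct, and for the ball case---which is indeed the hard half---it is essentially the paper's own argument: the paper likewise applies $\CC[[h]]\ptens{\cO(\CC^\times)}(-)$ to the quotient map $\cO(\CC^\times)\Ptens\cF(\BB^n_r)\to\cO_\defo(\BB^n_r)$ to obtain $\tilde\pi_{\BB}\colon\CC[[h;\cF(\BB^n_r)]]\to\tilde A(\BB^n_r)$, lifts $x^k$ to $u_k=\frac{k!}{|k|!}\sum_{\alpha\in\rp^{-1}(k)}e^{i\rrm(\alpha)h}\zeta_\alpha$ (your $v_k$ with the uniform weights), and runs your estimate: expand the exponentials, group by powers of $h$, use the orthogonality of the $\zeta_\alpha$ in $\|\cdot\|_\rho^\circ$ together with $\rrm(\alpha)\le|k|^2$ and $\card\rp^{-1}(k)=|k|!/k!$, then absorb the resulting polynomial factor by passing to $\rho_1\in(\rho,r)$, arriving at a bound by $C(k!/|k|!)^{1/2}\rho_1^{|k|}=C\|x^k\|_{\BB,\rho_1}$. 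The only cosmetic difference is that the paper sums $\sum_k c_k u_k$ upstairs in $\CC[[h;\cF(\BB^n_r)]]$, using the coefficient seminorms $\bigl\|\sum_j f_j h^j\bigr\|_{s,\rho}=\|f_s\|_\rho^\circ$, and then composes with the continuous map $\tilde\pi_{\BB}$, whereas you push each lift forward and sum directly in $\tilde A(\BB^n_r)$; these are equivalent. Where you genuinely diverge is the polydisk half: the paper does not invoke Theorem~\ref{thm:O_def_D_powrep}, but simply lifts $x^k$ to the ordered monomial $\zeta^k=\zeta_{\delta(k)}\in\cF(\DD^n_r)$, notes $\|\zeta^k\|_{\rho,\tau}=\rho^{|k|}\tau^{\rs(\delta(k))+1}\le\tau^n\rho^{|k|}$, and composes the resulting continuous map $j$ with the constant embedding into $\CC[[h;\cF(\DD^n_r)]]$ and $\tilde\pi_{\DD}$. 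Your route via the power-series model $D_{n,r}$ (where $\omega(k,0)=0$ gives $\|x^k\|_{\rho,\tau}=\rho^{|k|}$) is equally valid and a bit more direct, but it imports the nontrivial Theorem~\ref{thm:O_def_D_powrep}, while the paper's polydisk step needs nothing beyond the definition of $\cF(\DD^n_r)$; structurally both rest on the same observation, namely that for the polydisk an $h$-independent lift of $x^k$ with the right norm exists, so no symmetrized, $h$-dependent lift is required there.
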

\begin{proof}
As in Lemma~\ref{lemma:free_to_formal}, we start by constructing the map
$\cO(\BB^n_r)\to\tilde A(\BB^n_r)$ (as we shall see, the case of $\DD^n_r$ is much easier).
Applying the functor $\CC[[h]]\ptens{\cO(\CC^\times)}(-)$ to the quotient map
\[
\cO(\CC^\times)\Ptens\cF(\BB^n_r)\to\cO_\defo(\BB^n_r),
\]
we obtain a surjective homomorphism
\[
\tilde\pi_{\BB}\colon\CC[[h]]\Ptens\cF(\BB^n_r)=\CC[[h;\cF(\BB^n_r)]]\to\tilde A(\BB^n_r)
\]
of Fr\'echet $\CC[[h]]$-algebras.
For each $s\in\Z_+$ and each $\rho\in (0,r)$, define a seminorm $\|\cdot\|_{s,\rho}$
on $\CC[[h;\cF(\BB^n_r)]]$ by
\[
\biggl\| \sum_j f_j h^j\biggr\|_{s,\rho}=\| f_s\|_\rho^\circ \qquad (f_j\in\cF(\BB^n_r)).
\]
Clearly, $\{\|\cdot\|_{s,\rho} : s\in\Z_+,\; \rho\in (0,r)\}$ is a (nondirected) defining family of
seminorms on $\CC[[h;\cF(\BB^n_r)]]$.

Given $k\in\Z_+^n$, let
\[
u_k=\frac{k!}{|k|!}\sum_{\alpha\in \rp^{-1}(k)} e^{i\rrm(\alpha)h}\zeta_\alpha\in\CC[[h,\cF(\BB^n_r)]].
\]
Observe that
\begin{equation}
\label{tilde_pi_u}
\tilde\pi_{\BB}(u_k)
=\frac{k!}{|k|!}\sum_{\alpha\in \rp^{-1}(k)} e^{i\rrm(\alpha)h}x_\alpha
=\frac{k!}{|k|!}\sum_{\alpha\in \rp^{-1}(k)}x^k
=x^k.
\end{equation}
Take $s\in\Z_+$ and $\rho\in (0,r)$, choose any $\rho_1\in (\rho,r)$, and find $C>0$
such that
\[
t^{2s}\rho^t\le C\rho_1^t\qquad (t\ge 0).
\]
Using the fact that $\rrm(\alpha)\le |\alpha|^2$, we obtain
\[
\begin{split}
\| u_k\|_{s,\rho}
&=\frac{k!}{|k|!}\,\biggl\|\sum_{\alpha\in\rp^{-1}(k)}
\frac{(i\rrm(\alpha))^s}{s!}\,\zeta_\alpha\biggr\|_\rho^\circ
=\frac{k!}{|k|!}\biggl(\sum_{\alpha\in\rp^{-1}(k)}
\frac{\rrm(\alpha)^{2s}}{(s!)^2}\biggr)^{1/2}\rho^{|k|}\\
&\le |k|^{2s} \left(\frac{k!}{|k|!}\right)^{1/2} \rho^{|k|}
\le C \left(\frac{k!}{|k|!}\right)^{1/2} \rho_1^{|k|}
=C\| x^k\|_{\BB,\rho_1}.
\end{split}
\]
This implies that for each $f=\sum_k c_k x^k\in\cO(\BB^n_r)$ the series
$\sum_k c_k u_k$ absolutely converges in $\CC[[h;\cF(\BB^n_r)]]$.
Moreover, we have
\[
\biggl\| \sum_k c_k u_k\biggr\|_{s,\rho}\le C\| f\|_{\BB,\rho_1}.
\]
Hence we obtain a continuous linear map
\[
\psi\colon\cO(\BB^n_r)\to\CC[[h;\cF(\BB^n_r)]],\quad x^k\mapsto u_k\qquad (k\in\Z_+^n).
\]
Taking into account \eqref{tilde_pi_u}, we conclude that
$\tilde\pi_{\BB}\psi\colon\cO(\BB^n_r)\to\tilde A(\BB^n_r)$ is the map we are looking for.

To construct the map $\cO(\DD^n_r)\to\tilde A(\DD^n_r)$,
observe that for each $k\in\Z_+^n$, each $\rho\in (0,r)$, and each $\tau\ge 1$
we have
\[
\|\zeta^k\|_{\rho,\tau}\le\tau^n\rho^{|k|}=\tau^n\| x^k\|_{\DD,\rho}
\]
(where the norm $\|\cdot\|_{\rho,\tau}$ on $\cF(\DD^n_r)$ is given by~\eqref{F_poly_expl}).
This implies that for each $f=\sum_k c_k x^k\in\cO(\DD^n_r)$ the series
$\sum_k c_k \zeta^k$ absolutely converges in $\cF(\DD^n_r)$ to an element $j(f)$, and that
$j\colon\cO(\DD^n_r)\to\cF(\DD^n_r)$ is a continuous linear map.
Clearly, the composition
\[
\cO(\DD^n_r)\xra{j}\cF(\DD^n_r)\hookrightarrow\CC[[h;\cF(\DD^n_r)]]
\xra{\tilde\pi_{\DD}}\tilde A(\DD^n_r)
\]
(where $\tilde\pi_{\DD}$ is defined similarly to $\tilde\pi_{\BB}$)
is the map we are looking for.
\end{proof}

Here is the main result of this section.

\begin{theorem}
\label{thm:formal}
There exist Fr\'echet $\CC[[h]]$-algebra isomorphisms
$\tilde A(\DD^n_r)\to\cO_\fdef(\DD^n_r)$ and $\tilde A(\BB^n_r)\to\cO_\fdef(\BB^n_r)$
uniquely determined by $x_i\mapsto x_i\; (i=1,\ldots ,n)$.
\end{theorem}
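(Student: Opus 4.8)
The plan is to produce continuous $\CC[[h]]$-linear maps in both directions between $\tilde A(\DD^n_r)$ and $\cO_\fdef(\DD^n_r)$, one of which is manifestly an algebra homomorphism, and to check that they are mutually inverse by evaluating on the monomials $x^k$. I treat $\DD^n_r$ in detail; the argument for $\BB^n_r$ is identical once the polydisk data are replaced by their ball counterparts (using the corresponding maps furnished by Lemmas~\ref{lemma:free_to_formal} and~\ref{lemma:func_to_tilde}).

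First I would build $\Phi\colon\tilde A(\DD^n_r)\to\cO_\fdef(\DD^n_r)$. Regard $\cO_\fdef(\DD^n_r)$ as a Fr\'echet $\cO(\CC^\times)$-algebra via $\lambda\colon\cO(\CC^\times)\to\CC[[h]]$, $z\mapsto e^{ih}$, noting that $\CC[[h]]$ is central in $\cO_\fdef(\DD^n_r)$ because $\star_{\DD^n_r}$ is $\CC[[h]]$-bilinear. Let $\varphi_\DD\colon\cF(\DD^n_r)\to\cO_\fdef(\DD^n_r)$ be the homomorphism of Lemma~\ref{lemma:free_to_formal}. Since $\lambda(\cO(\CC^\times))$ and $\varphi_\DD(\cF(\DD^n_r))$ commute, the assignment $f\otimes a\mapsto\lambda(f)\varphi_\DD(a)$ defines a continuous homomorphism $\cO(\CC^\times)\Ptens\cF(\DD^n_r)\to\cO_\fdef(\DD^n_r)$, exactly as in the proofs of Theorems~\ref{thm:D_DT_iso} and~\ref{thm:O_def_D_powrep}. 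Using~\eqref{relat_fdef} one checks that for $i<j$ the generator $\zeta_i\zeta_j-z\zeta_j\zeta_i$ is sent to $x_i\star x_j-e^{ih}(x_j\star x_i)=x_ix_j-e^{ih}e^{-ih}x_ix_j=0$, so the map kills $I_\DD$ and descends to an $\cO(\CC^\times)$-algebra homomorphism $\cO_\defo(\DD^n_r)\to\cO_\fdef(\DD^n_r)$. Applying the extension-of-scalars functor $\CC[[h]]\ptens{\cO(\CC^\times)}(-)$ then yields the desired $\CC[[h]]$-algebra homomorphism $\Phi$, which sends each $x_i$ to $x_i$.

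Next I would build $\Psi\colon\cO_\fdef(\DD^n_r)\to\tilde A(\DD^n_r)$. By Proposition~\ref{prop:fdef} the Fr\'echet $\CC[[h]]$-module $\cO_\fdef(\DD^n_r)=\CC[[h]]\Ptens\cO(\DD^n_r)$ is topologically free, so a continuous $\CC[[h]]$-linear map out of it is freely determined by its restriction to $\cO(\DD^n_r)$. I take $\Psi$ to be the $\CC[[h]]$-linear extension of the continuous linear map $\cO(\DD^n_r)\to\tilde A(\DD^n_r)$, $x^k\mapsto x^k$, furnished by Lemma~\ref{lemma:func_to_tilde}.

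It then remains to show that $\Phi$ and $\Psi$ are mutually inverse, after which $\Phi$ is the sought Fr\'echet $\CC[[h]]$-algebra isomorphism and $\Psi=\Phi^{-1}$ is automatically multiplicative. The key computation is that in $\cO_\fdef(\DD^n_r)$ the iterated $\star$-product of the generators in increasing order collapses to the ordinary monomial: evaluating $\star_{\DD^n_r}$ on monomials through the identity $x^a\star x^b=e^{-ih\sigma(b,a)}x^{a+b}$ (with $\sigma$ as in~\eqref{sigma}, read off from the proof of Proposition~\ref{prop:fdef}) gives $x_1^{\star k_1}\star\cdots\star x_n^{\star k_n}=x^k$ for every $k\in\Z_+^n$, since $\sigma$ vanishes on each relevant pair. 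As $\Phi$ is a homomorphism with $\Phi(x_i)=x_i$, this yields $\Phi(x^k)=x^k$, while $\Psi(x^k)=x^k$ by construction, so both composites $\Phi\Psi$ and $\Psi\Phi$ are continuous $\CC[[h]]$-linear maps fixing every $x^k$. Since the $x^k$ span a dense subspace of $\cO(\DD^n_r)$, their $\CC[[h]]$-span is dense in $\cO_\fdef(\DD^n_r)$; and since the image of $\cO^\reg_\defo(\CC^n)$ is dense in $\tilde A(\DD^n_r)$ by Lemma~\ref{lemma:O_reg_def_dense} while there $z^p$ becomes the central unit $e^{iph}\in\CC[[h]]$, the $\CC[[h]]$-span of the $x^k$ is likewise dense in $\tilde A(\DD^n_r)$. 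Hence $\Phi\Psi=\id$ and $\Psi\Phi=\id$. I do not expect any single step here to be a genuine obstacle: the real analytic content — the convergence estimates guaranteeing that $\zeta_i\mapsto x_i$ and $x^k\mapsto x^k$ extend to continuous maps — has already been isolated in Lemmas~\ref{lemma:free_to_formal} and~\ref{lemma:func_to_tilde}, so the only delicate bookkeeping that remains is matching the two dense subspaces and verifying the monomial collapse above.
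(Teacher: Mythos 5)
Your proposal is correct and follows essentially the same route as the paper's own proof: the forward map is built from the homomorphism of Lemma~\ref{lemma:free_to_formal} via $\lambda$, shown to kill $I_\DD$ (resp.\ $I_\BB$), and passed through extension of scalars, while the inverse is the $\CC[[h]]$-linear extension of the map from Lemma~\ref{lemma:func_to_tilde}, with the two composites identified on the dense $\CC[[h]]$-span of the monomials $x^k$. Your explicit verification of the monomial collapse $x_1^{\star k_1}\star\cdots\star x_n^{\star k_n}=x^k$ via the vanishing of $\sigma$ on increasing-order pairs is a point the paper leaves implicit, and it checks out.
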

\begin{proof}
Let $D=\DD^n_r$ or $D=\BB^n_r$, and let
\[
\varphi_0\colon\cF(D)\to\cO_\fdef(D),\quad \zeta_i\mapsto x_i\qquad (i=1,\ldots ,n)
\]
be the Fr\'echet algebra homomorphism constructed in Lemma~\ref{lemma:free_to_formal}.
Consider the map
\[
\varphi_1\colon\cO(\CC^\times)\Ptens\cF(D)\to\cO_\fdef(D),\quad
f\otimes u\mapsto\lambda(f)\varphi_0(u).
\]
Clearly, $\varphi_1$ is a Fr\'echet $\cO(\CC^\times)$-algebra homomorphism,
and $\varphi_1$ vanishes on $I_D$. Hence $\varphi_1$ induces a Fr\'echet $\cO(\CC^\times)$-algebra
homomorphism
\[
\varphi_2\colon\cO_\defo(D)\to\cO_\fdef(D),
\quad f\otimes u+I_D\mapsto\lambda(f)\varphi_0(u).
\]
Consider now the map
\[
\varphi\colon\tilde A(D)=\CC[[h]]\ptens{\cO(\CC^\times)}\cO_\defo(D)\to\cO_\fdef(D),
\quad g\otimes v\mapsto g\varphi_2(v).
\]
Clearly, $\varphi$ takes $x_i$ to $x_i$ and is a Fr\'echet $\CC[[h]]$-algebra homomorphism.
We claim that $\varphi$ is an isomorphism. To see this, let
\[
\psi_0\colon\cO(D)\to\tilde A(D),\quad x^k\mapsto x^k
\qquad (k\in\Z_+^n)
\]
be the linear map constructed in Lemma~\ref{lemma:func_to_tilde}. We have a Fr\'echet
$\CC[[h]]$-module morphism
\[
\psi\colon\cO_\fdef(D)=\CC[[h]]\Ptens\cO(D)\to\tilde A(D),
\quad f\otimes g\mapsto f\psi_0(g).
\]
Clearly, $(\varphi\psi)(x^k)=x^k$ and $(\psi\varphi)(x^k)=x^k$ for all $k\in\Z_+^n$.
Since the $\CC[[h]]$-submodule generated by $\{ x^k : k\in\Z_+^n\}$ is dense both in
$\cO_\fdef(D)$ and in $\tilde A(D)$, we conclude that $\varphi\psi=\id_{\cO_\fdef(D)}$
and $\psi\varphi=\id_{\tilde A(D)}$. Hence $\varphi$ is a Fr\'echet $\CC[[h]]$-algebra
isomorphism.
\end{proof}

\appendix

\section{Locally convex bundles}

In this Appendix we collect some facts on bundles of locally convex spaces and algebras.
Most definitions below are modifications of those contained in \cite{Gierz}
(cf. also \cite{DH,Varela95}).
The principal difference between our approach and the approach of \cite{Gierz}
is that we do not endow the total space $E$ of a locally convex bundle with a family
of seminorms. Instead, we introduce a coarser ``locally convex uniform
vector structure'' on $E$ compatible with the topology on $E$
(see Definitions~\ref{def:loc_conv_unif_vect_str} and~\ref{def:LCbun}).
The reason is that we need a functor from locally convex $K$-modules
(where $K$ is a subalgebra of $C(X)$) to locally convex bundles over $X$
(see Theorems~\ref{thm:Kmod-Bnd} and~\ref{thm:Kalg-Bnd}).
It seems that the approach of \cite{Gierz} is not appropriate for this purpose.

By a {\em family of vector spaces} over a set $X$ we mean a pair $(E,p)$, where
$E$ is a set and $p\colon E\to X$ is a surjective map, together with a vector space structure
on each fiber $E_x=p^{-1}(x)\; (x\in X)$. As usual, we let
\[
E\times_X E=\{ (u,v)\in E\times E : p(u)=p(v)\}.
\]
For a subset $V\subset X$, a map $s\colon V\to E$ is a {\em section} of $(E,p)$ over $V$ if
$ps=\id_V$. It will be convenient to denote the value of $s$ at $x\in V$ by $s_x$.
The vector space of all sections of $(E,p)$ over $V$ will be denoted by $S(V,E)$.
In other words, $S(V,E)=\prod_{x\in V} E_x$.

\begin{definition}
Let $X$ be a topological space. By a {\em prebundle of topological vector spaces}
over $X$ we mean a family $(E,p)$ of vector spaces over $X$ together with a topology on $E$
such that $p$ is continuous and open, the zero section $0\colon X\to E$ is continuous,
and the operations
\begin{equation*}
\begin{split}
E\times_X E&\to E, \quad (u,v)\mapsto u+v,\\
\CC\times E&\to E,\quad (\lambda, v)\mapsto \lambda v,
\end{split}
\end{equation*}
are also continuous.
\end{definition}

\begin{remark}
\label{rem:fiber_TVS}
If $(E,p)$ is a prebundle of topological vector spaces, then each fiber $E_x$
is clearly a topological vector space with respect to the topology inherited from $E$.
\end{remark}

If $(E,p)$ is a prebundle of topological vector spaces over $X$, then
the set of all continuous sections of $(E,p)$ over $V\subset X$
will be denoted by $\Gamma(V,E)$. The above axioms readily imply that
$\Gamma(V,E)$ is a vector subspace of $S(V,E)$.

Prebundles of topological vector spaces are too general objects for our purposes.
First, we would like that the topology on each fiber $E_x$ inherited from $E$ be locally convex.
What is more important, we would like to consider prebundles endowed with an additional structure that would
enable us to ``compare'' $0$-neighborhoods in different fibers. This can be achieved as follows.

Let $(E,p)$ be a family of vector spaces over a set $X$.
We say that a subset $S\subset E$ is {\em absolutely convex}
(respectively, {\em absorbing}) if $S\cap E_x$ is absolutely convex
(respectively, absorbing) in $E_x$, for each $x\in X$.

\begin{definition}
\label{def:loc_conv_unif_vect_str}
Let $X$ be a set, and let $(E,p)$ be a family of vector spaces over $X$.
By a {\em locally convex uniform vector structure} on $E$ we mean a family
$\cU$ of subsets of $E$ satisfying the following conditions:
\begin{mycompactenum}
\item[$\mathrm{(U1)}$]
Each $U\in\cU$ is absorbing and absolutely convex.
\item[$\mathrm{(U2)}$]
If $U\in\cU$ and $\lambda\in\CC\setminus\{ 0\}$, then $\lambda U\in\cU$.
\item[$\mathrm{(U3)}$]
If $U,V\in\cU$, then $U\cap V\in\cU$.
\item[$\mathrm{(U4)}$]
If $U\in\cU$ and $V\supset U$ is an absolutely convex subset of $E$, then $V\in\cU$.
\end{mycompactenum}
\end{definition}

\begin{example}
If $X$ is a single point and $E$ is just a vector space, then there is a $1$-$1$ correspondence
between locally convex topologies on $E$ (i.e., topologies on $E$ making $E$ into a locally convex
topological vector space) and locally convex uniform vector structures on $E$.
Indeed, if $E$ is endowed with a locally convex topology, then the family of all absolutely
convex $0$-neighborhoods in $E$ is a locally convex uniform vector structure on $E$.
Moreover, each locally convex uniform vector structure arises in this way
(see, e.g., \cite[\S18.1]{Kothe_I}).
\end{example}

\begin{remark}
\label{rem:uniform_str}
A locally convex uniform vector structure $\cU$ on $E$
determines a uniform structure $\tilde\cU$
(in Weil's sense; see, e.g., \cite[Chap. 8]{Engelking})
on $E$ whose basis consists of all sets of the form
\[
\tilde U=\bigl\{ (u,v)\in E\times_X E : u-v\in U\bigr\}
\qquad (U\in\cU).
\]
\end{remark}

\begin{definition}
Let $X$ be a set, $(E,p)$ be a family of vector spaces over $X$, and $\cU$
be a locally convex uniform vector structure on $E$. We say that a subfamily $\cB\subset\cU$
is a {\em base} of $\cU$ if for each $U\in\cU$ there exists $B\in\cB$ such that $B\subset U$.
\end{definition}

Clearly, a filter base $\cB\subset 2^E$ consisting of absorbing, absolutely convex subsets of $E$
is a base of a (necessarily unique) locally convex uniform vector structure if and only if
$\cB$ satisfies the condition
\begin{mycompactenum}
\item[$\mathrm{(BU)}$]
For each $B\in\cB$ there exists $B'\in\cB$ such that $B'\subset (1/2)B$.
\end{mycompactenum}

It is a standard fact that each locally convex topology on a vector space
is generated by a family of seminorms. This result can easily be extended to
locally convex uniform vector structures on a family of vector spaces.
Let $(E,p)$ be a family of vector spaces over a set $X$.
By definition, a function $\|\cdot\|\colon E\to [0,+\infty)$ is a {\em seminorm}
if the restriction of $\|\cdot\|$ to each fiber is a seminorm in the usual sense.
Suppose that $\cN=\{ \|\cdot\|_\lambda : \lambda\in\Lambda\}$
is a family of seminorms on $E$.
We assume that $\cN$ is {\em directed}, that is, for each
$\lambda,\mu\in\Lambda$ there exist $C>0$ and $\nu\in\Lambda$
such that $\|\cdot\|_\lambda\le C\|\cdot\|_\nu$ and $\|\cdot\|_\mu\le C\|\cdot\|_\nu$.
Given $\lambda\in\Lambda$ and $\eps>0$, let $\sU_{\lambda,\eps}=\{ u\in E : \| u\|_\lambda<\eps\}$.

\begin{prop}
\label{prop:unifstruct_seminorms}
Let $X$ be a set, and let $(E,p)$ be a family of vector spaces over $X$.
\begin{mycompactenum}
\item For each directed family $\cN=\{ \|\cdot\|_\lambda : \lambda\in\Lambda\}$
of seminorms on $E$, the family $\cB_\cN=\{ \sU_{\lambda,\eps} : \lambda\in\Lambda,\; \eps>0\}$
is a base of a locally convex uniform vector structure $\cU_\cN$ on $E$.
\item Conversely, for each locally convex uniform vector structure $\cU$ on $E$ there exists
a directed family $\cN$ of seminorms on $E$ such that $\cU=\cU_\cN$.
Specifically, we can take $\cN=\{ p_B : B\in\cB\}$, where $\cB$ is any base of $\cU$
and $p_B$ is the Minkowski functional of $B$.
\end{mycompactenum}
\end{prop}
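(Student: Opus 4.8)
The plan is to treat both parts as fiberwise versions of the classical correspondence between directed families of seminorms and locally convex topologies on a single vector space, exploiting the fact that every notion involved here — absorbing, absolutely convex, seminorm, Minkowski functional — is defined fiber by fiber in the excerpt. For part~(i) I would invoke the criterion recorded immediately before the proposition: a filter base $\cB$ of absorbing, absolutely convex subsets of $E$ is the base of a unique locally convex uniform vector structure exactly when it satisfies $\mathrm{(BU)}$. For part~(ii) I would check that $\cN=\{p_B : B\in\cB\}$ is a directed family of seminorms and then prove $\cU_\cN=\cU$ by comparing the bases $\cB_\cN$ and $\cB$ through the standard Minkowski inclusions, passing back and forth by means of axioms $\mathrm{(U2)}$ and $\mathrm{(U4)}$.

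For part~(i), first each $\sU_{\lambda,\eps}$ meets every fiber $E_x$ in the open $\eps$-ball of the seminorm $\|\cdot\|_\lambda|_{E_x}$, which is absorbing and absolutely convex in $E_x$; hence $\sU_{\lambda,\eps}$ is absorbing and absolutely convex in the sense of the excerpt. Next, $\cB_\cN$ is a filter base: given $\sU_{\lambda,\eps}$ and $\sU_{\mu,\delta}$, directedness of $\cN$ supplies $\nu\in\Lambda$ and $C>0$ with $\|\cdot\|_\lambda\le C\|\cdot\|_\nu$ and $\|\cdot\|_\mu\le C\|\cdot\|_\nu$, so that $\sU_{\nu,\eta}\subset\sU_{\lambda,\eps}\cap\sU_{\mu,\delta}$ for $\eta=\min\{\eps,\delta\}/C$. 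Finally $\mathrm{(BU)}$ is immediate, since $(1/2)\sU_{\lambda,\eps}=\sU_{\lambda,\eps/2}\in\cB_\cN$. The cited criterion then produces $\cU_\cN$.

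For part~(ii), since each $B\in\cB$ is absorbing and absolutely convex, so is $B\cap E_x$ in $E_x$, whence the Minkowski functional $p_B$ restricts to an ordinary seminorm on each fiber; thus $p_B$ is a seminorm on $E$. The family $\cN$ is directed because a base of $\cU$ is in particular a filter base (use $\mathrm{(U3)}$): for $B_1,B_2$ pick $B_3\in\cB$ with $B_3\subset B_1\cap B_2$, so $p_{B_1}\le p_{B_3}$ and $p_{B_2}\le p_{B_3}$. To prove $\cU_\cN=\cU$ I would use the fiberwise inclusions $\{p_B<1\}\subset B\subset\{p_B\le 1\}$, which give $\sU_{p_B,1}\subset B$ and $(\eps/2)B\subset\sU_{p_B,\eps}$ for every $\eps>0$. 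For $\cU\subset\cU_\cN$: any $U\in\cU$ contains some $B\in\cB$, hence contains $\sU_{p_B,1}\in\cB_\cN\subset\cU_\cN$, and since $U$ is absolutely convex by $\mathrm{(U1)}$, axiom $\mathrm{(U4)}$ for $\cU_\cN$ places $U$ in $\cU_\cN$. For the reverse inclusion: any $U\in\cU_\cN$ contains some $\sU_{p_B,\eps}\in\cB_\cN$, which contains $(\eps/2)B$; since $(\eps/2)B\in\cU$ by $\mathrm{(U2)}$ and $\sU_{p_B,\eps}$ is absolutely convex, $\mathrm{(U4)}$ gives $\sU_{p_B,\eps}\in\cU$, and a second application of $\mathrm{(U4)}$ (with $U$ absolutely convex by $\mathrm{(U1)}$) yields $U\in\cU$.

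I do not expect a genuine obstacle: the whole argument is the standard locally convex theory carried out one fiber at a time, the only bookkeeping being to confirm that ``absorbing / absolutely convex / seminorm / Minkowski functional'' behave fiberwise, which is precisely how they are set up in the excerpt. The single point deserving care is the repeated use of the saturation axiom $\mathrm{(U4)}$, which replaces the familiar step ``a set containing a basic neighborhood is a neighborhood'' and is exactly what makes the two structures literally equal rather than merely equivalent.
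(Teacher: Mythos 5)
Your proof is correct and follows essentially the same route as the paper's: part (i) is verified via the $\mathrm{(BU)}$ criterion stated before the proposition, and part (ii) rests on the same directedness argument ($B_3\subset B_1\cap B_2$ gives $p_{B_1},p_{B_2}\le p_{B_3}$) and the same Minkowski inclusions $\sU_{p_B,1}\subset B$ and $(\eps/2)B\subset\sU_{p_B,\eps}$, which yield the two containments $\cU\subset\cU_\cN$ and $\cU_\cN\subset\cU$. The only difference is one of detail: you make explicit the saturation steps via $\mathrm{(U2)}$ and $\mathrm{(U4)}$ and the fiberwise checks that the paper compresses into ``clearly'' and ``elementary,'' all of which are carried out correctly.
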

\begin{proof}
(i) Clearly, $\cB_\cN$ is a filter base on $E$, each set belonging to $\cB_\cN$ is absorbing
and absolutely convex, and $\cB_\cN$ satisfies $\mathrm{(BU)}$.

(ii) Given $B_1,B_2\in\cB$, find $B_3\in \cB$ such that $B_3\subset B_1\cap B_2$.
We clearly have $\max\{ p_{B_1},p_{B_2}\}\le p_{B_3}$, which implies that $\cN$ is directed.
It is a standard fact that for each absorbing, absolutely convex set $B$ we have
\begin{equation*}
\{ u\in E : p_B(u)<1\} \subset B \subset \{ u\in E : p_B(u)\le 1\}.
\end{equation*}
Thus for each $B\in\cB$ we have $\sU_{p_B,1}\subset B$,
whence $\cU\subset\cU_\cN$. On the other hand, for each $B\in\cB$ and each $\eps>0$
we have $(\eps/2)B\subset\sU_{p_B,\eps}$,
whence $\cU_\cN\subset\cU$.
\end{proof}

\begin{definition}
If $\cN=\{ \|\cdot\|_\lambda : \lambda\in\Lambda\}$ and
$\cN'=\{ \|\cdot\|_\mu : \mu\in\Lambda'\}$ are two directed families of seminorms on $E$,
then we say that $\cN$ is {\em dominated} by $\cN'$ (and write $\cN\prec\cN'$)
if for each $\lambda\in\Lambda$ there exist $C>0$ and $\mu\in\Lambda'$
such that $\|\cdot\|_\lambda\le C\|\cdot\|_\mu$.
If $\cN\prec\cN'$ and $\cN'\prec\cN$, then we say that $\cN$ and $\cN'$ are
{\em equivalent} and write $\cN\sim\cN'$.
\end{definition}

A standard argument shows that $\cN\prec\cN'$ if and only if
$\cU_{\cN}\subset\cU_{\cN'}$, and hence
$\cN\sim\cN'$ if and only if $\cU_{\cN}=\cU_{\cN'}$.

Given a family $(E,p)$ of vector spaces over $X$, let
\[
\add\colon E\times_X E \to E, \quad (u,v)\mapsto u+v.
\]
For each pair $S,T$ of subsets of $E$, we let $S+T=\add((S\times T)\cap (E\times_X E))$.

\begin{lemma}
\label{lemma:open_sum}
Let $X$ be a topological space, and let $(E,p)$ be a prebundle of topological vector spaces over $X$.
For each open set $V\subset X$, each $s\in\Gamma(V,E)$, and each open set $U\subset E$,
the set $s(V)+U$ is open in $E$.
\end{lemma}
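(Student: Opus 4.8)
The plan is to reduce the statement to the fact that ``fibrewise translation by a continuous section'' is a homeomorphism. Unwinding the definition $s(V)+U=\add\bigl((s(V)\times U)\cap(E\times_X E)\bigr)$, a pair $(s_x,u)$ with $x\in V$ and $u\in U$ lies in $E\times_X E$ precisely when $p(s_x)=p(u)$, i.e. when $x=p(u)$; hence
\[
s(V)+U=\{\, u+s_{p(u)} : u\in U,\ p(u)\in V\,\}.
\]
This exhibits $s(V)+U$ as the image of the open set $U\cap p^{-1}(V)$ under the map
\[
\tau\colon p^{-1}(V)\to p^{-1}(V),\qquad \tau(v)=v+s_{p(v)},
\]
using that each fibre is abelian. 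So it suffices to show that $\tau$ is a homeomorphism of the open subset $p^{-1}(V)$ of $E$ onto itself.

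First I would check that $\tau$ is well defined and continuous. For $v\in p^{-1}(V)$ we have $p(v)\in V$, so $s_{p(v)}\in E_{p(v)}$ lies in the same fibre as $v$ and their sum makes sense; moreover $p(\tau(v))=p(v)$, so $\tau$ maps $p^{-1}(V)$ into itself. For continuity I would write $\tau=\add\circ\iota$, where $\iota\colon p^{-1}(V)\to E\times_X E$ is $v\mapsto(v,s_{p(v)})$. The two coordinate maps $v\mapsto v$ and $v\mapsto s(p(v))$ are continuous (the latter because $s$ and $p$ are), so $\iota$ is continuous into $E\times E$; since its image lies in $E\times_X E$, it is continuous into the subspace $E\times_X E$, and composing with the continuous map $\add$ yields continuity of $\tau$.

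Next I would produce a continuous inverse. Setting $\tau'(v)=v-s_{p(v)}=v+(-1)\,s_{p(v)}$, the same argument (now using continuity of scalar multiplication to form $(-1)\,s_{p(v)}$, together with continuity of $\add$) shows that $\tau'\colon p^{-1}(V)\to p^{-1}(V)$ is continuous. A fibrewise computation gives $\tau'\circ\tau=\tau\circ\tau'=\mathrm{id}$ on $p^{-1}(V)$, so $\tau$ is a homeomorphism. Since $p$ is continuous, $p^{-1}(V)$ is open in $E$; hence $U\cap p^{-1}(V)$ is open in $E$, a fortiori open in $p^{-1}(V)$, its image $\tau\bigl(U\cap p^{-1}(V)\bigr)=s(V)+U$ is open in $p^{-1}(V)$, and finally open in $E$ because $p^{-1}(V)$ is itself open in $E$.

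I do not expect any serious obstacle here; the only point requiring a little care is the continuity of $\iota$, namely the passage from continuity into $E\times E$ to continuity into the fibre product $E\times_X E$ with its subspace topology, which is automatic once one observes that $\iota$ indeed takes values in $E\times_X E$. Everything else is a routine unravelling of the definition of the fibrewise sum and of the prebundle axioms.
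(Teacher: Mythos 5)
Your proof is correct and rests on the same idea as the paper's: fibrewise translation by the continuous section $s$ over the open set $p^{-1}(V)$. The paper takes a slight shortcut, observing that $s(V)+U=\bigl\{ v\in p^{-1}(V) : v-s_{p(v)}\in U\bigr\}$ is the preimage of the open set $U$ under the continuous map $v\mapsto v-s_{p(v)}$ (which is exactly your $\tau'$), so the homeomorphism half of your argument --- checking continuity of $\tau$ and the identities $\tau\tau'=\tau'\tau=\mathrm{id}$ --- is not needed.
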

\begin{proof}
Clearly, the map $p^{-1}(V)\to E,\; v\mapsto v-s_{p(v)}$, is continuous.
Now the result follows from the equality
\[
s(V)+U=\bigl\{ v\in p^{-1}(V) : v-s_{p(v)}\in U\bigr\}. \qedhere
\]
\end{proof}

\begin{definition}
\label{def:LCbun}
Let $X$ be a topological space. A {\em bundle of locally convex spaces}
(or a {\em locally convex bundle}) over $X$ is a triple $(E,p,\cU)$, where $(E,p)$
is a prebundle of topological vector spaces over $X$ and $\cU$ is a locally convex
uniform vector structure on $E$ satisfying the following compatibility axioms:
\begin{mycompactenum}
\item[$\mathrm{(B0)}$]
For each $U\in\cU$ there exists an open subset $U_0\subset U$, $U_0\in\cU$.
\item[$\mathrm{(B1)}$]
The family
\[
\bigl\{ s(V)+U : V\subset X\text{ open, } s\in \Gamma(V,E),\; U\in\cU,\; U \text{ is open}\bigr\}
\]
is a base for the topology on $E$.
\item[$\mathrm{(B2)}$]
For each $x\in X$, the set
\begin{equation*}
\bigl\{ s_x : s\in\Gamma(V,E),\; V\text{ is an open neighborhood of $x$}\bigr\}
\end{equation*}
is dense in $E_x$ with respect to the topology inherited from $E$.
\end{mycompactenum}
If, in addition, each fiber $E_x$ is a Fr\'echet space with respect to the topology inherited from $E$,
then we say that $(E,p,\cU)$ is a {\em Fr\'echet space bundle}.
\end{definition}

\begin{remark}
We would like to stress that the topology on $E$ does not coincide with the topology
$\tau(\cU)$ determined by the uniform structure $\tilde\cU$ (see Remark~\ref{rem:uniform_str}).
In fact, each fiber $E_x$ is $\tau(\cU)$-open, which is not the case for the original
topology on $E$ (unless $X$ is discrete). Axiom $\mathrm{(B1)}$ implies that $\tau(\cU)$ is stronger than
the original topology on $E$.
\end{remark}

\begin{lemma}
\label{lemma:bas_unif}
Let $X$ be a topological space, and let $(E,p,\cU)$ be a locally convex bundle over $X$.
Suppose that $\cB$ is a base of $\cU$ consisting of open sets. Then
\begin{mycompactenum}
\item
the family
\[
\bigl\{ s(V)+B : V\subset X\text{ open, } s\in \Gamma(V,E),\; B\in\cB\bigr\}
\]
is a base for the topology on $E$;
\item
for each open set $X'\subset X$, each $s\in\Gamma(X',E)$, and each $x\in X'$, the family
\[
\bigl\{ s(V)+B : V\subset X'\text{ is an open neighborhood of $x$, } B\in\cB\bigr\}
\]
is a base of open neighborhoods of $s_x$.
\end{mycompactenum}
\end{lemma}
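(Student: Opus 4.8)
The plan is to prove~(i) in detail and to obtain~(ii) by repeating the same argument relative to the fixed section $s$. For~(i), first note that every set $s(V)+B$ with $B\in\cB$ open is open in $E$ by Lemma~\ref{lemma:open_sum}, and that, since $\cB\subset\cU$ consists of open sets, each such $s(V)+B$ already occurs among the sets listed in axiom $\mathrm{(B1)}$. Hence it remains only to show that this subfamily is still a base. So I would fix an open set $W\subset E$ and a point $v\in W$ with $x=p(v)$, use $\mathrm{(B1)}$ to find an open $V_0\ni x$, a section $s_0\in\Gamma(V_0,E)$ and an open $U\in\cU$ with $v\in s_0(V_0)+U\subset W$, and set $u_0=v-s_{0,x}\in U\cap E_x$. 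The task reduces to producing an open $V_2\ni x$, a section $s_1\in\Gamma(V_2,E)$ and a $B\in\cB$ with $v\in s_1(V_2)+B\subset s_0(V_0)+U$.

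The first move is a \emph{recentering}. Using the density axiom $\mathrm{(B2)}$, for a suitably small $B\in\cB$ I would choose a continuous section $s_1$ on a neighborhood $V_1\subset V_0$ of $x$ with $v-s_{1,x}\in B$, so that automatically $v\in s_1(V_1)+B$. The point is that, although $u_0$ may sit near the fiberwise ``boundary'' of $U$ (so that $s_0(V_0)+U$ is genuinely not a union of the sets $s_0(V_0)+B$), the new reference section $s_1$ passes within $B$ of $v$. Writing $\delta=s_1-s_0$, a continuous section with $\delta_x\in u_0+B$, and composing with the fiberwise shift $w\mapsto w-s_{0,p(w)}$ (a homeomorphism, continuous by the computation in Lemma~\ref{lemma:open_sum}), the desired inclusion $s_1(V_2)+B\subset s_0(V_0)+U$ becomes equivalent to the tube inclusion $\delta(V_2)+B\subset U$.

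The main obstacle is precisely this cross-fiber inclusion: openness of $U$ by itself does \emph{not} force a whole uniform tube to stay inside $U$ after the base is shrunk, and the uniformity of the ``radius'' $U$ across fibers has to be exploited. The key observation I would isolate is that the Minkowski functional $p_U$ (the canonical seminorm of $U$ in the sense of Proposition~\ref{prop:unifstruct_seminorms}) is \emph{upper semicontinuous on $E$}: indeed, for every $c>0$ one has $\{w\in E:p_U(w)<c\}=\bigcup_{0<t<c}tU$, a union of open sets, since scalar multiplication by $t\neq0$ is a homeomorphism of $E$ and $U$ is open. Moreover, because $U$ is open in $E$, a standard scaling argument in the fiber $E_x$ shows the strict inequality $p_U(u_0)<1$. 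I would then pick $\eps>0$ with $p_U(u_0)<1-3\eps$, choose $B\in\cB$ with $B\subset\{p_U<\eps\}$, and choose $s_1$ as above so that $p_U(v-s_{1,x})<\eps$; subadditivity of $p_U$ gives $p_U(\delta_x)<1-2\eps$. Setting $A=\{w\in E:p_U(w)<1-\eps\}$, which is open by upper semicontinuity, and $V_2=V_1\cap\delta^{-1}(A)$, which is open and contains $x$ by continuity of $\delta$, subadditivity yields $p_U(\delta_y+b)<1$ for all $y\in V_2$ and $b\in B\cap E_y$, hence $\delta(V_2)+B\subset\{p_U<1\}\subset U$. Transporting back through the shift gives $s_1(V_2)+B\subset s_0(V_0)+U\subset W$ while still $v\in s_1(V_2)+B$, which proves~(i).

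Finally, part~(ii) follows by running the same argument inside the open subspace $p^{-1}(X')$, now with the given section $s$ playing the role of the ``small'' tube. Each $s(V)+B$ with $V\subset X'$ a neighborhood of $x$ and $B\in\cB$ open is an open neighborhood of $s_x$ (it is open by Lemma~\ref{lemma:open_sum} and contains $s_x=s_x+0$). Conversely, given any open neighborhood $O$ of $s_x$, part~(i) supplies a tube $t(V')+B'\ni s_x$ inside $O$; applying the upper-semicontinuity estimate of the previous paragraph to the continuous section $s-t$ (with $p_{B'}$ in place of $p_U$) produces a neighborhood $V\subset X'$ of $x$ and $B\in\cB$ with $s(V)+B\subset t(V')+B'\subset O$. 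I expect the only genuinely delicate point to be the upper-semicontinuity observation together with the strict inequality $p_U(u_0)<1$; everything else is either an application of Lemma~\ref{lemma:open_sum} and axioms $\mathrm{(B1)}$--$\mathrm{(B2)}$ or a routine triangle-inequality estimate.
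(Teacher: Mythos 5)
Your argument is correct and is essentially the paper's own proof of (i) in Minkowski-functional language: the recentering via $\mathrm{(B2)}$, the strict slack $p_U(u_0)<1-3\eps$ obtained from openness of $U$, the choice $B\subset\{p_U<\eps\}$, and the shrinking $V_2=V_1\cap(s_1-s_0)^{-1}(\{p_U<1-\eps\})$ correspond step by step to the paper's $u-s_x\in(1-2\delta)U$, $B\subset\delta U$, $t_x\in u+B$, and $(t-s)(V'')\subset(1-\delta)U$, with subadditivity of $p_U$ playing the role of the convexity identity $\delta U+(1-2\delta)U=(1-\delta)U$. The paper omits the proof of (ii) as ``similar,'' and your reduction of (ii) to (i) followed by the same recentering estimate is a correct way of carrying it out.
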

\begin{proof}
Let $W$ be an open subset of $E$, and let $u\in W$. Without loss of generality, we may
assume that $W=s(V)+U$ for some open set $V\subset X$, $s\in\Gamma(V,E)$,
and an open set $U\in\cU$. Letting $x=p(u)$, we see that $u-s_x\in U$.
Since $U$ is open, there exists $\delta\in (0,1)$ such that $u-s_x\in (1-2\delta)U$.
Choose $B\in\cB$ such that $B\subset\delta U$. By Remark~\ref{rem:fiber_TVS},
$u+(B\cap E_x)$ is a neighborhood of $u$ in $E_x$.
Hence $\mathrm{(B2)}$ implies that there exist an open neighborhood $V'\subset V$ of $x$
and $t\in\Gamma(V',E)$ such that $t_x\in u+B$. We have
\[
t_x-s_x=(t_x-u)+(u-s_x)\in \delta U + (1-2\delta)U=(1-\delta)U.
\]
Hence there exists an open neighborhood $V''\subset V'$ of $x$ such that $(t-s)(V'')\subset (1-\delta)U$.
We clearly have $u\in t(V'')+B$, and
\[
t(V'')+B\subset s(V'')+(t-s)(V'')+B\subset s(V)+(1-\delta)U+\delta U=s(V)+U.
\]
This completes the proof of (i). The proof of (ii) is similar and is therefore omitted
(cf. also \cite{Gierz}).
\end{proof}

Let us now study relations between locally convex bundles in the sense of Definition~\ref{def:LCbun}
and locally convex bundles in the sense of \cite{Gierz}.
Let $(E,p)$ be a family of vector spaces over a set $X$, and let
$\cN=\{ \|\cdot\|_\lambda : \lambda\in\Lambda\}$
be a directed family of seminorms on $E$.
Given $\lambda\in\Lambda$, $\eps>0$, a set $V\subset X$, and a section $s\colon V\to E$,
define the ``$\eps$-tube'' around $s$ by
\[
\sT(V,s,\lambda,\eps)=\bigl\{ v\in E : p(v)\in V,\; \| v-s_{p(v)}\|_\lambda<\eps\bigr\}.
\]
Observe that $\sT(X,0,\lambda,\eps)=\sU_{\lambda,\eps}$ and
\begin{equation}
\label{tube=tube}
\sT(V,s,\lambda,\eps)=s(V)+\sU_{\lambda,\eps}.
\end{equation}

\begin{definition}
Let $X$ be a topological space, $(E,p)$ be a prebundle of topological vector spaces over $X$,
and $\cN=\{ \|\cdot\|_\lambda : \lambda\in\Lambda\}$ be a directed family of seminorms on $E$.
We say that $\cN$ is {\em admissible} if the following conditions hold
(cf. \cite{Hof_Kei,Varela84,Gierz,Ara_Mat}):
\begin{mycompactenum}
\item[$\mathrm{(Ad1)}$]
The family
\[
\cB(\cN)=\bigl\{ \sT(V,s,\lambda,\eps) : V\subset X \text{ open},\;
s\in\Gamma(V,E),\;  \lambda\in\Lambda,\; \eps>0\bigr\}
\]
consists of open sets and is a base for the topology on $E$.
\item[$\mathrm{(Ad2)}$]
For each $x\in X$, the set
\begin{equation*}
\bigl\{ s_x : s\in\Gamma(V,E),\; V\text{ is an open neighborhood of $x$}\bigr\}
\end{equation*}
is dense in $E_x$ with respect to the topology generated by the restrictions of the seminorms
$\|\cdot\|_\lambda \; (\lambda\in\Lambda)$ to $E_x$.
\end{mycompactenum}
\end{definition}

\begin{remark}
\label{rem:up_semi_open}
The condition that all sets belonging to $\cB(\cN)$ are open is equivalent to
the upper semicontinuity of all seminorms from $\cN$.
Indeed, $\|\cdot\|_\lambda$ is upper semicontinuous if and only if
$\sU_{\lambda,\eps}=\sT(X,0,\lambda,\eps)$ is open for all $\eps>0$.
By \eqref{tube=tube} and Lemma~\ref{lemma:open_sum},
this implies that each $\sT(V,s,\lambda,\eps)\in\cB(\cN)$ is open.
\end{remark}

\begin{remark}
\label{rem:strong_admiss}
Some authors (e.g., \cite{Gierz,Ara_Mat}) use a stronger form of
$\mathrm{(Ad1)}$ and $\mathrm{(Ad2)}$ in which $\Gamma(V,E)$ is replaced
by the space $\Gamma_{\cN}(V,E)$ of {\em $\cN$-bounded} sections
(i.e., those $s\in\Gamma(V,E)$ for which the function $x\mapsto\| s(x)\|_\lambda$
is bounded for every $\lambda\in\Lambda$). This restriction is not needed
for our purposes. Anyway, the stronger form of $\mathrm{(Ad1)}$ and $\mathrm{(Ad2)}$
is clearly equivalent to ours in the case where $X$ is locally compact.
\end{remark}

\begin{lemma}
\label{lemma:eqv_fam_1}
Let $(E,p)$ be a prebundle of topological vector spaces over $X$, and let $\cN$ and $\cN'$ be
directed families of upper semicontinuous seminorms on $E$.
\begin{mycompactenum}
\item
If $\cN\prec\cN'$ and $\cN'$ satisfies $\mathrm{(Ad2)}$, then
each set from $\cB(\cN)$ is a union of sets belonging to $\cB(\cN')$.
\item
If $\cN\sim\cN'$ and $\cN$ is admissible, then so is $\cN'$.
\end{mycompactenum}
\end{lemma}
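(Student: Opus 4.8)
The plan is to prove (i) by a pointwise argument inside each basic tube and then to deduce (ii) from (i) together with the equivalence $\cN\sim\cN'$.

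For (i), I would fix a basic set $\sT(V,s,\lambda,\eps)\in\cB(\cN)$ and show that each of its points has a neighbourhood from $\cB(\cN')$ contained in it; since the seminorms of $\cN'$ are upper semicontinuous, every member of $\cB(\cN')$ is open by Remark~\ref{rem:up_semi_open}, so this exhibits $\sT(V,s,\lambda,\eps)$ as a union of sets of $\cB(\cN')$. Concretely, fix $v_0\in\sT(V,s,\lambda,\eps)$, put $x_0=p(v_0)$ and $\delta=\eps-\|v_0-s_{x_0}\|_\lambda>0$. Using $\cN\prec\cN'$, I would choose $C>0$ and $\mu\in\Lambda'$ with $\|\cdot\|_\lambda\le C\|\cdot\|_\mu$, fix $\eta\in(0,\delta/(3C))$, and invoke $\mathrm{(Ad2)}$ for $\cN'$ to obtain an open neighbourhood $V_1\subset V$ of $x_0$ and a section $t\in\Gamma(V_1,E)$ with $\|t_{x_0}-v_0\|_\mu<\eta$.

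The crux is to shrink $V_1$ so that the $\cN'$-tube around $t$ fits inside the original $\cN$-tube, and here upper semicontinuity is essential. The section $t-s\in\Gamma(V_1,E)$ composed with the upper semicontinuous seminorm $\|\cdot\|_\lambda$ yields an upper semicontinuous function $x\mapsto\|(t-s)_x\|_\lambda$, whose value at $x_0$ is at most $\|t_{x_0}-v_0\|_\lambda+\|v_0-s_{x_0}\|_\lambda\le C\eta+(\eps-\delta)<\eps-2\delta/3$. Hence there is an open neighbourhood $V_2\subset V_1$ of $x_0$ on which $\|(t-s)_x\|_\lambda<\eps-2\delta/3$. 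Then for every $w\in\sT(V_2,t,\mu,\eta)$ I would estimate
\[
\|w-s_{p(w)}\|_\lambda\le C\|w-t_{p(w)}\|_\mu+\|(t-s)_{p(w)}\|_\lambda<C\eta+\eps-\tfrac{2\delta}{3}<\eps,
\]
so that $\sT(V_2,t,\mu,\eta)\subset\sT(V,s,\lambda,\eps)$; moreover $v_0\in\sT(V_2,t,\mu,\eta)$ because $x_0\in V_2$ and $\|v_0-t_{x_0}\|_\mu<\eta$. This proves (i).

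For (ii) I would verify $\mathrm{(Ad1)}$ and $\mathrm{(Ad2)}$ for $\cN'$, and the logical order matters. Property $\mathrm{(Ad2)}$ is the easy half: since $\cN\sim\cN'$, the restrictions of $\cN$ and $\cN'$ to each fibre $E_x$ are equivalent families of seminorms and hence generate the same topology on $E_x$; as $\cN$ is admissible, the set of values of local sections at $x$ is dense in $E_x$ for the $\cN$-topology, whence also for the $\cN'$-topology. For $\mathrm{(Ad1)}$, the members of $\cB(\cN')$ are open by Remark~\ref{rem:up_semi_open}, and it remains to check they form a base. Having just established $\mathrm{(Ad2)}$ for $\cN'$, and using $\cN\prec\cN'$, I may now apply part (i) to conclude that each member of $\cB(\cN)$ is a union of members of $\cB(\cN')$; since $\cB(\cN)$ is a base for the topology on $E$ (admissibility of $\cN$), so is $\cB(\cN')$. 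The main obstacle is precisely the asymmetry exploited in (i): $\cN\prec\cN'$ controls $\|\cdot\|_\lambda$ only from above by $\|\cdot\|_\mu$, so the shrunken neighbourhood $V_2$ must be produced via upper semicontinuity of $x\mapsto\|(t-s)_x\|_\lambda$ rather than by a direct seminorm comparison; and in (ii) one must secure $\mathrm{(Ad2)}$ for $\cN'$ first, before (i) becomes applicable for the base property.
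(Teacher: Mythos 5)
Your proof is correct and follows essentially the same route as the paper: in (i) you approximate the given point by a local section via $\mathrm{(Ad2)}$ for $\cN'$, use upper semicontinuity of $x\mapsto\|(t-s)_x\|_\lambda$ to shrink the neighborhood, and close with the triangle inequality and the domination $\|\cdot\|_\lambda\le C\|\cdot\|_\mu$, exactly as in the paper (only your constants $\delta/3$ versus the paper's $2C\delta$ bookkeeping differ). For (ii) the paper merely says ``immediate from (i)'', and your explicit argument --- first transferring $\mathrm{(Ad2)}$ to $\cN'$ via the fiberwise equivalence of the two families, then invoking (i) and Remark~\ref{rem:up_semi_open} for the base property --- is precisely the intended reading, with the correct observation that $\mathrm{(Ad2)}$ must be secured before (i) applies.
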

\begin{proof}
(i) Let $\cN=\{ \|\cdot\|_\lambda : \lambda\in\Lambda\}$ and
$\cN'=\{ \|\cdot\|_\mu : \mu\in\Lambda'\}$.
Take any $\sT(V,s,\lambda,\eps)\in\cB(\cN)$, and let $u\in\sT(V,s,\lambda,\eps)$.
Choose $\mu\in\Lambda'$ and $C>0$ such that $\|\cdot\|_\lambda\le C\|\cdot\|_\mu$.
Let $x=p(u)$, and find $\delta>0$ such that
\[
2C\delta+\| u-s_x\|_\lambda<\eps.
\]
Since $\cN'$ satisfies $\mathrm{(Ad2)}$, there exist an open neighborhood $W$ of $x$
and $t\in\Gamma(W,E)$ such that
$\| t_x-u\|_\mu<\delta$.
We have
\[
\| t_x-s_x\|_\lambda\le \| t_x-u\|_\lambda+\| u-s_x\|_\lambda
\le C\| t_x-u\|_\mu+\| u-s_x\|_\lambda
<C\delta+\| u-s_x\|_\lambda.
\]
Since $\|\cdot\|_\lambda$ is upper semicontinuous, there exists an open
neighborhood $W'$ of $x$, $W'\subset V\cap W$, such that
\[
\| t_y-s_y\|_\lambda<C\delta+\| u-s_x\|_\lambda \quad (y\in W').
\]
Clearly, $u\in\sT(W',t,\mu,\delta)$. We claim that
\begin{equation}
\label{W_subset_U}
\sT(W',t,\mu,\delta)\subset\sT(V,s,\lambda,\eps).
\end{equation}
Indeed, let $v\in\sT(W',t,\mu,\delta)$, and let $y=p(v)$.
We have
\[
\begin{split}
\| v-s_y\|_\lambda
&\le\| v-t_y\|_\lambda+\| t_y-s_y\|_\lambda\\
&\le C\| v-t_y\|_\mu+\| t_y-s_y\|_\lambda
<C\delta+C\delta+\| u-s_x\|_\lambda<\eps.
\end{split}
\]
This implies \eqref{W_subset_U} and completes the proof of (i). Part (ii) is immediate from (i).
\end{proof}

\begin{lemma}
\label{lemma:two_top_fiber}
Let $X$ be a topological space, $(E,p)$ be a prebundle of topological vector spaces over $X$,
and $\cN=\{ \|\cdot\|_\lambda : \lambda\in\Lambda\}$
be a directed family of seminorms on $E$ satisfying $\mathrm{(Ad1)}$.
Then for each $x\in X$ the topology on $E_x$ inherited from $E$ coincides with
the topology generated by the restrictions of the seminorms
$\|\cdot\|_\lambda \; (\lambda\in\Lambda)$ to $E_x$.
\end{lemma}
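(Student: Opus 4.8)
The plan is to prove that the two topologies on $E_x$ coincide by establishing mutual inclusion. Write $\tau_{\mathrm{sub}}$ for the topology inherited from $E$ and $\tau_{\mathrm{sem}}$ for the locally convex topology on $E_x$ generated by the restrictions $\|\cdot\|_\lambda|_{E_x}\;(\lambda\in\Lambda)$. The one structural fact I would lean on throughout is Remark~\ref{rem:fiber_TVS}: $(E_x,\tau_{\mathrm{sub}})$ is a topological vector space, so in particular all fibrewise translations are $\tau_{\mathrm{sub}}$-homeomorphisms.

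For the inclusion $\tau_{\mathrm{sub}}\subseteq\tau_{\mathrm{sem}}$, I would use that $\cB(\cN)$ is a base for the topology of $E$ by $\mathrm{(Ad1)}$. Every $\tau_{\mathrm{sub}}$-open subset of $E_x$ is then a union of sets of the form $\sT(V,s,\lambda,\eps)\cap E_x$. If $x\notin V$ this intersection is empty; if $x\in V$ it equals $\{u\in E_x : \|u-s_x\|_\lambda<\eps\}$, which is precisely an open $\|\cdot\|_\lambda$-ball about $s_x$ and hence $\tau_{\mathrm{sem}}$-open. This yields the inclusion immediately, and it is the routine direction.

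For the reverse inclusion $\tau_{\mathrm{sem}}\subseteq\tau_{\mathrm{sub}}$, it suffices to show that each subbasic $\tau_{\mathrm{sem}}$-set $B=\{u\in E_x : \|u-u_0\|_\lambda<\eps\}$ is $\tau_{\mathrm{sub}}$-open. By Remark~\ref{rem:up_semi_open}, $\mathrm{(Ad1)}$ forces each seminorm $\|\cdot\|_\lambda$ to be upper semicontinuous, so $\sU_{\lambda,\eps}=\sT(X,0,\lambda,\eps)$ (a legitimate member of $\cB(\cN)$, since the zero section is continuous and global) is open in $E$. Intersecting with the fiber shows that the origin-centred ball $\sU_{\lambda,\eps}\cap E_x$ is $\tau_{\mathrm{sub}}$-open. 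Finally I would write $B=u_0+(\sU_{\lambda,\eps}\cap E_x)$ and conclude that $B$ is $\tau_{\mathrm{sub}}$-open because translation by $u_0$ is a $\tau_{\mathrm{sub}}$-homeomorphism of $E_x$.

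The delicate point — and the step I would watch most carefully — is exactly this last translation argument. One cannot translate the origin-centred tube to an arbitrary $u_0$ \emph{within $E$}, because there need be no global (or even local) section passing through $u_0$, so $\mathrm{(Ad1)}$ alone does not hand us an open tube around $u_0$. The argument only works because the translation is performed \emph{inside the fiber}, where the ambient topological vector space structure of $(E_x,\tau_{\mathrm{sub}})$ guarantees that translation is a homeomorphism. Everything else is bookkeeping, so the whole proof should be short once this observation is in place.
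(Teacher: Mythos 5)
Your proof is correct and follows essentially the same route as the paper's: the inclusion $\tau_{\mathrm{sub}}\subseteq\tau_{\mathrm{sem}}$ via $\sT(V,s,\lambda,\eps)\cap E_x=\{u\in E_x:\|u-s_x\|_\lambda<\eps\}$ and the base property in $\mathrm{(Ad1)}$, and the reverse inclusion by writing each ball as $u_0+\bigl(\sT(X,0,\lambda,\eps)\cap E_x\bigr)$ and invoking Remark~\ref{rem:fiber_TVS} to translate inside the fiber. The only cosmetic difference is that you justify the openness of $\sT(X,0,\lambda,\eps)$ via upper semicontinuity (Remark~\ref{rem:up_semi_open}), whereas the paper uses directly that this tube belongs to $\cB(\cN)$, which $\mathrm{(Ad1)}$ declares to consist of open sets; the two justifications are equivalent.
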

\begin{proof}
Let $\tau$ denote the topology on $E_x$ inherited from $E$, and let
$\tau'$ denote the topology on $E_x$ generated by the restrictions of the seminorms
$\|\cdot\|_\lambda \; (\lambda\in\Lambda)$.
The standard base for $\tau'$ consists of all sets of the form
\[
B_{\lambda,\eps}(v)=\{ u\in E_x : \| u-v\|_\lambda<\eps\}
\qquad (v\in E_x,\; \lambda\in\Lambda,\; \eps>0).
\]
Since $\sT(V,s,\lambda,\eps)\cap E_x=B_{\lambda,\eps}(s_x)$,
$\mathrm{(Ad1)}$ implies that $\tau\subset\tau'$.
On the other hand, we have
\begin{equation*}
B_{\lambda,\eps}(v)=v+B_{\lambda,\eps}(0_x)=
v+\sT(X,0,\lambda,\eps)\cap E_x.
\end{equation*}
Together with Remark~\ref{rem:fiber_TVS}, this implies that
$B_{\lambda,\eps}(v)$ is $\tau$-open. Thus $\tau=\tau'$.
\end{proof}

\begin{prop}
\label{prop:bundles_via_seminorms}
Let $X$ be a topological space, and let $(E,p)$ be a prebundle of topological vector
spaces over $X$. A locally convex uniform vector structure $\cU$ on $E$
satisfies $\mathrm{(B0)}$--$\mathrm{(B2)}$ if and only if
$\cU=\cU_\cN$ for an admissible directed family of seminorms on $E$.
Specifically, given $\cU$, we can take $\cN=\{ p_B : B\in\cB\}$, where $\cB$
is any base of $\cU$ consisting of open sets.
\end{prop}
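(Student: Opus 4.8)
The plan is to prove the two implications separately and to check along the way that the explicit family $\cN=\{p_B:B\in\cB\}$ indeed works in the forward direction. The overarching point is that Proposition~\ref{prop:unifstruct_seminorms} already sets up a dictionary between locally convex uniform vector structures and (equivalence classes of) directed families of seminorms; so the entire content of the statement is the matching of the bundle axioms $\mathrm{(B0)}$--$\mathrm{(B2)}$ against admissibility $\mathrm{(Ad1)}$--$\mathrm{(Ad2)}$.

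First I would treat the implication ``$\cU=\cU_\cN$ for some admissible $\cN$ $\Rightarrow$ $\mathrm{(B0)}$--$\mathrm{(B2)}$''. For $\mathrm{(B0)}$, given $U\in\cU$ I use that $\cB_\cN=\{\sU_{\lambda,\eps}\}$ is a base of $\cU_\cN$ to find $\sU_{\lambda,\eps}\subset U$; by $\mathrm{(Ad1)}$ (with $V=X$, $s=0$) the set $\sU_{\lambda,\eps}=\sT(X,0,\lambda,\eps)$ is open, and it lies in $\cU$, so it is the required open member. For $\mathrm{(B1)}$, I note that by \eqref{tube=tube} and $\mathrm{(Ad1)}$ the family $\cB(\cN)=\{s(V)+\sU_{\lambda,\eps}\}$ is a base for the topology consisting of sets of the form $s(V)+U$ with $U\in\cU$ open; since every such $s(V)+U$ is itself open by Lemma~\ref{lemma:open_sum}, the larger family $\{s(V)+U:U\in\cU\text{ open}\}$ is again a base. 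Finally $\mathrm{(B2)}$ is immediate: by Lemma~\ref{lemma:two_top_fiber} the subspace topology on $E_x$ coincides with the topology generated by the restricted seminorms, so density in the latter sense, which is exactly $\mathrm{(Ad2)}$, is density in the former.

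For the converse I assume $\mathrm{(B0)}$--$\mathrm{(B2)}$. Axiom $\mathrm{(B0)}$ guarantees that the open members of $\cU$ form a base of $\cU$, so I may take any base $\cB\subset\cU$ consisting of open sets and set $\cN=\{p_B:B\in\cB\}$; Proposition~\ref{prop:unifstruct_seminorms}(ii) then yields that $\cN$ is directed and $\cU=\cU_\cN$, which already establishes the ``specifically'' clause. It remains to verify admissibility. For the openness half of $\mathrm{(Ad1)}$ I invoke Remark~\ref{rem:up_semi_open} and check that each $p_B$ is upper semicontinuous, i.e.\ that each $\sU_{p_B,\eps}$ is open; this follows from the identity $\sU_{p_B,1}=\bigcup_{0<t<1}tB$ together with the fact that scalar multiplication by $t\ne 0$ is a homeomorphism of $E$, so that $\sU_{p_B,\eps}=\eps\sU_{p_B,1}$ is a union of open sets. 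Granting the base property of $\cB(\cN)$ (treated below), $\mathrm{(Ad2)}$ then follows by combining $\mathrm{(B2)}$ with Lemma~\ref{lemma:two_top_fiber}, exactly as in the first direction.

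The step I expect to be the real work is showing that $\cB(\cN)$ is a base for the topology of $E$. Here I would start from Lemma~\ref{lemma:bas_unif}(i), which asserts that $\{s(V)+B:B\in\cB\}$ is a base, and then, given an open $W$ and $u\in W$, choose $s(V)+B\subset W$ with $u\in s(V)+B$. Writing $x=p(u)$, the crux is the radial estimate $p_B(u-s_x)<1$: since $B$ is open and absolutely convex, $B\cap E_x$ is an open neighborhood of $u-s_x$ in $E_x$, so the points $t(u-s_x)$ still lie in it for $t$ slightly larger than $1$, which forces $p_B(u-s_x)<1$. Choosing $\eps$ with $p_B(u-s_x)<\eps<1$ gives $\sU_{p_B,\eps}=\eps\sU_{p_B,1}\subset\eps B\subset B$, whence $u\in\sT(V,s,p_B,\eps)=s(V)+\sU_{p_B,\eps}\subset s(V)+B\subset W$, producing the required basic tube inside $W$. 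Together with the openness established above, this completes $\mathrm{(Ad1)}$ and hence the proposition.
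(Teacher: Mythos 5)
Your proof is correct and follows essentially the same route as the paper: both directions rest on Proposition~\ref{prop:unifstruct_seminorms}, Lemma~\ref{lemma:two_top_fiber}, and Lemma~\ref{lemma:bas_unif}~(i), with the converse hinging on the standard Minkowski-functional fact for open absolutely convex sets. The only cosmetic difference is that the paper cites this fact as the identity $B=\sU_{p_B,1}$ (so that $\sT(V,s,p_B,\eps)=s(V)+\eps B$ and the base property is immediate at $\eps=1$), whereas you prove the two inclusions inline and run a short neighborhood-shrinking argument.
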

\begin{proof}
Let $\cN$ be an admissible directed family of seminorms on $E$.
The set $\sU_{\lambda,\eps}=\sT(X,0,\lambda,\eps)$ is open by $\mathrm{(Ad1)}$,
and so $\cU_\cN$ satisfies $\mathrm{(B0)}$. It is also immediate from
$\mathrm{(Ad1)}$ and~\eqref{tube=tube} that $\cU_\cN$ satisfies $\mathrm{(B1)}$.
Finally, $\mathrm{(B2)}$ follows from~$\mathrm{(Ad2)}$
and Lemma~\ref{lemma:two_top_fiber}.

Conversely, let $\cU$ be a locally convex uniform vector structure on $E$
satisfying $\mathrm{(B0)}$--$\mathrm{(B2)}$, and let
$\cB$ be a base of $\cU$ consisting of open sets.
Applying Proposition~\ref{prop:unifstruct_seminorms}, we see that
$\cU=\cU_\cN$, where $\cN=\{ p_B : B\in\cB\}$.
We claim that $\cN$ is admissible. Indeed, a standard argument (see, e.g., \cite[I.4]{RR})
shows that for each $B\in\cB$ we have $B=\sU_{p_B,1}$.
Together with \eqref{tube=tube}, this implies that for each open set $V\subset X$,
each $s\in\Gamma(V,E)$, and each $\eps>0$ we have
\begin{equation}
\label{tube=tube_2}
\sT(V,s,p_B,\eps)=s(V)+\sU_{p_B,\eps}=s(V)+\eps\sU_{p_B,1}=s(V)+\eps B.
\end{equation}
Now $\mathrm{(Ad1)}$ follows from
\eqref{tube=tube_2} and Lemma~\ref{lemma:bas_unif} (i).
Finally, $\mathrm{(Ad2)}$ is immediate from $\mathrm{(B2)}$
and Lemma~\ref{lemma:two_top_fiber}.
\end{proof}

\begin{remark}
According to Gierz \cite{Gierz}, a locally convex bundle over $X$ is
a triple $(E,p,\cN)$, where $(E,p)$ is a prebundle of topological vector spaces
over $X$, and $\cN$ is an admissible (in the strong sense, see Remark~\ref{rem:strong_admiss})
directed family of seminorms on $E$.
For our purposes, however, the locally convex uniform vector structure determined by $\cN$
is more important than $\cN$ itself.
Thus our point of view is closer to that of \cite{DH} and \cite{Varela95}.
In some sense, the difference between our bundles and bundles in the
sense of \cite{Gierz} is the same as between locally convex spaces and
polynormed spaces (i.e., vector spaces endowed with distinguished families of
seminorms, see \cite{X2}).
However, in contrast to the case of topological vector spaces,
neither the locally convex uniform vector structure determines the topology of $E$,
nor vice versa.
\end{remark}

\begin{definition}
Let $(E,p,\cU)$ and $(E',p',\cU')$ be locally convex bundles over $X$. A continuous map
$f\colon E\to E'$ is a {\em bundle morphism} if the following holds:
\begin{mycompactenum}
\item[$\mathrm{(BM1)}$]
$p'f=p$;
\item[$\mathrm{(BM2)}$]
the restriction of
$f$ to each fiber $E_x\; (x\in X)$ is a linear map from $E_x$ to $E'_x$;
\item[$\mathrm{(BM3)}$]
for each $U'\in\cU'$, we have $f^{-1}(U')\in\cU$.
\end{mycompactenum}
\end{definition}

\begin{remark}
Clearly, (BM3) is equivalent to the uniform continuity of $f$
with respect to the uniform
structures $\tilde\cU$ and $\tilde\cU'$ (see Remark~\ref{rem:uniform_str}).
\end{remark}

The category of all locally convex bundles and bundle morphisms over $X$
will be denoted by $\Bnd(X)$.

The following result (see \cite[5.2--5.7]{Gierz}) is a locally convex version of
Fell's theorem \cite{Fell_Mackey} (cf. also \cite[II.13.18]{FD},
\cite[C.25]{Williams}, \cite{Varela84}, \cite{Varela95}). It provides
a useful way of constructing locally convex bundles out of their fibers,
seminorms, and sections.

\begin{prop}
\label{prop:sec_bnd}
Let $X$ be a topological space, $(E,p)$ be a family of vector spaces over $X$,
and $\cN=\{\|\cdot\|_\lambda : \lambda\in\Lambda\}$ be a directed family of seminorms on $E$.
Suppose that $\Gamma$ is a vector subspace of $S(X,E)$
satisfying the following conditions:
\begin{mycompactenum}
\item[$\mathrm{(S1)}$]
For each $x\in X$, the set $\{ s_x : s\in \Gamma\}$ is dense in $E_x$
with respect to the topology generated by the restrictions of the seminorms
$\|\cdot\|_\lambda\; (\lambda\in\Lambda)$ to $E_x$;
\item[$\mathrm{(S2)}$]
For each $s\in \Gamma$ and each $\lambda\in\Lambda$, the map $X\to\R,\; x\mapsto \| s_x\|_\lambda$,
is upper semicontinuous.
\end{mycompactenum}
Then there exists a unique topology on $E$ such that
$(E,p,\cU_\cN)$ is a locally convex bundle
and such that $\Gamma\subset\Gamma(X,E)$. Moreover, the family
\begin{equation}
\label{base}
\bigl\{ \sT(V,s,\lambda,\eps) : V\subset X \text{ open},\;
s\in \Gamma,\; \lambda\in\Lambda,\;  \eps>0\bigr\}
\end{equation}
is a base for the topology on $E$.
\end{prop}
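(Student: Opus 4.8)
The plan is to \emph{define} the desired topology $\cT$ on $E$ by declaring the family~\eqref{base} to be a base, and then to recover all the bundle axioms by verifying that $\cN$ is admissible for $\cT$, so that Proposition~\ref{prop:bundles_via_seminorms} applies. The one mechanism underlying almost every step is the refinement principle already exploited in Lemma~\ref{lemma:eqv_fam_1}: given a tube $\sT(V,s,\lambda,\eps)$ with $s\in\Gamma$ and a point $u$ in it, set $x=p(u)$, use directedness of $\cN$ to pick $\mu$ dominating $\lambda$, use the density hypothesis $\mathrm{(S1)}$ to choose $t\in\Gamma$ with $t_x$ arbitrarily close to $u$ in $\|\cdot\|_\mu$, and use the upper semicontinuity hypothesis $\mathrm{(S2)}$ applied to $t-s\in\Gamma$ to find an open $W\ni x$, $W\subset V$, on which $\|t_y-s_y\|_\lambda$ stays below $\eps$ with a margin; a thin tube $\sT(W,t,\mu,\delta)$ then contains $u$ and lies inside $\sT(V,s,\lambda,\eps)$.

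First I would check that~\eqref{base} is a base for a topology $\cT$: it covers $E$ because $0\in\Gamma$, and the intersection condition is precisely the refinement principle (handling two tubes at once via directedness). Next I would show that $(E,p)$ is a prebundle of topological vector spaces for $\cT$: the map $p$ is continuous and open since $p^{-1}(V)=\bigcup_\eps\sT(V,0,\lambda,\eps)$ and $p(\sT(V,s,\lambda,\eps))=V$; the zero section is continuous and each $\|\cdot\|_\lambda$ is upper semicontinuous because the basic set $\sU_{\lambda,\eps}=\sT(X,0,\lambda,\eps)$ is open (Remark~\ref{rem:up_semi_open}); and continuity of addition and scalar multiplication is the refinement principle once more, using that $t_1+t_2\in\Gamma$ and $ct_1\in\Gamma$ for $c\in\CC$ since $\Gamma$ is a subspace. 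I would then prove $\Gamma\subset\Gamma(X,E)$: for $s\in\Gamma$ and a basic neighbourhood $\sT(V,t,\lambda,\eps)$ of $s_x$, upper semicontinuity of $y\mapsto\|s_y-t_y\|_\lambda$ (valid as $s-t\in\Gamma$) yields an open $W\ni x$ with $s(W)\subset\sT(V,t,\lambda,\eps)$. Now $\cN$ satisfies $\mathrm{(Ad1)}$ (all tubes $\sT(V,s,\lambda,\eps)$ with $s\in\Gamma(V,E)$ are open by Remark~\ref{rem:up_semi_open}, and~\eqref{base} is already a base) and $\mathrm{(Ad2)}$ (immediate from $\mathrm{(S1)}$ together with $\Gamma\subset\Gamma(X,E)$), hence $\cN$ is admissible, and Proposition~\ref{prop:bundles_via_seminorms} gives that $\cU_\cN$ satisfies $\mathrm{(B0)}$--$\mathrm{(B2)}$. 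Thus $(E,p,\cU_\cN)$ is a locally convex bundle with $\Gamma\subset\Gamma(X,E)$, and~\eqref{base} is a base for its topology by construction.

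For uniqueness, let $\cT'$ be any topology for which $(E,p,\cU_\cN)$ is a locally convex bundle with $\Gamma\subset\Gamma(X,E)$. By $\mathrm{(B0)}$ and Remark~\ref{rem:up_semi_open} each $\sU_{\lambda,\eps}$ is $\cT'$-open, so by~\eqref{tube=tube} and Lemma~\ref{lemma:open_sum} every member of~\eqref{base} is $\cT'$-open; hence $\cT\subset\cT'$. Conversely, Lemma~\ref{lemma:bas_unif}(i) applied to $\cT'$ (with the base $\{\sU_{\lambda,\eps}\}$ of $\cU_\cN$) shows that $\{\sT(V,s,\lambda,\eps):s\in\Gamma(V,E)\}$ is a base for $\cT'$, so it suffices to see that each such tube is $\cT$-open. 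This is again the refinement principle, with the single modification that the section $s$ is now only $\cT'$-continuous; the required upper semicontinuity of $y\mapsto\|t_y-s_y\|_\lambda$ (for $t\in\Gamma$ supplied by $\mathrm{(S1)}$) follows because $t-s$ is $\cT'$-continuous and $\|\cdot\|_\lambda$ is upper semicontinuous for $\cT'$ (by $\mathrm{(B0)}$), so the gap function is the composition of an upper semicontinuous function with a continuous one. Hence $\cT'\subset\cT$ and the two topologies coincide.

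The main difficulty is not conceptual but lies in the bookkeeping of the refinement principle across its four uses (the base property, continuity of the operations, continuity of $\Gamma$-sections, and the reverse inclusion for uniqueness). The most delicate instance is uniqueness, where one must keep the sections of $\Gamma$ (continuous for both topologies) distinct from arbitrary $\cT'$-continuous sections, and extract the needed upper semicontinuity from axiom $\mathrm{(B0)}$ rather than from $\mathrm{(S2)}$.
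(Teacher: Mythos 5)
Your architecture is sound and is genuinely more self-contained than the paper's treatment: the paper cites Gierz for the existence half and proves only the uniqueness assertion, in a remark whose computation is exactly your ``refinement principle'' (the same computation as in Lemma~\ref{lemma:eqv_fam_1} and Lemma~\ref{lemma:bas_unif}). Your existence part---declare \eqref{base} a base, verify the prebundle axioms by the refinement principle (parallel to the proof of Proposition~\ref{prop:sec_alg_bnd}), check $\Gamma\subset\Gamma(X,E)$ via $\mathrm{(S2)}$, and then obtain $\mathrm{(B0)}$--$\mathrm{(B2)}$ from admissibility of $\cN$ and Proposition~\ref{prop:bundles_via_seminorms}---is correct; in particular, for the \emph{constructed} topology the upper semicontinuity of the $\|\cdot\|_\lambda$ is legitimately established, since $\sU_{\lambda,\eps}=\sT(X,0,\lambda,\eps)$ is itself a member of \eqref{base} and hence open by fiat.

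The one genuine gap is in your uniqueness argument, where twice you justify the $\cT'$-upper semicontinuity of $\|\cdot\|_\lambda$ ``by $\mathrm{(B0)}$''. Axiom $\mathrm{(B0)}$ does not deliver this: it says only that each $U\in\cU_\cN$ contains \emph{some} $\cT'$-open $U_0\in\cU_\cN$, so $\sU_{\lambda,\eps}$ contains an open member of $\cU_\cN$, which in turn contains some $\sU_{\mu,\delta}$---none of which makes $\sU_{\lambda,\eps}$ itself open ($\sU_{\lambda,\eps}$ becomes a neighborhood of every point of the zero section, whereas upper semicontinuity requires suitable openness at \emph{each} of its points). Remark~\ref{rem:up_semi_open} records the equivalence between openness of tubes and upper semicontinuity of the seminorms; it is not a derivation of either from $\mathrm{(B0)}$. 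Both of your inclusions $\cT\subset\cT'$ and $\cT'\subset\cT$ rest on this step, and in the second your appeal to Lemma~\ref{lemma:bas_unif}~(i) also needs $\{\sU_{\lambda,\eps}\}$ to be a base of $\cU_\cN$ consisting of $\cT'$-\emph{open} sets.

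Fortunately the missing fact is provable from your standing hypotheses by one more instance of your own refinement principle, so the gap is reparable: given $u$ with $\| u\|_\lambda<\eps$, put $x=p(u)$ and $c=\eps-\| u\|_\lambda$; by $\mathrm{(B0)}$ pick a $\cT'$-open $U_0\in\cU_\cN$ with $U_0\subset\sU_{\lambda,c/3}$, and pick $\mu,\delta'$ with $\sU_{\mu,\delta'}\subset U_0$; by directedness and $\mathrm{(S1)}$ choose $t\in\Gamma$ with $u-t_x\in\sU_{\mu,\delta'}$ and $\| u-t_x\|_\lambda<c/3$; by $\mathrm{(S2)}$ find an open $V\ni x$ on which $\| t_y\|_\lambda<\| u\|_\lambda+2c/3$. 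Then $t(V)+U_0$ is $\cT'$-open by Lemma~\ref{lemma:open_sum} (here the hypothesis $\Gamma\subset\Gamma(X,E)$ for $\cT'$ is essential), contains $u$, and lies in $\sU_{\lambda,\eps}$, since for $v\in t(V)+U_0$ with $y=p(v)$ one has $\| v\|_\lambda\le\| v-t_y\|_\lambda+\| t_y\|_\lambda<c/3+\| u\|_\lambda+2c/3=\eps$. With this patch your proof is complete---and it in fact supplies a detail that the paper's own uniqueness remark leaves implicit, since there too the phrase ``by the upper semicontinuity of $\|\cdot\|_\lambda$'' is invoked for an arbitrary competing topology without justification.
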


\begin{remark}
The uniqueness part of Proposition~\ref{prop:sec_bnd} is not proved in \cite{Gierz},
so let us explain why the topology $\cT$ with the above properties is unique, i.e.,
why~\eqref{base} must be a base of $\cT$.
Let $W\subset E$ be an open set, and let $u\in W$. Without loss of generality,
we may assume that $W=\sT(V,s,\lambda,\eps)\in\cB(\cN)$.
Let $x=p(u)$. Since $\| u-s_x\|_\lambda<\eps$, we can choose $\delta>0$ such that
\[
2\delta+\| u-s_x\|_\lambda<\eps.
\]
By $\mathrm{(S1)}$, there exists $t\in \Gamma$ with $\| t_x-u\|_\lambda<\delta$.
Then $\| t_x-s_x\|_\lambda<\delta+\| u-s_x\|_\lambda$. By the upper semicontinuity
of $\|\cdot\|_\lambda$, there exists an open neighborhood $V'$ of $x$ such that $V'\subset V$ and
\[
\| t_y-s_y\|_\lambda<\delta+\| u-s_x\|_\lambda\qquad (y\in V').
\]
By construction, $u\in\sT(V',t,\lambda,\delta)$. We claim that
\begin{equation}
\label{uniq_top}
\sT(V',t,\lambda,\delta) \subset \sT(V,s,\lambda,\eps).
\end{equation}
Indeed, let $v\in\sT(V',t,\lambda,\delta)$, and let $y=p(v)$. We have
\[
\| v-s_y\|_\lambda<\delta+\| t_y-s_y\|_\lambda<2\delta+\| u-s_x\|_\lambda<\eps.
\]
This proves \eqref{uniq_top} and implies that \eqref{base} is a base of $\cT$.

Alternatively, the uniqueness of $\cT$ can be proved by adapting Fell's original argument
(see \cite{Fell_Mackey}, \cite[II.13.18]{FD},
or \cite[C.25]{Williams}) to the locally convex setting.
\end{remark}

\begin{remark}
\label{rem:sec_bnd_equiv_fam}
Under the conditions of Proposition~\ref{prop:sec_bnd}, we can replace $\cN$ by
any directed subfamily equivalent to $\cN$. By the uniqueness
part of Proposition~\ref{prop:sec_bnd}, this will not affect the topology and the
locally convex uniform vector structure on $E$.
\end{remark}

Let us now describe a situation where conditions $\mathrm{(S1)}$ and $\mathrm{(S2)}$
of Proposition~\ref{prop:sec_bnd}
are satisfied automatically. Let $K$ be a commutative algebra.
By a {\em locally convex $K$-module} we mean a $K$-module $M$ together
with a locally convex topology such that for each $a\in K$ the map $M\to M,\; x\mapsto ax$,
is continuous.
The category of all locally convex $K$-modules and continuous $K$-module
morphisms will be denoted by $K\lmod$.
Suppose that $X$ is a topological space and $\gamma\colon K\to C(X)$
is an algebra homomorphism. Given $a\in K$ and $x\in X$, we write $a(x)$ for $\gamma(a)(x)$.
Define $\eps_x\colon K\to\CC$ by $\eps_x(a)=a(x)$, and let $\fm_x=\Ker\eps_x$.
Given a locally convex $K$-module $M$ and $u\in M$, let
\begin{equation}
\label{fiber}
M_x=M/\ol{\fm_x M},\qquad
u_x=u+\ol{\fm_x M}\in M_x.
\end{equation}
We say that $M_x$ is the {\em fiber} of $M$ over $x\in X$.
If $\|\cdot\|$ is a continuous seminorm on $M$, then the respective quotient
seminorm on $M_x$ will be denoted by the same symbol $\|\cdot\|$;
this will not lead to confusion.

The following lemma is a locally convex version of \cite[Proposition 1.2]{Rf_contfld}.

\begin{lemma}
\label{lemma:semicont}
For each $u\in M$, the function $X\to\R,\; x\mapsto \| u_x\|$, is upper semicontinuous.
\end{lemma}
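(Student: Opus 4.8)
The plan is to show upper semicontinuity of $x\mapsto\|u_x\|$ by proving that the sublevel sets $\{x\in X : \|u_x\|<c\}$ are open for every $c\in\R$. Fix $x_0\in X$ with $\|u_{x_0}\|<c$. By the definition of the quotient seminorm on $M_{x_0}=M/\ol{\fm_{x_0}M}$, there exists $v\in\ol{\fm_{x_0}M}$ such that $\|u-v\|<c$. After approximating, I may assume $v=\sum_{i=1}^m a_i w_i$ is a genuine finite sum with $a_i\in\fm_{x_0}$ and $w_i\in M$, so that $a_i(x_0)=0$ for all $i$ and still $\|u-\sum_i a_i w_i\|<c$.

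First I would exploit the continuity of the module action together with the strict inequality. Since $\|u-\sum_i a_i w_i\|<c$, choose $\eps>0$ with $\|u-\sum_i a_i w_i\|<c-\eps$. Because each map $M\to M$, $y\mapsto a_i y$, is continuous and the seminorm $\|\cdot\|$ is continuous, the function $(\lambda_1,\dots,\lambda_m)\mapsto\|\sum_i(a_i-\lambda_i)w_i\|$ is continuous in the scalars $\lambda_i$ at the origin; hence there is $\delta>0$ such that $|\lambda_i|<\delta$ for all $i$ implies $\|\sum_i(a_i-\lambda_i)w_i\|<\eps$. Equivalently, replacing $a_i$ by $a_i-a_i(x)$ (which lies in $\fm_x$) changes the relevant element by at most $\eps$ in seminorm provided each $|a_i(x)|<\delta$.

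Next I would use that $\gamma(a_i)=a_i(\cdot)\in C(X)$ is continuous and vanishes at $x_0$: the set $V=\bigcap_{i=1}^m\{x\in X : |a_i(x)|<\delta\}$ is an open neighborhood of $x_0$. For every $x\in V$, the element $\sum_i(a_i-a_i(x))w_i$ lies in $\fm_x M$, so it projects to $0$ in $M_x$, and therefore
\[
\|u_x\|\le\Bigl\|u-\sum_i(a_i-a_i(x))w_i\Bigr\|
\le\Bigl\|u-\sum_i a_i w_i\Bigr\|+\Bigl\|\sum_i(a_i(x))w_i\Bigr\|<(c-\eps)+\eps=c,
\]
where the first inequality is the definition of the quotient seminorm and the last uses $|a_i(x)|<\delta$ together with the choice of $\delta$. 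Thus $V\subset\{x : \|u_x\|<c\}$, proving that this sublevel set is open and hence that $x\mapsto\|u_x\|$ is upper semicontinuous.

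The only genuinely delicate point is the reduction from an arbitrary element $v\in\ol{\fm_{x_0}M}$ (a closure) to a finite sum $\sum_i a_i w_i$ with a strict seminorm bound preserved; this is handled by the density of $\fm_{x_0}M$ in its closure, which lets me pick a finite-sum approximant with $\|u-\sum_i a_iw_i\|$ still strictly below $c$. Everything else is a routine $\eps$-bookkeeping argument combining continuity of the $K$-action, continuity of the seminorm, and continuity of the functions $a_i(\cdot)$, so I expect no further obstacle.
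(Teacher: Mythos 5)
Your argument is correct and is essentially the paper's own proof: you pass from the closure $\ol{\fm_{x_0}M}$ to a genuine finite sum $\sum_i a_iw_i$ with $a_i\in\fm_{x_0}$ (legitimate, since the continuous seminorm has the same infimum over $\fm_{x_0}M$ as over its closure), observe that $a_i-a_i(x)\in\fm_x$ for every $x$, and finish with the triangle inequality together with continuity of the functions $a_i(\cdot)$, which vanish at $x_0$. One auxiliary sentence is misstated --- there need not exist $\delta>0$ such that $|\lambda_i|<\delta$ for all $i$ implies $\bigl\|\sum_i(a_i-\lambda_i)w_i\bigr\|<\eps$, since at $\lambda=0$ this would read $\bigl\|\sum_i a_iw_i\bigr\|<\eps$ --- but the estimate you actually invoke in the final display, namely $\bigl\|\sum_i a_i(x)w_i\bigr\|\le\sum_i|a_i(x)|\,\|w_i\|<\eps$ once $\delta\le\eps/(1+\sum_i\|w_i\|)$, is immediate from homogeneity and the triangle inequality (no continuity of the module action is needed), so this is a slip of wording rather than a gap.
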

\begin{proof}
Let $x\in X$, and suppose that $\| u_x\|<C$. We need to show that
$\| u_y\|<C$ as soon as $y$ is close enough to $x$.
We have
\[
\inf\{ \| u+v\| : v\in \fm_x M\}
=\inf\{ \| u+v\| : v\in \ol{\fm_x M}\}
=\| u_x\|<C,
\]
and so there exist $a_1,\ldots ,a_n\in \fm_x$ and $v_1,\ldots ,v_n\in M$ such that
\begin{equation}
\label{norm<C}
\Bigl\| u+\sum_{i=1}^n a_i v_i\Bigr\|<C.
\end{equation}
Observe that for each $a\in K$ and each $y\in X$ we have $a-a(y)\in\fm_y$. Hence
\begin{equation}
\label{u_y_y}
\| u_y\|
\le \Bigl\| u+\sum_{i=1}^n (a_i-a_i(y)) v_i\Bigr\|
\le \Bigl\| u+\sum_{i=1}^n a_i v_i\Bigr\|+\sum_{i=1}^n |a_i(y)| \| v_i\|.
\end{equation}
Since each $a_i$ is continuous and vanishes at $x$, \eqref{norm<C} and \eqref{u_y_y}
together imply that there exists a neighborhood
$V$ of $x$ such that $\| u_y\|<C$ for all $y\in V$. This completes the proof.
\end{proof}

We are now in a position to construct a fiber-preserving functor from $K\lmod$
to $\Bnd(X)$.
Given a locally convex $K$-module $M$, let $\sE(M)=\bigsqcup_{x\in X} M_x$,
and let $p_M\colon\sE(M)\to X$ be given by $p_M(M_x)=\{ x\}$.
Thus $(\sE(M),p_M)$ is a family of vector spaces over $X$.
Let $\cC_M=\{ \|\cdot\|_\lambda : \lambda\in\Lambda\}$ denote the family of all
continuous seminorms on $M$. For each $\lambda\in\Lambda$ and each $x\in X$,
the quotient seminorm of $\|\cdot\|_\lambda$ on $M_x$ will be denoted
by the same symbol $\|\cdot\|_\lambda$ (see discussion before Lemma~\ref{lemma:semicont}).
Thus we obtain a directed family $\cN_M=\{\|\cdot\|_\lambda : \lambda\in\Lambda\}$ of seminorms
on $\sE(M)$. The locally convex uniform vector structure
on $\sE(M)$ determined by $\cN_M$ will be denoted by $\cU_M$.
For each $u\in M$, the function $\tilde u\colon X\to\sE(M),\; x\mapsto u_x$, is clearly
a section of $(\sE(M),p_M)$. Let $\Gamma_M=\{ \tilde u : u\in M\}$.
For each $x\in X$, we obviously have $\{ \tilde u_x : \tilde u\in\Gamma_M\}=M_x$,
and so $\mathrm{(S1)}$ holds. Lemma~\ref{lemma:semicont} implies that
$\mathrm{(S2)}$ holds as well. Applying Proposition~\ref{prop:sec_bnd},
we see that $(\sE(M),p_M,\cU_M)$ is a locally convex bundle over $X$.
For brevity, we will denote every basic open set $\sT(V,\tilde u,\lambda,\eps)$ in $\sE(M)$
(where $V\subset X$ is an open set,
$u\in M$, $\lambda\in\Lambda$, and $\eps>0$) simply by $\sT(V,u,\lambda,\eps)$.

\begin{remark}
\label{rem:mod_bnd_equiv_fam}
In the above construction, we can let $\cC_M$ be any directed defining family
of seminorms on $M$. By Remark~\ref{rem:sec_bnd_equiv_fam},
this will not affect the topology and the locally convex uniform vector structure on $\sE(M)$.
\end{remark}

Suppose now that $f\colon M\to N$ is a morphism in $K\lmod$. For each $x\in X$
we clearly have $f(\ol{\fm_x M})\subset \ol{\fm_x N}$. Hence $f$
induces a continuous linear map $f_x\colon M_x\to N_x,\; u_x\mapsto f(u)_x$.
We let $\sE(f)\colon\sE(M)\to\sE(N)$ denote the map whose restriction to each fiber $M_x$
is $f_x$.

\begin{lemma}
$\sE(f)\colon \sE(M)\to\sE(N)$ is a bundle morphism.
\end{lemma}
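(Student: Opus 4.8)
The plan is to verify conditions (BM1)--(BM3) together with the continuity of $\sE(f)$, all of which I would deduce from a single fiberwise estimate. Conditions (BM1) and (BM2) are immediate from the construction: by definition $\sE(f)$ maps $M_x$ into $N_x$ through the linear map $f_x$, so $p_N\sE(f)=p_M$ and $\sE(f)|_{M_x}=f_x$ is linear. The crux will be the following estimate. Fix a continuous seminorm $\|\cdot\|_\lambda$ on $N$ and set $p=\|f(\cdot)\|_\lambda$, which is a continuous seminorm on $M$ since $f$ is continuous, hence $p\in\cC_M$. I claim that for every $x\in X$ and every $\xi\in M_x$ one has $\|f_x(\xi)\|_\lambda\le\|\xi\|_p$, where on the left $\|\cdot\|_\lambda$ is the quotient seminorm on $N_x$ and on the right $\|\cdot\|_p$ is the quotient seminorm on $M_x$. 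Writing $\xi=u_x$ with $u\in M$ and using that $f$ is a $K$-module morphism (so $f(\fm_x M)\subset\fm_x N$), this will follow by comparing two infima:
\[
\|f_x(u_x)\|_\lambda=\inf_{w\in\fm_x N}\|f(u)+w\|_\lambda\le\inf_{v\in\fm_x M}\|f(u)+f(v)\|_\lambda=\|u_x\|_p.
\]

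Granting the estimate, (BM3) is quick. It suffices to check it on the base $\{\sU_{\lambda,\eps}:\lambda\in\Lambda,\ \eps>0\}$ of $\cU_N$ and then invoke (U4): for an arbitrary $U'\in\cU_N$ I would pick a basic $\sU_{\lambda,\eps}\subset U'$. Since each $f_x$ is linear and $\sU_{\lambda,\eps}\cap N_x$ is absolutely convex, both $\sE(f)^{-1}(\sU_{\lambda,\eps})$ and $\sE(f)^{-1}(U')$ are absolutely convex. The estimate gives $\sU_{p,\eps}\subset\sE(f)^{-1}(\sU_{\lambda,\eps})$, and $\sU_{p,\eps}\in\cU_M$ because $p\in\cC_M$; hence $\sE(f)^{-1}(\sU_{\lambda,\eps})\in\cU_M$ by (U4), and then $\sE(f)^{-1}(U')\in\cU_M$ again by (U4).

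The remaining point, the continuity of $\sE(f)$, is the one I expect to be the main obstacle, since it must be proved directly from the tube base of the topology (Proposition~\ref{prop:sec_bnd}) rather than from the uniform structure alone. I would fix $u_0\in\sE(M)$ with $x_0=p_M(u_0)$ and a basic neighborhood $\sT(V,v,\lambda,\eps)$ of $\sE(f)(u_0)$, where $v\in N$. Since $M\to M_{x_0}$ is onto, I can choose $w\in M$ with $w_{x_0}=u_0$, so that $\sE(f)(u_0)=f(w)_{x_0}$ and $\|f(w)_{x_0}-v_{x_0}\|_\lambda<\eps$. For a point $u\in\sE(M)$ with $x=p_M(u)$, the triangle inequality and the fiberwise estimate give
\[
\|\sE(f)(u)-v_x\|_\lambda\le\|f_x(u-w_x)\|_\lambda+\|(f(w)-v)_x\|_\lambda\le\|u-w_x\|_p+\|(f(w)-v)_x\|_\lambda .
\]
Here I would use that $x\mapsto\|(f(w)-v)_x\|_\lambda$ is upper semicontinuous by Lemma~\ref{lemma:semicont}, with value $\|\sE(f)(u_0)-v_{x_0}\|_\lambda<\eps$ at $x_0$. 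Choosing $\delta>0$ and a neighborhood $V'\subset V$ of $x_0$ so that the two summands stay below $\eps$ together on $V'$ yields $\sE(f)\bigl(\sT(V',w,p,\delta)\bigr)\subset\sT(V,v,\lambda,\eps)$. As $\sT(V',w,p,\delta)$ is an open neighborhood of $u_0$ (note $\|u_0-w_{x_0}\|_p=0<\delta$), this proves continuity at $u_0$ and completes the verification that $\sE(f)$ is a bundle morphism. The only genuinely delicate bookkeeping is matching the upper-semicontinuity margin against the tube radius $\delta$, which is routine once the fiberwise estimate is in hand.
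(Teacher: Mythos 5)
Your proof is correct, and its engine is the same as the paper's: the pullback seminorm $p=\|f(\cdot)\|_\lambda$, which lies in $\cC_M$ precisely because the defining family consists of \emph{all} continuous seminorms on $M$, together with the fiberwise estimate $\|f_x(\xi)\|_\lambda\le\|\xi\|_p$ obtained by comparing infima over $f(\fm_x M)\subset\fm_x N$ (this is what the paper compresses into ``by passing to the quotients''). Where you diverge is in the packaging of continuity and $\mathrm{(BM3)}$. The paper invokes Lemma~\ref{lemma:bas_unif}~(ii): since the image point $\sE(f)(u_x)=f(u)_x$ lies on the continuous global section $\widetilde{f(u)}$, the tubes around \emph{that} section already form a neighborhood base at the image point, so continuity is the one-line inclusion $\sE(f)\bigl(\sT(V,u,\lambda,\eps)\bigr)\subset\sT(V,f(u),\mu,\eps)$ with no error term to control, and $\mathrm{(BM3)}$ follows by specializing $u=0$, $V=X$. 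You instead test against a tube $\sT(V,v,\lambda,\eps)$ around an \emph{arbitrary} section $\tilde v$ through a neighborhood of the image, which forces the triangle inequality, the upper semicontinuity of $x\mapsto\|(f(w)-v)_x\|_\lambda$ from Lemma~\ref{lemma:semicont}, and the margin bookkeeping you describe; this is valid but does by hand the work that Lemma~\ref{lemma:bas_unif}~(ii) was designed to eliminate. Conversely, your direct derivation of $\mathrm{(BM3)}$ from the estimate --- preimages of basic sets contain basic sets, preimages of absolutely convex sets are absolutely convex, then $\mathrm{(U4)}$ --- is actually more explicit than the paper's terse ``letting $u=0$ and $V=X$'', which tacitly relies on the same $\mathrm{(U4)}$ step.
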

\begin{proof}
Clearly, $\sE(f)$ satisfies $\mathrm{(BM1)}$ and $\mathrm{(BM2)}$.
Let $u\in M$ and $x\in X$, and let us prove the continuity of $\sE(f)$ at $u_x\in M_x$.
Let $\{ \|\cdot\|_\lambda : \lambda\in\Lambda\}$ (respectively,
$\{ \|\cdot\|_\mu : \mu\in\Lambda'\}$) denote the family of all continuous seminorms
on $M$ (respectively, $N$). By Lemma~\ref{lemma:bas_unif} (ii), a basic neighborhood of
$\sE(f)(u_x)=f(u)_x$ has the form $\sT(V,f(u),\mu,\eps)$,
where $V\subset X$ is an open neighborhood of $x$, $\mu\in\Lambda'$, and $\eps>0$.
Since $f$ is continuous, there exists $\lambda\in\Lambda$ such that for each $v\in M$ we have
$\| f(v)\|_\mu=\| v\|_\lambda$. By passing to the quotients, we see that
$\| f_y(v_y)\|_\mu\le \| v_y\|_\lambda$ ($v\in M$, $y\in X$).
We claim that
\begin{equation}
\label{Ef_cont}
\sE(f)\bigl(\sT(V,u,\lambda,\eps)\bigr)\subset \sT(V,f(u),\mu,\eps).
\end{equation}
Indeed, for each $v_y\in \sT(V,u,\lambda,\eps)$, where $v\in M$ and $y\in V$,
we have
\[
\| \sE(f)(v_y)-f(u)_y\|_\mu
=\| f_y(v_y-u_y)\|_\mu
\le \| v_y-u_y\|_\lambda
<\eps.
\]
This implies \eqref{Ef_cont} and shows that $\sE(f)$ is continuous.
Finally, letting $u=0$ and $V=X$ in \eqref{Ef_cont}, we conclude that $\sE(f)$
satisfies $\mathrm{(BM3)}$.
\end{proof}

Summarizing, we obtain the following.

\begin{theorem}
\label{thm:Kmod-Bnd}
There exists a functor $\sE\colon K\lmod\to\Bnd(X)$ uniquely determined by the following
properties:
\begin{mycompactenum}
\item
for each $M\in K\lmod$ and each $x\in X$, we have $\sE(M)_x=M_x$;
\item
the locally convex uniform vector structure on $\sE(M)$ is determined by $\cN_M$;
\item
for each $M\in K\lmod$ and each $u\in M$, the section $\tilde u\colon X\to\sE(M),\; x\mapsto u_x$,
is continuous;
\item
for each morphism $f\colon M\to N$ in $K\lmod$ and each $x\in X$, we have
$\sE(f)_x=f_x$.
\end{mycompactenum}
\end{theorem}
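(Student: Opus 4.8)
The plan is to observe that essentially all the required structure has already been assembled in the preceding discussion, so the proof reduces to checking functoriality and uniqueness. First I would record that for each locally convex $K$-module $M$ the triple $(\sE(M),p_M,\cU_M)$ is a locally convex bundle over $X$: this was established above by verifying conditions $\mathrm{(S1)}$ and $\mathrm{(S2)}$ of Proposition~\ref{prop:sec_bnd} for the space $\Gamma_M=\{\tilde u : u\in M\}$, where $\mathrm{(S1)}$ holds because $\{\tilde u_x : u\in M\}=M_x$ and $\mathrm{(S2)}$ holds by Lemma~\ref{lemma:semicont}. Likewise, for each morphism $f\colon M\to N$ in $K\lmod$, the map $\sE(f)$ defined fiberwise by $\sE(f)_x=f_x$ is a bundle morphism, as shown in the lemma immediately preceding the statement.

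Next I would verify that $\sE$ is a functor. Since $(\id_M)_x=\id_{M_x}$, we have $\sE(\id_M)_x=\id_{\sE(M)_x}$ for every $x$, whence $\sE(\id_M)=\id_{\sE(M)}$. For composable morphisms $f\colon M\to N$ and $g\colon N\to P$, every $u\in M$ and $x\in X$ satisfy $(g\circ f)_x(u_x)=(g(f(u)))_x=g_x(f(u)_x)=g_x(f_x(u_x))$, so that $\sE(g\circ f)_x=g_x\circ f_x=\sE(g)_x\circ\sE(f)_x$ on each fiber; since a bundle morphism is determined by its fiberwise action, $\sE(g\circ f)=\sE(g)\circ\sE(f)$. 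Properties $\mathrm{(i)}$--$\mathrm{(iv)}$ then hold directly by construction: $\mathrm{(i)}$ and $\mathrm{(ii)}$ are the definitions of the fibers and of $\cU_M$, $\mathrm{(iii)}$ is the inclusion $\Gamma_M\subset\Gamma(X,\sE(M))$ guaranteed by Proposition~\ref{prop:sec_bnd}, and $\mathrm{(iv)}$ is the definition of $\sE(f)$.

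Finally I would establish uniqueness. Suppose $\sE'$ is another functor satisfying $\mathrm{(i)}$--$\mathrm{(iv)}$. By $\mathrm{(i)}$ the underlying family of vector spaces of $\sE'(M)$ coincides with $(\sE(M),p_M)$, and by $\mathrm{(ii)}$ its locally convex uniform vector structure equals $\cU_M$, which is determined by $\cN_M$. By $\mathrm{(iii)}$ each section $\tilde u\ (u\in M)$ is a continuous section of $\sE'(M)$, so $\Gamma_M\subset\Gamma(X,\sE'(M))$. Since $\Gamma_M$ and $\cN_M$ satisfy $\mathrm{(S1)}$ and $\mathrm{(S2)}$, the uniqueness assertion in Proposition~\ref{prop:sec_bnd} forces the topology on $\sE'(M)$ to coincide with that on $\sE(M)$; thus $\sE'(M)=\sE(M)$ as locally convex bundles. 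Applying $\mathrm{(iv)}$ to both functors gives $\sE'(f)_x=f_x=\sE(f)_x$ for every $x$, whence $\sE'(f)=\sE(f)$. I expect no genuine obstacle here, since the substantive work—constructing the bundle via Proposition~\ref{prop:sec_bnd} and checking that $\sE(f)$ is a bundle morphism—has already been carried out; the only point demanding care is invoking the uniqueness part of Proposition~\ref{prop:sec_bnd} to pin down the topology of $\sE'(M)$.
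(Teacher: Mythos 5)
Your proposal is correct and follows essentially the same route as the paper, which proves the theorem precisely by the preceding construction: verifying $\mathrm{(S1)}$--$\mathrm{(S2)}$ via Lemma~\ref{lemma:semicont}, invoking Proposition~\ref{prop:sec_bnd} (including its uniqueness part, justified in the remark following it), and using the lemma that $\sE(f)$ is a bundle morphism. Your only addition is to write out explicitly the routine functoriality checks and the uniqueness argument that the paper compresses into ``Summarizing, we obtain the following,'' and these are carried out correctly.
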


For the purposes of Section~\ref{sect:deforms}, we need locally convex bundles with an additional
algebraic structure.
Let $X$ be a set, and let $(A,p)$ be a family of vector spaces over $X$. Suppose that
each fiber $A_x\; (x\in X)$ is endowed with an algebra structure, and let
\begin{equation}
\label{mult}
\mult\colon A\times_X A \to A, \quad (u,v)\mapsto uv.
\end{equation}
For each pair $S,T$ of subsets of $A$, we let $S\cdot T=\mult((S\times T)\cap (A\times_X A))$.

\begin{definition}
Let $X$ be a topological space. By a {\em locally convex algebra bundle} over $X$ we mean
a locally convex bundle $(A,p,\cU)$ over $X$ together with an algebra structure on each fiber
$A_x\; (x\in X)$ such that
\begin{mycompactenum}
\item[$\mathrm{(B3)}$]
the multiplication \eqref{mult} is continuous;
\item[$\mathrm{(B4)}$]
for each $U\in\cU$ there exists $V\in\cU$ such that $V\cdot V\subset U$.
\end{mycompactenum}
If, in addition, each fiber $A_x$ is a Fr\'echet algebra with respect to the topology inherited from $A$,
then we say that $(A,p,\cU)$ is a {\em Fr\'echet algebra bundle}.
If $(A,p,\cU)$ and $(A',p',\cU')$ are locally convex algebra bundles over $X$,
then a bundle morphism $f\colon A\to A'$ is an {\em algebra bundle morphism}
if the restriction of
$f$ to each fiber $A_x\; (x\in X)$ is an algebra homomorphism from $A_x$ to $A'_x$.
\end{definition}

The category of all locally convex algebra bundles and algebra bundle morphisms over $X$
will be denoted by $\AlgBnd(X)$.

We need the following modification of Proposition~\ref{prop:sec_bnd}.

\begin{prop}
\label{prop:sec_alg_bnd}
Under the conditions of Proposition~{\upshape\ref{prop:sec_bnd}}, suppose that
each $E_x$ is endowed with an algebra structure and that the following
holds:
\begin{mycompactenum}
\item[$\mathrm{(S3)}$]
For each $\lambda\in\Lambda$ there exist $\mu\in\Lambda$ and $C>0$ such that
for each $x\in X$ and each $u,v\in E_x$ we have
$\| uv\|_\lambda\le C\| u\|_\mu \| v\|_\mu$;
\item[$\mathrm{(S4)}$]
$\Gamma$ is a subalgebra of $S(X,E)$.
\end{mycompactenum}
Then the bundle $(E,p,\cU_\cN)$ constructed in Proposition~{\upshape\ref{prop:sec_bnd}}
is a locally convex algebra bundle.
\end{prop}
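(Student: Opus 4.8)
The plan is to check the two algebra axioms $\mathrm{(B3)}$ and $\mathrm{(B4)}$, since Proposition~\ref{prop:sec_bnd} already provides that $(E,p,\cU_\cN)$ is a locally convex bundle; thus only the compatibility of the fibrewise multiplications with the bundle structure remains to be established.

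Axiom $\mathrm{(B4)}$ is immediate from $\mathrm{(S3)}$. Given $U\in\cU_\cN$, I would choose a basic set $\sU_{\lambda,\eps}\subset U$, let $\mu\in\Lambda$ and $C>0$ be as in $\mathrm{(S3)}$ for this $\lambda$, and pick $\delta>0$ with $C\delta^2<\eps$. For $u,v\in\sU_{\mu,\delta}$ lying in one fibre we have $\|uv\|_\lambda\le C\|u\|_\mu\|v\|_\mu<C\delta^2<\eps$, so $\sU_{\mu,\delta}\cdot\sU_{\mu,\delta}\subset\sU_{\lambda,\eps}\subset U$, and $\sU_{\mu,\delta}\in\cU_\cN$.

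The real content, and the main obstacle, is the continuity of $\mult\colon E\times_X E\to E$ required by $\mathrm{(B3)}$; here the difficulty is that the sections in $\Gamma$ are only dense in the fibres by $\mathrm{(S1)}$ and the seminorms are only upper semicontinuous by $\mathrm{(S2)}$, so a fibrewise product estimate must be transported to a whole neighbourhood in $X$. I would fix $(u_0,v_0)\in E\times_X E$, put $x_0=p(u_0)=p(v_0)$ and $w_0=u_0v_0$, and by Lemma~\ref{lemma:bas_unif}(ii) reduce to the following: given a basic neighbourhood $\sT(V_0,c,\lambda,\eps)$ of $w_0$ (with $c\in\Gamma$ and $\|w_0-c_{x_0}\|_\lambda<\eps$), produce open tubes about $u_0$ and $v_0$ whose fibrewise product lies in it. Taking $\mu,C$ from $\mathrm{(S3)}$, I would use $\mathrm{(S1)}$ to choose $a,b\in\Gamma$ with $\|u_0-a_{x_0}\|_\mu$ and $\|v_0-b_{x_0}\|_\mu$ arbitrarily small, and here axiom $\mathrm{(S4)}$ is crucial: it guarantees $ab\in\Gamma$, so that $ab-c$ is again a section in $\Gamma$ and $\mathrm{(S2)}$ applies to it.

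The estimate would then proceed by splitting, for $u$ near $a$ and $v$ near $b$ in a common fibre $y$,
\[
\|uv-c_y\|_\lambda\le\|uv-a_yb_y\|_\lambda+\|(ab-c)_y\|_\lambda .
\]
The first summand is controlled through the identity $uv-a_yb_y=(u-a_y)v+a_y(v-b_y)$ and $\mathrm{(S3)}$, with $\|a_y\|_\mu$ and $\|b_y\|_\mu$ kept bounded on a neighbourhood of $x_0$ by upper semicontinuity $\mathrm{(S2)}$; it is therefore of order $\delta$ on a tube of radius $\delta$. The second summand equals $\|(ab-c)_y\|_\lambda$, whose value at $x_0$ is within $\|w_0-c_{x_0}\|_\lambda<\eps$ by $\mathrm{(S3)}$ and the choice of $a,b$, and $\mathrm{(S2)}$ then propagates this strict inequality to a neighbourhood of $x_0$. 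Shrinking the base and the radius finally yields $\|uv-c_y\|_\lambda<\eps$, i.e. $uv\in\sT(V_0,c,\lambda,\eps)$. The one point requiring care is the order of the choices, which must be arranged to avoid circularity: I would first fix a slack $\theta=\tfrac{1}{3}\bigl(\eps-\|w_0-c_{x_0}\|_\lambda\bigr)$, bound $\|a_{x_0}\|_\mu$ and $\|b_{x_0}\|_\mu$ a priori by $\|u_0\|_\mu,\|v_0\|_\mu$ plus a constant so that the admissible tube radius is independent of the approximation, and only then select $a,b$, the neighbourhood $V_1\subset V_0$, and the radius $\delta$. This verifies $\mathrm{(B3)}$ and completes the proof.
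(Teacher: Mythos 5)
Your proof is correct and takes essentially the same route as the paper's: $\mathrm{(B4)}$ directly from $\mathrm{(S3)}$ with a small $\delta$, and $\mathrm{(B3)}$ by approximating $u_0,v_0$ with sections $a,b\in\Gamma$, invoking $\mathrm{(S4)}$ to get $ab\in\Gamma$, splitting $uv-a_yb_y=(u-a_y)v+a_y(v-b_y)$ under $\mathrm{(S3)}$ with $\|a_y\|_\mu,\|b_y\|_\mu$ controlled on a neighborhood by $\mathrm{(S2)}$, and fixing the quantitative slack before choosing $a,b$ exactly as the paper does with its $\eps'$ and the condition $C\delta(\|u\|_\mu+\|v\|_\mu+5\delta)<\eps-\eps'$. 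The one cosmetic difference is that you propagate the estimate on the second summand via upper semicontinuity $\mathrm{(S2)}$ applied to the section $ab-c$, where the paper equivalently uses the continuity of $ab$ (guaranteed by Proposition~\ref{prop:sec_bnd} since $ab\in\Gamma$) together with the openness of the tube $\sT(V,c,\lambda,\eps')$.
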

\begin{proof}
Condition $\mathrm{(S3)}$ implies that for each $\eps>0$ we have
$\sU_{\mu,\delta}\cdot \sU_{\mu,\delta}\subset \sU_{\lambda,\eps}$,
where $\delta=\min\{\eps/C,1\}$. Thus $\mathrm{(B4)}$ holds.
To prove $\mathrm{(B3)}$,
let $u,v\in E\times_X E$, and let $\sT(V,c,\lambda,\eps)$ be a basic
neighborhood of $uv$, where $V\subset X$ is an open set, $c\in \Gamma$,
$\lambda\in\Lambda$, and $\eps>0$.
Find $\mu\in\Lambda$ and $C>0$ satisfying $\mathrm{(S3)}$.
Fix any $\eps'\in (0,\eps)$ such that $uv\in \sT(V,c,\lambda,\eps')$, and choose
$\delta>0$ such that
\begin{equation}
\label{delta_cond}
C\delta(\| u\|_\mu+\| v\|_\mu+5\delta)<\eps-\eps'.
\end{equation}
Let $x=p(u)=p(v)$. By $\mathrm{(S3)}$, the multiplication on $E_x$ is
continuous, so $\mathrm{(S1)}$ implies that there exist $a,b\in \Gamma$ such that
\begin{gather}
\label{ab_cond_1}
\| u-a_x\|_\mu<\delta,\quad
\| v-b_x\|_\mu<\delta,\\
\label{ab_cond_2}
a_x b_x\in \sT(V,c,\lambda,\eps').
\end{gather}
By $\mathrm{(S4)}$ and \eqref{ab_cond_2},
there exists an open neighborhood $W$ of $x$ such that $W\subset V$ and
\begin{equation}
\label{V_cond_1}
(ab)(W)\subset \sT(V,c,\lambda,\eps').
\end{equation}
By shrinking $W$ if necessary and by using $\mathrm{(S2)}$, we can also assume that
\begin{equation}
\label{V_cond_2}
\| a_y\|_\mu<\| a_x\|_\mu+\delta, \quad
\| b_y\|_\mu<\| b_x\|_\mu+\delta \qquad (y\in W).
\end{equation}
By \eqref{ab_cond_1}, $\sT(W,a,\mu,\delta)$ (respectively, $\sT(W,b,\mu,\delta)$)
is a neighborhood of $u$ (respectively, $v$). We claim that
\begin{equation}
\label{m_E_cont}
\sT(W,a,\mu,\delta)\cdot \sT(W,b,\mu,\delta)\subset \sT(V,c,\lambda,\eps).
\end{equation}
Indeed, let $y\in W$, and let $u',v'\in E_y$ be such that
$\| u'-a_y\|_\mu<\delta$ and $\| v'-b_y\|_\mu<\delta$. We have
\begin{align*}
\| u'v'-c_y\|_\lambda
&\le \| u'v'-a_y b_y\|_\lambda+\| a_y b_y-c_y\|_\lambda\\
&\le \| (u'-a_y)v'\|_\lambda+\| a_y(v'-b_y)\|_\lambda+\eps'
&& \text{(by \eqref{V_cond_1})} \\
&\le C \| u'-a_y\|_\mu \| v'\|_\mu+C\| a_y\|_\mu\| v'-b_y\|_\mu+\eps'
&& \text{(by $\mathrm{(S3)}$)} \\
&<C\delta(\| a_y\|_\mu+\| v'\|_\mu)+\eps'\\
&<C\delta(\| a_y\|_\mu+\| b_y\|_\mu+\delta)+\eps'\\
&<C\delta(\| a_x\|_\mu+\| b_x\|_\mu+3\delta)+\eps'
&& \text{(by \eqref{V_cond_2})} \\
&<C\delta(\| u\|_\mu+\| v\|_\mu+5\delta)+\eps'
&& \text{(by \eqref{ab_cond_1})} \\
&<(\eps-\eps')+\eps'=\eps
&& \text{(by \eqref{delta_cond})}.
\end{align*}
Thus \eqref{m_E_cont} holds, which implies $\mathrm{(B3)}$ and completes the proof.
\end{proof}

Let $K$ be a commutative algebra. By a {\em locally convex $K$-algebra} we mean
a locally convex $K$-module $A$ together with a continuous $K$-bilinear associative
multiplication ${A\times A}\to A$.
Morphisms of locally convex $K$-algebras are defined
in the obvious way. The category of locally convex $K$-algebras will be denoted by $K\lalg$.
As above, let $X$ be a topological space, and let
$\gamma\colon K\to C(X)$ be an algebra homomorphism.
Observe that for each locally convex $K$-algebra $A$ and each
$x\in X$ the subspace $\ol{\fm_x A}$ is a two-sided ideal of $A$.
Thus the fiber $A_x=A/\ol{\fm_x A}$ of $A$ over $x$ is a locally convex algebra
in a natural way. Let $\cC_A=\{ \|\cdot\|_\lambda : \lambda\in\Lambda\}$ denote the family of all
continuous seminorms on $A$.
Since the multiplication on $A$ is continuous, it follows that for each $\lambda\in\Lambda$
there exist $\mu\in\Lambda$ and $C>0$ such that for all $a,b\in A$ we have
$\| ab\|_\lambda\le C\| a\|_\mu \| b\|_\mu$. By passing to the quotient seminorms
on the fibers $A_x\; (x\in X)$, we see that
$\mathrm{(S3)}$ holds (with $E_x=A_x$).
Clearly, $\Gamma_A$ satisfies $\mathrm{(S4)}$.
Applying Proposition~\ref{prop:sec_alg_bnd},
we conclude that $\sE(A)$ is a locally convex algebra bundle.
Moreover, if $f\colon A\to B$ is a locally convex $K$-algebra morphism, then
$\sE(f)\colon\sE(A)\to\sE(B)$ is an algebra bundle morphism. Thus we have the following
analog of Theorem~\ref{thm:Kmod-Bnd}.

\begin{theorem}
\label{thm:Kalg-Bnd}
There exists a functor $\sE\colon K\lalg\to\AlgBnd(X)$ uniquely determined by
the following properties:
\begin{mycompactenum}
\item
for each $A\in K\lalg$ and each $x\in X$, we have $\sE(A)_x=A_x$;
\item
the locally convex uniform vector structure on $\sE(A)$ is determined by $\cN_A$;
\item
for each $A\in K\lalg$ and each $u\in A$, the section $\tilde u\colon X\to\sE(A),\; x\mapsto u_x$,
is continuous;
\item
for each morphism $f\colon A\to B$ in $K\lalg$ and each $x\in X$, we have
$\sE(f)_x=f_x$.
\end{mycompactenum}
\end{theorem}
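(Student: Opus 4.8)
The plan is to assemble the construction already carried out in the discussion preceding the statement and then to verify functoriality and uniqueness, importing as much as possible from the module case. First I would define $\sE$ on objects. Given $A\in K\lalg$, I forget the multiplication to regard $A$ as an object of $K\lmod$ and take the underlying locally convex bundle $(\sE(A),p_A,\cU_A)$ furnished by Theorem~\ref{thm:Kmod-Bnd}; this makes properties (i)--(iii) hold automatically, with $\cU_A$ determined by $\cN_A$ and the sections $\tilde u$ continuous. Each fiber $A_x=A/\ol{\fm_x A}$ carries a natural algebra structure, since $\ol{\fm_x A}$ is a two-sided ideal, and the family $\cN_A$ of quotient seminorms satisfies $\mathrm{(S3)}$: this is obtained by pushing the submultiplicativity estimate $\| ab\|_\lambda\le C\| a\|_\mu\| b\|_\mu$ on $A$ down to the quotients, while $\Gamma_A$ satisfies $\mathrm{(S4)}$. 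Proposition~\ref{prop:sec_alg_bnd} then upgrades $\sE(A)$ to a locally convex algebra bundle, establishing $\mathrm{(B3)}$ and $\mathrm{(B4)}$.

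Next I would treat morphisms. For $f\colon A\to B$ in $K\lalg$, I take $\sE(f)$ to be the bundle morphism already produced in the module case, whose fiber restrictions are $f_x\colon A_x\to B_x,\; u_x\mapsto f(u)_x$, so that property (iv) holds. To see that $\sE(f)$ is an \emph{algebra} bundle morphism it suffices to check that each $f_x$ is an algebra homomorphism, which is immediate: $f$ is multiplicative and $f(\ol{\fm_x A})\subset\ol{\fm_x B}$, so multiplication passes to the quotient fibers. Preservation of identities and of composition is inherited verbatim from Theorem~\ref{thm:Kmod-Bnd}, because $\sE$ on $K\lalg$ coincides, on underlying bundles and underlying bundle morphisms, with $\sE$ on $K\lmod$ after forgetting multiplication.

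Finally, uniqueness. Properties (i), (ii), and (iv) are exactly the uniqueness-determining data for the module functor, while (iii) fixes the continuous sections $\tilde u$. By the uniqueness part of Theorem~\ref{thm:Kmod-Bnd}, these data already pin down the total space, its topology, its locally convex uniform vector structure, and the action of $\sE$ on morphisms at the level of underlying linear maps. The only additional structure, namely the fiberwise multiplication on $\sE(A)_x=A_x$, is forced by (i), and the algebra-bundle axioms $\mathrm{(B3)}$, $\mathrm{(B4)}$ are then automatic; hence any functor satisfying (i)--(iv) must agree with the one constructed.

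I do not expect a genuine obstacle here, since all of the analytic difficulty was absorbed into Proposition~\ref{prop:sec_alg_bnd}, in particular the delicate verification of $\mathrm{(B3)}$, the joint continuity of fiberwise multiplication on the total space. The one point deserving a little care is to make the forgetful passage $K\lalg\to K\lmod$ explicit and compatible with $\sE$, so that functoriality and uniqueness can be transported wholesale from the module setting rather than reproved from scratch.
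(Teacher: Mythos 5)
Your proposal is correct and follows essentially the same route as the paper: the paper likewise builds $\sE(A)$ from the module construction of Theorem~\ref{thm:Kmod-Bnd}, uses that $\ol{\fm_x A}$ is a two-sided ideal to get the fiberwise algebra structure, verifies $\mathrm{(S3)}$ by passing the submultiplicativity estimate $\| ab\|_\lambda\le C\| a\|_\mu\| b\|_\mu$ to the quotient seminorms, notes $\mathrm{(S4)}$ for $\Gamma_A$, and invokes Proposition~\ref{prop:sec_alg_bnd}. Your explicit fiberwise check that each $f_x$ is multiplicative, and the transport of uniqueness from the module case, merely spell out what the paper leaves implicit.
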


In conclusion, let us discuss the notion of continuity for locally convex bundles.
Let $(E,p,\cU)$ be a locally convex bundle over a topological space $X$. Recall that
we always have $\cU=\cU_{\cN}$, where $\cN$ is an admissible directed family of seminorms on $E$
(see Proposition~\ref{prop:bundles_via_seminorms}).
By Remark~\ref{rem:up_semi_open}, each seminorm belonging to $\cN$
is upper semicontinuous.
In the theory of Banach bundles (see, e.g., \cite{FD}), it is
customary to consider only those Banach bundles whose norm is a continuous
function on $E$. This leads naturally to the following definition.

\begin{definition}
\label{def:cont_bnd}
We say that a locally convex bundle $(E,p,\cU)$ is {\em continuous} if there exists
an admissible directed family $\cN$ of continuous seminorms on $E$ such that
$\cU=\cU_{\cN}$.
\end{definition}

The following result gives a convenient way of proving the
continuity of locally convex bundles.

\begin{prop}
\label{prop:cont_semi}
Let $(E,p,\cU)$ be a locally convex bundle over a topological space $X$, and let
$\Gamma$ be a vector subspace of $\Gamma(X,E)$ such that for each $x\in X$
the set $\{ s_x : s\in \Gamma\}$ is dense in $E_x$. Suppose that $\|\cdot\|$
is an upper semicontinuous seminorm on $E$ such that for every $s\in\Gamma$
the map $X\to\R,\; x\mapsto \| s_x\|$, is continuous. Then $\|\cdot\|$ is continuous.
\end{prop}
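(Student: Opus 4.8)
The seminorm $\|\cdot\|$ is upper semicontinuous by hypothesis, so to establish its continuity it suffices to prove lower semicontinuity, i.e.\ that $\{v\in E : \| v\|>c\}$ is open for every $c\in\R$. The plan is to fix a point $u_0$ of this set, put $x_0=p(u_0)$, and produce an open neighborhood of $u_0$ on which $\|\cdot\|$ stays $>c$. Combining this with the given upper semicontinuity then yields continuity.

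The naive approach would be to approximate $u_0$ by a section $s\in\Gamma$ and estimate $\|\cdot\|$ on a basic tube $\sT(V,s,\lambda,\eps)=s(V)+\sU_{\lambda,\eps}$ around $u_0$ (here $\cU=\cU_\cN$ with $\cN=\{\|\cdot\|_\lambda\}$ admissible, by Proposition~\ref{prop:bundles_via_seminorms}). This fails, however: a point $v$ of such a tube satisfies only $\| v-s_{p(v)}\|_\lambda<\eps$, and there is no reason for the given seminorm $\|\cdot\|$ to be dominated by a single defining seminorm $\|\cdot\|_\lambda$ uniformly in $p(v)$, since fiberwise continuity of $\|\cdot\|$ provides such a bound only with index and constant depending on the fiber. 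The idea that circumvents this is to use upper semicontinuity of $\|\cdot\|$ \emph{on the total space}: for every $\eta>0$ the set $\{v\in E:\| v\|<\eta\}$ is open in $E$.

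Concretely, I would proceed as follows. Fix $\eta>0$ with $3\eta<\| u_0\|-c$. Since $\{\|\cdot\|<\eta\}$ is open in $E$, its trace on the fiber $E_{x_0}$ is a neighborhood of $0$ there (Remark~\ref{rem:fiber_TVS}), so $\{w\in E_{x_0}:\| w-u_0\|<\eta\}$ is a neighborhood of $u_0$ in $E_{x_0}$. By density of $\{s_{x_0}:s\in\Gamma\}$ I may pick $s\in\Gamma$ with $\| u_0-s_{x_0}\|<\eta$, whence $\| s_{x_0}\|>\| u_0\|-\eta$; using continuity of $y\mapsto\| s_y\|$ at $x_0$ I then choose an open $V\ni x_0$ with $\| s_y\|>\| u_0\|-2\eta$ for all $y\in V$. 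Finally, consider the map $\Phi\colon E\to E$, $\Phi(v)=v-s_{p(v)}$, which is continuous by the argument of Lemma~\ref{lemma:open_sum} (as $s\in\Gamma\subset\Gamma(X,E)$ is a global continuous section). Since $\Phi(u_0)=u_0-s_{x_0}$ lies in the open set $\{\|\cdot\|<\eta\}$, there is an open neighborhood $W\subset p^{-1}(V)$ of $u_0$ with $\Phi(W)\subset\{\|\cdot\|<\eta\}$, i.e.\ $\| v-s_{p(v)}\|<\eta$ for all $v\in W$. Then for $v\in W$ with $y=p(v)\in V$,
\[
\| v\|\ge\| s_y\|-\| v-s_y\|>(\| u_0\|-2\eta)-\eta>c,
\]
so $W\subset\{\|\cdot\|>c\}$, as required.

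I expect the only genuinely delicate point to be recognizing that the tube-based estimate is the wrong instrument and that the correct substitute is the continuity of the translation-by-section map $\Phi$ together with the openness of $\{\|\cdot\|<\eta\}$ coming from upper semicontinuity on the total space. Once this is in place, the remaining steps are routine triangle-inequality estimates and standard applications of the density and semicontinuity hypotheses.
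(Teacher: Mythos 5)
Your proof is correct and takes essentially the same route as the paper's: you approximate $u_0$ in its fiber by a section $s\in\Gamma$ (density plus the fiberwise openness of $\{\|\cdot\\|<\eta\}$ coming from upper semicontinuity and translation invariance), push the bound $\| s_y\|>\| u_0\|-2\eta$ to an open $V$ via the continuity hypothesis, and your neighborhood $W$ is exactly the tube $\sT(V,s,\|\cdot\|,\eta)$, whose openness the paper obtains from Remark~\ref{rem:up_semi_open} and which you re-derive directly via the continuity of $v\mapsto v-s_{p(v)}$ (precisely the content of the proof of Lemma~\ref{lemma:open_sum}). The concluding reverse triangle-inequality estimate matches the paper's, up to inessential bookkeeping of the constants.
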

\begin{proof}[Proof {\upshape (cf. \cite{Williams}, the last step of the proof of C.25)}]
We have to show that $\|\cdot\|$ is lower semicontinuous.
Let $u\in E$, let $x=p(u)$, and suppose that $\| u\|>c>0$. Choose $\delta>0$ such that
$\| u\|>c+2\delta$. Since $\|\cdot\|$ is upper semicontinuous and the topology on $E_x$
is translation invariant, it follows that $\{ v\in E_x : \| v-u\|<\delta\}$ is open in $E_x$.
Hence there exists $s\in\Gamma$ such that $\| u-s_x\|<\delta$.
In particular, $\| s_x\|>c+\delta$. By assumption, this implies that there exists
an open neighborhood $V$ of $x$ such that $\| s_y\|>c+\delta$ for all $y\in V$.
By Remark~\ref{rem:up_semi_open}, $\sT(V,s,\|\cdot\|,\delta)$ is an open
neighborhood of $u$. If now $v\in \sT(V,s,\|\cdot\|,\delta)$ and $y=p(v)$, then
\[
\| v\|\ge \| s_y\|-\| s_y-v\|>c+\delta-\delta=c.
\]
This implies that $\|\cdot\|$ is lower semicontinuous and completes the proof.
\end{proof}

\end{document}